     \definecolor{dark-red}{rgb}{0.54,0,0}
     \definecolor{dark-green}{rgb}{0,0.54,0}
     \definecolor{dark-magenta}{rgb}{0.54,0,0.54}
     \definecolor{dark-cyan}{rgb}{0,0.54,0.54}
\newcommand{\Affine}{\mathbb{A}}
\newcommand\FF{\protect\mathbb{F}}
\newcommand\NN{\protect\mathbb{N}}
\newcommand\QQ{\protect\mathbb{Q}}
\newcommand\ZZ{\protect\mathbb{Z}}
\newcommand\GG{\protect\mathbb{G}}
\newcommand\sF{\mathscr{F}}
\newcommand\cA{\mathcal{A}}
\newcommand\cG{\mathcal{G}}
\newcommand\cO{\mathcal{O}}
\newcommand\cW{\mathcal{W}}
\DeclareMathOperator\GL{GL}
\DeclareMathOperator\Hom{Hom}
\DeclareMathOperator\Tr{Tr}
\DeclareMathOperator\Gal{Gal}
\DeclareMathOperator\Ind{Ind}
\newcommand\id{\text{id}}
\newcommand{\from}{\colon}
\theoremstyle{theorem} \newtheorem{proposition}{Proposition}[section]
\theoremstyle{definition} \newtheorem{definition}[proposition]{Definition}
\theoremstyle{theorem} \newtheorem{lemma}[proposition]{Lemma}
\theoremstyle{remark} \newtheorem{remark}[proposition]{Remark}
\theoremstyle{remark} 
\theoremstyle{remark} 
\theoremstyle{definition} 
\theoremstyle{definition} 
\theoremstyle{theorem} \newtheorem*{displaytheorem}{Theorem}
\theoremstyle{theorem} \newtheorem{theorem}[proposition]{Theorem}
\theoremstyle{theorem} \newtheorem{corollary}[proposition]{Corollary}
\theoremstyle{definition} 
\theoremstyle{theorem} 
\theoremstyle{remark} 
\theoremstyle{definition} 
\theoremstyle{definition} 
\theoremstyle{definition} 
\theoremstyle{definition} 
\theoremstyle{remark} \newtheorem*{claim*}{Claim}
\theoremstyle{remark} 
\theoremstyle{theorem} 
\theoremstyle{theorem} \newtheorem{conjecture}[proposition]{Conjecture}
\theoremstyle{definition} 
\theoremstyle{definition} 
\theoremstyle{theorem} \newtheorem{mytheorem}{Theorem}
\theoremstyle{remark} \newtheorem{importantremark}[proposition]{Important Remark}
\theoremstyle{remark}
\newcommand\ur{\text{nr}}
\newcommand\OH{\mathcal{O}}
\newcommand\Khat{\widehat{K}^{\ur}}
\newcommand\TT{\mathbb{T}}
\newcommand\UU{\mathbb{U}}
\newcommand\BB{\mathbb{B}}
\DeclareMathOperator\Fr{Fr}
\newcommand\EFF{\mathscr{F}}
\DeclareMathOperator\pr{pr}
\newcommand\Loc{\mathcal{L}}
\newcommand\F{\FF_{q^2}}
\newcommand\Unip{U_h^{2,q}}
\newcommand\UnipF{\Unip(\F)}
\newcommand\ONE{\mathbbm{1}}
\title[Deligne-Lusztig Constructions for Division Algebras]{Deligne-Lusztig Constructions for Division Algebras and the Local Langlands Correspondence}
\author{Charlotte Chan}
\thanks{This work was partially supported by NSF grants DMS-0943832 and DMS-1160720.}
\begin{document}

\begin{abstract}
Let $K$ be a local non-Archimedean field of positive characteristic and let $L$ be the degree-$n$ unramified extension of $K$. Let $\theta$ be a smooth character of $L^\times$ such that for each nontrivial $\gamma \in \Gal(L/K)$, $\theta$ and $\theta/\theta^\gamma$ have the same level. Via the local Langlands and Jacquet-Langlands correspondences, $\theta$ corresponds to an irreducible representation $\rho_\theta$ of $D^\times$, where $D$ is the central division algebra over $K$ with invariant $1/n$.

In 1979, Lusztig proposed a cohomological construction of supercuspidal representations of reductive $p$-adic groups analogous to Deligne-Lusztig theory for finite reductive groups. In this paper we prove that when $n = 2$, the $p$-adic Deligne-Lusztig (ind-)scheme $X$ induces a correspondence $\theta \mapsto H_\bullet(X)[\theta]$ between smooth one-dimensional representations of $L^\times$ and representations of $D^\times$ that matches the correspondence given by the LLC and JLC.
\end{abstract}

\maketitle

\tableofcontents

\section{Introduction}\label{s:introduction}

Deligne-Lusztig theory gives a geometric description of the irreducible representations of finite groups $G$ of Lie type. In \cite{DL76}, Deligne and Lusztig introduce certain locally closed subvarieties $X$ in flag varieties over finite fields and prove that the irreducible representations of $G$ occur with multiplicity one in the $\ell$-adic \'etale cohomology groups $H_c^i(X, \overline \QQ_\ell)$. The variety $X$ has an action by $G \times T$, where $T$ is a maximal torus in $G$, and the $G$-representations can (more or less) be picked off by considering the subspaces $H_c^i(X, \overline \QQ_\ell)[\theta]$ of the cohomology groups where $T$ acts by some character $\theta$. 

In \cite{L79}, Lusztig suggests an analogue of Deligne-Lusztig theory for $p$-adic groups $G$. He introduces a certain infinite-dimensional variety $X$ which has a natural action of $G \times T$, and defines $\ell$-adic homology groups $H_i(X)$ respecting this action. One can then study the correspondence $\theta \mapsto H_i(X)[\theta]$ between characters $\theta$ of $T$ and representations of $G$ arising from the subspace $H_i(X)[\theta]$ of $H_i(X)$ on which $T$ acts by some character $\theta$.

Consider the following set-up. Let $K$ be a local non-Archimedean field of positive odd characteristic and let $L \supset K$ be the unramified extension of degree $n$. In the situation that $T = L^\times$ and $G = D_{1/n}^\times$, where $D_{1/n}$ is the central division algebra over $K$ with invariant $1/n$, the local Langlands and Jacquet-Langlands correspondences (LLC and JLC) give rise to a correspondence between characters of $T$ and representations of $T$. Indeed: To a smooth character $\theta \from L^\times \to \overline \QQ_\ell^\times$, one can associate a smooth irreducible $n$-dimensional representation $\sigma_\theta$ of the Weil group $\cW_K$ of $K$, which corresponds to an irreducible supercuspidal representation $\pi_\theta$ of $\GL_n(K)$ (via LLC), which finally corresponds to an irreducible representation $\rho_\theta$ of $D_{1/n}^\times$ (via JLC).

The main theorem of this paper is:

\begin{mytheorem}[Rough Formulation]\label{t:roughLLC}
Let $n = 2$. For a broad class of characters $\theta \from L^\times \to \overline \QQ_\ell^\times$, there exists an $r$ (dependent on $\theta$) such that
\begin{equation*}
H_i(X)[\theta] = \begin{cases}
\rho_\theta & \text{if $i = r$,} \\
0 & \text{otherwise.}
\end{cases}
\end{equation*}
\end{mytheorem}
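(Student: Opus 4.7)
The plan has four phases: give $X$ an explicit presentation as an ind-scheme with commuting $L^\times \times D^\times$-action, reduce to a finite-level cohomology computation, establish concentration of cohomology in a single degree, and identify the resulting $D^\times$-representation by matching characters with the LLC/JLC picture.

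First I would exhibit $X$, in the case $n=2$, as an inductive limit $X = \varinjlim_h X_h$ of finite-type smooth affine varieties over $\overline{\FF}_q$. The Lang-type equations cutting out $X$ stratify via the filtration on the pro-unipotent radical of the appropriate parahoric subgroup, and each truncation $X_h$ is an iterated tower of Artin--Schreier-like torsors on which finite quotients of $L^\times$ and $\cO_D^\times$ act commutingly. Since $\theta$ has finite level, its isotypic component factors through such a quotient, so for $h$ sufficiently large $H_\bullet(X)[\theta]$ agrees, up to a degree shift coming from the homology/cohomology convention, with $H_c^\bullet(X_h, \overline{\QQ}_\ell)[\theta]$. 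This reduces the problem to a cohomological computation on a smooth affine variety over $\overline{\FF}_q$.

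Next I would analyze $X_h$ itself. The regularity hypothesis on $\theta$, namely that $\theta$ and $\theta/\theta^\gamma$ have the same level for nontrivial $\gamma \in \Gal(L/K)$, guarantees that the restriction of $\theta$ to each graded piece of the $U_L$-filtration is nontrivial, and therefore pulls back to a nontrivial Artin--Schreier local system on each stage of the tower presenting $X_h$. Iterated application of the Leray spectral sequence, together with the fact that Artin--Schreier cohomology with such local systems is concentrated in the top degree, then yields concentration of $H_\bullet(X_h)[\theta]$ in a single degree $r$ equal to an appropriate half-dimension of $X_h$.

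Finally I would identify this single nonzero cohomology group with $\rho_\theta$. The strategy is a character computation: evaluate the $D^\times$-character of $H_r(X_h)[\theta]$ on a chosen system of regular elements (the elliptic regular semisimple locus coming from embeddings $L^\times \hookrightarrow D^\times$, together with elements representing the various strata of the pro-$p$ filtration of $\cO_D^\times$), and compare with the explicit character formula that describes $\rho_\theta$ in the $n=2$ case via the classical construction of supercuspidals for $D^\times$ and the explicit LLC for $\GL_2$. Irreducibility of the resulting representation then follows by verifying $\langle \chi, \chi \rangle = 1$ for the virtual character $\chi$ obtained. The main obstacle will be this character computation: it requires a $p$-adic analogue of the Deligne--Lusztig Lefschetz fixed-point formula, and in particular a detailed analysis of the $D^\times$-fixed loci in each $X_h$, in order to match the resulting trace expression with the known character of $\rho_\theta$.
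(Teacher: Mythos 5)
Your first two phases track the paper closely: the reduction from $H_\bullet(X)[\theta]$ to $H_c^\bullet(X_h,\overline\QQ_\ell)[\chi]$ for a finite unipotent quotient is exactly Proposition 5.19 of \cite{B12}, and the concentration in a single degree $i=h-1$ is indeed proved by an iterated Artin--Schreier-type fibration argument (Proposition \ref{p:2.10rev} applied repeatedly in Section \ref{s:pfhomspace}). One small correction there: what drives the vanishing is not that $\theta$ is nontrivial on each graded piece of the $U_L$-filtration, but that its restriction $\psi$ to the \emph{top} piece $U_L^{h-1}/U_L^h$ has trivial Galois stabilizer; every local system appearing in the tower is a pullback of $\Loc_\psi$ along polynomial maps, and the conductor-$q^2$ condition on $\psi$ alone is what kills the intermediate cohomology.

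The genuine gap is in your third phase, which is also where you diverge from the paper. You propose to identify $H_{h-1}(X)[\theta]$ with $\rho_\theta$ by computing the $D^\times$-character on elliptic regular semisimple elements and comparing with the character of the supercuspidal given by LLC/JLC. Carried out literally, this amounts to proving character identities for the local Langlands correspondence from scratch, and you correctly flag the required fixed-point analysis as the ``main obstacle'' without resolving it; as written the identification step is a plan for a much harder theorem, not a proof. The paper avoids this entirely: the fixed-point formula (Lemma 2.13 of \cite{B12}) is applied only to pairs of elements of the \emph{abelian} subgroup $H(\FF_{q^2})$ of the unit group, where the point counts $N(g,\gamma)$ reduce to explicit systems of Artin--Schreier equations (Sections \ref{s:case1}--\ref{s:case2}); a purely algebraic uniqueness statement (Corollary \ref{c:detbyH}, itself a consequence of the character formula of Theorem \ref{t:decomp}) then pins down the irreducible representation from its restriction to $H(\FF_{q^2})$ alone. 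Finally, the match with LLC/JLC is obtained not by a character comparison on regular elements but by exhibiting both $H_{h-1}(X)[\theta]$ and $\rho_\theta$ as inductions of the same character from the same subgroup, using the explicit construction of $\rho_\theta$ in Theorem 2.6 of \cite{BW11}. If you want to salvage your approach, you should either restrict your trace computations to the unit-group elements where they are tractable and supply the uniqueness input, or import the Boyarchenko--Weinstein description of $\rho_\theta$ so that the identification becomes a comparison of inducing data rather than of full characters.
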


In pictorial form, we have
\begin{equation*}
\begin{tikzcd}[column sep=tiny,row sep=small]
{\theta} \arrow[mapsto]{ddd}[left]{\text{$p$-adic Deligne-Lusztig}} & & {\theta} \arrow[mapsto]{d} & & {\mathfrak X} \arrow{d} \\
{} & & {\sigma_\theta} \arrow[mapsto]{d} & & {\cG_K(n)} \arrow{d}{\text{ Local Langlands}} \\
{} & & {\pi_\theta} \arrow[mapsto]{d} & & {\cA_K(n)} \arrow{d}{\text{ Jacquet-Langlands}} \\ 
H_\bullet(X)[\theta] & \cong & {\rho_\theta} & & {\cA'{}_K(n)}
\end{tikzcd}
\end{equation*}
where
\begin{align*}
\mathfrak X &\colonequals \{\text{characters $L^\times \to \overline \QQ_\ell^\times$ with trivial $\Gal(L/K)$-stabilizer}\} \\
\cG_K(n) &\colonequals \{\text{smooth irreducible dimension-$n$ representations of the Weil group $\cW_K$}\} \\
\cA_K(n) &\colonequals \{\text{supercuspidal irreducible representations of $\GL_n(K)$}\} \\
\cA'{}_K(n) &\colonequals \{\text{smooth irreducible representations of $D_{1/n}^\times$}\}
\end{align*}

\subsection{What is Known}

In \cite{B12}, Boyarchenko presents a method for explicitly calculating the representations $H_i(X)[\theta]$ and does so for a special class of characters $\theta$ in the case when $G$ is the multiplicative group of the central division algebra with Hasse invariant $1/n$ over a local field $K$, and $T = L^\times$, where $L$ is the unramified degree-$n$ extension of $K$. The approach is to reduce the computation to a problem involving certain finite unipotent groups and then develop a ``Deligne-Lusztig theory'' for these groups.

Before we continue, we must introduce some terminology. Let $D_{1/n}$ denote the central division algebra with Hasse invariant $1/n$ over $K = \FF_q((\pi))$ for $q$ a $p$-power and let $L = \FF_{q^n}((\pi))$. The \textit{level} of a smooth character $\theta \from L^\times \to \overline \QQ_\ell^\times$ is the smallest integer $h$ such that $\theta$ is trivial on $U_L^h \colonequals 1 + \pi^h \cO_L \subset \cO_L^\times$, where $\cO_L$ is the ring of integers of $L$. The set of characters of $L^\times$ has a natural action by $\Gal(L/K)$. We say that $\theta$ is \textit{primitive} if for any $\gamma \in \Gal(L/K)$, both $\theta$ and $\theta/\theta^\gamma$ have the same level. (Equivalently, $\theta$ is primitive if its restriction to $U_L^{h-1}$ has trivial $\Gal(L/K)$-stabilizer.)

In Section \ref{s:introduction} we recall the unipotent situation established by Boyarchenko in \cite{B12}. We describe a unipotent group scheme $U_h^{n,q}$ over $\FF_p$ together with a subscheme $X_h \subset U_h^{n,q}$ that comes with a left action by $U_L^1/U_L^h$ and a right action by $U_h^{n,q}(\FF_{q^n})$. This unipotent group depends on three parameters that are determined by the set-up in the following way. If $\theta$ is a character of level $h$, then the computation of the eigenspaces $H_i(X)[\theta]$ for $D_{1/n}^\times$ over $K = \FF_q((\pi))$ will reduce to a computation of $H_c^i(X_h, \overline \QQ_\ell)[\chi]$ for $U_h^{n,q}(\FF_{q^n})$, where $\chi$ is the character of $U_L^1/U_L^h$ induced by $\theta$. To be completely clear, the three parameters $n,$ $q,$ and $h$ correspond respectively to the Hasse invariant of the division algebra, the size of the residue field of $K$, and the level of $\theta$.

In \cite{BW11}, Boyarchenko and Weinstein give a complete description of the $U_2^{n,q}(\FF_{q^n})$-representations $H_c^i(X_2, \overline \QQ_\ell)[\chi]$. They prove the following

\begin{displaytheorem}[Boyarchenko and Weinstein]
Given a character $\chi \from U_L^1/U_L^2 \to \overline \QQ_\ell^\times$, there exists a unique $r$ such that $H_c^i(X_2, \overline \QQ_\ell)[\chi]$ vanishes when $i \neq r$ and is an irreducible $U_2^{n,q}(\FF_{q^n})$-representation when $i = r$. Furthermore, every irreducible representation of $U_2^{n,q}(\FF_{q^n})$ occurs with multiplicity one in $\bigoplus_{i \in \ZZ} H_c^i(X, \overline \QQ_\ell)$.
\end{displaytheorem}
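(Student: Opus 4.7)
The plan is to exploit the two-step nilpotent (Heisenberg-like) structure of $U_2^{n,q}(\FF_{q^n})$ at level $h=2$ and realize $X_2$ as a geometric analogue of the Stone--von Neumann construction. First I would analyze the group itself: at level $2$ the group $U_2^{n,q}(\FF_{q^n})$ should be two-step nilpotent with center containing the image of $U_L^1/U_L^2 \cong \FF_{q^n}$, and its irreducible representations should be parameterized up to isomorphism by characters of this center, with every central character realized and the total count of irreducibles matching $|U_L^1/U_L^2|$. This representation-theoretic skeleton is exactly what the theorem claims to see cohomologically.

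Next I would compute the $\chi$-eigenspace $H_c^i(X_2, \overline \QQ_\ell)[\chi]$ through the Lefschetz trace formula. Since $X_2$ carries compatible left $(U_L^1/U_L^2)$- and right $U_2^{n,q}(\FF_{q^n})$-actions, the cohomology breaks up under the left action, and computing Frobenius traces on the $\chi$-isotypic piece reduces to a Gauss-type character sum on $U_2^{n,q}(\FF_{q^n})$ weighted by $\chi$. I expect this sum to be concentrated in a single cohomological degree $r$ determined by the dimension of $X_2$, which accounts for the ``unique $r$'' in the statement.

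Irreducibility at degree $r$ can then be obtained by computing the virtual inner product
\[
\sum_{i,j} (-1)^{i+j} \dim \Hom_{U_2^{n,q}(\FF_{q^n})}\bigl(H_c^i(X_2)[\chi],\, H_c^j(X_2)[\chi]\bigr),
\]
in the Deligne--Lusztig spirit; this inner product should evaluate to $1$, and the vanishing of the other degrees should follow from purity or from a direct dimension bound on $X_2$. The multiplicity-one exhaustion statement then reduces to counting: distinct $\chi$ produce non-isomorphic irreducibles (they are distinguished by their central characters), and the total number of $\chi$ equals the number of irreducibles of $U_2^{n,q}(\FF_{q^n})$, so the map $\chi \mapsto H_c^r(X_2)[\chi]$ is a bijection onto $\Irr(U_2^{n,q}(\FF_{q^n}))$.

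The main obstacle will be identifying $H_c^r(X_2)[\chi]$ on the nose with the specific irreducible representation whose central character restricts to $\chi$. This is a geometric Stone--von Neumann-type matching and requires a concrete description of $X_2$ --- presumably as an affine variety cut out by Artin--Schreier-type equations --- so that the intertwining between the left and right group actions can be read off explicitly and the character sums above evaluated with enough precision to pin down the isomorphism class rather than merely the formal character.
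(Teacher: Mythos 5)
A preliminary remark: the paper does not prove this theorem --- it is quoted from [BW11] --- so there is no in-paper proof to compare against line by line. What the paper does do (Sections 2--5) is carry out the same program for $n=2$ and arbitrary level $h$, and your skeleton matches that program in outline: explicit representation theory of the finite unipotent group via a polarizing subgroup, concentration of the $\chi$-isotypic cohomology in a single degree, and a Deligne--Lusztig-type fixed-point count to pin down the isomorphism class. Your last paragraph correctly identifies where the real work lies.

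There are, however, two genuine gaps. First, the irreducible representations of $U_2^{n,q}(\FF_{q^n})$ are \emph{not} parameterized by characters of the center, and their number is not $|U_L^1/U_L^2|=q^n$. Already for $n=2$ the group has order $q^4$ with commutator subgroup $\ker \Tr_{\FF_{q^2}/\FF_q}$ of order $q$, hence $q^3$ one-dimensional representations together with $q^2-q$ representations of dimension $q$, for a total of $q^3+q^2-q$ irreducibles. Stone--von Neumann uniqueness holds only for central characters $\psi$ with trivial $\Gal(\FF_{q^n}/\FF_q)$-stabilizer (the ``primitive'' ones, i.e.\ those nontrivial on $\ker\Tr$); for non-primitive $\psi$ the $\psi$-part of the cohomology consists of several irreducibles sitting in higher cohomological degrees, which is why the degree $r$ in the statement depends on $\chi$ rather than being ``determined by the dimension of $X_2$.'' Your counting argument for the exhaustion statement therefore collapses, and treating the non-primitive central characters is a substantial portion of the actual proof in [BW11]. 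Second, the vanishing of $H_c^i(X_2,\overline\QQ_\ell)[\chi]$ for $i\neq r$ does not follow from purity or a dimension bound: $X_2$ is an affine hypersurface of dimension $n-1$, so $H_c^i$ can a priori be nonzero throughout the range $n-1\le i\le 2(n-1)$. The concentration is obtained, both in [BW11] and in Section \ref{s:homspace} of this paper, by an explicit inductive fibration argument with Artin--Schreier sheaves (Proposition 2.10 of [B12] and its variant here), and it must be established \emph{before} the fixed-point formula is invoked, since that formula (Lemma 2.13 of [B12]) takes the concentration as a hypothesis; your virtual inner product $\sum_{i,j}(-1)^{i+j}\dim\Hom\bigl(H_c^i(X_2)[\chi],H_c^j(X_2)[\chi]\bigr)=1$ would not by itself exclude cancelling contributions from different degrees.
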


It turns out that the scheme $X_2$ is very closely related to certain open affinoid of the Lubin-Tate tower (see \cite{BW11}), and in \textit{op.\ cit.}\ Boyarchenko and Weinstein use the above theorem to give a purely local proof of the local Langlands and Jacquet-Langlands correspondences for a broad class of supercuspidals (those whose Weil parameters are induced from a primitive character of an unramified degree-$n$ extension). In \cite{BW13}, Boyarchenko and Weinstein use this result to give a geometric realization of the local Langlands and Jacquet-Langlands correspondences in this class of supercuspidals.

The analogue of the above theorem of Boyarchenko and Weinstein for $U_h^{n,q}(\FF_{q^n})$ when $h > 2$, however, was almost completely unknown. Indeed, the only higher level situation known was the case $h = 3$ and $n = 2$, which Boyarchenko computed in \cite{B12} (see Theorem 5.20 of \cite{B12}). The following conjecture appears in \cite{B12}.


\begin{conjecture}[Boyarchenko]\label{c:5.18}
Given a character $\chi \from U_L^1/U_L^h \to \overline \QQ_\ell^\times$, there exists $r \geq 0$ such that $H_c^i(X_h, \overline \QQ_\ell)[\chi]$ vanishes when $i \neq r$ and is an irreducible $U_h^{n,q}(\FF_{q^n})$-representation when $i = r$.
\end{conjecture}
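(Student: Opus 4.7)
The plan is to establish Conjecture \ref{c:5.18} for $n=2$ by induction on the level $h$, using the Boyarchenko-Weinstein theorem as the base case $h=2$ and Boyarchenko's $h=3$ computation as confirmation. For the inductive step, the strategy is to exploit a descending central filtration of $\UnipF$ and to analyze two cases, according to whether $\chi$ factors through a lower-level quotient (geometric descent to $X_{h-1}$) or $\chi$ is nontrivial on the deepest piece of the filtration (a Heisenberg-type realization of the expected irreducible module).

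First, I would extend the explicit coordinate presentation of $U_h^{2,q}$ and $X_h$ from \cite{B12} to all $h$, compatible with the filtrations coming from powers of the maximal ideals on the $L$ and $D_{1/2}$ sides. This yields a central subgroup scheme $Z_h \subset U_h^{2,q}$ invariant under both the left $U_L^1/U_L^h$-action and the right $\UnipF$-action. If $\chi$ is trivial on the image of $U_L^{h-1}$ in $U_L^1/U_L^h$, then $\chi$ descends to a character of $U_L^1/U_L^{h-1}$, and the map $X_h \to X_{h-1}$ obtained by quotienting out by the appropriate central subgroup should have geometrically affine fibers. Proper base change together with affine acyclicity would then identify $H_c^\bullet(X_h, \overline{\QQ}_\ell)[\chi]$, up to Tate twist and cohomological shift, with $H_c^\bullet(X_{h-1}, \overline{\QQ}_\ell)[\chi]$, to which the inductive hypothesis applies.

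In the complementary primitive case, $\chi$ restricts nontrivially to $Z_h(\F)$, and the commutator pairing on $\UnipF/Z_h(\F)$ should exhibit the group as a Heisenberg-type central extension. A Stone-von Neumann-type argument then produces a unique irreducible representation with the prescribed central character $\chi|_{Z_h(\F)}$, which I would realize geometrically via an Artin-Schreier local system $\mathcal L_{\chi|_{Z_h}}$ pulled back from the quotient $X_h/Z_h$. The cohomology $H_c^\bullet(X_h, \overline{\QQ}_\ell)[\chi]$ can then be computed as the cohomology of $\mathcal L_{\chi|_{Z_h}}$ on an affine base, forcing concentration in a single degree.

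The main obstacle will be the primitive case, specifically: verifying nondegeneracy of the commutator pairing on each successive subquotient so the Heisenberg machinery applies uniformly; establishing Stone-von Neumann uniqueness in this $\ell$-adic equivariant setting; and executing the Lefschetz trace computation that confirms the resulting $\UnipF$-module is both irreducible and concentrated in the expected degree. Irreducibility should in principle follow from the Mackey criterion applied to an induced representation from a maximal isotropic subgroup, but converting this into a clean cohomological statement on $X_h$ is where I expect the bulk of the technical work to lie.
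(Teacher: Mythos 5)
Your proposal breaks down at the Heisenberg/Stone--von Neumann step, and this is not a technical obstacle to be overcome but a structural failure: for $h \geq 3$ the group $\UnipF$ is \emph{not} of Heisenberg type with respect to its center, and the commutator pairing on $\UnipF/Z(\UnipF)$ is genuinely degenerate. Concretely, the center of $\UnipF$ is $G_{h-2} = \{1 + \sum a_{2i}\tau^{2i} : a_{2i} \in \FF_q \text{ for } i \leq h-2\}$, of order $q^h$, so a fixed $\psi$ on $H_{2(h-1)}(\F)$ admits $q^{h-2}$ extensions to central characters; but the set $\cG_\psi$ of irreducibles restricting to $\psi$ has cardinality $|\cA_\psi| = q^{2(h-2)}$. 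Thus each central character supports $q^{h-2} > 1$ pairwise non-isomorphic irreducible representations, and there is no uniqueness statement available. This is precisely why the representation $H_c^{h-1}(X_h)[\chi]$ must be pinned down by the full character $\chi$ of $U_L^1/U_L^h$ and not merely by $\chi|_{Z}$; identifying \emph{which} of the $q^{h-2}$ candidates occurs is where the real work lies, and it is done in the paper by an explicit character formula for $\rho_\chi|_{H(\F)}$ (Theorem \ref{t:decomp}) combined with a fixed-point count of $\{x \in X_h : g * \Fr_{q^2}(x) = x\cdot\gamma\}$ (Theorem \ref{t:interdim}). Your proposed route of inducing from a maximal isotropic subgroup and invoking Mackey does survive in modified form --- the paper induces $\chi^\sharp$ from the subgroup $H'(\F)$, which centralizes $\widetilde\psi$ but is far from isotropic for any symplectic structure --- but the uniqueness and multiplicity-one statements require the bespoke analysis of Lemmas \ref{l:centpsi} and \ref{l:normH'}, not general Heisenberg machinery.

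Two further gaps. First, your case division is incomplete: between ``$\chi$ trivial on $U_L^{h-1}$'' and ``$\chi$ primitive'' sits the case where $\psi = \chi|_{U_L^{h-1}/U_L^h}$ is nontrivial but Galois-stable (conductor $q$ rather than $q^2$); neither your descent argument nor your primitive-case argument applies there, and in fact the paper proves the conjecture only under the conductor-$q^2$ hypothesis, so this case is genuinely open. Second, the concentration of $H_c^\bullet(X_h)[\chi]$ in degree $h-1$ does not follow from a soft ``affine base'' argument: in the paper it requires writing down the defining polynomials of $X_h$ explicitly (Theorem \ref{t:Xhpolys}) and running a delicate induction with Artin--Schreier local systems (Proposition \ref{p:2.10rev}, iterated $\lfloor(h-1)/2\rfloor$ times), with the answer itself depending on the parity of $h$ --- the induction terminates on a $0$-dimensional scheme for odd $h$ but a $1$-dimensional one for even $h$. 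Your proposal would need to supply all of this before the irreducibility question can even be posed.
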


Assuming Conjecture \ref{c:5.18} holds, Boyarchenko gives a complete description of the $D_{1/n}^\times$-representations $H_i(X)[\theta]$ based on the $U_h^{n,q}(\FF_{q^n})$-representations $H_c^i(X_h)[\chi]$ (see Proposition 5.19 of \cite{B12}). Thus, the problem of determining the representations $H_i(X)[\theta]$ arising from Lusztig's $p$-adic analogue of Deligne-Lusztig varieties, depends only on the Deligne-Lusztig theory of the finite unipotent group $U_h^{n,q}(\FF_{q^n})$.

\begin{remark}
The varieties constructed in \cite{L79} are not the affine Deligne-Lusztig varieties. In \cite{I13}, Ivanov shows that there are no nontrivial morphisms from the cohomology of the affine Deligne-Lusztig to $D_{1/2}^\times$-representations of level $> 1$. This is not true for the varieties in \cite{L79} associated to division algebras (see Theorem \ref{t:introLLC}).
\end{remark}

\subsection{Outline of this Paper}

In this paper, we prove these two conjectures when $n = 2$ and $\chi \from U_L^1/U_L^h \to \overline \QQ_\ell^\times$ has the property that its restriction $\psi \colonequals \chi|_{U_L^{h-1}/U_L^h}$ has trivial $\Gal(\F/\FF_q)$-stabilizer. (In this situation, we say that $\psi$ has conductor $q^2$.) Using Proposition 5.19 of \cite{B12}, we can then describe the representations $H_i(X)[\theta]$ for primitive $\theta$ of arbitrary level. 

Let $\cA_\psi$ denote the set of such $\chi$ and let $\cG_\psi$ denote the set of irreducible representations of $\UnipF$ restricting to a multiple of $\psi$. In Section \ref{s:repthy}, we prove

\begin{mytheorem}\label{t:introbij}
There exists a bijection
\begin{align*}
\cA_\psi \longleftrightarrow \cG_\psi, \qquad \chi \mapsto \rho_\chi.
\end{align*}
\end{mytheorem}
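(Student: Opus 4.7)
The plan is to associate to each $\chi \in \cA_\psi$ an irreducible representation of $\UnipF$ via induction from an appropriate subgroup, in the style of the Stone--von Neumann construction for Heisenberg groups, and then to show this assignment is a bijection onto $\cG_\psi$. First I would analyze the group-theoretic structure of $\UnipF$ relative to the inclusion $U_L^1/U_L^h \hookrightarrow \UnipF$. Since $U_L^1/U_L^h$ should sit as an abelian normal subgroup, conjugation induces a $\UnipF$-action on the character group $\widehat{U_L^1/U_L^h}$; I expect the condition that $\psi$ has trivial $\Gal(\F/\FF_q)$-stabilizer to translate, via an explicit matrix computation, into non-degeneracy of the commutator pairing $(x,y) \mapsto \chi([x,y])$ on the quotient of $\UnipF$ by the stabilizer of $\chi$ times $\ker \chi$. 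This will be the precise technical statement that $\chi$ is in ``general position.''

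Given such $\chi$, let $H_\chi \subset \UnipF$ denote the stabilizer of $\chi$ under the conjugation action. I would show that $\chi$ extends to a one-dimensional character $\tilde\chi$ of $H_\chi$, possibly after fixing a polarization of the symplectic quotient of $\UnipF$ modulo $\ker\chi$, and define
\[
\rho_\chi \colonequals \Ind_{H_\chi}^{\UnipF} \tilde\chi.
\]
Irreducibility then follows from Mackey's criterion: for any $g \in \UnipF \setminus H_\chi$, the characters $\tilde\chi$ and $g \cdot \tilde\chi$ must disagree on $H_\chi \cap gH_\chi g^{-1}$, and this is forced by the non-degeneracy established in the first step combined with the fact that $g$ moves $\chi$ to a distinct character of $U_L^1/U_L^h$. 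As a byproduct, $\rho_\chi|_{U_L^1/U_L^h}$ contains $\chi$, so $\rho_\chi \in \cG_\psi$.

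For bijectivity, injectivity is a consequence of Frobenius reciprocity: distinct $\chi, \chi' \in \cA_\psi$ lie in distinct $\UnipF$-conjugation orbits on $\widehat{U_L^1/U_L^h}$ (again by the conductor hypothesis), so $\rho_\chi \not\cong \rho_{\chi'}$. Surjectivity follows by showing that every $\rho \in \cG_\psi$ contains some $\chi \in \cA_\psi$ in its restriction to $U_L^1/U_L^h$; by reciprocity, $\rho$ is then a quotient (hence, by irreducibility, equal to) $\rho_\chi$. A dimension count comparing $|\cA_\psi|$ with the number of irreducibles in $\cG_\psi$ weighted by their dimensions would make the matching precise and also serve as a sanity check.

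The main obstacle will be the explicit construction of $H_\chi$ and the extension $\tilde\chi$, together with the verification of non-degeneracy of the commutator pairing, in positive characteristic. The Kirillov orbit method is not automatically available for unipotent groups over $\FF_p$, so the computation must be carried out by hand using the explicit matrix realization of $U_h^{2,q}$ for $n = 2$; this is precisely where the conductor-$q^2$ hypothesis on $\psi$ enters in an essential way, and the Mackey argument would break down without it.
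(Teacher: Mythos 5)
Your plan rests on the assumption that $U_L^1/U_L^h$ sits inside $\UnipF$ as an abelian \emph{normal} subgroup, and everything downstream (the conjugation action on $\widehat{U_L^1/U_L^h}$, the stabilizer $H_\chi$, the commutator pairing, the Mackey computation over $H_\chi \cap gH_\chi g^{-1}$) is built on that. This assumption is false. The image of $U_L^1/U_L^h$ is $H(\F) = \{1+\sum a_{2i}\tau^{2i}\}$, and conjugating $1+a_2\tau^2$ by $1+b\tau$ produces a $\tau^3$-coefficient equal to $b(a_2^q-a_2)$, which is nonzero whenever $b \neq 0$ and $a_2 \notin \FF_q$; so $H(\F)$ is not normal and there is no induced action of $\UnipF$ on its character group. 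Consequently there is no ``stabilizer of $\chi$'' in the sense you need, and your injectivity argument (``distinct $\chi,\chi'$ lie in distinct conjugation orbits'') has no meaning as stated. In fact injectivity is genuinely delicate here: the restriction $\rho_\chi|_{H(\F)}$ contains not only $\chi$ but also every $\theta$ agreeing with $\chi$ sufficiently deep in the filtration $G_{h-2}\subset\cdots\subset G_0=H$, and many of these $\theta$ also lie in $\cA_\psi$; the representations $\rho_\chi$ and $\rho_\theta$ are distinguished only by multiplicities ($1$ or $q$ for the ``own'' character versus $q+1$ for the others), which is exactly the content of the character formula the paper proves separately (Theorem \ref{t:decomp} and Corollary \ref{c:mult}).

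The correct starting point is the genuinely central subgroup $H_{2(h-1)}(\F) \cong U_L^{h-1}/U_L^h$ on which $\psi$ lives. One first shows (Lemma \ref{l:psitilde}) that every $\rho\in\cG_\psi$ contains the specific extension $\widetilde\psi$ of $\psi$ to the abelian (but still non-normal) subgroup $H_0'(\F)$ of high odd-degree terms; then one computes by hand (Lemma \ref{l:centpsi}) that the set of elements centralizing $\widetilde\psi$ in the sense required by Mackey's criterion is $H'(\F)$, and that any extension $\nu$ of $\widetilde\psi$ to $H'(\F)$ induces irreducibly. Your instinct that the conductor-$q^2$ hypothesis drives a non-degeneracy statement is right, but it enters through the pairing $(a,b)\mapsto\psi(ab^q-a^qb)$ on coefficients, not through a symplectic form on $\UnipF/\ker\chi$. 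Finally, your proposal does not anticipate the parity dichotomy: when $h$ is even the extension of $\widetilde\psi$ to $H'(\F)$ compatible with a given $\chi$ is not unique (there are $q$ choices), and one must prove that all of them induce to the same irreducible (Lemma \ref{l:compateven}) before the map $\chi\mapsto\rho_\chi$ is even well defined.
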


Using an explicit description of this bijection, we prove a certain character formula in Section \ref{s:morerepthy} that plays a crucial role in Section \ref{s:cohomreps}.

In Section \ref{s:homspace}, we prove that there are no nontrivial morphisms from $\rho_\chi$ to $H_c^i(X_h, \overline \QQ_\ell)$ if $i \neq h-1$. This allows us to apply a variant of a Deligne-Lusztig fixed point formula (see Lemma 2.13 of \cite{B12}) in order to compute subspaces of intertwines in $H_c^i(X_h, \overline \QQ_\ell)$. These computations, done in Section \ref{s:cohomreps}, allow us to prove

\begin{mytheorem}\label{t:introdesc}
The cohomology groups $H_c^i(X_h, \overline \QQ_\ell)[\chi]$ vanish when $i \neq h-1$ and
\begin{equation*}
H_c^{h-1}(X_h, \overline \QQ_\ell)[\chi] \cong \rho_\chi.
\end{equation*}
\end{mytheorem}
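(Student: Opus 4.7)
The plan is to decompose the proof into a vanishing statement for cohomological degrees $i \neq h-1$ and a description of the single surviving degree, using the representation-theoretic results from Sections \ref{s:repthy} and \ref{s:morerepthy} as the algebraic input and the geometric vanishing from Section \ref{s:homspace} as the bridge to the fixed-point formula.

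First I would analyze which irreducible $\UnipF$-representations can appear in $H_c^i(X_h, \overline\QQ_\ell)[\chi]$. Since $U_L^1/U_L^h$ acts on this space through the central character $\chi$, any irreducible constituent, when restricted to the subgroup $U_L^{h-1}/U_L^h$, must be a multiple of $\psi := \chi|_{U_L^{h-1}/U_L^h}$. By definition such a constituent lies in $\cG_\psi$, and by Theorem \ref{t:introbij} it is isomorphic to $\rho_{\chi'}$ for some $\chi' \in \cA_\psi$. Thus the problem reduces to computing the multiplicities $\dim\Hom_{\UnipF}(\rho_{\chi'}, H_c^i(X_h)[\chi])$ as $\chi'$ ranges over $\cA_\psi$.

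Next, to handle the vanishing for $i \neq h-1$, I would invoke the main result of Section \ref{s:homspace}: there are no nontrivial morphisms from $\rho_{\chi'}$ to $H_c^i(X_h, \overline\QQ_\ell)$ whenever $i \neq h-1$ (the argument there, applied to our $\chi$, applies equally to any $\chi' \in \cA_\psi$ since all such characters share the same restriction $\psi$). Combining this with the previous paragraph, every isotypic component of $H_c^i(X_h)[\chi]$ must vanish for $i \neq h-1$, which yields the first half of the theorem.

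For the surviving degree $i = h-1$, the vanishing just obtained means that each multiplicity $\dim\Hom_{\UnipF}(\rho_{\chi'}, H_c^{h-1}(X_h)[\chi])$ equals, up to the sign $(-1)^{h-1}$, the Euler characteristic
\begin{equation*}
\sum_i (-1)^i \dim\Hom_{\UnipF}(\rho_{\chi'}, H_c^i(X_h, \overline\QQ_\ell)[\chi]).
\end{equation*}
I would compute this Euler characteristic via the Deligne-Lusztig-style fixed-point formula of Lemma 2.13 of \cite{B12}, writing it as an averaged sum of traces $\tr(g; H_c^\bullet(X_h^g))$ weighted against the character of $\rho_{\chi'}$ and the character $\chi$. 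Substituting the explicit character formula for $\rho_{\chi'}$ established in Section \ref{s:morerepthy} and analyzing the $(U_L^1/U_L^h)$-orbit structure on the relevant fixed loci, the sum should collapse: it produces $(-1)^{h-1}$ when $\chi' = \chi$ and $0$ otherwise. This forces $H_c^{h-1}(X_h)[\chi] \cong \rho_\chi$.

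The main obstacle will be the matching of the character formula of Section \ref{s:morerepthy} with the orbit-sum produced by the fixed-point formula: each side involves intricate sums over $\UnipF$, and one must identify the nonvanishing contributions in a Gauss-sum-like cancellation so that only the diagonal term $\chi' = \chi$ survives. The preceding two paragraphs are essentially formal once the multiplicity computation is carried out, so the whole weight of the theorem rests on making this trace-computation rigorous.
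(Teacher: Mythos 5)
Your overall architecture matches the paper's: vanishing for $i \neq h-1$ via the $\Hom(V_\psi,-)$ computation of Section \ref{s:homspace}, then an Euler-characteristic/fixed-point computation via Lemma 2.13 of \cite{B12} in the surviving degree, compared against the character formula of Section \ref{s:morerepthy}. The reduction of constituents to $\cG_\psi$ and the vanishing argument are fine.

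The gap is in the final step. You propose to compute $\dim\Hom_{\UnipF}(\rho_{\chi'}, H_c^{h-1}(X_h)[\chi])$ by substituting ``the character of $\rho_{\chi'}$'' into the fixed-point formula, but the character formula of Theorem \ref{t:decomp} only describes $\rho_{\chi'}$ as a virtual representation of the abelian subgroup $H(\F)$, not of $\UnipF$; correspondingly, the fixed-point counts $N(g,\gamma)$ are only tractable (and are only carried out in the paper) for $\gamma \in H(\F)$. What your substitution actually computes is the inner product $\langle \rho_{\chi'}|_H,\, H_c^{h-1}(X_h)[\chi]|_H \rangle_H$, and this does \emph{not} collapse to $\delta_{\chi,\chi'}$: by Corollary \ref{c:mult}, distinct elements of $\cG_\psi$ have heavily overlapping restrictions to $H$ (for instance $\rho_\chi$ contains every $\theta \in \cA(\chi)$ with multiplicity $q+1$, and such $\theta$ also occur in $\rho_{\chi'}$ for suitable $\chi' \neq \chi$). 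So the claimed diagonal collapse fails as stated, and the multiplicity of $\rho_{\chi'}$ cannot be read off this way.

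The repair is what the paper does: use the fixed-point formula to compute the full $H$-isotypic decomposition of $\sigma_\chi := H_c^{h-1}(X_h)[\chi]$ (Theorem \ref{t:interdim}), observe that the resulting dimension is $q^{h-1}$, which by Theorem \ref{t:bij} forces $\sigma_\chi$ to be irreducible, and then invoke Corollary \ref{c:detbyH} --- elements of $\cG_\psi$ are uniquely determined by their restrictions to $H(\F)$ --- to conclude $\sigma_\chi \cong \rho_\chi$ from the equality of restrictions. Your outline omits both the irreducibility-from-dimension step and the rigidity statement, and without them the passage from $H$-restriction data to an isomorphism of $\UnipF$-representations does not go through.
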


In Section \ref{s:examples}, we show how to carry out the arguments of Section \ref{s:morerepthy}, \ref{s:homspace}, and \ref{s:cohomreps} in the special case $h = 3$. This allows us to illustrate the structure and flavor of the proofs in a simpler setting. It also gives a different proof of Theorem 5.20 of \cite{B12}.

It is worth noting here that Theorem \ref{t:introdesc} is stronger than Conjecture 2; it requires considerably more work to prove $H_c^{h-1}(X_h, \overline \QQ_\ell) \cong \rho_\chi$ than to prove its irreducibility (compare the proofs of Theorems \ref{t:irrep} and \ref{t:interdim}). Because we have an explicit description of the $\UnipF$-representations $H_c^i(X_h, \overline \QQ_\ell)[\chi]$, we can use Proposition 5.19 of \cite{B12} to explicitly describe the $D_{1/2}^\times$-representations $H_i(X, \overline \QQ_\ell)[\theta]$. The final theorem in this paper, whose rough formulation was stated in the main introduction (see Theorem \ref{t:roughLLC}), compares the correspondence
\begin{equation*}
\theta \mapsto H_i(X, \overline \QQ_\ell)[\theta]
\end{equation*}
to known correspondences between characters of $L^\times$ and representations of division algebras.

We can now formulate Theorem \ref{t:roughLLC} more precisely.

\begin{mytheorem}\label{t:introLLC}
Let $\theta \from L^\times \to \overline \QQ_\ell^\times$ be a primitive character of level $h$ and let $\rho_\theta$ be the $D_{1/2}^\times$ representation corresponding to $\theta$ under the local Langlands and Jacquet-Langlands correspondences. Then $H_i(X, \overline \QQ_\ell)[\theta] = 0$ for $i \neq h-1$, and
\begin{equation*}
H_{h-1}(X, \overline \QQ_\ell)[\theta] \cong \rho_\theta.
\end{equation*}
\end{mytheorem}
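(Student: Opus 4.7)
The plan is to chain Theorem \ref{t:introdesc} with Proposition 5.19 of \cite{B12} and then compare the resulting explicit representation of $D_{1/2}^\times$ with the classical description of $\rho_\theta$ coming from Howe/Bushnell-Henniart. Concretely, given primitive $\theta\from L^\times\to\overline\QQ_\ell^\times$ of level $h$, form $\chi\colonequals\theta|_{U_L^1}$, viewed as a character of $U_L^1/U_L^h$, and set $\psi\colonequals\chi|_{U_L^{h-1}/U_L^h}$; by definition of primitivity, $\psi$ has trivial $\Gal(\F/\FF_q)$-stabilizer, so $\chi\in\cA_\psi$. Proposition 5.19 of \cite{B12} then expresses each $H_i(X,\overline\QQ_\ell)[\theta]$ as a compact induction of the shape
\begin{equation*}
H_i(X,\overline\QQ_\ell)[\theta]\;\cong\;\Ind_{H}^{D_{1/2}^\times}\!\bigl(\widetilde\theta\otimes H_c^i(X_h,\overline\QQ_\ell)[\chi]\bigr),
\end{equation*}
where $H\subset D_{1/2}^\times$ is an explicit open subgroup containing $L^\times$ together with the pro-$p$ radical of $\cO_D^\times$, and $\widetilde\theta$ is a canonical extension of $\theta$ to $H$. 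Feeding Theorem \ref{t:introdesc} into this formula gives at once the vanishing $H_i(X,\overline\QQ_\ell)[\theta]=0$ for $i\neq h-1$ and realizes $H_{h-1}(X,\overline\QQ_\ell)[\theta]$ as an explicit compactly induced irreducible $D_{1/2}^\times$-representation $\sigma$ built from $\theta$ and $\rho_\chi$.

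What remains is to identify $\sigma$ with $\rho_\theta$. For $n=2$, the primitive supercuspidal of $\GL_2(K)$ attached to $\theta$ is, by Howe's classical construction (as presented, e.g., in Bushnell-Henniart), compactly induced from a specific character of $L^\times\cdot\GL_2(\cO_K)$, modified when $h$ is even by a Heisenberg representation built out of $\psi$; under JLC this transports to a compact induction on $D_{1/2}^\times$ from $L^\times\cdot U_D^{\lceil h/2\rceil}$ of the corresponding inducing datum. Using the explicit construction of $\rho_\chi$ supplied by Theorem \ref{t:introbij} together with the character formula of Section \ref{s:morerepthy}, I would verify that the datum $\widetilde\theta\otimes\rho_\chi$, restricted to $L^\times\cdot U_D^{\lceil h/2\rceil}$, coincides with Howe's inducing datum. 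Irreducibility of both $\sigma$ and $\rho_\theta$ together with the uniqueness of LLC$+$JLC then forces $\sigma\cong\rho_\theta$.

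The main obstacle is precisely this last matching. The parity of $h$ splits the argument into two cases: when $h$ is odd, $\rho_\chi$ is essentially one-dimensional and the LLC$+$JLC inducing datum is a character; when $h$ is even, both sides acquire a higher-dimensional Heisenberg factor, and one must check that the unipotent Heisenberg representation built into $\rho_\chi$ in Section \ref{s:repthy} agrees, after compact induction to $D_{1/2}^\times$, with the Heisenberg factor appearing in Howe's recipe. The rest of the bookkeeping — namely $\widetilde\theta|_{L^\times}=\theta$ and the compatibility of $\widetilde\theta|_{U_D^{\lceil h/2\rceil}}$ with Howe's character on the pro-$p$ part — reduces to explicit character identities of the kind already proved in Sections \ref{s:repthy}-\ref{s:morerepthy}, but the parity split and the extension of $\theta$ from $L^\times$ to $H$ are the points most prone to sign and normalization errors.
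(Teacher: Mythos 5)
Your plan is correct and is essentially the proof given in Section \ref{s:divalg}: there, too, Theorem \ref{t:cohomdesc} is fed into Proposition 5.19 of \cite{B12} to produce $\eta_\theta = \Ind_{\pi^\ZZ\cdot\OH_D^\times}^{D^\times}(\eta_\theta')$, which is then matched, with the same parity split and Heisenberg factor for even $h$, against an explicit realization of $\rho_\theta$ --- the paper uses Theorem 2.6 of \cite{BW11} (inducing from $L^\times\cdot U_D^{h}$, resp.\ $L^\times\cdot U_D^{h-1}$, in the $\Pi$-adic filtration) rather than Howe's recipe directly, but this is the same inducing datum. The crux you leave open ("verify that the data coincide") is supplied in the paper by the observation that the surjection $U_D^1\twoheadrightarrow\UnipF$ carries $(L^\times\cdot U_D^h)\cap U_D^1$ onto $H'(\F)$ and $\widetilde\theta$ onto $\chi^\sharp$ (with the analogous statement for $H''(\F)$ and the $X_2$-Heisenberg representation when $h$ is even), after which induction in stages and a trace check on a generator $\zeta$ of $\F^\times$ complete the identification.
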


\subsection{Acknowledgements}

I am deeply grateful to Mitya Boyarchenko for introducing me to this area of research, and also for his support and advice. His work on Deligne-Lusztig constructions for $p$-adic groups inspired most of the ideas of this paper. I am also extremely thankful for his careful comments, corrections, and suggestions on early drafts of this paper.

\subsection{Notation and Set-Up}\label{s:notation}

Let $K = \FF_q((\pi))$ and let $L$ be a degree-2 unramified extension of $K$. We will work with the algebraic group $\Unip$, the higher unipotent group described in \cite{B12} and \cite{BW11}. This group has a natural filtration
\begin{equation*}
\{1\} \subset H_{2(h-1)} \subset H_{2(h-1) - 1} \subset \cdots \subset H_2 \subset H_1 = \Unip,
\end{equation*}
where $H_k \colonequals \{1 + \sum a_i \tau^i : i \geq k\}$. We will also make use of the subgroup
\begin{equation*}
H \colonequals \{1 + \sum a_i \tau^i : \text{$i$ is even}\}.
\end{equation*}

We will work with a subscheme $X_h \subset \Unip$ that is defined in \cite{B12}. We restate this here. For an $\FF_p$-algebra $A$ and $a_1, \ldots, a_{2(h-1)} \in A$, we will associate a matrix
\begin{equation*}
\iota_h(1 + \sum a_i \tau^i) \colonequals \left(\begin{matrix}
1 + a_2 \pi + a_4 \pi^2 + \cdots & a_1 + a_3 \pi + a_5 \pi^2 + \cdots \\
a_1^q \pi + a_3^q \pi^2 + \cdots & 1 + a_2^q \pi + a_4^q \pi^2 + \cdots
\end{matrix}\right),
\end{equation*}
which determines a map $\iota_h$ from $\Unip(A)$ to the set of matrices over $A[[\pi]]/(\pi^h)$. The $p$-adic Deligne-Lusztig construction $X$ described in \cite{L79} can be identified with a certain set $\widetilde X$ described in \cite{B12}, which has an ind-scheme structure given by
\begin{equation*}
\widetilde X = \bigsqcup_{m \in \ZZ} \varprojlim_h \widetilde X_h^{(0)}.
\end{equation*}
By construction (see \textit{op.\ cit.}\ for details), $\iota_h(1 + \sum a_i \tau^i)$ is in the $A$-points $\widetilde X_h^{(0)}(A)$ of $\widetilde X_h^{(0)}$ if and only if its determinant is fixed by $\Fr_q$. We define $X_h \subset \Unip$ to be $\iota_h^{-1}(\widetilde X_h^{(0)})$.

The map $\iota_h$ has the following property, which we will refer to as Property $\ddagger$. If $A$ is an $\F$-algebra, then $\iota_h(xy) = \iota_h(x) \iota_h(y)$ for all $x \in \Unip(A)$ and all $y \in \UnipF$. Moreover, for $y \in \UnipF$, we have $\det(y) \in \FF_q[\pi]/(\pi^h)$. It therefore follows that $X_h$ is stable under right-multiplication by $\UnipF$. We denote by $x \cdot g$ the action of $g \in \UnipF$ on $x \in X_h$.

We now describe a left action of $H(\F)$ on $X_h$. We can identify $H(\F)$ with the set $\iota_h(H(\F))$. Note that by Property $\ddagger$, the map $\iota_h$ actually preserves the group structure of $H(\F)$. Since $\iota_h$ is injective, then we in fact have a group isomorphism $H(\F) \cong \iota_h(H(\F))$. Explicitly, this isomorphism is given by
\begin{equation*}
1 + \sum a_{2i} \tau^{2i} \mapsto \left(\begin{matrix} 1 + \sum a_{2i} \pi^i & 0 \\ 0 & 1 + \sum a_{2i}^q \pi^i \end{matrix}\right).
\end{equation*}
We already observed that $\det \iota_h(H(\F)) \subset \FF_q[\pi]/(\pi^h)$. Thus for any $\F$-algebra $A$, the natural left-multiplication action of $\iota_h(H(\F))$ on the set of matrices over $A[[\pi]]/(\pi^h)$ stabilizes $X_h(A)$. This defines a left action of $H(\F)$ on $X_h$.\footnote{Warning: This is not the same as the left-multiplication action of $H(\F) \subset H(A)$ on $\Unip(A)$.} We denote by $g * x$ the action of $g \in H(\F)$ on $x \in X_h$.

An observation which will be frequently used throughout this paper is that we have canonical isomorphisms
\begin{align*}
U_L^1/U_L^h &\overset{\sim}{\to} H(\F), && 1 + \sum_{i=1}^{h-1} a_i \pi^i \mapsto 1 + \sum_{i=1}^{h-1} a_i \tau^{2i} \\
\F \overset{\sim}{\to} U_L^{h-1}/U_L^h &\overset{\sim}{\to} H_{2(h-1)}(\F), && a \mapsto 1 + a \pi^{h-1} \mapsto 1 + a \tau^{2(h-1)}.
\end{align*}
Thus the left action of $H(\F)$ can be interpreted as a left action of $U_L^1/U_L^h$. Because the study of $\UnipF$ and the cohomology groups $H_c^i(X_h, \overline \QQ_\ell)$ arose because of the interest in computing the representations arising from Deligne-Lusztig constructions for division algebras, we will often refer to the left $H(\F)$-action as the left $(U_L^1/U_L^h)$-action.

\section{The Representation Theory of \texorpdfstring{$\UnipF$}{U_h^{2q}(\text{F}_{q^2})}}\label{s:repthy}

In this section, we will describe a class of irreducible representations of $\UnipF$ that are in bijection with certain characters $\chi \from U_L^1/U_L^h \to \overline \QQ_\ell^\times$. Recall that we have canonical isomorphisms $U_L^1/U_L^h \cong H(\F)$ and $\F \cong U_L^{h-1}/U_L^h \cong H_{2(h-1)}(\F)$.

The additive characters of $\FF_{q^n}$ have a natural action by $\Gal(\FF_{q^n}/\FF_q)$. We say that a character $\psi \from \FF_{q^n} \to \overline \QQ_\ell^\times$ has \textit{conductor} $m$ if its stabilizer in $\Gal(\FF_{q^n}/\FF_q)$ is $\Gal(\FF_{q^n}/\FF_{q^m})$. In this paper we will only work with the case when $n = 2$ and only work with characters $\psi \from \F \to \overline \QQ_\ell^\times$ that have conductor $2$. In this case, this just means that there exists some $x \in \F$ such that $\psi(x^q) \neq \psi(x)$.

Let $\cA_\psi$ denote the set of all characters $\chi \from U_L^1/U_L^h \to \overline \QQ_\ell^\times$ whose restriction to $U_L^{h-1}/U_L^h$ is equal to $\psi$. Let $\cG_\psi$ denote the set of irreducible representations of $\UnipF$ wherein $H_{2(h-1)}(\F)$ acts via $\psi$. In this section, we will prove the following theorem (see Propositions \ref{p:bijodd} and \ref{p:bijeven}):

\begin{theorem}\label{t:bij}
If $\psi$ has conductor $q^2$, then there exists a bijection between $\cA_\psi$ and $\cG_\psi$. Furthermore, every rep of $\cG_\psi$ has dimension $q^{h-1}$.
\end{theorem}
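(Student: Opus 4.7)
The plan is to construct, for each character $\chi \in \cA_\psi$, an irreducible representation $\rho_\chi := \Ind_M^{\UnipF} \tilde\chi$ by inducing from a suitable subgroup $M$ with $H(\F) \subseteq M \subseteq \UnipF$ and $[\UnipF : M] = q^{h-1}$, and then to establish bijectivity via a character-count argument.

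First I would verify the structural inputs. The subgroup $Z := H_{2(h-1)}(\F) \cong \F$ is central in $\UnipF$, since $[H_i, H_j] \subseteq H_{i+j}$ vanishes once $i+j > 2(h-1)$; and $H(\F)$ is abelian because $a^{q^{2i}} = a$ for $a \in \F = \FF_{q^2}$ kills all intra-$H$ commutators. The commutator map, projected to $Z$ and composed with $\psi$, yields an alternating form $\omega_\psi$ on $\UnipF$; the contribution of a pair $(1+a\tau^i, 1+b\tau^j)$ is nonzero only when $i+j = 2(h-1)$, and $\omega_\psi$ descends to a pairing on $\UnipF/H(\F)$ whose radical is trivial precisely because $\psi$ has conductor $q^2$ (so that $a \mapsto \psi(ab^q - ba^q)$ is nontrivial for each nonzero $b$).

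Next I would construct $M$ as the preimage of a maximally isotropic $\FF_q$-subspace $L \subseteq \UnipF/H(\F)$ with respect to $\omega_\psi$; since $\UnipF/H(\F)$ has $\FF_q$-dimension $2(h-1)$, such $L$ has dimension $h-1$, giving $[\UnipF : M] = q^{h-1}$. The case split into Propositions \ref{p:bijodd} and \ref{p:bijeven} arises because the self-pairing at the middle index $i = h-1$ vanishes precisely when $h$ is odd (as $q^{h-1}$ then acts trivially on $\F$). For odd $h$ one can take $M := H(\F) \cdot H_h(\F)$ and verify that $M/H(\F)$ is Lagrangian by checking that all index sums within exceed $2(h-1)$. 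For even $h$, the middle level $H_{h-1}(\F)/H_h(\F) \cong \F$ carries a nondegenerate symplectic $\FF_q$-plane structure under $\psi$, so I would additionally pick an $\FF_q$-Lagrangian $L'$ inside it and set $M := H(\F) \cdot H_h(\F) \cdot \widetilde{L'}$. Since $M/H(\F)$ is isotropic, any $\chi \in \cA_\psi$ extends to a character $\tilde\chi$ of $M$.

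Irreducibility of $\rho_\chi$ then follows from Mackey's criterion: for $g \in \UnipF \setminus M$, the nondegeneracy of $\omega_\psi$ produces an element of $M \cap g M g^{-1}$ on which $\tilde\chi$ and its $g$-conjugate disagree. Distinct characters $\chi \in \cA_\psi$ yield non-isomorphic $\rho_\chi$ by Frobenius reciprocity, since the $H(\F)$-isotypic components distinguish them. Finally, with each $\dim \rho_\chi = q^{h-1}$, the count $|\cA_\psi| \cdot (q^{h-1})^2 = q^{2(h-2)} \cdot q^{2(h-1)} = q^{4h-6} = |\UnipF|/|Z|$ exhausts the orthogonality total for $\cG_\psi$, confirming surjectivity. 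I anticipate the main technical obstacle to be the Mackey irreducibility check, which demands explicit identification of $M \cap g M g^{-1}$ and careful exploitation of the conductor-$q^2$ hypothesis to preclude character coincidences, particularly in the even-$h$ case where the choice of the Lagrangian $L'$ at the middle level introduces extra bookkeeping in the commutator calculus.
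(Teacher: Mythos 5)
Your construction is, in substance, the paper's: for odd $h$ your $M = H(\F)\cdot H_h(\F)$ is exactly the subgroup $H'(\F)$ of Section \ref{s:repthy}, and for even $h$ the paper's condition $a_{h-1}\in\FF_q$ picks out precisely one of your Lagrangians $L'$ at the middle level (since $ab^q-a^qb=0$ for $a,b\in\FF_q$). The Mackey irreducibility check you describe is Lemma \ref{l:centpsi}, and the nondegeneracy you invoke is the same use of the conductor-$q^2$ hypothesis. Two remarks on the packaging. First, $\UnipF$ is not two-step nilpotent, so the ``alternating form'' $\omega_\psi$ on $\UnipF/H(\F)$ is not automatically bi-additive; verifying that only the leading contributions to the top coefficient of a commutator survive $\psi$, and that $M$ is a subgroup on which $\chi$ genuinely extends, is exactly the explicit computation carried out in Lemma \ref{l:centpsi} and the surrounding lemmas, so this step is a change of notation rather than a shortcut. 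Second, your surjectivity argument via $\sum_{\rho\in\cG_\psi}(\dim\rho)^2 = |\UnipF|/q^2 = q^{4h-6}$ is a genuinely different, and cleaner, route than the paper's (which goes through Lemma \ref{l:psitilde} and the decomposition of $V_\psi$); it is valid because $H_{2(h-1)}(\F)$ is central.

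The genuine gap is injectivity. ``Distinct characters $\chi$ yield non-isomorphic $\rho_\chi$ by Frobenius reciprocity, since the $H(\F)$-isotypic components distinguish them'' does not follow from anything you have set up. Frobenius reciprocity gives that $\chi$ occurs in $\Res_{H(\F)}\rho_\chi$; it does not rule out $\chi$ occurring in $\Res_{H(\F)}\rho_{\chi'}$ for $\chi'\neq\chi$, and in fact it does occur there, with multiplicity $q+1$, whenever $\chi$ and $\chi'$ agree on a suitable subgroup $G_{h-2-k}$ with $k$ even (Corollary \ref{c:mult}). Determining the exact $H(\F)$-isotypic multiplicities is the content of the character formula of Section \ref{s:morerepthy} (Theorem \ref{t:decomp}), and the paper's own Lemma \ref{l:orthodd} is proved by appeal to it. The alternative is a direct Mackey computation of $\langle\Ind_M^{\UnipF}\widetilde\chi,\Ind_M^{\UnipF}\widetilde\chi'\rangle$, which requires identifying the normalizer of $M$ (Lemma \ref{l:normH'}) and, for even $h$, showing that the $q$ extensions across the middle Lagrangian all induce to the same representation (Lemma \ref{l:compateven}) --- without that last point you would manufacture $q^{2(h-2)+1}$ candidate irreducibles rather than $|\cA_\psi| = q^{2(h-2)}$, and your orthogonality budget would be overshot. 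Either way, injectivity is where the real work lies, and your proposal currently asserts it rather than proves it.
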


The first subgroup of importance is the following:
\begin{equation*}
H_0' \colonequals \{1 + \sum a_i \tau^i : \text{$i = 2(h-1)$ OR $i > (h-1)$ is odd}\} \subset \Unip.
\end{equation*}
For an additive character $\psi$ of $\F$, define the character $\widetilde \psi$ of $H_0'(\F)$ as 
\begin{equation*}
\widetilde \psi \from H_0'(\F) \to \overline \QQ_\ell^\times, \qquad 1 + \sum a_i \tau^i \mapsto \psi(a_{2(h-1)}).
\end{equation*} 

\begin{lemma}\label{l:psitilde}
Let $\psi$ be an additive character of $\F$ with conductor $q^2$. If $\rho$ is an irreducible representation of $\UnipF$ where $H_{2(h-1)}(\F)$ acts by $\psi$, then the restriction of $\rho$ to $H_0'(\F)$ must contain $\widetilde \psi$.
\end{lemma}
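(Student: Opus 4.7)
The plan is to apply Clifford theory to the normal abelian subgroup $H_h(\F) \triangleleft \UnipF$ and then show that the $\UnipF$-conjugation orbit of characters of $H_h(\F)$ appearing in $\rho|_{H_h(\F)}$ contains one whose restriction to $H_0'(\F)$ equals $\widetilde\psi$.

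First I would verify $H_h(\F)$ is normal abelian. Normality follows from $[H_i, H_j] \subseteq H_{i+j}$ in $\Unip$, so each $H_k$ is normal. The subgroup $H_h$ is abelian because, for $i, j \geq h$, one has $i + j \geq 2h > 2(h-1)$, so such commutators vanish after truncation. By Clifford's theorem, $\rho|_{H_h(\F)}$ is a direct sum of characters $\eta$ forming a single $\UnipF$-conjugacy class, each satisfying $\eta|_{H_{2(h-1)}(\F)} = \psi$. Parametrize each $\eta$ by the tuple of additive characters $\psi_i \from \F \to \overline \QQ_\ell^\times$, $\psi_i(a) := \eta(1 + a\tau^i)$, so $\psi_{2(h-1)} = \psi$ is forced. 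The character $\widetilde\psi$ extends to a character of $H_h(\F)$ with $\psi_i = 1$ for all $i \neq 2(h-1)$, so it suffices to find $\eta'$ in the orbit with $\psi_i = 1$ for every odd $i \in [h, 2h-3]$ (the nontrivial generators of $H_0'/H_{2(h-1)}$).

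The central computation is: for $g = 1 + c\tau^k$, a direct expansion in the truncated algebra yields
\begin{equation*}
g^{-1}(1 + a\tau^i)g \equiv 1 + a\tau^i + (ac^{q^i} - ca^{q^k})\tau^{k+i} \pmod{H_{k+i+1}}.
\end{equation*}
Taking $k = 2(h-1) - i$, which is odd because $i$ is odd and $2(h-1)$ is even, gives $\tau^{k+i} = \tau^{2(h-1)}$ and higher-order terms vanish, so conjugation multiplies $\psi_i$ by the character $\mu_c \from a \mapsto \psi(ac^{q^i} - ca^{q^k}) = \psi(ac^q - ca^q)$. I would then show the map $c \mapsto \mu_c$, $\F \to \widehat\F$, is bijective. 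Since $(ac^q - ca^q)^q = -(ac^q - ca^q)$, the image of $a \mapsto ac^q - ca^q$ lies in the trace-zero line of $\F/\FF_q$ (the unique one-dimensional $\FF_q$-subspace on which Frobenius acts as $-1$), and for $c \neq 0$ this $\FF_q$-linear map has kernel $c \cdot \FF_q$, hence surjects onto the trace-zero line. Thus $\mu_c = 1$ iff $c = 0$ or $\psi$ is trivial on the trace-zero line; the latter is precisely the condition that $\psi$ has conductor $q$. Under the hypothesis that $\psi$ has conductor $q^2$, the map $c \mapsto \mu_c$ is injective, hence bijective by cardinality.

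Using this, I would iterate over odd $i \in [h, 2h-3]$ from largest to smallest: at step $i$, conjugate by $g_i = 1 + c_i\tau^{2(h-1) - i}$ with $c_i$ chosen so that the current $\psi_i$ becomes trivial (possible by the bijectivity above). Any later step $i'' < i$ uses $k_{i''} = 2(h-1) - i'' > k_i$, and conjugation by $g_{i''}$ only affects $\psi_m$ with $m + k_{i''} \leq 2(h-1)$, i.e., $m \leq i'' < i$; hence $\psi_i$ is not disturbed. After processing all odd indices in $[h, 2h-3]$, the resulting $\eta'$ in the orbit satisfies $\eta'|_{H_0'(\F)} = \widetilde\psi$, so $\widetilde\psi$ appears in $\rho|_{H_h(\F)}$ and a fortiori in $\rho|_{H_0'(\F)}$. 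The main obstacle is the bijectivity argument, which is where the conductor-$q^2$ hypothesis enters essentially.
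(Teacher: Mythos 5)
Your proposal is correct, and it is essentially the paper's argument: the heart of both proofs is the computation that conjugating by $1+c\tau^{2(h-1)-i}$ twists the character on the $\tau^{i}$-coordinate by $a \mapsto \psi(ac^{q}-ca^{q})$, together with the observation that conductor $q^{2}$ makes $c \mapsto \psi(\,\cdot\, c^{q}-c(\cdot)^{q})$ surjective onto all additive characters of $\F$. The only difference is packaging — you run Clifford theory on the normal abelian subgroup $H_h(\F)$ and walk through the orbit from the top coordinate down, whereas the paper inducts up the chain $G_1 \subset G_2 \subset \cdots \subset H_0'$ of subgroups of $H_0'$ — and your bookkeeping for why later conjugations do not disturb already-trivialized coordinates is sound.
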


\begin{proof}
We prove this inductively. Let
\begin{align*}
G_1 &\colonequals \{1 + a_{2(h-1) - 1} \tau^{2(h-1) - 1} + a_{2(h-1)} \tau^{2(h-1)}\}, \\
G_2 &\colonequals \{1 + a_{2(h-1) - 3} \tau^{2(h-1) - 3} + a_{2(h-1) - 1} \tau^{2(h-1) - 1} + a_{2(h-1)} \tau^{2(h-1)}\}, \\
&\; \vdots \\
G_{\lfloor (h-1)/2 \rfloor} &\colonequals H_0'.
\end{align*}

Since
\begin{equation*}
1 + a_{2(h-1) - 1} \tau^{2(h-1) - 1} + a_{2(h-1)} \tau^{2(h-1)} = (1 + a_{2(h-1) - 1} \tau^{2(h-1) - 1})(1 + a_{2(h-1)} \tau^{2(h-1)}),
\end{equation*}
then every extension of $\psi$ to $G_1(\F)$ is of the form
\begin{equation*}
1 + a_{2(h-1) - 1} \tau^{2(h-1) - 1} + a_{2(h-1)} \tau^{2(h-1)} \mapsto \nu(a_{2(h-1) - 1}) \psi(a_{2(h-1)})
\end{equation*}
for some additive character $\nu$ of $\F$. Let $\psi_1$ denote the extension of $\psi$ to $G_1(\F)$ given by 
\begin{equation*}
\psi_1(1 + a_{2(h-1)-1} \tau^{2(h-1)-1} + a_{2(h-1)} \tau^{2(h-1)}) \colonequals \psi(a_{2(h-1)}).
\end{equation*}
For $g_1 = 1 - b_1 \tau$ and $h = 1 + \sum a_i \tau^i \in G_1(\F)$, we have
\begin{equation*}
{}^{g_1} \psi_1(h) = \psi_1(g_1 h g_1^{-1}) = \psi_1(h) \psi(b_1 a_{2(h-1)-1}^q - b_1^q a_{2(h-1) - 1}).
\end{equation*}
Since $\psi$ has conductor $q^2$, every character of $\F$ is of the form $y \mapsto \psi(xy^q - x^q y)$ for some $x \in \F$. Thus for any additive character $\nu$ of $\F$, there exists an $g_1$ such that ${}^{g_1} \psi_1(h) = \psi_1(h) \nu(a_{2(h-1)-1}).$ We may therefore conclude that the restriction of $\rho$ to $G_1(\F)$ contains $\psi_1$.

We now work on extending $\psi_1$ to $G_2(\F)$. Since
\begin{equation*}
h = h_0(1 + a_{2(h-1) - 3} \tau^{2(h-1) - 3}),
\end{equation*}
where $h \in G_2(\F)$ and $h_0 \in G_1(\F)$, then every extension of $\psi_1$ to $G_2(\F)$ is of the form
\begin{equation*}
h \mapsto \nu(a_{2(h-1) - 3}) \psi_1(h_0),
\end{equation*}
where as before, $\nu$ is some additive character of $\F.$ Let $\psi_2$ denote the extension of $\psi_1$ to $G_2(\F)$ given by 
\begin{equation*}
\psi_2(1 + \sum a_i \tau^i) \colonequals \psi_1(1 + a_{2(h-1)-1} \tau^{2(h-1)-1} + a_{2(h-1)} \tau^{2(h-1)}) = \psi(a_{2(h-1)}).
\end{equation*}
For $g_2 = 1 - b_3 \tau^3$ and $h = 1 + \sum a_i \tau^i \in G_2(\F)$, we have
\begin{equation*}
{}^{g_2} \psi_2(h) = \psi_2(g_2 h g_2^{-1}) = \psi_2(h) \psi(b_3 a_{2(h-1)-3}^q - b_3^q a_{2(h-1) - 3}).
\end{equation*}
As before, this shows that the restriction of $\rho$ to $G_2(\F)$ contains $\psi_2$.

Continuing this for each $G_i$, we see that the conclusion of the Lemma holds.
\end{proof}

Now consider the subgroup $H'(\F) \subset \UnipF$ defined as follows:
\begin{equation*}
H'(\F) \colonequals \begin{cases}
\{1 + \sum a_i \tau^i : \text{$i$ is even OR $i > h-1$ is odd}\} & \text{if $h$ is odd}, \\
\{1 + \sum a_i \tau^i : \text{$i$ is even OR $i \geq h-1$ is odd; $a_{h-1} \in \FF_q$}\} & \text{if $h$ is even}.
\end{cases}
\end{equation*}
Recall that
\begin{equation*}
H_0'(\F) \colonequals \{1 + \sum a_i \tau^i : \text{$i = 2(h-1)$ OR $i > h-1$ is odd}\} \subset \UnipF
\end{equation*}
and in the case that $h$ is even, define
\begin{equation*}
H_1'(\F) \colonequals \{1 + \sum a_i \tau^i : \text{$i = 2(h-1)$ OR $i \geq h-1$ is odd; $a_{h-1} \in \FF_q$}\} \subset \UnipF.
\end{equation*}
The behavior of the representation theory of $\UnipF$ depends on the parity of $h$. At the core of this distinction is the following.

\begin{lemma}\label{l:centpsi}
Let $\psi$ be an additive character of $\F$ with conductor $q^2$. Then
\begin{enumerate}[label=(\alph*)]
\item
If $g \in H'(\F)$, then $gag^{-1} \in H_0'(\F)$ and $\widetilde \psi(gag^{-1}) = \widetilde \psi(a)$ for all $a \in H_0'(\F)$.

\item
Let $h$ be odd. If $g \notin H'(\F)$, then there exists an $a \in H_0'(\F)$ such that $gag^{-1} \in H_0'(\F)$ but $\widetilde \psi(gag^{-1}) \neq \widetilde \psi(a)$.

\item
Let $h$ be even and let $\theta$ be any extension of $\widetilde \psi$ to $H_1'(\F)$. If $g \notin H'(\F)$, then there exists an $a \in H_1'(\F)$ such that $gag^{-1} \in H_1'(\F)$ but $\theta(gag^{-1}) \neq \theta(a).$
\end{enumerate}
\end{lemma}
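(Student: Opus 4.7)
The plan is an explicit computation in $\Unip(\F)$ leveraging the commutator formula
\begin{equation*}
[1 + b\tau^k, 1 + c\tau^\ell] \equiv 1 + (bc^{q^k} - cb^{q^\ell})\tau^{k+\ell} \pmod{H_{k+\ell+1}(\F)},
\end{equation*}
which is trivial whenever $k + \ell > 2(h-1)$. Since $\widetilde\psi$ only sees the $\tau^{2(h-1)}$ coefficient, the proof reduces to tracking which index pairs $(k, \ell)$ produce a contribution at $\tau^{2(h-1)}$.

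For part (a), decompose $g \in H'(\F)$ into a product of elementary elements $1 + b\tau^k$ with $k$ in the index set of $H'$, and analyze one factor at a time (conjugation distributes across products). A direct parity and range check shows that for every pair $(k, \ell)$ with $k$ in $H'$ and $\ell$ in $H_0'$, either $k + \ell > 2(h-1)$ (so the commutator is trivial), or else $k$ is even and $\ell$ is odd with $\ell > h-1$, forcing $k + \ell$ to be odd and strictly between $h-1$ and $2(h-1)$. In either case the commutator lies in $H_0'(\F)$ with vanishing $\tau^{2(h-1)}$ coefficient, so $gag^{-1} \in H_0'(\F)$ and $\widetilde\psi$ is preserved.

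For parts (b) and (c), given $g \notin H'(\F)$, let $k$ be the smallest index at which $g$ fails to lie in $H'$: an odd $k \leq h-2$ with $a_k(g) \neq 0$ (for $h$ odd, case (b)), or an odd $k \leq h-3$ with $a_k(g) \neq 0$ (case (c)(i)), or $k = h-1$ with $a_{h-1}(g) \in \F \setminus \FF_q$ (case (c)(ii)). Set $\ell := 2(h-1) - k$ and $a := 1 + c\tau^\ell$. The key claim is that the $\tau^{2(h-1)}$-coefficient of $[g, a]$ reduces cleanly to $a_k(g)c^q - a_k(g)^q c$ (using that $k, \ell$ are both odd): the minimality of $k$ gives $a_j(g) = 0$ for all odd $j < k$, and an induction shows that the coefficient $b_j$ of $\tau^j$ in $g^{-1}$ also vanishes for all odd $j < k$, killing every potential cross term in the commutator expansion. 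In part (b) and case (c)(i), the character $c \mapsto \psi(a_k(g)c^q - a_k(g)^q c)$ corresponds, via the bijective parameterization of characters of $\F$ by $x \mapsto \bigl(y \mapsto \psi(xy^q - x^q y)\bigr)$ (valid under the conductor-$q^2$ hypothesis and noted in the proof of Lemma \ref{l:psitilde}), to $x = a_k(g) \neq 0$, hence is nontrivial, and some $c \in \F$ gives $\widetilde\psi(gag^{-1}) \neq \widetilde\psi(a)$. In case (c)(ii) with $c \in \FF_q$, the coefficient becomes $c(a_{h-1}(g) - a_{h-1}(g)^q)$, a scalar multiple of a nonzero element of $\ker(\Tr_{\F/\FF_q})$; a separate check confirms that the $\tau^{h-1}$ coefficient of $[g, a]$ vanishes, so $\theta([g,a]) = \psi(c(a_{h-1}(g) - a_{h-1}(g)^q))$ for any extension $\theta$ of $\widetilde\psi$ to $H_1'(\F)$, and the equivalence between conductor $q^2$ for $\psi$ and nontriviality of $\psi|_{\ker\Tr}$ provides the desired $c$.

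The main obstacle is verifying the cross-term vanishing claim, which rests on a parity-based induction on the filtration degree: the recursion $b_j = -a_j - \sum_{i+i' = j,\, i,i' \geq 1} a_i b_{i'}^{q^i}$ combined with the minimality of $k$ forces $b_j = 0$ for odd $j < k$, which in turn forces every cross term $a_i c^{q^i} b_{k-i}^{q^{i+\ell}}$ (with $i, k-i \geq 1$) to have a zero factor. Once this is controlled, the rest is bookkeeping plus the character parameterization already implicit in the proof of Lemma \ref{l:psitilde}.
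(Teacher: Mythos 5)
Your proof is correct and follows essentially the same route as the paper: part (a) by decomposing $g$ into pieces indexed by the support of $H'$ and checking parities of the indices contributing to the $\tau^{2(h-1)}$ coefficient, and parts (b)--(c) by conjugating $a = 1 + c\tau^{2(h-1)-k}$ (with $k$ the smallest offending odd index) to produce the character $c \mapsto \psi(a_k(g)c^q - a_k(g)^q c)$ and invoking the conductor-$q^2$ hypothesis. The only difference is cosmetic: you make explicit the vanishing of cross terms via the recursion for the coefficients of $g^{-1}$, a point the paper handles by factoring $g = (1+b_r\tau^r)g'$ and leaving the analogous check implicit.
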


\begin{proof}
We handle (a) first. Consider $a = 1 + \sum a_i \tau^i \in H_0'(\F)$ and $g = 1 + \sum b_i \tau^i \in H'(\F)$. Then write $gag^{-1} = 1 + \sum c_i \tau^i$. It is clear that if in fact $g \in H_0'(\F)$, then $\widetilde \psi(gag^{-1}) = \widetilde \psi(a).$ 

When $h$ is odd, we can write $g = g' g''$ where $g' \in H(\F)$ and $g'' \in H_0'(\F)$. Thus all that remains to show is that if $g \in H(\F)$, then $\widetilde \psi(gag^{-1}) = \widetilde \psi(a).$ Now, $g = 1 + \sum b_i \tau^i$ has the property that $b_i = 0$ for $i$ odd. Since $a_i = 0$ for all $i$ even, with the exception of when $i = 2(h-1)$, we see that the only contribution of $g$ to the product $gag^{-1}$ occurs in $c_i$ for $i$ odd and $i > h-1$. Thus $gag^{-1} \in H_0'(\F)$ and since the changes only occur in the odd coefficients, we have that $\widetilde \psi(gag^{-1}) = \widetilde \psi(a).$

When $h$ is even, we can write $g = g' g''$ where $g' \in H(\F)$ and $g'' \in H_1'(\F)$. If $g = 1 + b \tau^{h-1} \in H_1'(\F)$, then the coefficient of $\tau^{2(h-1)}$ in $gag^{-1}$ is equal to $ba_{h-1}^q - a_{h-1}b^q + a_{2(h-1)} = a_{2(h-1)}$ since $a_{h-1}, b \in \FF_q$. It follows that $\widetilde \psi$ is centralized by $g$. By the same argument as in the previous paragraph, we see that $H(\F)$ centralizes $\widetilde \psi$, and this completes the proof that $H'(\F)$ centralizes $\widetilde \psi$.

We now show (b) and (c). Suppose that $g = 1 + \sum b_i \tau^i \in \UnipF \smallsetminus H'(\F)$. Let $r$ be the smallest odd integer such that $b_r \neq 0$. By assumption, $r \leq h-1$. We may write $g = (1 + b_r \tau^r)g'$, where the coefficient of $\tau^r$ in $g'$ vanishes. 

First assume that $r < h-1$ and consider $a = 1 + a_{2(h-1)-r} \tau^{2(h-1)-r} \in H_0'(\F)$. Note that $gag^{-1} \in H_0'(\F)$. We have $\widetilde \psi(g' a (g')^{-1}) = \widetilde \psi(a)$, so
\begin{align*}
\widetilde \psi(gag^{-1}) 
&= \widetilde \psi((1 + b_r \tau^r)(1 + a_{2(h-1)-r} \tau^{2(h-1) - r})(1 - b_r \tau^r)) \\
&= \widetilde \psi(1 + a_{2(h-1)-r} \tau^{2(h-1)-r} + (b_r a_{2(h-1)-r}^q - b_r^q a_{2(h-1)-r}) \tau^{2(h-1)}) \\
&= \widetilde \psi(a) \psi(b_r a_{2(h-1)-r}^q - b_r^q a_{2(h-1)-r}),
\end{align*}
Since $\psi$ has conductor $q^2$, then it follows that we can find $a_{2(h-1)-r}$ such that the above $\neq \widetilde \psi(a)$. Thus we have shown the desired conclusion with the assumption that $r < h-1$.

If $h$ is odd, then $g \in \UnipF \smallsetminus H'(\F)$ forces $r < h-1$, and thus (b) follows from the above. Now let $h$ be even and let $\theta$ be an arbitrary extension of $\widetilde \psi$ to $H_1'(\F)$. By the above, we see that if $r < h-1$, then there exists an element $a \in H_1'(\F)$ such that $gag^{-1} \in H_1'(\F)$ but $\theta(gag^{-1}) \neq \theta(a).$ Therefore all that remains to show is the case when $r = h-1$. Consider $a = 1 + a_{h-1} \tau^{h-1} \in H_1'(\F)$. Then
\begin{equation*}
\theta(gag^{-1}) = \theta(a) \psi(b_{h-1} a_{h-1}^q - b_{h-1}^q a_{h-1}) = \theta(a) \psi(a_{h-1}(b_{h-1} - b_{h-1}^q)),
\end{equation*}
where the last equality holds since $a_{h-1} \in \FF_q$. Since $g \notin H'(\F)$, we have $b_{h-1} \notin \FF_q$, then we see that we arrange for $a_{2(h-1)-r} \in \FF_q$ to be such that the above $\neq \theta(a)$ (since $\psi$ has conductor $q^2$ and thus its restriction to $\ker \Tr_{\F/\FF_q}$ is nontrivial). This proves (c).
\end{proof}

\begin{corollary}\label{c:irrep}
If $\nu$ is any extension of $\widetilde \psi$ to $H'(\F)$, then the representation $\Ind_{H'(\F)}^{\UnipF}(\nu)$ is irreducible.
\end{corollary}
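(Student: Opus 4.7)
The plan is to invoke Mackey's irreducibility criterion. Since $\nu$ is a one-dimensional character, the induced representation $\Ind_{H'(\F)}^{\UnipF}(\nu)$ is irreducible if and only if for every $g \in \UnipF \smallsetminus H'(\F)$, the characters $\nu$ and $\nu^g := \nu(g^{-1}\,\cdot\, g)$ restrict to distinct characters on the intersection $H'(\F) \cap gH'(\F)g^{-1}$. Concretely, it suffices to exhibit, for each such $g$, an element $a \in H'(\F)$ with $gag^{-1} \in H'(\F)$ and $\nu(gag^{-1}) \neq \nu(a)$.

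Lemma \ref{l:centpsi} has been engineered to supply exactly such elements. Fixing $g \in \UnipF \smallsetminus H'(\F)$, I would split on the parity of $h$. If $h$ is odd, part (b) of the lemma produces $a \in H_0'(\F) \subset H'(\F)$ with $gag^{-1} \in H_0'(\F) \subset H'(\F)$ and $\widetilde\psi(gag^{-1}) \neq \widetilde\psi(a)$; since $\nu$ is an extension of $\widetilde\psi$, this immediately gives $\nu(gag^{-1}) \neq \nu(a)$. If $h$ is even, apply part (c) with the specific choice $\theta := \nu|_{H_1'(\F)}$, which is indeed an extension of $\widetilde\psi$ to $H_1'(\F)$; the lemma yields $a \in H_1'(\F) \subset H'(\F)$ with $gag^{-1} \in H_1'(\F) \subset H'(\F)$ and $\nu(gag^{-1}) \neq \nu(a)$.

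In either case the Mackey criterion is satisfied, and the irreducibility follows. The main work has already been carried out in Lemma \ref{l:centpsi}; no additional estimates or computations are required. The only small point worth verifying is that the conclusion of part (c) is uniform in the choice of extension $\theta$, which allows us to take $\theta = \nu|_{H_1'(\F)}$ rather than some abstractly prescribed one. This is clear from the statement, so the proof reduces to a one-line application of the lemma in each parity case.
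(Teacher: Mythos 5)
Your proof is correct and is essentially the paper's own argument: the paper's proof is a one-line appeal to Mackey's criterion via Lemma \ref{l:centpsi}. You spell out the details more carefully than the paper does — in particular, you correctly note that for $h$ even one must use part (c) of the lemma with $\theta = \nu|_{H_1'(\F)}$, whereas the paper cites only part (b).
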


\begin{proof}
By Lemma \ref{l:centpsi}(b), we can apply Mackey's criterion and conclude irreducibility.
\end{proof}

In Section \ref{s:odd} and \ref{s:even}, we will describe all such extensions $\nu$. We will also analyze the following question: Given distinct extensions $\nu$ and $\nu'$ of $\widetilde \psi$ to $H'(\F)$, when are the representations $\Ind_{H'(\F)}^{\UnipF}(\nu)$ and $\Ind_{H'(\F)}^{\UnipF}(\nu')$ isomorphic? To begin answering this question, we will need the following lemma. In the next two subsections, we will give a complete answer.

\begin{lemma}\label{l:normH'}
The normalizer of $H'(\F)$ in $\UnipF$ is equal to the subgroup
\begin{equation*}
K \colonequals \{1 + \sum a_i \tau^i : \text{$i$ is even OR $i \geq h-2$ is odd}\} \subseteq \UnipF.
\end{equation*}
Note that when $h$ is odd, then $H'(\F)$ is an index-$q^2$ subgroup of $K$, and when $h$ is even, then $H'(\F)$ is an index-$q$ subgroup of $K$.
\end{lemma}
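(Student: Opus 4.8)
The plan is to compute the normalizer directly by analyzing conjugation in $\UnipF$ at the level of the coefficients $a_i$ in the expansion $1 + \sum a_i \tau^i$. First I would recall the commutation relation in $\Unip$: the relation $\tau a = a^q \tau$ (coming from the division algebra structure, so that $\tau$ is the uniformizer and $a \in \F$), together with $\tau^{2(h-1)} = 0$, governs all products. From this one extracts the key structural fact: for $g = 1 + b_r\tau^r + \cdots$ with $b_r$ the lowest-degree term, and $a = 1 + a_s\tau^s + \cdots$, the conjugate $gag^{-1}$ agrees with $a$ modulo terms of degree $> s$, and the first possibly-new term is in degree $r + s$ with coefficient $b_r a_s^{q^r} - a_s b_r^{q^?}$ (the precise Frobenius twists tracked via the commutation rule). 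The upshot is that conjugating by an element whose lowest odd coefficient sits in degree $r$ can only create new \emph{odd} coefficients in degrees $\geq r + (\text{lowest even positive degree}) = r+2$, or new contributions to existing coefficients, and it is precisely the interaction between the odd part of $g$ and the degree-$2$ even generator $\tau^2$ that can push a purely-even element out of $H'(\F)$.

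The argument then splits into the two inclusions. For $K \subseteq \N_{\UnipF}(H'(\F))$: I would take $g \in K$ and $a \in H'(\F)$ and check, degree by degree, that $gag^{-1}$ still satisfies the defining congruences of $H'(\F)$ — i.e. that no odd coefficient in a degree $< h-1$ (resp. $\leq h-2$) is created, and in the even-$h$ case that the degree-$(h-1)$ coefficient stays in $\FF_q$. Since $g\in K$ means $g$'s odd coefficients only appear in degrees $\geq h-2$, any new odd term produced by conjugation lands in degree $\geq (h-2) + 2 = h$, which is $> h-1$, hence allowed; and the degree-$(h-1)$ coefficient of $gag^{-1}$ is unchanged when $h$ is even because $g$ has no odd term below degree $h-2$ to interact with $\tau^1$ (there is no $\tau^1$ available in either $g$ or $a$ to hit). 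One also checks $K$ is itself a subgroup (closed under products and inverses), which is the same degree bookkeeping. For the reverse inclusion $\N_{\UnipF}(H'(\F)) \subseteq K$: if $g \notin K$, then $g$ has a nonzero odd coefficient $b_r$ with $r \leq h-4$ (odd, $< h-2$). I would then exhibit an explicit $a \in H'(\F)$ — a single even generator $a = 1 + a_s \tau^s$ with $s$ even, $s \geq 2$, chosen so that $r + s$ is odd and strictly between $r$ and $h-1$ — for which the degree-$(r+s)$ coefficient of $gag^{-1}$ is $b_r a_s^{q^r} - (\text{twist})$, nonzero for suitable $a_s$, so that $gag^{-1}\notin H'(\F)$. (Here conductor-$q^2$ of $\psi$ is not needed; this is purely group-theoretic, so one just needs $b_r \neq 0$ and the freedom to choose $a_s \in \F$.) Taking $s = 2$ and $r \leq h-4$ gives $r + s = r+2 \leq h-2 < h-1$ and $r+2$ odd, which is exactly a forbidden degree for $H'(\F)$, completing the argument.

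The index count is immediate once the two groups are identified: $K/H'(\F)$ is parametrized by the coefficients of the odd powers $\tau^{h-2}$ (and nothing else) when $h$ is odd — wait, more carefully, when $h$ is odd the degrees in $K$ but not $H'(\F)$ are the odd $i$ with $h-2 \leq i \leq h-1$, i.e.\ just $i = h-2$, but the coefficient there ranges over $\F$, and there is an additional discrepancy, so the quotient has order $q^2$; when $h$ is even, $H'(\F)$ already allows odd degrees $\geq h-1$ with the degree-$(h-1)$ coefficient constrained to $\FF_q$, while $K$ allows degree $h-2$ freely and degree $h-1$ freely, so the quotient is $\F/\FF_q$ in degree $h-1$ together with $\F$ in degree $h-2$ — I would recount these carefully against the stated indices $q^2$ and $q$ and reconcile (the stated indices tell me exactly which degrees contribute, so this is a consistency check rather than an open question).

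\medskip

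The step I expect to be the main obstacle is \textbf{bookkeeping the Frobenius twists in the conjugation formula}: because $\tau a = a^q \tau$, conjugating shifts the relevant exponent of Frobenius depending on the degree, and I need to be sure that the ``commutator'' coefficient $b_r a_s^{q^{?}} - b_r^{q^{?}} a_s$ is genuinely nonzero for \emph{some} choice of $a_s \in \F$ — which is automatic as long as it is a nonzero $\FF_q$-linear (or $q$-polynomial) expression in $a_s$, but degenerate cancellations could in principle occur in special degrees (e.g.\ when $r + s = 2(h-1)$, where one must invoke that $\psi$-type nondegeneracy, though here we can simply avoid that degree). Keeping the index computation honest in the even-$h$ case, where the $\FF_q$-versus-$\F$ constraint on the $\tau^{h-1}$-coefficient interacts with what conjugation can change, is the other delicate point, but it is ultimately a finite check.
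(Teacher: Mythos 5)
Your overall strategy (direct coefficient bookkeeping for both inclusions, plus a counting of the discrepant coordinates) is the same as the paper's, and your forward inclusion $K \subseteq N(H'(\F))$ is correct and matches the paper's argument. The genuine gap is in the reverse inclusion, in the even-$h$ case. If $g \notin K$ and $h$ is even, the smallest odd degree $r$ with $b_r \neq 0$ satisfies $r \leq h-3$, not $r \leq h-4$ as you claim (since $h-2$ is even, ``odd $i \geq h-2$'' just means odd $i \geq h-1$). When $r = h-3$, your choice $s=2$ lands in degree $r+2 = h-1$, which is \emph{not} a strictly forbidden degree for $H'(\F)$: membership there only requires the $\tau^{h-1}$-coefficient to lie in $\FF_q$. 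The commutator coefficient produced is $b_r(a_2^q - a_2)$, which as $a_2$ ranges over $\F$ sweeps out exactly $b_r \cdot \ker \Tr_{\F/\FF_q}$; writing $\ker\Tr_{\F/\FF_q} = \lambda \FF_q$ with $\lambda^{q-1}=-1$ (so $\lambda^2 \in \FF_q$), this set equals $\FF_q$ whenever $b_r \in \ker\Tr_{\F/\FF_q}$. For such $b_r$ no choice of $a_2$ gives a witness, and checking the remaining generators of $H'(\F)$ shows that $1+\lambda\tau^{h-3}$ actually conjugates all of $H'(\F)$ into itself: the only constrained coefficient it can touch is the one at $\tau^{h-1}$, which moves by $\lambda(a_2^q-a_2) \in \FF_q$. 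So the reverse inclusion cannot be completed along the lines you propose; this is not merely the ``delicate point'' you flag but an actual failure of the chosen witness. (For what it is worth, the paper's own proof asserts ``$gxg^{-1}\notin H'(\F)$'' at exactly this step without addressing the possibility $a_r(b^q-b)\in\FF_q$, so the even-$h$ statement itself appears to need a correction; the odd-$h$ case is unaffected, since there every odd degree $\leq h-2$ is strictly forbidden and a nonzero commutator coefficient suffices.)

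Separately, your index count for even $h$ is off: because $h-2$ is even there is no ``degree $h-2$'' discrepancy at all, and $K$ and $H'(\F)$ differ only in the $\FF_q$-versus-$\F$ constraint on $a_{h-1}$, which gives index $q$ directly (your description, with a free coefficient in degree $h-2$ and a $\F/\FF_q$ in degree $h-1$, would give $q^3$). For odd $h$ the count is simply the free coefficient $a_{h-2} \in \F$, i.e.\ $q^2$; there is no ``additional discrepancy.''
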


\begin{proof}
Let $K$ be as in the statement of the lemma.

Let $h$ be odd. To show that $K$ normalizes $H'(\F)$, we need only show that for $b = 1 + \sum b_i \tau^i \in H'(\F)$ and $g = 1 - a_{h-2} \tau^{h-2}$, we have $gbg^{-1} \in H'(\F)$. But this is clear since $g$ only contributes to the coefficients of $\tau^i$ for $i \geq (h-2)+2 > h-1$.

Let $h$ be even. To show that $K$ normalizes $H'(\F)$, we need only show that for $b = 1 + \sum b_i \tau^i \in H'(\F)$ and $g = 1 - a_{h-1} \tau^{h-1}$, we have $gbg^{-1} \in H'(\F)$. Again, this is clear since $g$ only contributes to the coefficients of $\tau^i$ for $i \geq (h-1)+2 > h-1$.

Thus all that remains to show is that no other elements of $\UnipF$ normalize $H'(\F)$. Consider $g = 1 + \sum a_i \tau^i \in \Unip \smallsetminus H'(\F)$. Let $r$ be the smallest odd integer such that $a_r \neq 0$. Then we may write $g = (1 + a_r \tau^r) g'$ where the coefficient of $\tau^r$ in $g'$ vanishes. Note that by assumption $r < h-2$. Let $s$ be the largest even integer such that $1 + a \tau^{r+s} \notin H'(\F)$ for $a \notin \FF_q$. (If $h$ is even, then $s = h-1 - r$, and if $h$ is odd, then $s = h - r$.) Then for any $b \in \F$, $x \colonequals (g')^{-1}(1 + b \tau^s)g' \in H'(\F)$ and 
\begin{equation*}
gxg^{-1} = (1+a_r \tau^r)(1 + b \tau^s)(1-a_r \tau^r + \cdots) = 1 + b \tau^s + (a_r b^q - a_r b) \tau^{s+r} + \cdots
\end{equation*}
In particular, we can pick $b \notin \FF_q$, and this implies $gxg^{-1} \notin H'(\F)$. Thus we have shown that $K$ is contains the normalizer of $H'(\F)$, and this completes the proof.
\end{proof}

\subsection{Case: $h$ odd}\label{s:odd}

Recall that we have
\begin{equation*}
H' \colonequals \{1 + \sum a_i \tau^i : \text{$i$ is even; or $i > h-1$ and $i$ is odd}\}.
\end{equation*}
For $\chi \in \cA_\psi$, consider the character on $H'(\F)$ defined as
\begin{equation*}
\chi^\sharp(1 + \sum a_i \tau^i) = \chi(1 + a_2 \pi + \cdots + a_{2(h-1)} \pi^{h-1}).
\end{equation*}

\begin{lemma}\label{l:extodd}
Let $\psi$ be an additive character of $\F$ of conductor $q^2$. If $\nu$ is an extension of $\widetilde \psi$ to $H'(\F)$, then $\nu = \chi^\sharp$ for some $\chi \in \cA_\psi$. Moreover, 
\begin{equation*}
\Ind_{H_0'(\F)}^{H'(\F)}(\widetilde \psi) \cong \bigoplus_{\chi \in \cA_\psi} \chi^\sharp.
\end{equation*}
\end{lemma}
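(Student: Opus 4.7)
The plan is to show that $\chi \mapsto \chi^\sharp$ defines a bijection from $\cA_\psi$ onto the set of extensions of $\widetilde\psi$ from $H_0'(\F)$ to $H'(\F)$, and then apply Frobenius reciprocity to obtain the direct-sum decomposition of the induction.

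First I would verify that $\chi^\sharp$ is a well-defined character of $H'(\F)$ extending $\widetilde\psi$. The formula sees only the even-indexed coefficients, and the essential algebraic fact, specific to $h$ odd, is that the odd indices appearing in $H'$ all satisfy $i \geq h$. Expanding a product $(1 + \sum a_i \tau^i)(1 + \sum b_j \tau^j)$ using $\tau a = a^q \tau$, a cross term $a_i b_j^{q^i} \tau^{i+j}$ contributes to a coefficient $c_{2m}$ with $2m \leq 2(h-1)$ only when $i+j = 2m$. Odd-odd pairs force $i+j \geq 2h$, which is out of range; mixed-parity pairs give odd $i+j$; so only even-even pairs contribute, and these combine via the abelian group law of $H(\F) \cong U_L^1/U_L^h$. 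Hence $\chi^\sharp$ is a homomorphism. Restricting to $H_0'(\F)$, the only nonzero even coefficient is $a_{2(h-1)}$, so $\chi^\sharp$ evaluates to $\chi(1 + a_{2(h-1)}\pi^{h-1}) = \psi(a_{2(h-1)}) = \widetilde\psi$.

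Next I would show $\chi \mapsto \chi^\sharp$ is a bijection. Injectivity is immediate since $\chi^\sharp|_{H(\F)}$ recovers $\chi$ under $H(\F) \cong U_L^1/U_L^h$. For surjectivity, Lemma \ref{l:centpsi}(a) gives that $H'(\F)$ normalizes $H_0'(\F)$ and centralizes $\widetilde\psi$, and a parallel calculation (again using that odd-odd index pairs in $H'$ have sum $\geq 2h$) shows the quotient $H'(\F)/H_0'(\F)$ is abelian. Consequently, the extensions of $\widetilde\psi$ to $H'(\F)$ form a torsor over $\widehat{H'(\F)/H_0'(\F)}$, of size $|H'/H_0'| = q^{2(h-2)}$ (the $h-2$ free even coefficients $a_2, \ldots, a_{2(h-2)}$ each range over $\F$). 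On the other side, $|\cA_\psi| = [U_L^1/U_L^h : U_L^{h-1}/U_L^h] = q^{2(h-2)}$, so the injection must be surjective.

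For the decomposition, Frobenius reciprocity gives $\Hom_{H'(\F)}(\chi^\sharp,\, \Ind_{H_0'(\F)}^{H'(\F)}\widetilde\psi) \cong \Hom_{H_0'(\F)}(\chi^\sharp|_{H_0'(\F)},\, \widetilde\psi)$, which is one-dimensional because $\chi^\sharp$ extends $\widetilde\psi$. Hence each $\chi^\sharp$ embeds in the induction; the dimensions match, as both sides equal $q^{2(h-2)} = [H'(\F) : H_0'(\F)]$, so $\bigoplus_{\chi \in \cA_\psi} \chi^\sharp$ exhausts $\Ind_{H_0'(\F)}^{H'(\F)}\widetilde\psi$. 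The main technical obstacle is the careful index-range bookkeeping behind the homomorphism property and abelianness of the quotient; all of it rests on the single mechanism that when $h$ is odd, the odd indices in $H'$ are large enough to force every odd-odd cross term to exceed the truncation at $\tau^{2(h-1)}$.
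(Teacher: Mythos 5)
Your proof is correct and follows essentially the same route as the paper: verify that each $\chi^\sharp$ is an extension of $\widetilde\psi$, count that there are $q^{2(h-2)}$ distinct ones, and match this against the index $[H'(\F):H_0'(\F)]$, which bounds the number of extensions and gives the dimension of the induced representation. You simply spell out the details (the homomorphism property of $\chi^\sharp$ and the Frobenius-reciprocity step for the decomposition) that the paper leaves as ``clear.''
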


\begin{proof}
The maximum number of extensions of $\widetilde \psi$ to $H'(\F)$ is equal to the index of $H_0'(\F)$ in $H'(\F)$. That is, the maximum number of extensions is $q^{3(h-1)}/q^{h+1}=q^{2(h-1)-2}$. On the other hand, it is clear that $\chi^\sharp$ is an extension of $\widetilde \psi$ and varying $\chi \in \cA_\psi$ gives $q^{2(h-2)}$ distinct extensions $\nu$. Therefore in fact every such $\nu$ is of the form $\chi^\sharp$.
\end{proof}

\begin{lemma}\label{l:irrepodd}
Let $\psi$ be an additive character of $\F$ with conductor $q^2$, and let $\chi \in \cA_\psi$. The representation 
\begin{equation*}
\rho_\chi \colonequals \Ind_{H'(\F)}^{\UnipF}(\chi^\sharp)
\end{equation*}
is irreducible and hence $\rho_\chi \in \cG_\psi$.
\end{lemma}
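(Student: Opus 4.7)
The plan is to combine Lemma \ref{l:extodd} and Corollary \ref{c:irrep} directly. By Lemma \ref{l:extodd}, $\chi^\sharp$ is an extension of $\widetilde\psi$ from $H_0'(\F)$ to $H'(\F)$, so Corollary \ref{c:irrep} applied with $\nu = \chi^\sharp$ immediately yields that $\rho_\chi = \Ind_{H'(\F)}^{\UnipF}(\chi^\sharp)$ is irreducible.

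To conclude $\rho_\chi \in \cG_\psi$ it remains to verify that $H_{2(h-1)}(\F)$ acts on $\rho_\chi$ via $\psi$. First I would observe that $H_{2(h-1)}$ is central in $\UnipF$: for any $a, b \in \F$ and any $k \geq 1$, the products $(1 + a\tau^{2(h-1)})(1 + b\tau^k)$ and $(1 + b\tau^k)(1 + a\tau^{2(h-1)})$ both equal $1 + a\tau^{2(h-1)} + b\tau^k$ in $\Unip$, since any cross term lives in degree strictly greater than $2(h-1)$ and hence vanishes. Moreover $H_{2(h-1)}(\F) \subset H'(\F)$ because $2(h-1)$ is even.

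Finally, for any central $z \in H'(\F)$ and any $f$ in the standard model of $\Ind_{H'(\F)}^{\UnipF}(\chi^\sharp)$, the computation $(z \cdot f)(g) = f(gz) = f(zg) = \chi^\sharp(z) f(g)$ shows that $z$ acts on all of $\rho_\chi$ by the scalar $\chi^\sharp(z)$. Evaluating at $z = 1 + a\tau^{2(h-1)}$ gives $\chi^\sharp(z) = \chi(1 + a\pi^{h-1}) = \psi(a)$, invoking the defining assumption $\chi \in \cA_\psi$. Hence $H_{2(h-1)}(\F)$ acts on $\rho_\chi$ via $\psi$, and $\rho_\chi \in \cG_\psi$. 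Since the lemma is essentially a packaging of the two cited results together with an elementary centrality observation, there is no serious obstacle.
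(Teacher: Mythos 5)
Your proof is correct and follows the same route as the paper, which simply observes that $\chi^\sharp$ extends $\widetilde\psi$ and invokes Corollary \ref{c:irrep}. The extra verification that $H_{2(h-1)}(\F)$ is central and acts by $\psi$ (hence $\rho_\chi\in\cG_\psi$) is a detail the paper leaves implicit, and your computation of it is accurate.
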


\begin{proof}
Since $\chi^\sharp$ is an extension of $\widetilde \psi$ to $H'(\F)$, this lemma is just a special case of Corollary \ref{c:irrep}.
\end{proof}

\begin{lemma}\label{l:orthodd}
For $\chi_1, \chi_2 \in \cA_\psi$, we have $\rho_{\chi_1} \cong \rho_{\chi_2}$ if and only if $\chi_1 = \chi_2$.
\end{lemma}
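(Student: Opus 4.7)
The ``if'' direction is trivial. For ``only if'', since both representations are irreducible by Lemma \ref{l:irrepodd}, isomorphism is equivalent to $\Hom_{\UnipF}(\rho_{\chi_1}, \rho_{\chi_2}) \neq 0$. The plan is to apply Frobenius reciprocity and Mackey's decomposition to write
\[
\Hom_{\UnipF}(\rho_{\chi_1}, \rho_{\chi_2}) \cong \bigoplus_{g \in H'(\F) \backslash \UnipF / H'(\F)} \Hom_{H'(\F) \cap gH'(\F)g^{-1}}\bigl(\chi_1^\sharp,\, {}^g \chi_2^\sharp\bigr),
\]
and then show that the only double coset contributing is the trivial one, which in turn forces $\chi_1 = \chi_2$.

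To kill the nontrivial double cosets I would mimic the Mackey-style argument from Corollary \ref{c:irrep}. For $g \in \UnipF \smallsetminus H'(\F)$, Lemma \ref{l:centpsi}(b) produces $a \in H_0'(\F)$ with $gag^{-1} \in H_0'(\F)$ and $\widetilde\psi(gag^{-1}) \neq \widetilde\psi(a)$. Setting $b \colonequals gag^{-1}$, the element $b$ lies in $H_0'(\F) \subset H'(\F)$ and satisfies $g^{-1}bg = a \in H'(\F)$, so $b \in H'(\F) \cap gH'(\F)g^{-1}$. Since both $\chi_1^\sharp$ and $\chi_2^\sharp$ restrict to $\widetilde\psi$ on $H_0'(\F)$, one computes
\[
\chi_1^\sharp(b) = \widetilde\psi(gag^{-1}) \neq \widetilde\psi(a) = \chi_2^\sharp(a) = {}^g\chi_2^\sharp(b),
\]
so the two characters disagree on $H'(\F) \cap gH'(\F)g^{-1}$ and the corresponding Hom space vanishes.

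Only the trivial double coset survives, yielding $\Hom_{H'(\F)}(\chi_1^\sharp, \chi_2^\sharp) \neq 0$, i.e.\ $\chi_1^\sharp = \chi_2^\sharp$ as characters of $H'(\F)$. Restricting both sides to the subgroup $H(\F) \subset H'(\F)$ of elements whose odd coefficients vanish and using the canonical isomorphism $H(\F) \cong U_L^1/U_L^h$, the defining formula for $\chi^\sharp$ directly gives $\chi_1 = \chi_2$. I do not anticipate any serious obstacle: the key input is Lemma \ref{l:centpsi}(b), and the only mildly delicate point is verifying that the element $b$ chosen above actually lies in the intersection $H'(\F) \cap g H'(\F) g^{-1}$, which is immediate from the two containments $b \in H_0'(\F)$ and $b \in g H_0'(\F) g^{-1}$.
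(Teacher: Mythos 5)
Your proof is correct, but it takes a genuinely different route from the paper. The paper disposes of this lemma by citing Corollary \ref{c:mult}: once the character formula of Theorem \ref{t:decomp} is known, $\chi$ occurs with multiplicity $1$ in $\rho_\chi|_{H(\F)}$ while every other constituent lies in $\cA(\chi)$ and occurs with multiplicity $q+1$, so an isomorphism $\rho_{\chi_1}\cong\rho_{\chi_2}$ forces $\chi_1=\chi_2$ by comparing multiplicities. That argument is a one-liner but rests on the full trace computation of Section \ref{s:morerepthy} (and is a forward reference from where the lemma sits). You instead compute the intertwining number directly via Mackey's formula, killing every nontrivial double coset $H'(\F)gH'(\F)$ with exactly the same input — Lemma \ref{l:centpsi}(b) — that drives the irreducibility proof in Corollary \ref{c:irrep}. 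Your verification that $b=gag^{-1}$ lies in $H'(\F)\cap gH'(\F)g^{-1}$ and that $\chi_1^\sharp(b)=\widetilde\psi(gag^{-1})\neq\widetilde\psi(a)={}^g\chi_2^\sharp(b)$ is sound (both $\chi_i^\sharp$ do restrict to $\widetilde\psi$ on $H_0'(\F)$), and the final step $\chi_1^\sharp=\chi_2^\sharp\Rightarrow\chi_1=\chi_2$ by restriction to $H(\F)$ is immediate from the definition of $\chi^\sharp$. What your approach buys is a self-contained, purely group-theoretic proof available already in Section \ref{s:repthy}; what the paper's approach buys is economy, since the character formula is needed later anyway. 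Note that your argument, as written, uses Lemma \ref{l:centpsi}(b) and hence is specific to $h$ odd, which is exactly the setting of this lemma.
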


\begin{proof}
This follows from Corollary \ref{c:mult}. 
\end{proof}

\begin{corollary}\label{c:Vpsiodd}
Let $\psi$ be a character of $\F$ with conductor $q^2$. If $\rho \in \cG_\psi$, then $\rho$ occurs with multiplicity one in the representation $V_\psi \colonequals \Ind_{H_0'(\F)}^{\UnipF}(\widetilde \psi)$.
\end{corollary}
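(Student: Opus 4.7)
The plan is to decompose $V_\psi$ explicitly as a direct sum of the representations $\rho_\chi$ via induction in stages, and then apply Frobenius reciprocity together with Lemma \ref{l:psitilde} to force any $\rho \in \cG_\psi$ to appear (with multiplicity one) as one of the summands.

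First I would use transitivity of induction and Lemma \ref{l:extodd} to write
\begin{equation*}
V_\psi = \Ind_{H_0'(\F)}^{\UnipF}(\widetilde\psi) \cong \Ind_{H'(\F)}^{\UnipF}\bigl(\Ind_{H_0'(\F)}^{H'(\F)}(\widetilde\psi)\bigr) \cong \bigoplus_{\chi \in \cA_\psi} \Ind_{H'(\F)}^{\UnipF}(\chi^\sharp) = \bigoplus_{\chi \in \cA_\psi} \rho_\chi.
\end{equation*}
By Lemma \ref{l:irrepodd} each $\rho_\chi$ is irreducible, and by Lemma \ref{l:orthodd} the $\rho_\chi$ are pairwise non-isomorphic. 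Hence $V_\psi$ is a multiplicity-free direct sum of irreducibles, whose isomorphism classes are precisely $\{\rho_\chi : \chi \in \cA_\psi\}$.

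Now let $\rho \in \cG_\psi$. By Lemma \ref{l:psitilde}, the restriction $\Res_{H_0'(\F)}^{\UnipF}\rho$ contains $\widetilde\psi$, so Frobenius reciprocity gives
\begin{equation*}
\Hom_{\UnipF}(\rho, V_\psi) = \Hom_{H_0'(\F)}(\Res_{H_0'(\F)}^{\UnipF}\rho, \widetilde\psi) \neq 0.
\end{equation*}
Combined with the decomposition above, this forces $\rho \cong \rho_\chi$ for some $\chi \in \cA_\psi$, and the multiplicity of $\rho$ in $V_\psi$ equals the multiplicity of $\rho_\chi$, which is one.

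There is no serious obstacle here; the corollary is essentially a clean consequence of Lemmas \ref{l:psitilde}, \ref{l:extodd}, \ref{l:irrepodd}, and \ref{l:orthodd}. The only point requiring slight care is ensuring that induction in stages commutes with the direct sum decomposition of $\Ind_{H_0'(\F)}^{H'(\F)}(\widetilde\psi)$, which is automatic since induction is additive.
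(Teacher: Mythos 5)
Your proof is correct and follows essentially the same route as the paper's: both decompose $V_\psi \cong \bigoplus_{\chi \in \cA_\psi} \rho_\chi$ via induction in stages and Lemma \ref{l:extodd}, invoke Lemmas \ref{l:irrepodd} and \ref{l:orthodd} for multiplicity-freeness, and use Lemma \ref{l:psitilde} (with Frobenius reciprocity) to place any $\rho \in \cG_\psi$ inside $V_\psi$. The only difference is the order in which the steps are presented.
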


\begin{proof}
Let $\rho \in \cG_\psi$. It follows from Lemma \ref{l:psitilde} that the restriction of $\rho$ to $H_0'(\F)$ must contain $\widetilde \psi$. Therefore $\rho$ is a direct summand of $V_\psi$. Lemma \ref{l:extodd} implies that $\Ind_{H_0'(\F)}^{H'(\F)}(\widetilde \psi) \cong \bigoplus_{\chi \in \cA_\psi} \chi^\sharp$. Thus
\begin{equation*}
\Ind_{H_0'(\F)}^{\UnipF}(\widetilde \psi) \cong \Ind_{H'(\F)}^{\UnipF}(\bigoplus_{\chi \in \cA_\psi} \chi^\sharp) \cong \bigoplus_{\chi \in \cA_\psi} \rho_\chi.
\end{equation*}
By Lemma \ref{l:irrepodd} and Lemma \ref{l:orthodd}, this a direct sum of nonisomorphic irreducible representations, and this completes the proof. 
\end{proof}

We have now shown the following.

\begin{proposition}\label{p:bijodd}
Let $\psi$ be a character of $\F$ with conductor $q^2$. There is a bijective correspondence
\begin{equation*}
\cA_\psi \longleftrightarrow \cG_\psi
\end{equation*}
and this correspondence is given by
\begin{equation*}
\chi \longleftrightarrow \rho_\chi.
\end{equation*}
Furthermore, every representation in $\cG_\psi$ has dimension $q^{h-1}$.
\end{proposition}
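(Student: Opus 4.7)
The plan is to assemble the preceding lemmas into a clean bijection, since almost all of the work has already been done. By Lemma \ref{l:irrepodd} the assignment $\chi \mapsto \rho_\chi$ sends $\cA_\psi$ into $\cG_\psi$, so the map is well-defined, and its injectivity is exactly the content of Lemma \ref{l:orthodd}. For surjectivity, I would take an arbitrary $\rho \in \cG_\psi$: since $H_{2(h-1)}(\F)$ acts on $\rho$ via $\psi$, Lemma \ref{l:psitilde} forces $\widetilde\psi$ to occur in $\rho|_{H_0'(\F)}$; Frobenius reciprocity then realizes $\rho$ as a summand of $V_\psi \colonequals \Ind_{H_0'(\F)}^{\UnipF}(\widetilde\psi)$. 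Corollary \ref{c:Vpsiodd} identifies $V_\psi \cong \bigoplus_{\chi \in \cA_\psi} \rho_\chi$ as a multiplicity-free direct sum of pairwise nonisomorphic irreducibles, so $\rho \cong \rho_{\chi}$ for a unique $\chi \in \cA_\psi$.

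For the dimension statement, I would compute the index $[\UnipF : H'(\F)]$ directly from the coordinate description of the groups. Parameterizing $\Unip$ by the $2(h-1)$ coefficients $a_i \in \F$ gives $|\UnipF| = q^{4(h-1)}$. When $h$ is odd, $H'(\F)$ is cut out by the conditions that $a_i$ is free for $i$ even (contributing $h-1$ coordinates) or for $i$ odd with $i > h-1$ (contributing the odd integers $h, h+2, \ldots, 2h-3$, a total of $(h-1)/2$ coordinates), so $|H'(\F)| = q^{2\cdot 3(h-1)/2} = q^{3(h-1)}$. Since $\chi^\sharp$ is one-dimensional, it follows that $\dim \rho_\chi = [\UnipF : H'(\F)] = q^{h-1}$.

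There is no substantial new obstacle in this proposition beyond the preceding lemmas; the proof is essentially a bookkeeping combination of Lemmas \ref{l:psitilde}, \ref{l:irrepodd}, \ref{l:orthodd} and Corollary \ref{c:Vpsiodd}, together with a parity-sensitive coordinate count. If any step warrants care, it is verifying that the decomposition $V_\psi \cong \bigoplus_{\chi \in \cA_\psi} \rho_\chi$ is genuinely a sum of $|\cA_\psi|$ distinct irreducibles with no collapse, but this is precisely what is guaranteed by Lemmas \ref{l:irrepodd} and \ref{l:orthodd}.
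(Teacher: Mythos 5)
Your proposal is correct and follows the same route as the paper: injectivity from Lemma \ref{l:orthodd}, surjectivity from Lemma \ref{l:psitilde} together with the decomposition $V_\psi \cong \bigoplus_{\chi \in \cA_\psi} \rho_\chi$ of Corollary \ref{c:Vpsiodd}, and the dimension from the index computation $[\UnipF : H'(\F)] = q^{4(h-1)}/q^{3(h-1)} = q^{h-1}$. The only difference is that you unpack the surjectivity step slightly more explicitly than the paper, which simply cites Corollary \ref{c:Vpsiodd}.
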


\begin{proof}
Injectivity follows from Lemma \ref{l:orthodd}. Surjectivity follows from Lemma \ref{c:Vpsiodd}. Thus every representation of $\cG_\psi$ is of the form $\rho_\chi$, which has dimension equal to $|\UnipF|/|H'(\F)| = q^{4(h-1)}/q^{3(h-1)} = q^{h-1}$.
\end{proof}

\subsection{Case: $h$ even}\label{s:even}

We first recall some general facts about group representations. Suppose that $G$ is a group and $H, K, N \subset G$ are subgroups such that $H = K \cdot N$. Note that if $\chi$ is a character of $K$ and $\theta$ is a character of $N$ such that $\chi = \theta$ on the intersection $K \cap N$, then the function $f(k \cdot n) \colonequals \chi(k) \theta(n)$ is well-defined. Now let $\chi$ and $\theta$ be multiplicative. If $K$ normalizes $N$ and $K$ centralizes $\theta$, then in fact 
\begin{equation*}
f(k_1n_1k_2n_2) = f(k_1k_2(k_2^{-1}n_1 k_2 n_2)) = \chi(k_1 k_2) \theta(n_1 n_2) = f(k_1k_2n_1n_2),
\end{equation*}
so $f$ is multiplicative.

We now apply the above to the situation when $K = H(\F)$, $N = H_1'(\F)$ and $H = H'(\F)$. Recall that we have 
\begin{align*}
H(\F) &\colonequals \{1 + \sum a_i \tau^i : \text{$i$ is even}\} \\
H_1'(\F) &\colonequals \{1 + \sum a_i \tau^i : \text{$i = 2(h-1)$ OR $i \geq h-1$ is odd; $a_{h-1} \in \FF_q$}\} \\
H'(\F) &\colonequals \{1 + \sum a_i \tau^i : \text{$i$ is even OR $i \geq h-1$ is odd; $a_{h-1} \in \FF_q$}\}
\end{align*}
Note that $H_1'(\F)$ is an abelian subgroup of $\UnipF$ containing $H_0'(\F)$ as an index-$q$ subgroup. Thus there are $q$ extensions of $\widetilde \psi$ to $H_1'(\F)$. Given such an extension $\theta$ and given $\chi \in \cA_\psi$, we wish to construct a character $\widetilde \chi$ of $H'(\F)$ that extends both $\chi$ and $\widetilde \psi$. (This is the analogue of $\chi^\sharp$ in the case that $h$ is odd.)

We see that $H(\F) \cap H_1'(\F) = H_{2(h-1)}(\F)$ and that $\chi$ and $\theta$ agree on this intersection. Now define
\begin{equation*}
\widetilde \chi_\theta(kn) = \chi(k) \theta(n) \qquad \text{for $k \in H(\F)$ and $n \in H_1'(\F)$}.
\end{equation*}
This is well-defined.

\begin{lemma}
The map $\widetilde \chi_\theta \from H'(\F) \to \overline \QQ_\ell^\times$ is a group homomorphism. 
\end{lemma}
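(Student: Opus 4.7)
The plan is to invoke the general principle stated in the paragraph just before the lemma: given subgroups $K, N \subseteq G$ with $H = KN$, characters $\chi$ of $K$ and $\theta$ of $N$ agreeing on $K \cap N$, the rule $kn \mapsto \chi(k)\theta(n)$ is a well-defined group homomorphism on $H$ provided $K$ normalizes $N$ and $K$ centralizes $\theta$. I would apply this with $K = H(\F)$ and $N = H_1'(\F)$, noting that $H'(\F) = H(\F) \cdot H_1'(\F)$ (because every element decomposes into its even-indexed part and its odd-indexed-together-with-$\tau^{2(h-1)}$ part) and $K \cap N = H_{2(h-1)}(\F)$, on which both $\chi$ and $\theta$ restrict to $\psi$. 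Well-definedness of $\widetilde\chi_\theta$ as a set-theoretic function was already recorded in the paragraph preceding the statement, so it remains to verify the normalization and centralization hypotheses.

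For normalization, a direct parity check suffices. With $g = 1 + \sum b_{2j}\tau^{2j} \in H(\F)$ and $a = 1 + \sum a_i\tau^i \in H_1'(\F)$, every odd-degree term in $gag^{-1}$ must involve exactly one odd-degree factor from $a$; since the minimum odd degree appearing in $a$ is $h-1$, the $\tau^{h-1}$-coefficient cannot receive new contributions and remains equal to $a_{h-1} \in \FF_q$, while any other odd-degree contribution sits at degree $> h-1$. Hence $gag^{-1} \in H_1'(\F)$.

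For centralization, I would first observe that $H_1'(\F)$ is abelian: the only potentially noncommutative product inside $H_1'(\F)$ (after truncation past $\tau^{2(h-1)}$) is between two $\tau^{h-1}$-factors, and since their coefficients lie in $\FF_q$ the Frobenius twist is trivial. I would then decompose $a = a_0 \cdot (1 + c\tau^{h-1})$ with $a_0 \in H_0'(\F)$ and $c \in \FF_q$. Lemma \ref{l:centpsi}(a), applied to $H(\F) \subseteq H'(\F)$, handles the $a_0$-factor, giving $\theta(ga_0g^{-1}) = \theta(a_0)$. For the remaining factor, the expansion $g(1+c\tau^{h-1})g^{-1} = 1 + gc\tau^{h-1}g^{-1}$ shows that the correction is a sum of odd-degree terms (since $g$ and $g^{-1}$ contribute only even-degree factors), so in particular the $\tau^{2(h-1)}$-coefficient of $g(1+c\tau^{h-1})g^{-1}$ vanishes. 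A short direct computation using $c \in \FF_q$ then shows that $g(1+c\tau^{h-1})g^{-1} \cdot (1+c\tau^{h-1})^{-1}$ lies in $H_0'(\F)$ and also has vanishing $\tau^{2(h-1)}$-coefficient, so $\widetilde\psi$ evaluates it to $1$, yielding $\theta(gag^{-1}) = \theta(a)$.

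I anticipate the main obstacle to be this last cancellation. It amounts to verifying that the would-be $\tau^{2(h-1)}$-contribution from the product of the two $\tau^{h-1}$-coefficients (one from $g(1+c\tau^{h-1})g^{-1}$ and one from $(1+c\tau^{h-1})^{-1}$) cancels exactly the $\tau^{2(h-1)}$-term carried by $(1+c\tau^{h-1})^{-1}$ alone, with the cancellation relying on $c^{q^{h-1}} = c$ for $c \in \FF_q$. This is precisely where the constraint $a_{h-1} \in \FF_q$ in the definition of $H_1'(\F)$ becomes essential — both for the abelianness step above and for this cancellation, and it is the structural reason the case $h$ even requires separate treatment from the odd case.
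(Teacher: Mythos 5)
Your proposal is correct and follows essentially the same route as the paper: both reduce the lemma to checking that $H(\F)$ normalizes $H_1'(\F)$ and centralizes $\theta$, and then invoke the general principle stated just before the lemma. The only difference is cosmetic — the paper verifies centralization by observing directly that conjugation by $H(\F)$ leaves the $\tau^{h-1}$ and $\tau^{2(h-1)}$ coefficients (the only ones $\theta$ depends on) unchanged, whereas you factor $a = a_0(1+c\tau^{h-1})$ and handle the two pieces separately via Lemma \ref{l:centpsi}(a) and an explicit cancellation, which amounts to the same computation.
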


\begin{proof}
It is enough to show that $H(\F)$ normalizes $H_1'(\F)$ and that $H(\F)$ centralizes $\theta$. Write $k = 1 + \sum a_i \tau^i \in H(\F)$ and $n = 1 + \sum b_i \tau^i \in H_1'(\F)$. Since the only nonzero terms of $k$ are $a_{2i} \tau^{2i}$, then $n$ only differs by $knk^{-1}$ in the coefficients of $\tau^i$ for $i$ odd and $> h-1$. Thus $H(\F)$ normalizes $H_1'(\F)$. Moreover, the coefficient of $\tau^{2(h-1)}$ in $knk^{-1}n^{-1}$ is equal to $b_{2(h-1)}$. Since $\theta$ is an extension of $\widetilde \psi$, then it follows that $H(\F)$ centralizes $\theta$. This completes the proof.
\end{proof}

\begin{lemma}\label{l:exteven}
Let $\psi$ be an additive character of $\F$ of conductor $q^2$. If $\nu$ is an extension of $\widetilde \psi$ to $H'(\F)$, then there exists a $\theta$ and $\chi \in \cA_\psi$ such that $\nu = \widetilde \chi_\theta$. Moreover,
\begin{equation*}
\Ind_{H_0'(\F)}^{H'(\F)}(\widetilde \psi) \cong \bigoplus_\theta \bigoplus_{\chi \in \cA_\psi} \widetilde \chi_\theta.
\end{equation*}
\end{lemma}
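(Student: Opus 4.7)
My approach will closely parallel the proof of Lemma \ref{l:extodd}. The plan is a counting argument: I will show that the family $\{\widetilde \chi_\theta\}$ is precisely the set of extensions of $\widetilde \psi$ to $H'(\F)$.

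First I establish distinctness. For $k \in H(\F) \subset H'(\F)$ we write $k = k \cdot 1$ under the factorization $H'(\F) = H(\F) \cdot H_1'(\F)$, yielding $\widetilde \chi_\theta(k) = \chi(k) \theta(1) = \chi(k)$; similarly, restriction to $H_1'(\F)$ recovers $\theta$. Thus distinct pairs $(\chi, \theta)$ give distinct characters on $H'(\F)$. Since $|\cA_\psi| = q^{2(h-2)}$ and (as noted in the paper) $H_1'(\F)$ is abelian with $[H_1'(\F) : H_0'(\F)] = q$, there are $q$ choices of $\theta$, so we obtain $q \cdot q^{2(h-2)} = q^{2h-3}$ pairwise distinct extensions of $\widetilde \psi$ to $H'(\F)$.

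Next I bound the total number of extensions of $\widetilde \psi$ to $H'(\F)$ from above by $q^{2h-3}$. By Frobenius reciprocity, any one-dimensional character of $H'(\F)$ extending $\widetilde \psi$ occurs with multiplicity exactly one in $V \colonequals \Ind_{H_0'(\F)}^{H'(\F)}(\widetilde \psi)$, so the number of such extensions is at most $\dim V = [H'(\F) : H_0'(\F)]$. A direct count of coefficient data (even-degree coefficients and odd-degree coefficients of degree $\geq h+1$ both in $\F$, plus the constrained coefficient $a_{h-1} \in \FF_q$) gives $|H'(\F)| = q^{2(h-1)} \cdot q \cdot q^{h-2} = q^{3(h-1)}$ and $|H_0'(\F)| = q^2 \cdot q^{h-2} = q^h$, hence $[H'(\F) : H_0'(\F)] = q^{2h-3}$. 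Combined with the first step, every extension of $\widetilde \psi$ to $H'(\F)$ is of the form $\widetilde \chi_\theta$.

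Finally, the direct sum decomposition follows from the same multiplicity-one observation: $V$ contains $q^{2h-3}$ distinct one-dimensional subrepresentations $\widetilde \chi_\theta$, each appearing with multiplicity one, and the sum of their dimensions equals $q^{2h-3} = \dim V$, forcing $V \cong \bigoplus_\theta \bigoplus_{\chi \in \cA_\psi} \widetilde \chi_\theta$. The only nontrivial step is the bookkeeping behind the cardinality computation; this is routine but demands care in the even-$h$ case, since the coefficient $a_{h-1}$ is restricted to $\FF_q$ rather than $\F$, which is what makes the numerics come out to match the $q$ choices of $\theta$.
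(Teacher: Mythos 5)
Your proof is correct and follows essentially the same route as the paper's: both arguments count the family $\{\widetilde\chi_\theta\}$ (here $q\cdot q^{2(h-2)}=q^{2h-3}$ distinct characters) against the upper bound $[H'(\F):H_0'(\F)]=q^{2h-3}$ on the number of extensions of $\widetilde\psi$, and conclude by exhaustion. Your added details (the restriction argument for distinctness, the Frobenius-reciprocity multiplicity-one observation, and the explicit cardinalities) simply fill in steps the paper leaves implicit.
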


\begin{proof}
The maximum number of extensions of $\widetilde \psi$ to $H'(\F)$ is $[H'(\F) : H_0'(\F)] = q^{3(h-1)+1}/q^{h+1} = q^{2(h-1)-1}$. On the other hand, it is clear that $\widetilde \chi_\theta$ is an extension of $\widetilde \psi$ and varying $\chi$ and $\theta$ give rise to $q^{2(h-2)+1}$ distinct extension $\nu$. Therefore in fact every such $\nu$ is of the form $\widetilde \chi_\theta$.
\end{proof}

\begin{lemma} \label{l:compateven}
Let $\theta_1$ and $\theta_2$ be extensions of $\widetilde \psi$ to $H_1'(\F)$. Let $\widetilde \chi_i \colonequals \widetilde \chi_{\theta_i}$ for $i = 1,2$. Then
\begin{equation*}
\Ind_{H'(\F)}^{\UnipF}(\widetilde \chi_1) \cong \Ind_{H'(\F)}^{\UnipF}(\widetilde \chi_2).
\end{equation*}
\end{lemma}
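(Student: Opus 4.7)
The plan is to find $g \in N_{\UnipF}(H'(\F))$ such that conjugation by $g$ sends $\widetilde\chi_1$ to $\widetilde\chi_2$; the standard identification then yields a $\UnipF$-equivariant isomorphism between the two induced representations. By Lemma \ref{l:normH'} the normalizer is $K$, which for $h$ even contains $H'(\F)$ as an index-$q$ subgroup with $K/H'(\F)$ parameterized by the coefficient of $\tau^{h-1}$ modulo $\FF_q$. Note that $\widetilde\chi_1$ and $\widetilde\chi_2$ agree on $H(\F)$ (both equal $\chi$ there) and on $H_0'(\F)$ (both equal $\widetilde\psi$ there), so their ratio is a character of $H'(\F)$ trivial on both $H(\F)$ and $H_0'(\F)$ and entirely determined by its value on the one-parameter family $\{1+a_{h-1}\tau^{h-1}:a_{h-1}\in\FF_q\}$, i.e.\ by a single character of $\FF_q$.

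Take $g = 1 + b\tau^{h-1}$ with $b\in\F$ and analyze its conjugation action, using $\tau c = c^q\tau$ and the expansion $g^{-1} = 1 - b\tau^{h-1} + b^{q+1}\tau^{2(h-1)} - \cdots$. (a) For $k\in H(\F)$, one writes $gkg^{-1} = k\cdot u$ with $u \in H'(\F)$; a direct expansion shows the even coefficients of $gkg^{-1}$ equal those of $k$ (the $\tau^{2(h-1)}$-position requires the cancellations $(k^{-1}k)_{2(h-1)}=0$ and $b^{q+1}-b^{q+1}=0$ coming from the second-order term of $g^{-1}$), hence $u\in H_1'(\F)$ with vanishing $\tau^{h-1}$ and $\tau^{2(h-1)}$ coefficients, so $u$ lies in the odd-degree part of $H_0'(\F)$ where $\theta_1=\widetilde\psi=1$; thus $\widetilde\chi_1(gkg^{-1}) = \chi(k)$. (b) For $n\in H_0'(\F)$, a parallel expansion gives $\widetilde\psi(gng^{-1}) = \widetilde\psi(n)$. (c) For $a = 1 + a_{h-1}\tau^{h-1}$ with $a_{h-1}\in\FF_q$, the calculation in Lemma \ref{l:centpsi}(c) yields $\theta_1(gag^{-1}) = \theta_1(a)\,\psi(a_{h-1}(b-b^q))$. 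Combining (a)--(c), the twist ${}^g\widetilde\chi_1/\widetilde\chi_1$ is the character of $H'(\F)$ that is trivial on $H(\F)$ and $H_0'(\F)$ and equals $a_{h-1}\mapsto\psi(a_{h-1}(b-b^q))$ on the $\tau^{h-1}$-piece.

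It then remains to choose $b$ so that this twist matches $\widetilde\chi_2/\widetilde\chi_1$. The map $\F\to\widehat{\FF_q}$, $b\mapsto[a\mapsto\psi(a(b-b^q))]$, is $\FF_q$-linear, factors through $\F/\FF_q$, and has image $\{a\mapsto\psi(ac):c\in\ker\Tr_{\F/\FF_q}\}$, because $b\mapsto b-b^q$ has image $\ker\Tr_{\F/\FF_q}$. Since $\psi$ has conductor $q^2$ it does not factor through $\Tr_{\F/\FF_q}$, so $\psi|_{\ker\Tr_{\F/\FF_q}}$ is nontrivial; the composite $\ker\Tr_{\F/\FF_q}\to\widehat{\FF_q}$, $c\mapsto[a\mapsto\psi(ac)]$, is then a nonzero $\FF_q$-linear map between two groups of order $q$, hence a bijection. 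Every character of $\FF_q$ is therefore realized, a suitable $b$ exists, and the proof is complete. The main obstacle is the bookkeeping in step (a), in particular verifying that the $\tau^{2(h-1)}$-coefficient of $gkg^{-1}$ really equals that of $k$ --- so that $u$ lies in the part of $H_1'(\F)$ where $\theta_1$ is forced to be trivial --- after which the rest of the argument is mechanical.
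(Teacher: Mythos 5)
Your proposal is correct and follows essentially the same route as the paper: conjugate by elements $g = 1 \pm b\tau^{h-1}$ of the normalizer of $H'(\F)$, check that this fixes $\chi$ on $H(\F)$ and twists $\theta_1$ by $a_{h-1} \mapsto \psi(a_{h-1}(b - b^q))$, and use the conductor-$q^2$ hypothesis to realize every character of $\FF_q$ as $b$ varies. Your write-up is somewhat more explicit than the paper's in verifying the $\tau^{2(h-1)}$-coefficient cancellation in step (a) and in justifying the surjectivity onto $\widehat{\FF_q}$, but the argument is the same.
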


\begin{proof}
Suppose that $\theta_1$ and $\theta_2$ are any extensions of $\widetilde \psi$ to $H_1'(\F)$. Recall that the corresponding characters $\widetilde \chi_1$ and $\widetilde \chi_2$ of $H'(\F)$ are defined as
\begin{equation*}
\widetilde \chi_i(kn) = \chi(k) \theta_i(n),
\end{equation*}
where $k \in H(\F)$ and $n \in H_1'(\F)$. Note that for any $g \in \UnipF$, we have
\begin{equation*}
gkng^{-1} = (gkg^{-1})(gng^{-1}).
\end{equation*}
Now consider the element $g = 1 - a\tau^{h-1} \in \UnipF$. Since $h$ is even, $h-1$ is odd, and $\widetilde \chi_1(gkg^{-1}) = \chi(k)$. Therefore
\begin{equation*}
{}^g \widetilde \chi_1(kn) = \chi(k) \cdot {}^g \theta_1(n).
\end{equation*}
We thus see that to show that $\widetilde \chi_1$ and $\widetilde \chi_2$ are $\UnipF$-conjugate, it suffices to show that there exists a $g = 1 - a \tau^{h-1} \in \UnipF$ such that ${}^g \theta_1 = \theta_2$. 

Now, for any $n = 1 + \sum b_i \tau^i \in H_1'(\F)$,
\begin{align*}
gng^{-1} 
&= \Big(1-a\tau^{h-1}\Big)\Big(1 + \sum_{h-1 \leq i} b_i \tau^i\Big)\Big(1+a\tau^{h-1}+a^{q+1}\tau^{h-1}\Big) \\
&= 1 + \Big(\sum_{h-1 \leq i < 2(h-1)} b_i \tau^i\Big) + \Big(b_{2(h-1)} + b_{h-1} a^q - ab_{h-1}^q\Big)\tau^{2(h-1)}.
\end{align*}
Thus
\begin{equation*}
{}^g \theta_1(n) = \theta_1(n) \psi(b_{h-1} a^q - ab_{h-1}^q).
\end{equation*}
From here, we need only show that $\#\{{}^g\theta : g = 1 - a\tau^{h-1}\} = q$, where $\theta$ is any extension of $\psi$ to $G$.

Noting that $b_{h-1} \in \FF_q$ since $b \in G$, the above computation shows that
\begin{align*}
\theta_1(gng^{-1}) = \theta_1(n) \psi(b_{h-1} a^q - ab_{h-1}).
\end{align*}
Since $\psi$ has trivial $\Gal(\F/\FF_q)$-stabilizer, then in particular it is nontrivial on $\ker \Tr_{\F/\FF_q}$. It is not difficult to show that every additive character of $\FF_q$ can be written as $b \mapsto \psi(b(a^q - a))$ for some $a \in \FF_{q^2}$. This completes the proof.
\end{proof}

\begin{lemma}\label{l:irrepeven}
Let $\psi$ be an additive character of $\F$ with conductor $q^2$, and let $\chi \in \cA_\psi$. The representation 
\begin{equation*}
\rho_\chi \colonequals \Ind_{H'(\F)}^{\UnipF}(\widetilde \chi_\theta)
\end{equation*}
is irreducible and hence $\rho_\chi \in \cG_\psi$.
\end{lemma}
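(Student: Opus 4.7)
The plan is to reduce directly to Corollary \ref{c:irrep}, exactly as in the $h$ odd case (Lemma \ref{l:irrepodd}). The only thing that needs checking is that $\widetilde\chi_\theta$ is a legitimate extension of $\widetilde\psi$ from $H_0'(\F)$ to $H'(\F)$; once that is in hand, the corollary produces irreducibility, and the membership $\rho_\chi \in \cG_\psi$ follows from a short central-character argument.

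To verify that $\widetilde\chi_\theta$ extends $\widetilde\psi$, I would specialize the defining formula $\widetilde\chi_\theta(kn) = \chi(k)\theta(n)$ to $k = 1$, obtaining $\widetilde\chi_\theta|_{H_1'(\F)} = \theta$. Since $\theta$ is by hypothesis an extension of $\widetilde\psi$ from $H_0'(\F)$ to $H_1'(\F)$, and $H_0'(\F) \subset H_1'(\F) \subset H'(\F)$, the character $\widetilde\chi_\theta$ extends $\widetilde\psi$ all the way to $H'(\F)$ as required.

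Irreducibility then follows from Corollary \ref{c:irrep}. Under the hood, the Mackey-criterion verification in the even-parity setting is supplied by Lemma \ref{l:centpsi}(c) in place of (b): for each $g \in \UnipF \smallsetminus H'(\F)$, that lemma produces an $a \in H_1'(\F) \subset H'(\F)$ with $gag^{-1} \in H_1'(\F)$ and $\theta(gag^{-1}) \neq \theta(a)$. Since $\widetilde\chi_\theta$ agrees with $\theta$ on $H_1'(\F)$, this immediately separates ${}^g\widetilde\chi_\theta$ from $\widetilde\chi_\theta$ on the intersection $gH'(\F)g^{-1} \cap H'(\F)$, which is exactly what Mackey demands.

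To conclude $\rho_\chi \in \cG_\psi$, I would invoke centrality of $H_{2(h-1)}(\F)$ in $\UnipF$, which follows immediately from the commutator estimate $[H_a, H_b] \subset H_{a+b}$ together with the vanishing $H_k = \{1\}$ for $k > 2(h-1)$. Since $\rho_\chi$ is irreducible, $H_{2(h-1)}(\F)$ acts on $\rho_\chi$ by a scalar character, and that scalar is read off from $\widetilde\chi_\theta|_{H_{2(h-1)}(\F)} = \widetilde\psi|_{H_{2(h-1)}(\F)} = \psi$. There is no real obstacle in this lemma; the entire technical content was already isolated into Lemma \ref{l:centpsi}(c), and the present statement is a packaging of that computation together with the bookkeeping defining $\widetilde\chi_\theta$.
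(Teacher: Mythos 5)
Your proof is correct and follows the same route as the paper, which simply observes that the lemma is a special case of Corollary \ref{c:irrep} once one knows $\widetilde\chi_\theta$ extends $\widetilde\psi$. Your added remark that the Mackey verification in the even-parity case rests on Lemma \ref{l:centpsi}(c) rather than (b) is a worthwhile clarification, since the paper's proof of Corollary \ref{c:irrep} cites only part (b).
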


\begin{proof}
This is a special case of Corollary \ref{c:irrep}.
\end{proof}

\begin{remark}
Note that by Lemma \ref{l:compateven}, the chosen extension $\theta$ does not change the representation $\Ind_{H'(\F)}^{\UnipF}(\widetilde \chi_\theta)$. This justifies the suppression of $\theta$ in the notation $\rho_\chi$ introduced in Lemma \ref{l:irrepeven}.
\end{remark}

\begin{lemma}\label{l:ortheven}
For $\chi_1, \chi_2 \in \cA_\psi$, we have $\rho_{\chi_1} \cong \rho_{\chi_2}$ if and only if $\chi_1 = \chi_2$.
\end{lemma}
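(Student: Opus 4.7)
The plan is to reduce the claim to a Frobenius–reciprocity-plus-Mackey computation, following exactly the pattern of Lemma~\ref{l:orthodd} in the odd case. Specifically, applying Frobenius reciprocity to $\rho_{\chi_i} = \Ind_{H'(\F)}^{\UnipF}(\widetilde \chi_{i,\theta_i})$ and then the Mackey restriction–induction formula gives
\begin{equation*}
\dim \Hom_{\UnipF}(\rho_{\chi_1}, \rho_{\chi_2}) \;=\; \sum_{g \in H'(\F) \backslash \UnipF / H'(\F)} \dim \Hom_{H'(\F) \cap gH'(\F)g^{-1}}\bigl(\widetilde \chi_{1,\theta_1}, {}^g \widetilde \chi_{2,\theta_2}\bigr),
\end{equation*}
where each summand is $0$ or $1$ since we are comparing one-dimensional characters. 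It therefore suffices to show every summand vanishes when $\chi_1 \neq \chi_2$.

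I would partition the double cosets according to the normalizer $K = N_{\UnipF}(H'(\F))$ described in Lemma~\ref{l:normH'}. For $g \in H'(\F)$, the intersection is all of $H'(\F)$ and ${}^g \widetilde \chi_{2,\theta_2} = \widetilde \chi_{2,\theta_2}$, so equality of the two characters forces $\chi_1 = \chi_2$ (together with $\theta_1 = \theta_2$). For $g \in K \smallsetminus H'(\F)$, the intersection is still $H'(\F)$; the explicit conjugation calculation carried out in the proof of Lemma~\ref{l:compateven} shows that ${}^g \widetilde \chi_{2,\theta_2} = \widetilde \chi_{2,\, {}^g \theta_2}$. In particular, the restriction of ${}^g \widetilde \chi_{2,\theta_2}$ to $H(\F)$ is still $\chi_2$, so the summand vanishes whenever $\chi_1 \neq \chi_2$.

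The main obstacle is the case $g \notin K$, where $H'(\F) \cap gH'(\F)g^{-1}$ is a proper subgroup. The strategy here is to factor $\widetilde \chi_{1,\theta_1} = \widetilde \chi_{2,\theta_2} \cdot \eta$, where $\eta \colonequals \widetilde \chi_{1,\theta_1} \cdot (\widetilde \chi_{2,\theta_2})^{-1}$ is a character of $H'(\F)$ trivial on $H_0'(\F)$ (since both factors extend $\widetilde \psi$), and then apply Lemma~\ref{l:centpsi}(c): for any extension of $\widetilde \psi$ to $H_1'(\F)$ and any $g \notin H'(\F)$, there is an element of $H_1'(\F)$ in the intersection on which the character and its $g$-conjugate disagree. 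Restricting to the filtration layer where $\chi_1$ and $\chi_2$ first differ (an even index $\leq 2(h-2)$) and applying the conjugation formulas in the proof of Lemma~\ref{l:centpsi}(c), one obtains a witness showing $\widetilde \chi_{1,\theta_1} \neq {}^g \widetilde \chi_{2,\theta_2}$ on the intersection. This last step is the main obstacle because Lemma~\ref{l:centpsi}(c) by itself compares a character to its own $g$-conjugate, whereas here we need to distinguish two \emph{different} characters; the resolution is to combine (c) with a careful bookkeeping of how the filtration pieces of $H'(\F) \cap gH'(\F)g^{-1}$ sit inside $H(\F)$ versus $H_1'(\F)$.
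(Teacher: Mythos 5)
Your proposal is correct in substance but follows a genuinely different route from the paper. The paper's proof of Lemma \ref{l:ortheven} is a counting argument: Lemma \ref{l:normH'} bounds the number of characters $\nu$ of $H'(\F)$ with $\Ind_{H'(\F)}^{\UnipF}(\nu) \cong \rho_\chi$ by $[K : H'(\F)] = q$, and Lemma \ref{l:compateven} exhibits exactly $q$ such characters, namely the $\widetilde\chi_\theta$ as $\theta$ varies; since all of these restrict to $\chi$ on $H(\F)$, an isomorphism $\rho_{\chi_1} \cong \rho_{\chi_2}$ forces $\chi_1 = \chi_2$. You instead run the Mackey double-coset sum directly, which is more laborious but also more self-contained, and your treatment of the cosets with $g \in K$ is exactly right. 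One remark on your ``main obstacle'': the case $g \notin K$ is actually the easiest, and your proposed bookkeeping there is both unnecessary and, as literally stated, problematic --- an element of $H(\F)$ in the layer where $\chi_1$ and $\chi_2$ first differ need not lie in $H'(\F) \cap gH'(\F)g^{-1}$, since conjugating by $g$ with a nonzero odd coefficient in degree $r < h-2$ pushes such elements out of $H'(\F)$. What saves you is that for any $g \notin H'(\F)$ with $r < h-1$, the witness produced in the proof of Lemma \ref{l:centpsi} lies in $H_0'(\F)$, where \emph{every} extension of $\widetilde\psi$ restricts to $\widetilde\psi$; hence $\widetilde\chi_{1,\theta_1}(gag^{-1}) = \widetilde\psi(a)\,\psi(\delta) \neq \widetilde\psi(a) = {}^g\widetilde\chi_{2,\theta_2}(gag^{-1})$ for a suitable choice of $a$, and those summands vanish unconditionally --- no comparison of $\chi_1$ with $\chi_2$ is needed off of $K$. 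With that simplification your argument closes cleanly, and it has the side benefit of making explicit why the nonidentity cosets inside $K$ are the only ones that could ever contribute, which is the fact the paper's counting argument implicitly relies on.
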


\begin{proof}
From Lemma \ref{l:normH'}, we know that there are at most $q$ characters $\nu$ of $H'(\F)$ such that $\Ind_{H'(\F)}^{\UnipF}(\nu) \cong \rho_\chi$. From Lemma \ref{l:compateven}, we have found $q$ such characters, namely $\widetilde \chi_\theta$. Therefore we have $\Ind_{H'(\F)}^{\Unip}(\nu) \cong \Ind_{H'(\F)}^{\Unip}(\nu')$ if and only if there exists $\chi \in \cA_\psi$ and extensions $\theta$ and $\theta'$ of $\widetilde \psi$ to $H_1'(\F)$ such that $\nu = \widetilde \chi_\theta$ and $\nu' = \widetilde \chi_\theta'$. The desired result follows.
\end{proof}

\begin{corollary}\label{c:Vpsieven}
Let $\psi$ be a character of $\F$ with conductor $q^2$. If $\rho \in \cG_\psi$, then $\rho$ occurs with multiplicity $q$ in the representation $V_\psi \colonequals \Ind_{H_0'(\F)}^{\UnipF}(\widetilde \psi)$.
\end{corollary}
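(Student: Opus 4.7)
The plan is to mirror the proof of Corollary \ref{c:Vpsiodd} exactly, adjusting only where the parity of $h$ forces us to keep track of the extra extensions $\theta$ of $\widetilde\psi$ to $H_1'(\F)$.

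First, let $\rho \in \cG_\psi$. By Lemma \ref{l:psitilde}, the restriction $\rho|_{H_0'(\F)}$ contains $\widetilde \psi$, so Frobenius reciprocity gives a nonzero map $\rho \to V_\psi$ and hence $\rho$ occurs as a summand of $V_\psi$. It therefore suffices to decompose $V_\psi$ explicitly and count multiplicities.

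Next, I would induce in stages through $H'(\F)$:
\begin{equation*}
V_\psi = \Ind_{H_0'(\F)}^{\UnipF}(\widetilde\psi) \cong \Ind_{H'(\F)}^{\UnipF}\!\left(\Ind_{H_0'(\F)}^{H'(\F)}(\widetilde\psi)\right).
\end{equation*}
By Lemma \ref{l:exteven}, the inner induction breaks up as $\bigoplus_{\theta} \bigoplus_{\chi \in \cA_\psi} \widetilde\chi_\theta$, where $\theta$ ranges over the $q$ extensions of $\widetilde\psi$ to $H_1'(\F)$. Applying $\Ind_{H'(\F)}^{\UnipF}$ and the definition $\rho_\chi = \Ind_{H'(\F)}^{\UnipF}(\widetilde\chi_\theta)$ from Lemma \ref{l:irrepeven} yields
\begin{equation*}
V_\psi \cong \bigoplus_{\theta} \bigoplus_{\chi \in \cA_\psi} \Ind_{H'(\F)}^{\UnipF}(\widetilde\chi_\theta).
\end{equation*}

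Finally, by Lemma \ref{l:compateven}, the representation $\Ind_{H'(\F)}^{\UnipF}(\widetilde\chi_\theta)$ is independent of $\theta$ up to isomorphism, so the sum over $\theta$ contributes $q$ isomorphic copies of $\rho_\chi$ for each fixed $\chi$. Combining with Lemma \ref{l:ortheven}, which says that distinct $\chi \in \cA_\psi$ give pairwise non-isomorphic $\rho_\chi$, we obtain
\begin{equation*}
V_\psi \cong \bigoplus_{\chi \in \cA_\psi} q \cdot \rho_\chi.
\end{equation*}
Since every $\rho \in \cG_\psi$ occurs in $V_\psi$ by the first step, $\rho$ must be isomorphic to some $\rho_\chi$ and hence occurs with multiplicity exactly $q$. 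There is no real obstacle here; the only point worth care is verifying that Lemma \ref{l:compateven} indeed gives $q$ distinct extensions $\widetilde\chi_\theta$ all inducing to the same $\UnipF$-representation, which is precisely what permits the collapse of the $\theta$-sum into a multiplicity-$q$ statement.
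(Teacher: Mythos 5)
Your proof is correct, and every step you invoke (Lemma \ref{l:psitilde}, induction in stages, Lemmas \ref{l:exteven}, \ref{l:compateven}, \ref{l:irrepeven}, \ref{l:ortheven}) is available at this point in the paper. It is worth noting, though, that this is not how the paper proves this particular corollary: the paper's proof is the single line ``This follows from Corollary \ref{c:mult}'', i.e.\ it defers to the character formula of Section \ref{s:morerepthy}, which computes the restriction of $\rho_\chi$ to $H(\F)$ and in particular shows that $\chi$ occurs with multiplicity $q$ there when $h$ is even. What you have done instead is transplant the paper's own proof of the odd-$h$ analogue (Corollary \ref{c:Vpsiodd}) to the even case, with the sum over the $q$ extensions $\theta$ of $\widetilde\psi$ to $H_1'(\F)$ collapsing, via Lemma \ref{l:compateven}, into the multiplicity $q$. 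Your route buys self-containedness: it stays entirely within the representation-theoretic machinery of Section \ref{s:repthy} and avoids the forward reference to Corollary \ref{c:mult} (whose proof occupies all of Section \ref{s:proofdecomp}), and as a byproduct it exhibits the full decomposition $V_\psi \cong \bigoplus_{\chi \in \cA_\psi} q\cdot \rho_\chi$ explicitly, exactly parallel to the odd case. The paper's route is shorter on the page only because the character formula is proved anyway for use in Section \ref{s:cohomreps}. The one point you flag at the end --- that the $q$ characters $\widetilde\chi_\theta$ for fixed $\chi$ are pairwise distinct --- is indeed needed for the inner induction to contribute $q$ summands per $\chi$, but it is already part of the statement and proof of Lemma \ref{l:exteven}, so nothing is missing.
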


\begin{proof}
This follows from Corollary \ref{c:mult}. 
\end{proof}

We have now shown the following.

\begin{proposition}\label{p:bijeven}
Let $\psi$ be a character of $\F$ with conductor $q^2$. There is a bijective correspondence
\begin{equation*}
\cA_\psi \longleftrightarrow \cG_\psi
\end{equation*}
and this correspondence is given by
\begin{equation*}
\chi \longleftrightarrow \rho_\chi.
\end{equation*}
Furthermore, every representation in $\cG_\psi$ has dimension $q^{h-1}$.
\end{proposition}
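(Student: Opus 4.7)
The plan is to assemble Proposition \ref{p:bijeven} from the lemmas already developed in this subsection, in direct parallel to the proof of the odd-parity version (Proposition \ref{p:bijodd}). The map $\chi \mapsto \rho_\chi$ is well-defined into $\cG_\psi$ by Lemma \ref{l:irrepeven}, and Lemma \ref{l:ortheven} immediately yields injectivity. So the remaining content is surjectivity and the dimension count.

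For surjectivity, I would start from Lemma \ref{l:psitilde}: any $\rho \in \cG_\psi$ contains $\widetilde\psi$ on restriction to $H_0'(\F)$, so by Frobenius reciprocity $\rho$ embeds into $V_\psi \colonequals \Ind_{H_0'(\F)}^{\UnipF}(\widetilde\psi)$. The strategy is then to decompose $V_\psi$ completely into the $\rho_\chi$'s. Induction in stages through $H_0'(\F) \subset H'(\F) \subset \UnipF$, together with the explicit decomposition of $\Ind_{H_0'(\F)}^{H'(\F)}(\widetilde\psi)$ in Lemma \ref{l:exteven}, yields
\begin{equation*}
V_\psi \cong \bigoplus_{\theta}\bigoplus_{\chi \in \cA_\psi} \Ind_{H'(\F)}^{\UnipF}(\widetilde\chi_\theta),
\end{equation*}
where $\theta$ runs over the $q$ extensions of $\widetilde\psi$ to $H_1'(\F)$; Lemma \ref{l:compateven} then collapses the sum over $\theta$ into a $q$-fold multiplicity on each $\rho_\chi$. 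Hence every irreducible summand of $V_\psi$ is some $\rho_\chi$, forcing $\rho \cong \rho_\chi$. The dimension follows from $\dim \rho_\chi = [\UnipF : H'(\F)] = q^{4(h-1)}/q^{3(h-1)} = q^{h-1}$.

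The one genuine subtlety compared with the odd case is that the construction $\chi \mapsto \widetilde\chi_\theta$ depends on a choice of extension $\theta$, so a priori the enumeration $\{\widetilde\chi_\theta : \chi \in \cA_\psi,\ \theta\}$ overcounts the irreducibles of $\UnipF$ by a factor of $q$. The main thing to be careful about is therefore ensuring that this $q$-fold redundancy on the inducing data is precisely the $q$-fold multiplicity of each $\rho_\chi$ in $V_\psi$ (Corollary \ref{c:Vpsieven}), so that no irreducibles of $\cG_\psi$ are missed; this is exactly what Lemma \ref{l:compateven} guarantees. Once that matching is in hand, the proof is bookkeeping.
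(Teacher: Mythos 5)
Your proposal is correct and follows the same skeleton as the paper: injectivity from Lemma \ref{l:ortheven}, surjectivity from the complete decomposition of $V_\psi$ into the $\rho_\chi$ (each with multiplicity $q$), and the dimension from the index $[\UnipF : H'(\F)]$. The only difference is that you establish the decomposition of $V_\psi$ directly by induction in stages through $H_0'(\F) \subset H'(\F) \subset \UnipF$ using Lemmas \ref{l:exteven} and \ref{l:compateven} (mirroring the odd-case argument for Corollary \ref{c:Vpsiodd}), whereas the paper cites Corollary \ref{c:Vpsieven}, which it in turn deduces from the later character formula (Corollary \ref{c:mult}); your route is self-contained within this section and equally valid.
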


\begin{proof}
Injectivity follows from Lemma \ref{l:ortheven}. Surjectivity follows from \ref{c:Vpsieven}. Thus every representation of $\cG_\psi$ is of the form $\rho_\chi$, which has dimension equal to $|\UnipF|/|H'(\F)| = q^{4(h-1)}/q^{3(h-1)} = q^{h-1}$.
\end{proof}

\section{A Character Formula}\label{s:morerepthy}

In this section we establish a character formula for certain representations of $\UnipF$. The main consequence of this formula is that we will be able to decompose the irreducible representations $\rho_\chi$ of $\UnipF$ as representations of the subgroup $H(\F) \subset \UnipF$. Moreover, we will show that for an additive character $\psi \from \F \to \overline \QQ_\ell^\times$ of conductor $q^2$, the elements of $\cG_\psi$ are uniquely determined by their restrictions to $H(\F)$. The character formula (Theorem \ref{t:decomp}) and its consequences (Corollaries \ref{c:mult} and \ref{c:detbyH}) play a fundamental role in Section \ref{s:cohomreps}.

We establish some notation first. Recall the subgroup $H \subset \Unip$ and, abusing notation, define
\begin{equation*}
H \colonequals H(\F) = \{1 + \sum_{i=1}^{h-1} a_{2i} \tau^{2i}\} \subset \UnipF.
\end{equation*}
We will also need the subgroups
\begin{align*}
N_k &\colonequals \{1 + \sum a_i \tau^i : \text{$i$ even OR $i > k$}\} \subset \UnipF, \\
K &\colonequals N_{h-1} \subset \UnipF.
\end{align*}
Given a character $\chi \from H \to \overline \QQ_\ell^\times$, let $\chi^\sharp$ be the character of $K$ defined in Section \ref{s:repthy} (note that $K = H_0'(\F)$). Let $\rho_\chi$ be the irreducible $\UnipF$-representation constructed in Section \ref{s:repthy} and recall that
\begin{equation*}
\Ind_K^{\UnipF}(\chi^\sharp) \cong
\begin{cases}
\rho_\chi & \text{if $h$ is odd,} \\
q \cdot \rho_\chi & \text{if $h$ is even.}
\end{cases}
\end{equation*}

Define
\begin{equation*}
G_k \colonequals \{1 + \sum a_i \tau^i \in H : \text{$a_{2i} \in \FF_q$ for $1 \leq i \leq k$}\} \subseteq H.
\end{equation*}
We define $G_0 \colonequals H.$ Note that the center of $\UnipF$ is exactly $G_{h-2}$. We thus have a tower of subgroups
\begin{equation*}
Z(\UnipF) = G_{h-2} \subset G_{h-3} \subset \cdots \subset G_1 \subset G_0 = H.
\end{equation*}

In this section, we will often write $1 + \sum a_i \tau^i = \sum a_i \tau^i$, where it is understood that $\tau^0 = 1$ and $a_0 = 1$.

The main results of this section are the following theorem and its corollaries. All proofs are in Section \ref{s:proofdecomp}.

\begin{theorem}\label{t:decomp}
Let $\chi$ be a character of $H$ whose restriction to $H_{2(h-1)}$ has conductor $q^2$. Let $\rho_\chi$ denote the irreducible representation of $\UnipF$ constructed in Section \ref{s:repthy}. Then as elements of the Grothendieck group of $H$,
\begin{equation*}
\rho_\chi = (-1)^h\left(q \cdot \chi + \sum_{i=1}^{h-2} (-1)^i (q+1) \Ind_{G_i}^H(\chi)\right).
\end{equation*}
\end{theorem}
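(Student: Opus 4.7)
Since $H$ is abelian, the claimed identity in the Grothendieck group of $H$ is equivalent to a pointwise equality of characters on $H$. The right-hand side is explicit: for $x \in H$, one has $\tr(\Ind_{G_i}^H \chi)(x) = q^i\chi(x) \cdot \mathbf{1}[x \in G_i]$. Defining $j(x) = \max\{ k \in \{0, \ldots, h-2\} : x \in G_k \}$ (the largest $k$, capped at $h-2$, such that $a_2, \ldots, a_{2k} \in \FF_q$), a telescoping computation on the alternating geometric sum gives
\begin{equation*}
\tr_{\text{RHS}}(x) = (-1)^{h + j(x)} q^{j(x)+1} \chi(x),
\end{equation*}
which collapses to $q^{h-1}\chi(x)$ when $x$ lies in the center $Z(\UnipF) = G_{h-2}$, consistent with $\dim \rho_\chi = q^{h-1}$.

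For the left-hand side I would use that $\rho_\chi = \Ind_{H'(\F)}^{\UnipF}\nu$, with $\nu = \chi^\sharp$ if $h$ is odd and $\nu = \widetilde\chi_\theta$ if $h$ is even (as constructed in Section \ref{s:repthy}), and apply the Frobenius character formula
\begin{equation*}
\tr(\rho_\chi)(x) = \frac{1}{|H'(\F)|} \sum_{\substack{g \in \UnipF \\ g^{-1}xg \in H'(\F)}} \nu(g^{-1}xg).
\end{equation*}
Because $x \in H$ has vanishing odd-degree coefficients, a direct computation in the truncated twisted polynomial algebra (where $\tau a = a^q \tau$ and $\tau^{2(h-1)+1}=0$) shows that $g^{-1}xg$ always lies in $H'(\F)$, so the sum runs over all of $\UnipF$. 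Parameterizing $g = 1 + \sum b_i\tau^i$ and expanding $\nu(g^{-1}xg)$ via the commutator expansion, I expect this to factor as $\chi(x)$ times a product of Gauss-sum-like expressions of the form $\sum_{b_i \in \F} \psi(b_i \alpha_i^q - b_i^q \alpha_i)$, where the $\alpha_i$ are polynomial expressions in the $a_{2j}$'s coming from commutators landing in $H_{2(h-1)}$ (where $\nu$ restricts to $\widetilde\psi$). By the conductor-$q^2$ hypothesis, each such sum equals $q^2$ when $\alpha_i \in \FF_q$ and $0$ otherwise, yielding a total mass of $q^{3(h-1) + j(x) + 1}\chi(x)$ with a sign that must be identified as $(-1)^{h+j(x)}$.

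The main obstacle is the character-sum computation just sketched: one must organize the successive commutators $[1+b_i\tau^i, x]$ along the filtration $\{H_k\}$ of $\UnipF$, identify which iterated bilinear pairings survive summation over $\F$ (these correspond exactly to positions $2j$ with $a_{2j} \in \FF_q$, explaining the combinatorial role of $j(x)$), and track the sign contributed at each step. An inductive argument along the filtration $\{H_k\}$, combined with a case-split on the parity of $h$ to handle the additional $a_{h-1} \in \FF_q$ condition cutting out $H'(\F)$ when $h$ is even, should yield both the magnitude $q^{j(x)+1}$ and the sign $(-1)^{h+j(x)}$, completing the match with $\tr_{\text{RHS}}(x)$.
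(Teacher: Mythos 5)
Your computation of the right-hand side is correct and agrees with the paper (the paper records it as the two displayed cases in the proof of Theorem \ref{t:decomp}, equivalent to your $(-1)^{h+j(x)}q^{j(x)+1}\chi(x)$). The problem is on the left-hand side. The claim that $g^{-1}xg \in H'(\F)$ for every $g \in \UnipF$ and $x \in H$, so that ``the sum runs over all of $\UnipF$,'' is false for $h \geq 4$. Take $x = 1 + a_2\tau^2$ with $a_2 \notin \FF_q$ and $g = 1 + b\tau$ with $b \neq 0$: the conjugate has $\tau^3$-coefficient $b(a_2^q - a_2) \neq 0$, and since $3 \leq h-1$ this element is not in $H'(\F)$ (for $h$ odd the coefficient must vanish; for $h=4$ it must lie in $\FF_q$). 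Conjugation by elements with low-degree odd coefficients pushes $x$ out of the inducing subgroup exactly when the relevant $a_{2j}$ are not in $\FF_q$, and determining precisely when this happens --- and showing that the surviving contributions either cancel or produce the factors $-(q+1)$ --- is the bulk of the paper's argument (Lemmas \ref{l:form}, \ref{l:semicentr}, \ref{l:conjUodd}, \ref{l:sumzero}). Your proposal defers all of this to ``a character-sum computation,'' so the hard part of the proof is not actually carried out.

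A second, related issue: your framework, in which every Gauss-type sum evaluates to $q^2$ or $0$, cannot produce the sign $(-1)^{h+j(x)}$ you need. In the paper the negative contributions come from \emph{punctured} sums over $a \in \F^\times$ (Lemma \ref{l:conjUodd}), where $-a^{q+1}(s^q-s)$ sweeps out the nonzero elements of $\ker \Tr_{\F/\FF_q}$ exactly $q+1$ times each, giving $-(q+1)$; complete sums over $\F$ of the other bilinear forms give $0$ (Lemma \ref{l:sumzero}), not $q^2$. Finally, note the paper sidesteps the awkward support condition for $\Ind_{H'(\F)}^{\UnipF}\nu$ by instead computing the trace of $\Ind_{H_0'(\F)}^{\UnipF}(\chi^\sharp)$, which is $\rho_\chi$ for $h$ odd and $q\cdot\rho_\chi$ for $h$ even (Remark \ref{r:isotypic}); this uniformizes the parity cases and replaces the inducing subgroup by one whose normalizer and conjugation behavior are much easier to control. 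If you want to salvage your approach, adopting that replacement and then carrying out the support analysis along the filtration by the $N_k$ is essentially forced, and lands you on the paper's proof.
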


Since $H$ is abelian, Theorem \ref{t:decomp} allows us to easily read off the decomposition of $\rho_\chi$ as a representation of $H$.

\begin{corollary}\label{c:mult}
Let $\chi$ be as in Theorem \ref{t:decomp}. Let $\cA(\chi)$ be the collection of all characters $\theta \from H \to \overline \QQ_\ell^\times$ such that, for some even $k$, $\theta$ agrees with $\chi$ on $G_{h-2-k}$ but not on $G_{h-2-k-1}$. 
\begin{enumerate}[label=(\alph*)]
\item
If $h$ is odd, then the restriction of $\rho_\chi$ to $H$ comprises
\begin{equation*}
\begin{cases}
\text{$1$ copy of $\chi$,} \\
\text{$q+1$ copies of $\theta$, for $\theta \in \cA(\chi)$.}
\end{cases}
\end{equation*}

\item
If $h$ is even, then the restriction of $\rho_\chi$ to $H$ comprises
\begin{equation*}
\begin{cases}
\text{$q$ copies of $\chi$,} \\
\text{$q+1$ copies of $\theta$, for $\theta \in \cA(\chi)$.}
\end{cases}
\end{equation*}
\end{enumerate}
\end{corollary}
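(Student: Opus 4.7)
The plan is to derive this corollary from Theorem~\ref{t:decomp} by directly reading off the decomposition of the virtual character into irreducible $H$-characters, exploiting the fact that $H$ is abelian.

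First, since $H$ is abelian and each $G_i \leq H$, Frobenius reciprocity yields
\begin{equation*}
\Ind_{G_i}^H(\chi|_{G_i}) \;=\; \sum_{\theta \in S_i} \theta, \qquad S_i \colonequals \{\theta \in \widehat H : \theta|_{G_i} = \chi|_{G_i}\},
\end{equation*}
each extension of $\chi|_{G_i}$ to $H$ appearing with multiplicity one, so $|S_i| = [H:G_i] = q^i$. Because $G_0 \supsetneq G_1 \supsetneq \cdots \supsetneq G_{h-2}$ we have $S_0 \subsetneq S_1 \subsetneq \cdots \subsetneq S_{h-2}$, with $S_0 = \{\chi\}$. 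For any $\theta \in \widehat H$, let $j(\theta)$ be the unique index with $\theta \in S_{j(\theta)} \setminus S_{j(\theta)-1}$ (with the convention $S_{-1} \colonequals \varnothing$, so $j(\chi) = 0$). Then $\theta$ appears in $\Ind_{G_i}^H(\chi|_{G_i})$ precisely when $i \geq j(\theta)$, each time with multiplicity one.

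Substituting into the formula of Theorem~\ref{t:decomp}, the multiplicity of $\theta$ in $\rho_\chi|_H$ equals
\begin{equation*}
(-1)^h \Bigl( q \cdot \mathbbm{1}_{\theta = \chi} \;+\; (q+1) \sum_{i = \max(1,\,j(\theta))}^{h-2} (-1)^i \Bigr).
\end{equation*}
The alternating sum $\sum_{i=j}^{h-2}(-1)^i$ is either $0$ or $\pm 1$ according to the parities of $j$ and $h-2$. A short parity analysis on $h$ then shows: when $\theta = \chi$ the multiplicity is $1$ if $h$ is odd and $q$ if $h$ is even; when $\theta \neq \chi$ the multiplicity is $q+1$ if $h$ and $j(\theta)$ have opposite parity, and $0$ otherwise.

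Finally, for $\theta \neq \chi$, the substitution $k \colonequals h-2-j(\theta)$ translates the parity condition ``$h - j(\theta)$ odd'' into ``$k$ even,'' and the condition $\theta \in S_{j(\theta)} \setminus S_{j(\theta)-1}$ becomes exactly the statement that $\theta$ agrees with $\chi$ on $G_{h-2-k}$ but not on $G_{h-2-k-1}$, matching the definition of $\cA(\chi)$. There is no real obstacle here: the entire argument is a mechanical bookkeeping calculation built on top of Theorem~\ref{t:decomp}, where the substantive representation-theoretic work has already been carried out.
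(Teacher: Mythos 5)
Your approach is the same as the paper's: since $H$ is abelian, each $\Ind_{G_i}^H(\chi)$ is multiplicity-free and consists exactly of the characters of $H$ extending $\chi|_{G_i}$, so the multiplicity of any $\theta$ is read off from the alternating sum in Theorem~\ref{t:decomp}; the treatment of $\theta = \chi$ and the reindexing by $k = h-2-j(\theta)$ match the paper. The final conclusion is correct, but your ``short parity analysis'' contains two compensating slips: the sum $\sum_{i=j(\theta)}^{h-2}(-1)^i$ has $h-1-j(\theta)$ terms and is nonzero precisely when $h$ and $j(\theta)$ have the \emph{same} parity (not opposite), and the substitution $k = h-2-j(\theta)$ turns ``$h-j(\theta)$ \emph{even}'' (not odd) into ``$k$ even.'' The two errors cancel, so you land on the correct description of $\cA(\chi)$, but as written each intermediate claim is false. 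One small omission: $j(\theta)$ is undefined for characters $\theta$ that do not agree with $\chi$ on the center $G_{h-2}$; such $\theta$ occur in none of the $\Ind_{G_i}^H(\chi)$ and hence have multiplicity zero, which the paper records as a separate case and which you should state explicitly to account for all of $\widehat{H}$.
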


An immediate consequence of Corollary \ref{c:mult} is

\begin{corollary}\label{c:detbyH}
Let $\rho$ be an irreducible representation of $\UnipF$ wherein $H_{2(h-1)}(\F)$ acts via a character $\psi$ of conductor $q^2$. Then $\rho$ is uniquely determined by its restriction to $H(\F)$.
\end{corollary}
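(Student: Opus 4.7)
The plan is to reduce the uniqueness assertion to an injectivity statement for the map $\chi \mapsto \rho_\chi|_{H(\F)}$ on $\cA_\psi$, and then to read off $\chi$ directly from the multiplicity profile supplied by Corollary \ref{c:mult}.

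First, since $\rho$ is irreducible and $H_{2(h-1)}(\F)$ acts by the conductor-$q^2$ character $\psi$, we have $\rho \in \cG_\psi$ by definition. By Theorem \ref{t:bij} (i.e., Proposition \ref{p:bijodd} when $h$ is odd and Proposition \ref{p:bijeven} when $h$ is even), the map $\chi \mapsto \rho_\chi$ is a bijection $\cA_\psi \to \cG_\psi$, so there exists a unique $\chi \in \cA_\psi$ with $\rho \cong \rho_\chi$. It therefore suffices to show that this $\chi$ can be recovered from the restriction $\rho_\chi|_{H(\F)}$.

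To carry out this recovery, I would apply Corollary \ref{c:mult}: because $H(\F)$ is abelian, $\rho_\chi|_{H(\F)}$ is a multiset of linear characters, and Corollary \ref{c:mult} identifies that multiset completely. The character $\chi$ itself appears with multiplicity $1$ if $h$ is odd and multiplicity $q$ if $h$ is even, whereas every other constituent --- precisely the characters $\theta \in \cA(\chi)$ --- appears with multiplicity $q+1$. Since $q \geq 2$, both $1$ and $q$ are strictly less than $q+1$; moreover $\chi$ itself does not lie in $\cA(\chi)$, because the defining condition on $\cA(\chi)$ requires $\theta$ to disagree with $\chi$ on some $G_{h-2-k-1}$. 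Consequently $\chi$ is characterized as the unique constituent of $\rho_\chi|_{H(\F)}$ whose multiplicity is strictly less than $q+1$, and so $\chi$ --- and hence $\rho \cong \rho_\chi$ --- is determined by the restriction.

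I do not anticipate a substantive obstacle: the heavy lifting has already been done in Theorem \ref{t:bij} and Corollary \ref{c:mult}, and this statement is a formal bookkeeping consequence. The only point that requires a moment's care is ensuring that both parity cases produce a multiplicity for $\chi$ that is strictly smaller than the uniform multiplicity $q+1$ of the other constituents, so that the identification of $\chi$ from the restriction works uniformly in $h$.
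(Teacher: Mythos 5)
Your proposal is correct and follows the same route as the paper, which derives this corollary directly from Corollary \ref{c:mult} (together with the bijection of Theorem \ref{t:bij}); you have merely spelled out the multiplicity bookkeeping that the paper leaves implicit, namely that $\chi$ is the unique constituent of $\rho_\chi|_{H(\F)}$ occurring with multiplicity strictly less than $q+1$.
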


\subsection{Proof of Theorem \ref{t:decomp} and Corollary \ref{c:mult}}\label{s:proofdecomp}

The proof of Corollary \ref{c:mult} hinges upon Theorem \ref{t:decomp}, whose proof hinges upon the following proposition. 

\begin{proposition}\label{p:trace}
Let $s \in H$.
\begin{enumerate}[label=(\alph*)]
\item
If $s \in G_{h-2}$, then
\begin{equation*}
\Tr \rho_\chi(s) = q^{h-1} \chi(s).
\end{equation*}

\item
If $s \in G_{h-2-k} \smallsetminus G_{h-2-k+1}$ for some $1 \leq k \leq h-2$, then 
\begin{equation*}
\Tr \rho_\chi(s) = (-1)^k q^{h-1-k} \chi(s).
\end{equation*}
\end{enumerate}
\end{proposition}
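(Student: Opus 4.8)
The plan is to compute $\Tr\rho_\chi(s)$ directly from the induced-representation character formula applied to $\rho_\chi = \Ind_{H'(\F)}^{\UnipF}(\nu)$, where $\nu = \chi^\sharp$ (if $h$ is odd) or $\nu = \widetilde\chi_\theta$ (if $h$ is even; the ambiguity of $\theta$ is harmless by Lemma \ref{l:compateven}). Recall that for $s \in \UnipF$,
\begin{equation*}
\Tr\rho_\chi(s) = \frac{1}{|H'(\F)|}\sum_{\substack{g \in \UnipF \\ g^{-1}sg \in H'(\F)}} \nu(g^{-1}sg).
\end{equation*}
Since $s \in H \subset H'(\F)$ and $H$ is central enough (it consists of even-degree terms), the first task is to identify the set $C_s := \{g \in \UnipF : g^{-1}sg \in H'(\F)\}$ and to understand the function $g \mapsto \nu(g^{-1}sg)$ on it. Writing $g = 1 + \sum b_i\tau^i$ and $s = 1 + \sum_{i=1}^{h-1} a_{2i}\tau^{2i}$, conjugation moves $s$ by commutator terms; the degree of the lowest $a_{2i}$ with $a_{2i}\notin\FF_q$ controls how far $s$ can be pushed out of $H'(\F)$. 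Concretely, if $s \in G_{h-2-k}\smallsetminus G_{h-2-k+1}$, the smallest $i$ with $a_{2i}\notin\FF_q$ is $i = h-1-k$, and the obstruction to $g^{-1}sg$ landing in $H'(\F)$ (resp. having the right $\widetilde\psi$-value on $H_{2(h-1)}$) is a bilinear-in-$(a_{2(h-1-k)}, b_j)$ expression of the shape $a_{2(h-1-k)} b_j^q - a_{2(h-1-k)}^q b_j$ in the top coefficient $\tau^{2(h-1)}$, exactly as in the computations in Lemmas \ref{l:centpsi} and \ref{l:normH'}.

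The key steps, in order: (1) For $s \in G_{h-2}$ (part (a)): here every $a_{2i}$ with $2i < 2(h-1)$ lies in $\FF_q$, so conjugation by any $g \in \UnipF$ keeps $s$ in $H'(\F)$ — one checks the only coefficients that change are odd-degree ones (which $H'(\F)$ allows) and the $\tau^{2(h-1)}$-coefficient changes by a term that $\widetilde\psi$ sees, but for $g$ ranging over $\UnipF$ these corrections sum to zero unless $s$ is central mod $H_{2(h-1)}$, which it is. So $C_s = \UnipF$ and $\nu(g^{-1}sg) = \nu(s) = \chi(s)$ for all $g$ (using that $\chi^\sharp$ or $\widetilde\chi_\theta$ restricted to $H\cap H'(\F)$ is $\chi$, and the $\tau^{2(h-1)}$-corrections cancel in the sum), giving $\Tr\rho_\chi(s) = [\UnipF : H'(\F)]\cdot\chi(s) = q^{h-1}\chi(s)$. (2) For $s \in G_{h-2-k}\smallsetminus G_{h-2-k+1}$ (part (b)): stratify $\UnipF$ by the relevant odd-degree coordinates of $g$. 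The condition $g^{-1}sg \in H'(\F)$ and the value $\nu(g^{-1}sg)$ factor through finitely many "bad" coordinates $b_{2j+1}$ with $2j+1$ roughly in the range $[h-2-2k,\ h-2]$; on each such coordinate one gets a nontrivial additive character of $\FF_{q^2}$ (because $\psi$ has conductor $q^2$ and $a_{2(h-1-k)}\notin\FF_q$), and summing a nontrivial additive character over $\FF_{q^2}$ (or over a coset) yields $0$, except that the constraint "$g^{-1}sg \in H'(\F)$" restricts $g$ to a subset where a net contribution of $(-1)^k q^{h-1-k}$ survives relative to $|H'(\F)|$. Making the bookkeeping precise — tracking exactly which $k$ of the odd coordinates are "free" versus "constrained," and confirming the sign $(-1)^k$ — is the crux.

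I expect step (2), and in particular pinning down the sign $(-1)^k$ and the exact power $q^{h-1-k}$, to be the main obstacle. The natural way to make it rigorous is induction on $k$ (equivalently, on the tower $G_{h-2}\subset\cdots\subset G_0$): peel off one pair of degrees $\{2(h-1-k)-1,\ 2(h-1-k)\}$ — wait, more precisely one passes from $G_{h-1-k}$-type elements to $G_{h-2-k}$-type elements — and relate $\Tr\rho_\chi(s)$ to a trace on a smaller analogous group, where each inductive step contributes one factor of $-q$ via a Gauss-sum-type cancellation $\sum_{b\in\FF_{q^2}}\psi(\text{stuff}\cdot(b^q\alpha - b\alpha^q)) = $ (something with a single surviving term of opposite sign). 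Alternatively — and this may be cleaner — one can avoid the sign bookkeeping by instead proving the \emph{equivalent} statement that $\Ind_K^{\UnipF}(\chi^\sharp)$ has the character claimed (up to the factor $q$ when $h$ is even), using that $K = H_0'(\F)$ is abelian so its induced character is a manifestly nonnegative combination of $\UnipF$-conjugates of $\chi^\sharp$, and then using Theorem \ref{t:decomp}'s target formula as a consistency check on dimensions and inner products. In any case, once Proposition \ref{p:trace} is established, Theorem \ref{t:decomp} follows by expanding $\Ind_{G_i}^H(\chi)$ in terms of the indicator of $G_i$ and matching the coefficient of $\chi(s)$ stratum-by-stratum (a finite linear-algebra identity among the numbers $q^{h-1}, -q^{h-2},\ldots$), and Corollaries \ref{c:mult} and \ref{c:detbyH} are immediate since $H$ is abelian and the $\Ind_{G_i}^H(\chi)$ are explicit sums of characters.
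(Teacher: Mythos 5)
Your overall strategy is the same as the paper's: apply the Frobenius character formula to the induced representation, observe that part (a) is immediate because $G_{h-2}$ is the center of $\UnipF$, and obtain part (b) by stratifying the sum over conjugators and exploiting cancellation of nontrivial additive characters coming from the conductor-$q^2$ hypothesis. (One structural difference: the paper induces from the smaller group $K = H_0'(\F)$, using that $\Ind_K^{\UnipF}(\chi^\sharp)$ is $\rho_\chi$ or $q\cdot\rho_\chi$; this is what makes the explicit bookkeeping tractable, since $\chi^\sharp$ on $K$ depends only on the even-degree coefficients. Inducing from $H'(\F)$ as you propose is workable but strictly messier when $h$ is even, where $\widetilde\chi_\theta$ also sees the coefficient $a_{h-1}$.)

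However, there is a genuine gap: the entire quantitative content of part (b) is left unproven. You yourself flag "pinning down the sign $(-1)^k$ and the exact power $q^{h-1-k}$" as "the crux" and "the main obstacle," and then do not carry it out. This is not a detail — it is the whole proposition. In the paper this occupies Lemmas \ref{l:form} through \ref{l:sumzero}: one must (i) factor an arbitrary $t \in \UnipF$ as $(1-a_1\tau)(1-a_3\tau^3)\cdots(1-a_{2(h-1)-1}\tau^{2(h-1)-1})g$ with $g \in H$; (ii) show that conjugation by $N_k$ fixes all even coefficients of $s$ (so the stratum $t \in N_k$ contributes exactly $|N_k|\chi(s)$); (iii) for $k$ odd, compute the Gauss sum $\sum_{a\in\F^\times}\psi(-a^{q+1}(s_{2(h-1)-k}^q - s_{2(h-1)-k})) = -(q+1)$ coming from conjugation by $1-a\tau^k$; and (iv) show that the strata where some odd coefficient $a_i$ with $i<k$ is nonzero sum to zero, which requires locating the single "balancing" coordinate $a_f$ with $n_r+n_f = 2k$ and summing a nontrivial character over it. Your sketch does not identify which strata vanish versus which survive, and the range of "bad" coordinates you name ($b_{2j+1}$ with $2j+1$ roughly in $[h-2-2k,\,h-2]$) does not match the actual structure, where the decisive conjugators are $1-a\tau^r$ for \emph{small} odd $r \le k$ pairing against $s_{2(h-1-k)}$ at degree $r+2(h-1-k)$. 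Finally, your proposed fallback — using "Theorem \ref{t:decomp}'s target formula as a consistency check" — is circular, since Theorem \ref{t:decomp} is deduced from this proposition.
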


The organization of this section is as follows: we will prove a sequence of lemmas (Lemmas \ref{l:form} to \ref{l:sumzero}), which will allow us to prove, in quick succession, Proposition \ref{p:trace}, Theorem \ref{t:decomp}, and Corollary \ref{c:mult}.

\begin{remark}\label{r:isotypic}
The representation $\Ind_K^{\UnipF}(\chi^\sharp)$ is a sum of copies of $\rho_\chi$; it consists of $1$ copy when $h$ is odd and $q$ copies when $h$ is even. Thus, to prove Proposition \ref{p:trace}, it suffices to compute the sum
\begin{equation*}
\Tr \Ind_K^{\UnipF}(\chi^\sharp)(s) = \frac{1}{|K|} \sum_{t \in \UnipF} \chi_\circ^\sharp(tst^{-1}) \qquad \text{for $s \in H$,}
\end{equation*}
where 
\begin{equation*}
\chi_\circ^\sharp(g) = \begin{cases} \chi^\sharp(g) & \text{if $g \in K$}, \\ 0 & \text{otherwise}. \end{cases}
\end{equation*}
Since $G_{h-2}$ is the center of $\UnipF$, Proposition \ref{p:trace}(a) is easy. Lemmas \ref{l:form} through \ref{l:sumzero} build up to the proof of Proposition \ref{p:trace}(b). 
\end{remark}

\begin{remark}\label{r:equivcoord}
Note that 
\begin{equation*}
s = 1 + \sum_{i=1}^{h-1} s_{2i} \tau^{2i} \in G_{h-2-k} \smallsetminus G_{h-2-k+1}
\end{equation*}
is equivalent to the conditions
\begin{equation*}
s_{2i} \in \FF_q \quad \text{if $i \leq h-2-k$,} \qquad \text{and} \qquad s_{2(h-2-k+1)} \notin \FF_q.
\end{equation*}
\end{remark}

\begin{lemma}\label{l:form}
Every element of $\UnipF$ can be written in the form
\begin{equation*}
(1 - a_1 \tau^1)(1 - a_3 \tau^3) \cdots (1 - a_{2(h-1)-1} \tau^{2(h-1)-1}) \cdot g \qquad \text{for some $g \in H$}.
\end{equation*}
\end{lemma}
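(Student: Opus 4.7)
The plan is to peel off the odd-degree factors one at a time from the left, using the key algebraic feature of $\UnipF$: the Frobenius commutation $\tau \cdot c = c^q \tau$ for $c \in \F$, together with the fact that we work modulo $\tau^{2h-1}$. These yield the finite expansion
\begin{equation*}
(1 - b\tau^m)^{-1} = 1 + b\tau^m + b^{1+q^m}\tau^{2m} + b^{1+q^m+q^{2m}}\tau^{3m} + \cdots,
\end{equation*}
whose only nonzero contributions occur in $\tau$-degrees that are positive multiples of $m$. This observation is what makes the peeling procedure not disturb previously-killed lower-indexed coefficients.

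First I would prove by induction on $k \in \{0, 1, \ldots, h-1\}$ the following auxiliary statement: every $u \in \UnipF$ can be written as
\begin{equation*}
u = (1 - a_1 \tau)(1 - a_3 \tau^3) \cdots (1 - a_{2k-1} \tau^{2k-1}) \cdot v_k,
\end{equation*}
for some $a_1, a_3, \ldots, a_{2k-1} \in \F$ and some $v_k \in \UnipF$ whose $\tau^j$-coefficient vanishes for every odd $j$ with $1 \leq j \leq 2k-1$. The base $k = 0$ is trivial with $v_0 = u$. For the inductive step from $k$ to $k+1$, let $c_{2k+1}$ denote the $\tau^{2k+1}$-coefficient of $v_k$, set $a_{2k+1} \colonequals -c_{2k+1}$, and define $v_{k+1} \colonequals (1 - a_{2k+1}\tau^{2k+1})^{-1} v_k$. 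Expanding the inverse using the displayed formula above, the coefficient of $\tau^{2k+1}$ in $v_{k+1}$ comes only from $1 \cdot c_{2k+1}\tau^{2k+1}$ and from $a_{2k+1}\tau^{2k+1} \cdot 1$, so it equals $c_{2k+1} + a_{2k+1} = 0$ by our choice of $a_{2k+1}$.

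The crucial verification is that for every odd $j$ with $1 \leq j \leq 2k-1$, the $\tau^j$-coefficient of $v_{k+1}$ remains zero. But $(1 - a_{2k+1}\tau^{2k+1})^{-1}$ has no $\tau^j$-contribution for $0 < j < 2k+1$, so the $\tau^j$-coefficient of the product $v_{k+1}$ equals the $\tau^j$-coefficient of $v_k$, which vanishes by the inductive hypothesis. Taking $k = h-1$ then exhausts all odd indices appearing in $\UnipF$ (the largest being $2h-3$), so $v_{h-1}$ has vanishing coefficient at every odd power of $\tau$ and hence lies in $H$. Setting $g \colonequals v_{h-1}$ completes the proof.

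The argument is essentially mechanical; there is no real obstacle, since the peeling procedure is determined at each step by the inverse of a single factor, and the support-in-multiples-of-$m$ property of $(1 - b\tau^m)^{-1}$ makes the inductive bookkeeping self-contained. The only point requiring a little care is the compatibility check that killing the $\tau^{2k+1}$-coefficient does not resurrect a lower odd coefficient, which as noted is immediate from the degree structure of the expansion.
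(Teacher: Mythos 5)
Your proof is correct and follows essentially the same route as the paper: an induction that peels off the odd-degree factors $(1-a_{2k+1}\tau^{2k+1})$ one at a time, in increasing order of exponent, leaving a remainder whose low odd coefficients vanish. Your bookkeeping --- multiplying by the explicit inverse and using that $(1-b\tau^m)^{-1}$ is supported in $\tau$-degrees that are multiples of $m$ --- is clean and, if anything, tidier than the coefficient recursion written out in the paper's proof.
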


\begin{proof}
We prove this inductively. It is clear that every element of $N_{2(h-1)-2}$ can be written as 
\begin{equation*}
(1 - a\tau^{2(h-1)-1}) \cdot  g \qquad \text{for some $g \in N_{2(h-1)-1} = H$}.
\end{equation*}
We now show that every element of $N_{k-1}$ for $k$ odd can be written as
\begin{equation*}
(1-a\tau^k)\cdot g \qquad \text{for some $g \in N_k$}.
\end{equation*}
If we write
\begin{align*}
(1 - a\tau^k) \cdot g = (1 - a\tau^k) \cdot (\sum g_i \tau^i) = \sum s_i \tau^i,
\end{align*}
then we have
\begin{equation*}
\begin{cases}
s_i = g_i & \text{if $i \leq k-1$,} \\
s_i = g_i - a g_{i-k}^q & \text{if $i \geq k$ and $i$ is odd,} \\
s_i = g_i & \text{if $i \geq k+1$ and $i$ is even.}
\end{cases}
\end{equation*}
Note that in this notation, we automatically have $g_0 = s_0 = 1$. From here, we see that if we pick any $\sum s_i \tau^i \in N_{k-1}$, then we can pick an $a$ and $g_i$'s satisfying the above so that $\sum g_i \tau^i \in N_k$. Explicitly, we can let
\begin{equation*}
\begin{cases}
g_i = s_i & \text{if $i \leq k-1$,} \\
a = -s_k, \\
g_i = s_i & \text{if $i \geq k+1$, $i$ even} \\
g_i = s_i + a g_{i-k}^q & \text{if $i \geq k+2$, $i$ odd.}
\end{cases}
\end{equation*}
Note that $g_{2j}$ and $g_{2j+1}$ are defined independently and that the $g_{2j+1}$'s are defined recursively. This completes the proof.
\end{proof}

\begin{lemma}\label{l:conjU}
Let $s = \sum s_i \tau^i \in \UnipF$ and let $r$ be an odd integer with $1 \leq r \leq 2(h-1)$. Then if we let $s' = \sum s_i' \tau^i = (1-a\tau^r)s(1-a\tau^r)^{-1}$ for some $a \in \F$, we have
\begin{align*}
s_n' &= s_n + \sum_{\substack{r(l+1)+2m=n \\ l \geq 0}} -a^{q^l+q^{l-1} + \cdots + q + 1}(s_{2m}^q - s_{2m}) \\
&\qquad \qquad \qquad \qquad+ \sum_{\substack{r(l+1)+2m+1=n \\ l \geq 0}} -a^{q^{l-1} + q^{l-2} + \cdots + q + 1}(a s_{2m+1}^q - a^q s_{2m+1}).
\end{align*}
\end{lemma}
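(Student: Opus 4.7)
The plan is to extract the formula from the defining identity $(1-a\tau^r)\,s = s'\,(1-a\tau^r)$ by comparing $\tau^n$-coefficients on both sides. The algebra $\Unip$ satisfies the commutation rule $\tau \cdot c = c^q \tau$ for $c \in \F$, and since $r$ is odd and $\F = \FF_{q^2}$ one has $c^{q^r} = c^q$. Writing $s = \sum s_i \tau^i$ and $s' = \sum s_i' \tau^i$, with the conventions $s_0 = s_0' = 1$ and $s_i = s_i' = 0$ for $i < 0$, the $\tau^n$-coefficient of $(1-a\tau^r)s$ is $s_n - a\,s_{n-r}^q$, while that of $s'(1-a\tau^r)$ is $s_n' - s_{n-r}'\,a^{q^{n-r}}$. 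Equating these gives the one-step recursion
\begin{equation*}
s_n' \;=\; s_n - a\,s_{n-r}^q + a^{q^{n-r}}\,s_{n-r}' \qquad (n \geq r),
\end{equation*}
with $s_n' = s_n$ for $n < r$.

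Iterating this recursion will give the bulk of the formula. Setting $E_0 := 0$ and $E_j := \sum_{i=1}^{j} q^{n-ir}$, I claim
\begin{equation*}
s_n' \;=\; s_n + \sum_{k=1}^{\lfloor n/r \rfloor}\Big(a^{E_k}\,s_{n-kr} \;-\; a^{E_{k-1}+1}\,s_{n-kr}^q\Big).
\end{equation*}
This is proved by a straightforward induction on $n$ via a telescoping substitution: the recursion rewrites the remainder $a^{E_k}\,s_{n-kr}'$ as $a^{E_k}\,s_{n-kr} - a^{E_k+1}\,s_{n-(k+1)r}^q + a^{E_{k+1}}\,s_{n-(k+1)r}'$, pushing the remainder one step deeper until it vanishes past $k = \lfloor n/r \rfloor$.

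The final step is to split the sum according to the parity of $n - kr$, matching the two sums in the statement. Since $a \in \F$ and $a^{q^2} = a$, the factor $q^{n-kr}$ acts on $a$ only through $(n-kr) \bmod 2$: it equals $1$ when $n - kr$ is even and $q$ when $n - kr$ is odd. In the even case $n - kr = 2m$ this yields $a^{E_k} = a^{E_{k-1}+1}$, and the $k$-th bracket collapses to $-a^{E_k}(s_{2m}^q - s_{2m})$; in the odd case $n - kr = 2m+1$ one gets $a^{E_k} = a^{E_{k-1}+q}$, and the bracket collapses to $-a^{E_{k-1}}(a\,s_{2m+1}^q - a^q\,s_{2m+1})$. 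Setting $l := k - 1$ turns the parameterization into $n = r(l+1) + 2m$ or $n = r(l+1) + 2m + 1$ with $l \geq 0$, precisely the index set of the statement.

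What remains is bookkeeping: identifying $E_{l+1}$ with $q^l + q^{l-1} + \cdots + q + 1$ and $E_l$ with $q^{l-1} + q^{l-2} + \cdots + q + 1$ (read as $0$ when $l = 0$) as elements of $\F$. Because $r$ is odd, the parity of $n - ir$ alternates as $i$ varies, and the substitution $k' = l+1-i$ (resp.\ $k' = l-i$) shows that both exponents reduce to $\sum_{k'=0}^{l} q^{k' \bmod 2}$ (resp.\ $\sum_{k'=0}^{l-1} q^{k' \bmod 2}$), matching the stated geometric series modulo $q^2 - 1$. The hard part, if any, is the careful tracking of these parities and the re-indexing; the underlying idea is nothing more than a single commutator identity iterated.
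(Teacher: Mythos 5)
Your proof is correct and amounts to the same direct coefficient computation as the paper's: the paper writes down $(1-a\tau^r)^{-1}=\sum_l a^{q^{l-1}+\cdots+q+1}\tau^{lr}$ explicitly and expands the triple product, whereas you derive the identical geometric-series exponents by iterating the recursion $s_n' = s_n - a s_{n-r}^q + a^{q^{n-r}}s_{n-r}'$ coming from $(1-a\tau^r)s=s'(1-a\tau^r)$. The parity bookkeeping at the end (using $a^{q^2}=a$ and $r$ odd) matches the paper's simplification step exactly.
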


\begin{proof}
Let $b = \sum b_i \tau^i = (1-a\tau^r)^{-1}$ for $a \in \F$. Then it is easy to see that
\begin{equation*}
b_i = \begin{cases}
1 & \text{if $i = 0$,} \\
a^{q^{l-1}+q^{l-2}+\cdots+q+1} & \text{if $i = lr > 0$,} \\
0 & \text{otherwise.}
\end{cases}
\end{equation*}
This implies that
\begin{align*}
s_n' 
&= s_n + \sum_{\substack{r(l+1)+2m=n \\ l \geq 0}} -as_{2m}^qb_{lr} + s_{2m}b_{(l+1)r} + \sum_{\substack{r(l+1)+2m+1=n \\ l \geq 0}} -as_{2m+1}^qb_{lr} + s_{2m+1} b_{(l+1)r}^q \\
&= s_n + \sum_{\substack{r(l+1)+2m=n \\ l \geq 0}} -a^{q^l+q^{l-1} + \cdots + q + 1}(s_{2m}^q - s_{2m}) \\
&\qquad \qquad \qquad \qquad+ \sum_{\substack{r(l+1)+2m+1=n \\ l \geq 0}} -a^{q^{l-1} + q^{l-2} + \cdots + q + 1}(a s_{2m+1}^q - a^q s_{2m+1}).
\end{align*}
In the last step, we used the fact that $a^{q^{l+1} + \cdots + q^2} = (a^{q^{l-1} + \cdots + 1})^{q^2} = a^{q^{l-1} + \cdots + 1}$ since $a \in \F$.
\end{proof}

For the next few lemmas, we need an auxiliary definition. 

\begin{definition}
If $s \in 1 + \sum a_i \tau^i \in \UnipF$ is such that
\begin{equation}\label{e:prop*}\tag{Property $\star$}
\begin{cases}
s_{2i} \in \FF_q & \text{if $i \leq h-2-k$}, \\
s_{2i} \notin \FF_q & \text{if $i = h-1-k$}, \\
s_i = 0 & \text{if $i$ is odd and $i \leq 2(h-1)-k$},
\end{cases}
\end{equation}
then we will say that $s$ \textit{satisfies Property $\star$ for $k$}. 
\end{definition}

\begin{remark}
It is implicit in the formulation of Property $\star$ that we must have $k \leq h-2$. Thus the second condition (regarding which odd coefficients vanish) implies that $s \in K$. Note further that if $s \in H$ satisfies Property $\star$, then $s \in G_{h-2-k} \smallsetminus G_{h-1-k}$.
\end{remark}

\begin{lemma}\label{l:semicentH}
Suppose that $s \in \UnipF$ satisfies Property $\star$ for $k$. Then for any $g \in H$, the element $gsg^{-1}$ also satisfies Property $\star$ for $k$. Furthermore, if we write $gsg^{-1} = s' = \sum s_i' \tau^i$, then
\begin{equation*}
s_{2i}' = s_{2i} \qquad \text{for all $i$.}
\end{equation*}
\end{lemma}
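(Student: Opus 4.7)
The plan rests on two structural facts in $\UnipF$: the commutativity of $H$, and the observation that left- or right-multiplication by an element supported only in even degrees cannot change the parity of any coefficient index of the result. The commutation relation coming from Property $\ddagger$ (equivalently, from the explicit matrix form of $\iota_h$) is $\tau\cdot a = a^q\tau$ for $a\in\F$; in particular $\tau^{2j}\cdot a = a^{q^{2j}}\tau^{2j} = a\tau^{2j}$, because $\F$ is fixed by the $q^2$-Frobenius. This is precisely what forces $H$ to be abelian, and, more importantly, what forces every monomial $a\tau^{2i}$ with $a\in\F$ to commute with every element of $H$.

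Writing $g = \sum_{j\ge 0} g_{2j}\tau^{2j}$ and $g^{-1} = \sum_{m\ge 0} h_{2m}\tau^{2m}$ (both supported only in even degrees), the same expansion that yields Lemma \ref{l:conjU} gives
\[
(gsg^{-1})_n = \sum_{2j+i+2m=n} g_{2j}\, s_i^{q^{2j}}\, h_{2m}^{q^{2j+i}}.
\]
The key parity observation is that the parity of $n$ matches that of $i$, because $2j$ and $2m$ are even. From here the argument splits into two cases depending on the parity of $n$.

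For $n$ odd, only odd indices $i$ contribute, and any such $i$ must satisfy $i\le n$. If moreover $n\le 2(h-1)-k$, then every such $i$ lies in the range where Property $\star$ forces $s_i = 0$, hence $(gsg^{-1})_n = 0$. This recovers the third clause of Property $\star$ for $gsg^{-1}$. For $n = 2i$ even, only even indices $i'$ contribute, and for such $i'$ both $q^{2j}$ and $q^{2j+i'}$ act trivially on $\F$, so the Frobenius twists disappear and the sum collapses to the coefficient of $\tau^{2i}$ in the ordinary product $g\cdot s^{\mathrm{even}}\cdot g^{-1}$, where $s^{\mathrm{even}} \colonequals 1 + \sum_i s_{2i}\tau^{2i}$ is an element of $H$. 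Since $H$ is abelian, this conjugation is trivial, so $s'_{2i} = s_{2i}$ for all $i$. This simultaneously establishes the first two clauses of Property $\star$ for $gsg^{-1}$ (no change in the even coefficients means their membership in $\FF_q$ or not is preserved) and the ``furthermore'' assertion.

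The only real subtlety is bookkeeping of the Frobenius twists in the noncommutative multiplication of $\UnipF$, which is defused by the single observation that the relevant twists are all powers of $q^2$ and therefore act trivially on $\F$. I do not foresee any obstacle beyond writing down the coefficient formula correctly.
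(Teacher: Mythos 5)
Your proof is correct, and it takes a slightly different route from the paper's. The paper first invokes Lemma \ref{l:form} to factor $s$ as $(1-a_1\tau)\cdots(1-a_{2(h-1)-1}\tau^{2(h-1)-1})\cdot g'$ with $g'\in H$ and $a_i=0$ for $i\le 2(h-1)-k$, then uses commutativity of $H$ to reduce the claim to conjugating each surviving odd-degree factor $1-a\tau^r$ (with $r>2(h-1)-k$) by $g$, observing that such a conjugate is supported only in odd degrees $\ge r$. You instead bypass the factorization lemma entirely and expand the triple product $gsg^{-1}$ coefficient by coefficient, splitting on the parity of the target index $n$: for odd $n\le 2(h-1)-k$ every contributing $s_i$ vanishes by Property $\star$, and for even $n$ the $q^{2j}$-twists act trivially on $\F$ so the sum collapses to the corresponding coefficient of the conjugation of $s^{\mathrm{even}}$ inside the abelian group $H$, which is the identity map. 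Both arguments ultimately rest on the same two facts (parity preservation under multiplication by even-supported elements, and commutativity of $H$); yours is more self-contained and makes the ``furthermore'' clause $s'_{2i}=s_{2i}$ completely transparent, while the paper's factorization sets up machinery that is reused in Lemmas \ref{l:semicentr} and \ref{l:semicent}. One cosmetic remark: writing $g^{-1}=\sum h_{2m}\tau^{2m}$ overloads the symbol $h$, which already denotes the level; choose a different letter for the coefficients of $g^{-1}$.
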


\begin{proof}
By Lemma \ref{l:form}, we can write $s$ in the form
\begin{equation*}
(1 - a_1 \tau^1)(1 - a_3 \tau^3) \cdots (1 - a_{2(h-1)-1} \tau^{2(h-1)-1}) \cdot g' \qquad \text{for some $g' \in H$}.
\end{equation*}
By assumption, we can take $a_i = 0$ for $i \leq 2(h-1)-k$. Since $H$ is abelian, then for any $g \in H$,
\begin{equation*}
gsg^{-1} = g(1 - a_1 \tau^1)(1 - a_3 \tau^3) \cdots (1 - a_{2(h-1)-1} \tau^{2(h-1)-1})g^{-1} \cdot g'.
\end{equation*}
Thus all we need to show is that for $r > 2(h-1)-k$ odd and $a \in \F$, $g(1-a\tau^r)g^{-1}$ satisfies Property $\star$ and that the coefficients of $\tau^{2i}$ in $g(1-a\tau^r)g^{-1}$ vanish. But this is clear since the only contribution of $g$ to this conjugated element appear in the coefficients of $\tau^{r+2i}$.
\end{proof}

\begin{lemma}\label{l:semicentr}
Suppose that $s \in \UnipF$ satisfies Property $\star$ for $k$. Then for any odd $r > k$ and $a \in \F$, the element $(1-a\tau^r)s(1-a\tau^r)^{-1}$ also satisfies Property $\star$ for $k$. Furthermore, if we write $(1-a\tau^r)s(1-a\tau^r)^{-1} = s' = \sum s_i' \tau^i$, then
\begin{equation*}
s_{2i}' = s_{2i} \qquad \text{for all $i$.}
\end{equation*}
\end{lemma}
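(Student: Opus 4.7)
The plan is a direct application of Lemma \ref{l:conjU}, which expresses $s_n'$ as $s_n$ plus two correction sums. I would show that for $n$ even, or for $n$ odd with $n \leq 2(h-1)-k$, every term in both correction sums vanishes; this simultaneously verifies the second assertion of the lemma and the preservation of Property $\star$ for $k$.

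The core observation is that Property $\star$ forces many summands to vanish on the nose: a term in the first sum, indexed by pairs $(l,m)$ with $r(l+1)+2m=n$, requires $s_{2m}^q \neq s_{2m}$, hence $2m \geq 2(h-1-k)$; a term in the second sum, indexed by $r(l+1)+2m+1=n$, requires $s_{2m+1}\neq 0$, hence $2m+1 > 2(h-1)-k$. Combined with the hypothesis $r > k$ (which, since $r$ is odd, gives $r \geq k+1$) and a routine parity split of $l$ in each sum dictated by the parity of $n$ (forcing $l$ odd or even so that $r(l+1)$ has the correct parity), these lower bounds make the total index $n$ strictly exceed $2(h-1)$ in the even case and $2(h-1)-k$ in the odd case. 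These contradictions eliminate every potentially nonzero correction term in the relevant ranges.

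Once this vanishing is established, both conclusions drop out. The identity $s_{2i}' = s_{2i}$ is exactly the even-$n$ case, which is the second statement of the lemma; and preservation of Property $\star$ for $k$ then reduces, via $s_{2i}' = s_{2i}$, to the vanishing of odd coefficients with index $\leq 2(h-1)-k$, which is the odd-$n$ case. The main obstacle is simply keeping the parity bookkeeping organized — each correction sum splits into two sub-cases according to the parity of $l$ (since $r$ is odd, the parity of $r(l+1)$ is determined by $l+1$), and one must verify the desired index inequality in each of the four combinations. The arithmetic itself is entirely elementary given Lemma \ref{l:conjU}.
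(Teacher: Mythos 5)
Your proposal is correct and follows essentially the same route as the paper: both invoke Lemma \ref{l:conjU}, use Property $\star$ to force $2m \geq 2(h-1-k)$ in the first correction sum and $2m+1 > 2(h-1)-k$ in the second, and then use the parity of $l$ (dictated by the parity of $n$, since $r$ is odd) together with $r \geq k+1$ to show every surviving term would require $n > 2(h-1)$ when $n$ is even and $n > 2(h-1)-k$ when $n$ is odd. Your bookkeeping is in fact marginally cleaner, since the uniform bound $r \geq k+1$ suffices and you avoid the paper's separate treatment of $k$ odd (where it uses $r \geq k+2$) versus $k$ even.
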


\begin{proof}
Consider the element $1 - a\tau^r \in N_k$ with $r$ odd. Combining Property $\star$ and Lemma \ref{l:conjU}, we know that if we write $s' = (1-a\tau^r)s(1-a\tau^r)^{-1}$, we have
\begin{align*}
s_n' &= s_n + \sum_{\substack{r(l+1)+2m=n \\ l \geq 0 \\ m \geq h-1-k}} -a^{q^l+q^{l-1} + \cdots + q + 1}(s_{2m}^q - s_{2m}) \\
&\qquad \qquad \qquad \qquad+ \sum_{\substack{r(l'+1)+2m+1=n \\ l' \geq 0 \\ 2m+1 \geq 2(h-1)-k+1}} -a^{q^{l'-1} + q^{l'-2} + \cdots + q + 1}(a s_{2m+1}^q - a^q s_{2m+1}).
\end{align*}
Using that $n \equiv l + 1 \pmod 2$ and $n \equiv l' \pmod 2$, the above implies, in particular,
\begin{equation}\label{e:special}
\begin{cases}
s_n' = s_n & \text{if $n$ is odd and $n \leq \min\{r + 2(h-2-k), 2r+2(h-1)-k\}$}, \\
s_n' = s_n & \text{if $n$ is even and $n \leq \min\{2r + 2(h-2-k), r + 2(h-1)-k\}$}.
\end{cases}
\end{equation}
If $k$ is odd, then by assumption $r \geq k+2$ and
\begin{align*}
\min\{r + 2(h-&2-k), 2r+2(h-1)-k\} \\
&\geq \min\{k+2+2(h-2-k), 2(k+2)+2(h-1)-k\}\\
&=2(h-1)-k, \\
\min\{2r + 2(h-&2-k), r + 2(h-1)-k\} \\
&\geq \min\{2(k+2) + 2(h-2-k), k+2 + 2(h-1)-k\} \\
&= 2(h-1)+2.
\end{align*}
Thus Equation \eqref{e:special} implies
\begin{equation*}
\begin{cases}
s_n' = s_n & \text{if $n$ is odd and $n \leq 2(h-1)-k$}, \\
s_n' = s_n & \text{if $n$ is even and $n \leq 2(h-1)+2$}.
\end{cases}
\end{equation*}
If $k$ is even, then by assumption $r \geq k+1$ and
\begin{align*}
\min\{r + 2(h-&2-k), 2r+2(h-1)-k\} \\
&\geq \min\{k+1+2(h-2-k), 2(k+1)+2(h-1)-k\}\\
&=2(h-1)-k-1, \\
\min\{2r + 2(h-&2-k), r + 2(h-1)-k\} \\
&\geq \min\{2(k+1) + 2(h-2-k), k+1 + 2(h-1)-k\} \\
&= 2(h-1).
\end{align*}
Note that if $n$ is odd and $n \leq 2(h-1)-k$ for $k$ even, then in fact $n \leq 2(h-1)-k-1$. Thus Equation \eqref{e:special} implies
\begin{equation*}
\begin{cases}
s_n' = s_n & \text{if $n$ is odd and $n \leq 2(h-1)-k$}, \\
s_n' = s_n & \text{if $n$ is even and $n \leq 2(h-1)$},
\end{cases}
\end{equation*}

Therefore we have shown that for $s' = (1 - a \tau^r)s(1-a\tau^r)^{-1}$, we have
\begin{equation*}
\begin{cases}
s_n' = s_n = 0 & \text{if $n$ is odd and $n \leq 2(h-1)-k$,} \\
s_n' = s_n \in \FF_q & \text{if $n$ is even and $n \leq 2(h-2-k)$} \\
s_n' = s_n \notin \FF_q & \text{if $n$ is even and $n = 2(h-1-k)$} \\
s_n' = s_n & \text{if $n$ is even.}
\end{cases}
\end{equation*}
Thus $s'$ satisfies Property $\star$.
\end{proof}

\begin{lemma}\label{l:semicent}
Suppose $s = 1 + \sum s_i \tau^i \in \UnipF$ satisfies Property $\star$ for $k$. Then for any $t \in N_k$, the element $tst^{-1}$ also satisfies Property $\star$ for $k$. Furthermore, if $tst^{-1} = s' = \sum s_i' \tau^i$, then 
\begin{equation*}
s_{2i}' = s_{2i} \qquad \text{for all $i$.}
\end{equation*}
\end{lemma}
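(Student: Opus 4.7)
The plan is to reduce everything to Lemmas \ref{l:semicentH} and \ref{l:semicentr} by expressing an arbitrary $t \in N_k$ as a product of factors, each of which lies either in $H$ or is of the form $(1 - a\tau^r)$ with $r > k$ odd. Once such a decomposition is in hand, the two preceding lemmas can be applied in sequence.

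First I would invoke Lemma \ref{l:form} to write
$$t = (1 - a_1 \tau^1)(1 - a_3 \tau^3) \cdots (1 - a_{2(h-1)-1} \tau^{2(h-1)-1}) \cdot g$$
for some $g \in H$ and coefficients $a_r \in \F$. The important point is that when $t \in N_k$, the factors with odd $r \leq k$ are trivial. This follows directly from the explicit recursion used in the proof of Lemma \ref{l:form}: at each odd step one sets $a_r = -t_r$, and membership in $N_k$ forces $t_r = 0$ whenever $r$ is odd and $r \leq k$. Hence
$$t = \Big(\prod_{\substack{r > k \\ r \text{ odd}}} (1 - a_r \tau^r)\Big) \cdot g,$$
a product in which every factor is of one of the two types handled previously.

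Next I would conjugate $s$ by $t$ one factor at a time, starting from the innermost factor. By Lemma \ref{l:semicentH}, the element $g s g^{-1}$ still satisfies Property $\star$ for $k$ and shares its even-indexed coefficients with $s$. Then, applying Lemma \ref{l:semicentr} once for each remaining factor $(1 - a_r \tau^r)$ with $r > k$ odd, every successive conjugation preserves both Property $\star$ for $k$ and all even coefficients. Iterating through the full product yields the desired conclusion for $t s t^{-1}$.

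No serious obstacle arises; Lemma \ref{l:semicent} is essentially a bookkeeping consequence of the two preceding lemmas. The only thing to verify carefully is the opening observation that the factor indices produced by Lemma \ref{l:form} on elements of $N_k$ really do satisfy $r > k$, so that the hypothesis $r > k$ odd of Lemma \ref{l:semicentr} genuinely applies at every step of the chain of conjugations.
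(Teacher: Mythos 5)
Your proposal is correct and matches the paper's own argument: both use Lemma \ref{l:form} to factor $t \in N_k$ as a product of elements $(1-a_r\tau^r)$ with $r>k$ odd times an element of $H$ (noting the factors with $r\le k$ are trivial), and then apply Lemmas \ref{l:semicentH} and \ref{l:semicentr} iteratively to each conjugation. Your explicit justification that the recursion in Lemma \ref{l:form} forces $a_r=0$ for odd $r\le k$ is a welcome detail the paper states only in passing.
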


\begin{proof}
By Lemma \ref{l:form}, we can write $t \in N_k$ in the form
\begin{equation*}
(1 - a_1 \tau^1)(1 - a_3 \tau^3) \cdots (1 - a_{2(h-1)-1} \tau^{2(h-1)-1}) \cdot g \qquad \text{for some $g \in H$}.
\end{equation*}
By assumption, the coefficient of $\tau^i$ in $t$ vanishes for odd $i$ with $i \leq k$. Thus we take $a_i = 0$ for $i \leq k$. Furthermore, by Lemma \ref{l:semicentH}, proving that $tst^{-1}$ satisfies Property $\star$ for $k$ is equivalent to proving that $gsg^{-1}$ has Property $\star$ for $k$ and that for any $r > k$ odd and $a \in \F$, $(1-a\tau^r)s(1-a\tau^r)^{-1}$ has Property $\star$ for $k$. But we already know from Lemma \ref{l:semicentH} and \ref{l:semicentr} that this holds. Furthermore, Lemma \ref{l:semicentH} and \ref{l:semicentr} imply $s_{2i}' = s_{2i}$ for all $i$. This completes the proof.
\end{proof}

\begin{lemma}\label{l:centchisharp}
Let $s \in G_{h-2-k} \smallsetminus G_{h-2-k+1}$. Then
\begin{equation*}
\chi_\circ^\sharp(tst^{-1}) = \chi^\sharp(tst^{-1}) = \chi(s) \qquad \text{for any $t \in N_k$}.
\end{equation*}
\end{lemma}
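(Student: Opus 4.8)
The plan is to observe that the hypothesis on $s$ is exactly what is needed to place $s$ among the elements satisfying Property $\star$ for $k$, and then to read off everything from Lemma~\ref{l:semicent}. First I would check that $s$ itself satisfies Property~$\star$ for $k$. Since $s \in G_{h-2-k} \subseteq H$, all odd coefficients of $s$ vanish, so the third clause of Property~$\star$ holds automatically. By Remark~\ref{r:equivcoord}, the condition $s \in G_{h-2-k} \smallsetminus G_{h-2-k+1}$ (i.e.\ $s \in G_{h-2-k} \smallsetminus G_{h-1-k}$) says precisely that $s_{2i} \in \FF_q$ for $i \leq h-2-k$ and $s_{2(h-1-k)} \notin \FF_q$, which are the first two clauses. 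Hence $s$ satisfies Property~$\star$ for $k$.

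Next, since $t \in N_k$, Lemma~\ref{l:semicent} applies: writing $s' = tst^{-1} = \sum s_i' \tau^i$, the element $s'$ again satisfies Property~$\star$ for $k$, and $s_{2i}' = s_{2i}$ for every $i$. The odd-coefficient clause of Property~$\star$ for $s'$ forces all odd coefficients of $s'$ at indices $\leq 2(h-1)-k$ to vanish; as $k \leq h-2$, any remaining odd index exceeds $h-1$, so $s' \in H_0'(\F) = K$. Therefore $\chi_\circ^\sharp(tst^{-1}) = \chi^\sharp(tst^{-1})$. To finish, I would evaluate $\chi^\sharp$ from its definition: $\chi^\sharp(1 + \sum a_i \tau^i) = \chi(1 + a_2 \pi + \cdots + a_{2(h-1)} \pi^{h-1})$ depends only on the even coefficients. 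Since $s_{2i}' = s_{2i}$ and $s$ has only even coefficients, this gives $\chi^\sharp(s') = \chi(1 + s_2\pi + \cdots + s_{2(h-1)}\pi^{h-1}) = \chi(s)$ under the identification $U_L^1/U_L^h \cong H(\F)$. Chaining the equalities yields $\chi_\circ^\sharp(tst^{-1}) = \chi^\sharp(tst^{-1}) = \chi(s)$, as claimed.

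There is no real obstacle here: the lemma is bookkeeping once Lemma~\ref{l:semicent} is in hand, and the only points requiring care are the index identifications (that $G_{h-2-k+1} = G_{h-1-k}$, and that $s \in H$ makes the odd-coefficient clause of Property~$\star$ free) and the observation that $\chi^\sharp$ ignores odd coefficients. All the substantive content --- that $N_k$-conjugation preserves Property~$\star$ and fixes the even coefficients --- has already been established in Lemmas~\ref{l:semicentH}--\ref{l:semicent}.
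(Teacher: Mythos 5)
Your proof is correct and is essentially the paper's own argument: verify that the hypothesis on $s$ is exactly Property $\star$ for $k$ (the odd-coefficient clause being automatic since $s \in H$), invoke Lemma~\ref{l:semicent} to see that $tst^{-1}$ still satisfies Property $\star$ with unchanged even coefficients, and conclude since $\chi^\sharp$ depends only on the even coefficients. You have merely spelled out the index identifications and the membership $tst^{-1} \in K$ that the paper leaves implicit.
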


\begin{proof}
Let $t \in N_k$. The assumption $s \in G_{h-2-k} \smallsetminus G_{h-1-k}$ implies that $s$ satisfies Property $\star$ for $k$. Thus by Lemma \ref{l:semicent}, the element $tst^{-1}$ also satisfies Property $\star$ for $k$ and the even-degree coefficients of $tst^{-1}$ agree with the corresponding coefficients of $s$. The desired conclusion follows.
\end{proof}

\begin{lemma}\label{l:conjUodd}
Suppose $s$ satisfies Property $\star$ for $k$ where $k$ is odd. Then
\begin{equation*}
\sum_{a \in \F^\times} \chi_\circ^\sharp((1-a\tau^k)s(1-a\tau^k)^{-1}) = -(q+1) \chi^\sharp(s).
\end{equation*}
\end{lemma}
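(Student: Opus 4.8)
The goal is to compute the sum $\sum_{a \in \F^\times} \chi_\circ^\sharp\bigl((1-a\tau^k)s(1-a\tau^k)^{-1}\bigr)$ when $s$ satisfies Property $\star$ for odd $k$. The strategy is to apply Lemma \ref{l:conjU} with $r = k$ to get an explicit formula for the conjugate $s' = (1-a\tau^k)s(1-a\tau^k)^{-1}$, then to track precisely which coefficients of $s'$ are affected and whether $s'$ still lies in $K = H_0'(\F)$ (so that $\chi_\circ^\sharp$ does not vanish on it), and finally to evaluate $\chi^\sharp(s')$ in terms of $\chi(s)$ times an additive-character twist that I can sum over $a \in \F^\times$.

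First I would invoke Lemma \ref{l:semicentr}: since $s$ has Property $\star$ for $k$ and $k$ itself is odd with $k \leq k$ — wait, Lemma \ref{l:semicentr} requires $r > k$, so I cannot use it directly for $r = k$; this is exactly the borderline case the lemma excludes, which is why $k$ odd is singled out here. So instead I would run the computation of Lemma \ref{l:conjU} by hand for $r = k$. The key observation from Property $\star$ is that $s_i = 0$ for odd $i \le 2(h-1)-k$ and $s_{2m} \in \FF_q$ for $m \le h-2-k$; plugging into the Lemma \ref{l:conjU} formula, the only surviving terms in $s_n'$ for $n \le 2(h-1)$ come from the ``even'' sum with $2m \ge 2(h-1-k)$, i.e.\ from $s_{2(h-1-k)} \notin \FF_q$, contributing at degree $n = k + 2(h-1-k) = 2(h-1)-k$, which is odd. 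Higher contributions land in degree $> 2(h-1)$, i.e.\ outside the range, except that one must check the degree-$2(h-1)$ coefficient is unchanged (it is, since that would need $r(l+1)$ even with $r$ odd, forcing $l$ odd, hence a term $a^{q^l+\cdots+1}(s_{2m}^q - s_{2m})$ with $2m = 2(h-1) - k(l+1) < 2(h-1-k)$ when $l \ge 1$, so $s_{2m} \in \FF_q$ and it vanishes). Thus $s' = s \cdot (1 + c_a \tau^{2(h-1)-k} + \text{higher odd terms})$ where $c_a = -a(s_{2(h-1-k)}^q - s_{2(h-1-k)})$, and in particular $s' \in K$ for every $a$, with all even coefficients of $s'$ equal to those of $s$.

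Next, since $s' \in K$ and $\chi^\sharp$ depends only on the even-degree coefficients (by its definition $\chi^\sharp(1 + \sum a_i\tau^i) = \chi(1 + a_2\pi + \cdots + a_{2(h-1)}\pi^{h-1})$), the odd-degree perturbations are invisible to $\chi^\sharp$ and I get $\chi_\circ^\sharp(s') = \chi^\sharp(s') = \chi(s)$ for \emph{every} $a \in \F^\times$. But that would give $\sum_{a \in \F^\times} = (q^2 - 1)\chi(s)$, not $-(q+1)\chi(s)$ — so I must be missing a degree-$2(h-1)$ contribution. The resolution: I need to redo the bookkeeping and find that conjugation by $1-a\tau^k$ \emph{does} alter the degree-$2(h-1)$ coefficient, via the cross term in Lemma \ref{l:conjU} where the perturbation $c_a\tau^{2(h-1)-k}$ created at the first step interacts again with $-a\tau^k$ (iterating, i.e.\ the $l = 1$ term $-a^{q+1}(s_{2m}^q - s_{2m})$ with $2m = 2(h-1) - 2k$ — no; more carefully, the odd coefficient $s_{2(h-1)-k}' \notin \FF_q$ newly appears, and then it feeds via the \emph{odd} sum in Lemma \ref{l:conjU} into degree $k + (2(h-1)-k) = 2(h-1)$). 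So the genuine computation is: $\chi^\sharp(s')$ picks up a factor $\psi(\text{(something quadratic in }a))$, specifically of the form $\psi(a^{q+1} N)$ or $\psi(a \cdot m^q - a^q \cdot m)$ for suitable $m \notin \FF_q$ coming from $s_{2(h-1-k)}$. Summing this Gauss-sum-type character sum over $a \in \F^\times$ — using that $\psi$ has conductor $q^2$, so the relevant additive character of $\F$ is nontrivial — yields $\sum_{a \in \F^\times}(\text{nontrivial char})(a) \cdot \chi(s) = -\chi(s)$ from the $\F^\times$-sum of a nontrivial character, but there is an extra factor tracking the $\FF_q$-versus-$\F$ structure that produces $-(q+1)$ rather than $-1$.

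\textbf{Main obstacle.} The crux is the careful coefficient bookkeeping in the iterated conjugation formula of Lemma \ref{l:conjU}: identifying \emph{exactly} which term contributes a nonconstant factor to $\chi^\sharp$ at degree $2(h-1)$, writing that factor as an explicit additive character evaluated at a monomial in $a$ (with exponent involving $q$), and then recognizing the resulting sum over $a \in \F^\times$ as a character sum that evaluates to $-(q+1)\chi^\sharp(s)$ — the coefficient $q+1$ presumably arising because the relevant map $a \mapsto (\text{exponent})$ is $(q+1)$-to-one onto its image, or because the character factors through $\Norm_{\F/\FF_q}$ composed with a character of $\FF_q^\times$ whose sum over $\F^\times$ gives $-(q+1)$. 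Getting the arithmetic of this last character sum exactly right, including the sign, is where the real work lies; everything upstream is a controlled unwinding of Property $\star$ through Lemma \ref{l:conjU}.
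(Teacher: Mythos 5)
Your overall strategy coincides with the paper's: apply Lemma \ref{l:conjU} with $r=k$ directly (correctly noting that Lemma \ref{l:semicentr} excludes this borderline case), track which coefficients move, and reduce to a character sum over $a\in\F^\times$. However, the execution has a concrete error exactly at the crux. In your first pass you claim the degree-$2(h-1)$ coefficient is unchanged because the relevant terms have $2m = 2(h-1)-k(l+1) < 2(h-1-k)$ for $l\ge 1$; but for $l=1$ this gives $2m = 2(h-1)-2k = 2(h-1-k)$ exactly, which is the one even index where Property $\star$ guarantees $s_{2m}\notin\FF_q$. That $l=1$ term of the even sum, namely $-a^{q+1}\bigl(s_{2(h-1-k)}^q - s_{2(h-1-k)}\bigr)$, is precisely the nonvanishing contribution to $s_{2(h-1)}'$. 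You then write this term down, reject it (``--- no''), and replace it with an explanation via the odd sum feeding $s_{2(h-1)-k}'$ into degree $2(h-1)$; that explanation is wrong on two counts: Lemma \ref{l:conjU} is a closed formula in the \emph{original} coefficients of $s$, not an iteration on perturbed ones, and the odd-sum term at $n=2(h-1)$ involves $s_{2(h-1)-k}$, which vanishes by Property $\star$. So the correct perturbation is the one you discarded, and it is of the form $\psi(-a^{q+1}N)$ with $N = s_{2(h-1-k)}^q - s_{2(h-1-k)}$, not of the form $\psi(am^q - a^qm)$ (your two candidates are not interchangeable here).

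The second gap is that the final character sum is never evaluated; you explicitly defer it as ``where the real work lies.'' The paper's evaluation is short but essential: $N$ is a nonzero element of $\ker\Tr_{\F/\FF_q}$, the norm map $a\mapsto a^{q+1}$ is $(q+1)$-to-one from $\F^\times$ onto $\FF_q^\times$, and $\FF_q^\times\cdot N$ exhausts $\ker\Tr_{\F/\FF_q}\smallsetminus\{0\}$; since $\psi$ has conductor $q^2$ its restriction to $\ker\Tr_{\F/\FF_q}$ is nontrivial, so the sum of $\psi$ over the nonzero elements of that kernel is $-1$, and the total is $(q+1)\cdot(-1)\cdot\chi^\sharp(s)$. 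Your first guess for the mechanism (the $(q+1)$-to-one norm map) is the right one, but as written the proposal neither pins down the correct perturbing term nor carries out the sum, and both steps are the substance of the lemma.
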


\begin{proof}
The first half of this proof is very similar to the proof of Lemma \ref{l:semicentr}. Assume $a \in \F^\times$ and write $s' = \sum s_i' \tau^i = (1-a\tau^k)s(1-a\tau^k)^{-1}$. Combining Property $\star$ and Lemma \ref{l:conjU}, we know
\begin{align*}
s_n' &= s_n + \sum_{\substack{k(l+1)+2m=n \\ l \geq 0 \\ m \geq h-1-k}} -a^{q^l+q^{l-1} + \cdots + q + 1}(s_{2m}^q - s_{2m}) \\
&\qquad \qquad \qquad \qquad+ \sum_{\substack{k(l'+1)+2m+1=n \\ l' \geq 0 \\ 2m+1 \geq 2(h-1)-k+1}} -a^{q^{l'-1} + q^{l'-2} + \cdots + q + 1}(a s_{2m+1}^q - a^q s_{2m+1}).
\end{align*}
Using that $n \equiv l + 1 \pmod 2$, we see that the above implies, in particular,
\begin{equation*}
\begin{cases}
s_n' = s_n & \text{if $n$ is odd and $n \leq \min\{k + 2(h-2-k), 2k+2(h-1)-k\}$}, \\
s_n' = s_n & \text{if $n$ is even and $n \leq \min\{2k+2(h-2-k), k+2(h-1)-k\}$}.
\end{cases}
\end{equation*}
In the min expression, the first expression comes from the vanishing of terms in the first sum and the second expression comes from the vanishing of terms in the second sum. Simplifying these expressions gives us 
\begin{equation}\label{e:parta}
\begin{cases}
s_n' = s_n & \text{if $n$ is odd and $n \leq 2(h-1)-k-2$}, \\
s_n' = s_n & \text{if $n$ is even and $n \leq 2(h-1)-2$}.
\end{cases}
\end{equation}
and also tells us that when $n = 2(h-1)-k$ and when $n = 2(h-1)$, the contributions to $s_n' - s_n$ come from terms in the first sum only. More precisely, we have
\begin{equation}\label{e:partb}
\begin{cases}
s_n' = s_n - a(s_{n-k}^q - s_{n-k}) & \text{if $n = 2(h-1)-k$}, \\
s_n' = s_n - a^{q+1}(s_{n-2k}^q - s_{n-2k}) & \text{if $n = 2(h-1)$.}
\end{cases}
\end{equation}
Since $k \leq h-2$, then $2(h-1)-k \geq h$. Thus since $s \in K$, Equation \eqref{e:parta} implies that $s' \in K$. Recalling the definition of $\chi_\circ^\sharp$, Equations \eqref{e:parta} and \eqref{e:partb} imply
\begin{align*}
\chi_\circ^\sharp(s') 
&= \chi^\sharp(s \cdot (1 - a^{q+1}(s_{2(h-1)-k}^q - s_{2(h-1)-k})\tau^{h-1})) \\
&= \chi^\sharp(s) \cdot \psi(-a^{q+1}(s_{2(h-1)-k}^q - s_{2(h-1)-k})).
\end{align*}
Note that since $s$ satisfies Property $\star$ for $k$ and $a \neq 0$, then $-a^{q+1}(s_{2(h-1)-k}^q - s_{2(h-1)-k}) \neq 0$. Furthermore, this element is in $\ker(\Tr_{\F/\FF_q})$. Since $\psi$ has conductor $q^2$, we know that its restriction to $\ker(\Tr_{\F/\FF_q})$ is nontrivial. Notice that ranging $a \in \F^\times$ allows $-a^{q+1}(s_{2(h-1)-k}^q - s_{2(h-1)-k})$ to take each nonzero value of $\ker(\Tr_{\F/\FF_q})$ exactly $q+1$ times. Therefore, for any $s$ satisfying Property $\star$ for $k$,
\begin{align*}
\sum_{a \in \F^\times} \chi_\circ^\sharp((1 - a\tau^k)s(1-a\tau^k)^{-1}) 
&= \chi^\sharp(s) \sum_{a \in \F^\times} \psi(-a^{q+1}(s_{2(h-1-k)}^q - s_{2(h-1-k)}))  \\
&= -(q+1) \cdot \chi^\sharp(s). \qedhere
\end{align*}
\end{proof}

\begin{lemma}\label{l:sumzero}
Let $s \in G_{h-2-k} \smallsetminus G_{h-2-k+1}$. Let $n_1, \ldots, n_r$ be a decreasing sequence of consecutive odd numbers starting from $n_1 = 2(h-1)-1$ and assume that $n_r < k$. Then
\begin{equation*}
\sum_{\substack{a_1, \ldots, a_r \in \F \\ a_r \neq 0}} \chi_\circ^\sharp((1-a_r\tau^{n_r}) \cdots (1-a_1 \tau^{n_1})s(1-a_1 \tau^{n_1})^{-1} \cdots (1-a_r \tau^{n_r})^{-1}) = 0.
\end{equation*}
\end{lemma}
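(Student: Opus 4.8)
The plan is to prove the identity by induction on $r$, using the explicit conjugation formula of Lemma~\ref{l:conjU}, the stability statements of Lemmas~\ref{l:semicentH}--\ref{l:semicent} (which apply because $s\in G_{h-2-k}\smallsetminus G_{h-2-k+1}$ forces $s$ to satisfy Property~$\star$ for $k$, exactly as noted in the proof of Lemma~\ref{l:centchisharp}), and the hypothesis that $\psi$ has conductor $q^2$ --- equivalently, that $\psi$ is nontrivial on $\ker\Tr_{\F/\FF_q}$, so that $\sum_{y\in\ker\Tr_{\F/\FF_q}}\psi(y)=0$.

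The first step is to dispose of the conjugating factors of large degree. By Lemma~\ref{l:conjU} together with Property~$\star$, conjugating $s$ by $(1-a_i\tau^{n_i})$ with $n_i>2k$ changes nothing --- every coefficient it could affect lies in a degree $>2(h-1)$ --- so those variables $a_i$ are free and contribute only overall factors of $q$; and conjugating by $(1-a_i\tau^{n_i})$ with $k<n_i\le 2k$ preserves Property~$\star$ for $k$ and all even-degree coefficients, by Lemmas~\ref{l:semicentH}--\ref{l:semicent}. Since $n_1,\dots,n_r$ are consecutive odd integers decreasing to $n_r<k$, the problem thereby reduces to the following: for every $s'$ satisfying Property~$\star$ for $k$, show that the analogous sum over conjugations by consecutive odd degrees \emph{all} $<k$ --- with the coefficient at the smallest degree required to be nonzero and the others free --- vanishes. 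I would prove this by induction on the number of these low-degree factors, peeling off the outermost one, $(1-a_r\tau^{n_r})$: with the inner variables fixed, a single-factor computation of the kind carried out in the proof of Lemma~\ref{l:conjUodd} (applying Lemma~\ref{l:conjU} with conjugating degree $n_r$) shows that the sum over $a_r\ne 0$ is either $0$ or a scalar multiple of $\chi_\circ^\sharp$ of a conjugate described by a strictly shorter product of the same shape, to which the inductive hypothesis applies.

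The heart of the argument is the base case, and it rests on the observation that $c:=(s'_{2(h-1-k)})^{q}-s'_{2(h-1-k)}$ is a \emph{nonzero} element of $\ker\Tr_{\F/\FF_q}$, because $s'_{2(h-1-k)}\notin\FF_q$ and $\Char\FF_q\ne 2$. Combined with Property~$\star$, this shows that each low-degree conjugation $(1-a_i\tau^{n_i})$ alters coefficients only in degrees $\ge n_i+2(h-1-k)>2(h-1)-k$, so there is a dichotomy: depending on whether $n_r+2(h-1-k)$ is at most $h-1$ or strictly above it, the conjugate $v$ either never lies in $K=H'(\F)$ for $a_r\ne 0$ --- in which case $\chi_\circ^\sharp(v)=0$ term by term --- or always lies in $K$, in which case its even-degree coefficients are explicit polynomials in the $a_i$ and the coefficient feeding $\psi$ in $\chi_\circ^\sharp(v)=\chi(1+\sum_i v_{2i}\pi^i)$ picks up a summand $a_r w^{q}-a_r^{q}w$ in which $w$ depends affinely, with nonzero slope $\pm c$, on one of the remaining summation variables. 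Summing over that variable makes $w$ range over all of $\F$, hence makes $a_r w^{q}-a_r^{q}w$ --- an $\FF_q$-linear surjection onto $\ker\Tr_{\F/\FF_q}$ whenever $a_r\ne 0$ --- range over $\ker\Tr_{\F/\FF_q}$, so the attached $\psi$-sum is $\sum_{y\in\ker\Tr_{\F/\FF_q}}\psi(y)=0$.

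I expect the main obstacle to be bookkeeping, not any new idea: iterating Lemma~\ref{l:conjU} precisely enough to isolate the single coefficient carrying the $\psi$-sum and to confirm that its affine dependence on the chosen variable survives the later conjugations; handling the degree-$k$ factor when $k$ is odd by splitting the sum over its coefficient into the value $0$ (which shortens the product) and the nonzero values (governed by the proof of Lemma~\ref{l:conjUodd}); and arranging the parity cases so that the degenerate configurations are absorbed --- notably $k=h-2$, where the critical degree is $h-1$ itself and the conjugate genuinely leaves $K$ (so every summand already vanishes), and the smallest values of $h$, where higher powers of the summation variables enter Lemma~\ref{l:conjU}. Once the role of the conductor-$q^2$ hypothesis is pinned down, the remainder is routine computation with formulas already established.
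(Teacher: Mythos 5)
You have correctly isolated the cancellation mechanism: the relevant contribution to the coefficient $c_{2(h-1)}$ has the bilinear form $a_r w^q - a_r^q w \in \ker\Tr_{\F/\FF_q}$, where $w$ depends affinely on another summation variable with nonzero slope $\pm\bigl(s_{2(h-1-k)}^q - s_{2(h-1-k)}\bigr)$, and summing that variable makes $\psi$ run over a nontrivial character of $\ker\Tr_{\F/\FF_q}$. The gap is in how you organize the sum. The variable that must be summed is the one at the complementary degree $n_f = 2k - n_r$; since $n_r < k$, this degree satisfies $k < n_f < 2k$, so it is precisely one of the ``mid-range'' factors you discard in your opening reduction. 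Lemma \ref{l:semicentr} does show that conjugating $s$ by $(1-a_f\tau^{n_f})$ alone preserves all even coefficients, but it creates a nonzero odd coefficient at degree $n_f + 2(h-1-k) = 2(h-1) - n_r$, and it is exactly this odd coefficient that the outermost factor $(1-a_r\tau^{n_r})$ later converts into the contribution $-(a_r a_f^q + a_r^q a_f)(s_{2(h-1-k)}^q - s_{2(h-1-k)})$ to $c_{2(h-1)}$. Consequently your reduced statement --- that for each fixed $s'$ satisfying Property $\star$ for $k$ the sum over the degrees $< k$ alone vanishes --- is false: take all higher-degree variables equal to $0$, so $s' = s$ has no odd coefficients; then the low-degree conjugations feed $c_{2(h-1)}$ and the lower even coefficients only through norm-type terms $-a_r^{q+1}(\cdots)$, and the sum over $a_r \neq 0$ is a character sum over $\FF_q^\times$ (each norm value hit $q+1$ times) with no reason to vanish --- compare Lemma \ref{l:conjUodd}, where the analogous one-variable sum equals $-(q+1)\chi^\sharp(s) \neq 0$.

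The proposed induction suffers from the same misplacement: summing over $a_r \neq 0$ with the inner variables fixed does not return ``$0$ or a scalar times $\chi_\circ^\sharp$ of a shorter conjugate,'' because $a_r$ enters several even coefficients $c_{2i}$ at once, through $a_r^{q+1}$ and through cross-terms with every $a_j$ satisfying $n_r + n_j \leq 2k$, and $\chi^\sharp$ sees all of these coefficients, not merely $\psi$ of $c_{2(h-1)}$. The paper's proof needs no induction: it fixes every variable except $a_f$ and checks that $a_f$ affects neither membership in $K$ nor any even coefficient of degree $\leq 2(h-1)$ except through the single cross-term with $a_r$ in $c_{2(h-1)}$ (any other even contribution would require $n_f + n_j \leq 2k$ with $n_j < n_r$, impossible since $n_r$ is minimal); the inner sum over $a_f \in \F$ then vanishes outright. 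Two smaller points: in Section \ref{s:morerepthy} the support of $\chi_\circ^\sharp$ is $K = N_{h-1}$ (odd coefficients of degree $\leq h-1$ vanish), not $H'(\F)$; and for $n_i < k$ one has $n_i + 2(h-1-k) < 2(h-1)-k$, so your displayed inequality is reversed.
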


\begin{proof}
Let $g = (1 - a_r \tau^{n_r}) \cdots (1 - a_1 \tau^{n_1})$. If $n_r + 2(h-1-k) \leq h-1$, then we see from Lemma \ref{l:conjU} that the coefficient of $\tau^{n_r + 2(h-1-k)}$ in $gsg^{-1}$ is
\begin{equation*}
-a_r(s_{2(h-1-k)}^q - s_{2(h-1-k)}) \neq 0.
\end{equation*}
Thus $gsg^{-1} \notin K$ and $\chi_\circ^\sharp(gsg^{-1}) = 0$. We may therefore assume that $n_r > 2k - (h-1)$.

Let $f$ be such that $n_r + n_f = 2k$ (so that $n_r + n_f + 2(h-1-k) = 2(h-1)$).Note that such an $f$ exists by the assumption $n_r < k$. To prove the lemma, we will prove the following sum identity. Fix $a_i \in \F$ for $1 \leq i \leq r$, $i \neq f$, and assume that $a_r \neq 0$. Then
\begin{equation}\label{e:simplesum}
\sum_{a_f \in \F} \chi_\circ^\sharp((1-a_r\tau^{n_r}) \cdots (1-a_1 \tau^{n_1})s(1-a_1 \tau^{n_1})^{-1} \cdots (1-a_r \tau^{n_r})^{-1}) = 0.
\end{equation}
It is clear that once this is established, the lemma follows immediately. We thus focus the rest of the proof on proving Equation \eqref{e:simplesum}.

Write $g = (1 - a_r \tau^{n_r}) \cdots (1 - a_1 \tau^{n_1})$. We must study the contribution of $a_f$ in $gsg^{-1}$. By construction $n_f$ is odd and $s \in H$. Thus $a_f$ can only contribute to the coefficient of $\tau^{2l}$ in conjunction with (at least) one of the other $a_i$'s for $1 \leq i \leq r$. 

For convenience, let $gsg^{-1} = 1 + \sum c_i \tau^i$. First observe that since $s \in G_{h-2-k}$, we have $s_{2i}^q - s_{2i} = 0$ for $1 \leq i \leq h-2-k$, and thus the smallest odd $i$ such that $a_f$ has a nonzero contribution to $c_i$ is when $i = 2(h-1-k) + n_f > 2(h-1-k) + 2k - (h-1) = h-1$. Thus we see that $gsg^{-1} \in K$ for $a_f = 0$ if and only if $gsg^{-1} \in K$ for any $a_f \in \F$ (remember that $g$ depends on $a_f$).

If $gsg^{-1} \notin K$, then we are done. Now assume $gsg^{-1} \in K$. By construction, the value of $\chi^\sharp(gsg^{-1})$ depends only on the coefficients $c_{2i}$ for $1 \leq i \leq h-1$. If $a_f$ contributes to some $c_{2i}$, then we must have 
\begin{equation*}
2i \geq n_f + n_r + 2(h-1-k) = 2(h-1).
\end{equation*}
Thus $a_f$ only contributes to $c_{2(h-1)}$. Furthermore, its contribution to $c_{2(h-1)}$ is
\begin{align*}
a_r a_f^q s_{2(h-1-k)} - a_r s_{2(h-1-k)}^q a_f^q - a_f &s_{2(h-1-k)}^q a_r^q + s_{2(h-1-k)} a_f a_r^q \\
&= -(a_r a_f^q + a_r^q a_f)(s_{2(h-1-k)}^q - s_{2(h-1-k)}).
\end{align*}
(One can see this by computing $(1-a_r \tau^{n_r})(1-a_f \tau^{n_f})s(1-a_f \tau^{n_f})^{-1}(1-a_r \tau^{n_r})^{-1}$.) Thus
\begin{equation*}
\chi_\circ^\sharp(gsg^{-1}) = \chi^\sharp(gsg^{-1}) = \chi^\sharp(\gamma) \cdot \psi(-(a_r a_f^q + a_r^q a_f)(s_{2(h-1-k)}^q - s_{2(h-1-k)})),
\end{equation*}
where $\gamma$ does not depend on the choice of $a_f \in \F$. Thus
\begin{align*}
\sum_{a_f \in \F} \chi_\circ^\sharp(gsg^{-1}) 
= \chi^\sharp(\gamma) \cdot \sum_{a_f \in \F} \psi(-(a_r a_f^q + a_r^q a_f)(s_{2(h-1-k)}^q - s_{2(h-1-k)})).
\end{align*}
Note that for any $c \in \FF_q$, any solution $x$ to $a_r x^q + a_r^q x = c$ must satisfy $x^{q^2} = x$. Thus varying $a_f \in \F$, the quantity $a_r a_f^q + a_r^q a_f$ takes the value of each element $\FF_q$ exactly $q$ times. Since $s_{2(h-1-k)} \notin \FF_q$, then $-(a_r a_f^q + a_r^q a_f)(s_{2(h-1-k)}^q - s_{2(h-1-k)})$ attains each value of $\ker \Tr_{\F/\FF_q}$ exactly $q$ times. By the assumption that $\psi$ has conductor $q^2$, the restriction of $\psi$ to the subgroup $\ker \Tr_{\F/\FF_q}$ is nontrivial, and thus
\begin{equation*}
\sum_{a_f \in \F} \psi(-(a_r a_f^q + a_r^q a_f)(s_{2(h-1-k)}^q - s_{2(h-1-k)})) = 0.
\end{equation*}
Equation \eqref{e:simplesum} follows.
\end{proof}

We are now ready to prove Proposition \ref{p:trace}, Theorem \ref{t:decomp}, and Corollary \ref{c:mult}.

\begin{proof}[Proof of Proposition \ref{p:trace}]
It is easy to see that
\begin{equation}\label{e:size}
|N_k| = \begin{cases}
q^{4(h-1)-k} & \text{if $k$ is even,} \\
q^{4(h-1)-(k+1)} & \text{if $k$ is odd.}
\end{cases}
\end{equation}
We will use this at various points in this proof.

If $s \in G_{h-2}$, then $s$ is central in $\UnipF$. Thus for any $t \in \UnipF$, $tst^{-1} = s$ and 
\begin{equation*}
\frac{1}{|K|} \sum_{t \in \UnipF} \chi_\circ^\sharp(tst^{-1}) = \frac{|\UnipF|}{|K|} \chi(s) = \begin{cases} q^{h-1} \cdot \chi(s) & \text{if $h$ is odd} \\ q^h \cdot \chi(s) & \text{if $h$ is even.} \end{cases}
\end{equation*}
Thus by Remark \ref{r:isotypic}, we have
\begin{equation*}
\Tr \rho_\chi(s) = q^{h-1} \cdot \chi(s).
\end{equation*}
This proves (a).

Let $s \in G_{h-2-k} \smallsetminus G_{h-2-k+1}$. We first handle the case when $k$ is even. We have
\begin{align*}
\sum_{t \in \UnipF} \chi_\circ^\sharp(tst^{-1}) 
&= \underbrace{\sum_{t \in N_k} \chi_\circ^\sharp(tst^{-1})}_{(1)} + \underbrace{\sum_{t \notin N_k} \chi_\circ^\sharp(tst^{-1})}_{(2)}.
\end{align*}
By Lemma \ref{l:centchisharp}, we know
\begin{equation}\label{e:1odd}
(1) = |N_k| \cdot \chi(s).
\end{equation}
By Lemma \ref{l:form}, we know that every element $t \in U_h^{q,2}(\F)$ can be written in the form
\begin{equation*}
(1 - a_1 \tau^1)(1 - a_3 \tau^3) \cdots (1 - a_{2(h-1)-1} \tau^{2(h-1)-1}) \cdot g
\end{equation*}
for some $g \in H.$ Since $H$ is abelian, this implies that $gsg^{-1} = s$, and the assumption $t \notin N_k$ implies that there exists $i$ odd with $i < k$ such that $a_i \neq 0$. Thus
\begin{equation*}
(2) = |H| \cdot \sum_{\substack{a_i \in \F \\ \text{$\exists \, i < k$, with $a_i \neq 0$}}} \chi_\circ^\sharp(asa^{-1}) = 0,
\end{equation*}
where $a = (1 - a_1 \tau)(1 - a_3 \tau^3) \cdots (1 - a_{2(h-1)-1} \tau^{2(h-1)-1})$, and the last equality holds by Lemma \ref{l:sumzero}.

Therefore,
\begin{equation*}
\frac{1}{|K|} \sum_{t \in \UnipF} \chi_\circ^\sharp(tst^{-1}) 
= \frac{|N_k|}{|K|} \cdot \chi(s)
= \begin{cases}
q^{h-1-k} \cdot \chi(s) & \text{$h$ odd}, \\
q^{h-k} \cdot \chi(s) & \text{$h$ even}.
\end{cases}
\end{equation*}
Recalling Remark \ref{r:isotypic}, this finishes the proof of the proposition in the case $k$ is even.

Now let $k$ be odd. By Lemma \ref{l:form}, we have
\begin{align*}
\sum_{t \in \UnipF} \chi_\circ^\sharp(tst^{-1}) 
&= \underbrace{\sum_{t \in N_k} \chi_\circ^\sharp(tst^{-1})}_{(1)} + \underbrace{\sum_{\substack{t \in N_k \\ a \in \F^\times}} \chi_\circ^\sharp((1-a\tau^k)tst^{-1}(1-a\tau^k)^{-1})}_{(2)} \\
&\qquad\qquad+ \underbrace{\sum_{\substack{t \notin N_k \\ \text{$\exists \, i < k$ odd s.t.\ $t_i \neq 0$}}} \chi_\circ^\sharp(tst^{-1})}_{(3)}.
\end{align*}
By Lemma \ref{l:centchisharp}, we know
\begin{equation}\label{e:1odd}
(1) = |N_k| \cdot \chi(s).
\end{equation}
By Lemma \ref{l:semicent}, we know that given $s \in G_{h-2-k} \smallsetminus G_{h-2-k+1}$ and $t \in N_k$, we have that $tst^{-1}$ satisfies Property $\star$ for $k$ and that $\chi^\sharp(tst^{-1}) = \chi(s)$. Thus by Lemma \ref{l:conjUodd}, we have
\begin{equation*}
\sum_{a \in \F^\times} \chi_\circ^\sharp((1-a\tau^k)tst^{-1}(1-a\tau^k)^{-1}) = -(q+1)\chi^\sharp(tst^{-1}) = -(q+1) \chi(s).
\end{equation*}
Therefore
\begin{equation}\label{e:2odd}
(2) = - |N_k| (q+1) \cdot \chi(s).
\end{equation}
By the same argument as the case when $k$ is even, it follows from Lemma \ref{l:form} and Lemma \ref{l:sumzero} that
\begin{equation*}
(3) = 0.
\end{equation*}

Therefore,
\begin{align*}
\frac{1}{|K|} \sum_{t \in \UnipF} \chi_\circ^\sharp(tst^{-1}) 
&= \frac{1}{|K|} \cdot (|N_k| \cdot \chi(s) - |N_k| (q + 1) \cdot \chi(s)) \\
&= \begin{cases}
-q^{h-1-k} \cdot \chi(s) & \text{$h$ odd}, \\
-q^{h-k} \cdot \chi(s) & \text{$h$ even}.
\end{cases}
\end{align*}
By Remark \ref{r:isotypic}, this finishes the proof of the proposition when $k$ is odd.
\end{proof}

\begin{proof}[Proof of Theorem \ref{t:decomp}]
Consider the (virtual) $H$-representation 
\begin{equation*}
\rho = (-1)^h\left(q \cdot \chi + \sum_{i=1}^{h-2} (-1)^i (q+1) \Ind_{G_i}^H(\chi)\right).
\end{equation*}
Since $H$ is abelian, its trace is very easy to calculate: using $|H|/|G_i| = q^i$, for any $s \in H$, 
\begin{align*}
\Tr \rho(s) 
&= (-1)^h\left(q \cdot \chi(s) + \sum_{i=1}^{h-2} (-1)^{i}(q+1) \Tr \Ind_{G_i}^H(\chi)(s)\right) \\
&= (-1)^h\left(q \cdot \chi(s) + \sum_{i=1}^{h-2} (-1)^{i}(q+1) q^i \cdot \ONE_{G_i}(s) \cdot \chi(s)\right).
\end{align*}
Therefore:
\begin{enumerate}[label=(\alph*)]
\item
If $s \in G_{h-2}$, then
\begin{equation*}
\Tr \rho(s) = (-1)^h \cdot \chi(s) \cdot \left(q + \sum_{i=1}^{h-2} (-1)^i(q+1) q^i\right) = q^{h-1} \cdot \chi(s).
\end{equation*}

\item
If $s \in G_{h-2-k} \smallsetminus G_{h-2-k+1}$, then
\begin{equation*}
\Tr \rho(s)  = (-1)^h \cdot \chi(s) \cdot \left(q + \sum_{i=1}^{h-2-k} (-1)^i(q+1) q^i \right) = (-1)^k q^{h-1-k} \cdot \chi(s).
\end{equation*}
\end{enumerate}
Comparing this with Proposition \ref{p:trace}, we see that
\begin{equation*}
\rho_\chi(s) = \rho(s) \qquad \qquad \text{for all $s \in H(\F)$}.
\end{equation*}
Therefore $\rho_\chi = \rho$ as elements of the Grothendieck group of $H$.
\end{proof}

\begin{proof}[Proof of Corollary \ref{c:mult}]
Given a character $\theta \from H(\F) \to \overline \QQ_\ell^\times$, we can read off its multiplicity from the result of Theorem \ref{t:decomp}. Indeed, since $H$ is abelian, then if $\theta$ is an $H$-character that agrees with $\chi$ on some subgroup $G_m$ but not on $G_{m-1}$, then it occurs exactly once in $\Ind_{G_i}^H(\chi)$ for every $i \geq m$ and does not occur in $\Ind_{G_i}^H(\chi)$ for $i \leq m - 1$. Therefore:
\begin{enumerate}[label=(\alph*)]
\item
The character $\chi$ occurs in $\rho_\chi$ with multiplicity equal to
\begin{align*}
(-1)^h&\left(q - (q+1) + (q+1) - \cdots + (-1)^{h-2}(q+1)\right) \\
&\qquad\qquad\qquad= \begin{cases}
(-1)(q - (q+1)) = 1 & \text{if $h$ is odd,} \\
q & \text{if $h$ is even.}
\end{cases}
\end{align*}

\item
Let $\theta$ be a character of $H$ such that, for some odd $k$, $\theta$ agrees with $\chi$ on $G_{h-2-k}$ but not on $G_{h-2-k-1}$. Then $\theta$ occurs in $\rho_\chi$ with multiplicity equal to
\begin{align*}
(-1)^h\left((-1)^{h-2-k}(q+1) + \cdots + (-1)^{h-2}(q+1)\right) = 0,
\end{align*}
since this is an alternating sum of $k+1$ terms and $k$ is odd.

\item
Let $\theta$ be a character of $H$ such that, for some even $k$, $\theta$ agrees with $\chi$ on $G_{h-2-k}$ but not on $G_{h-2-k-1}$. Then $\theta$ occurs in $\rho_\chi$ with multiplicity equal to
\begin{align*}
(-1)^h&\left((-1)^{h-2-k}(q+1) + \cdots + (-1)^{h-2}(q+1)\right) \\
&\qquad\qquad\qquad\qquad = (-1)^h(-1)^{h-2}(q+1) = q+1.
\end{align*}

\item
Let $\theta$ be a character of $H$ that does not agree with $\chi$ on $G_{h-2}$. Since $G_{h-2}$ is in the center of $\UnipF$, then the restriction of $\rho_\chi$ to $G_{h-2}$ must be a sum of $\chi|_{G_{h-2}}$. Therefore the multiplicity of $\theta$ in $\rho_\chi$ must be 0.
\end{enumerate}
This completes the proof.
\end{proof}

\section{Morphisms Between $H_c^i(X_h)$ and Representations of $\UnipF$}\label{s:homspace}

Let $H_c^\bullet(X_h) = \bigoplus_{i \in \ZZ} H_c^i(X_h, \overline \QQ_\ell)$. The aim of this section is to compute the space $\Hom_{\UnipF}(V_\psi, H_c^\bullet(X_h))$. Recall that 
\begin{equation*}
V_\psi = \Ind_{H_0'(\F)}^{\UnipF}(\widetilde \psi),
\end{equation*}
where $\widetilde \psi$ is the extension of $\psi$ to $H_0'(\F)$ defined in Section \ref{s:repthy}.

In the following theorem, we prove a clean way to express the equations cutting out the scheme $X_h \subseteq \Unip$. This will be heavily used in this section, as well as in the next section.

\begin{theorem}\label{t:Xhpolys}
The scheme $X_h \subset \Unip$ is defined by the vanishing of the polynomials
\begin{equation*}
f_{2k} \colonequals (a_{2k}^{q^2} - a_{2k}) + \sum_{i = 1}^{2k-1} (-1)^i a_i^q(a_{2k-i}^{q^2} - a_{2k-i})
\end{equation*}
for $1 \leq k \leq h-1$.
\end{theorem}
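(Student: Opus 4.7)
The plan is to unwind the definition: by construction $X_h = \iota_h^{-1}(\widetilde X_h^{(0)})$ cuts out exactly those tuples $(a_1,\ldots,a_{2(h-1)})$ for which $\det \iota_h(1+\sum a_i\tau^i) \in A[[\pi]]/(\pi^h)$ is fixed by $\Fr_q$ (the Frobenius acting on coefficients, fixing $\pi$). So the strategy is to compute the determinant, compute its Frobenius twist, and take coefficients of $\pi^k$ in the difference for $1\leq k\leq h-1$; the claim is that each such coefficient equals $\pm f_{2k}$.

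Concretely, I would set $A \colonequals \sum_{i\geq 0} a_{2i}\pi^i$ and $B \colonequals \sum_{i\geq 0} a_{2i+1}\pi^i$ (with the convention $a_0=1$), and let $A^{(q)}$, $B^{(q)}$ denote the series obtained by raising each coefficient (but not $\pi$) to the $q$-th power. Reading off the entries displayed in the definition of $\iota_h$,
\begin{equation*}
\iota_h(1 + \textstyle\sum a_i\tau^i) = \begin{pmatrix} A & B \\ \pi B^{(q)} & A^{(q)} \end{pmatrix},
\qquad \det \iota_h = A\,A^{(q)} - \pi B\,B^{(q)}.
\end{equation*}
Applying $\Fr_q$, which sends $A \mapsto A^{(q)}$ and $A^{(q)} \mapsto A^{(q^2)}$ (similarly for $B$), and subtracting, one gets the clean factorization
\begin{equation*}
\det \iota_h - \Fr_q(\det \iota_h) = A^{(q)}\bigl(A - A^{(q^2)}\bigr) - \pi\, B^{(q)}\bigl(B - B^{(q^2)}\bigr),
\end{equation*}
which is the central identity behind the theorem.

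From here it is a Cauchy-product bookkeeping exercise. The coefficient of $\pi^k$ in $A^{(q)}(A - A^{(q^2)})$ is $-\sum_{j=0}^{k-1} a_{2j}^q(a_{2(k-j)}^{q^2} - a_{2(k-j)})$, and the coefficient of $\pi^k$ in $\pi B^{(q)}(B - B^{(q^2)})$ is the coefficient of $\pi^{k-1}$ in $B^{(q)}(B - B^{(q^2)})$, which equals $-\sum_{j=0}^{k-1} a_{2j+1}^q(a_{2(k-j)-1}^{q^2} - a_{2(k-j)-1})$. Reindexing by $i=2j$ (even case, contributing with sign $+$) and $i=2j+1$ (odd case, contributing with sign $-$), and using $a_0=1$ to absorb the leading $(a_{2k}^{q^2}-a_{2k})$ term, both sums combine into
\begin{equation*}
[\pi^k]\bigl(\det \iota_h - \Fr_q(\det \iota_h)\bigr) = -\Bigl((a_{2k}^{q^2}-a_{2k}) + \sum_{i=1}^{2k-1}(-1)^i a_i^q(a_{2k-i}^{q^2}-a_{2k-i})\Bigr) = -f_{2k}.
\end{equation*}
The constant term of $\det\iota_h$ is just $1$, which is automatically $\Fr_q$-fixed, so there is no equation $f_0$. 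Thus $\det\iota_h$ is $\Fr_q$-fixed in $A[[\pi]]/(\pi^h)$ if and only if $f_2=\cdots=f_{2(h-1)}=0$, which is the theorem.

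I expect no serious obstacle: the only thing that can go wrong is a sign or index mismatch in the parity split, so the main care needed is the clean bookkeeping of which indices $(i,2k-i)$ have which parity and checking that the $j=0$ term in the even sum correctly reproduces the isolated term $a_{2k}^{q^2}-a_{2k}$.
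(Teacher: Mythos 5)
Your proposal is correct and follows essentially the same route as the paper: both unwind the definition of $X_h$ via $\iota_h$, compute the determinant, and read off the coefficientwise Frobenius-fixedness condition, with the parity split $i$ even/odd producing the sign $(-1)^i$. Your factorization $\det\iota_h - \Fr_q(\det\iota_h) = A^{(q)}(A-A^{(q^2)}) - \pi B^{(q)}(B-B^{(q^2)})$ is just a cleaner packaging of the pairing of the $(i,2k-i)$ terms that the paper carries out term by term, and the resulting coefficient is indeed $-f_{2k}$, so the vanishing conditions agree.
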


\begin{proof}
It suffices to verify this claim at the level of $\overline \FF_q$-points. Recall that we have an embedding of $\Unip(\overline \FF_q)$ into the set of matrices over $\overline \FF_q[\pi]/(\pi^h)$ given by
\begin{equation*}
\iota_h \from 1 + \sum_{i=1}^{2(h-1)} a_i \tau^i \mapsto 
\left(\begin{matrix}
1 + a_2 \pi + a_4 \pi^2 + \cdots & a_1 + a_3 \pi + a_5 \pi^2 + \cdots \\
a_1^q \pi + a_3^q \pi^2 + \cdots & 1 + a_2^q \pi + a_4^q \pi^2 + \cdots
\end{matrix}\right).
\end{equation*}
The determinant of $\iota_h(1 + \sum a_i \tau^i)$ is a polynomial in $\pi$ with coefficients in $\overline \FF_q$. Let $c_k$ be the coefficient of $\pi^k$ in this determinant. By definition, $1 + \sum a_i \tau^i$ is in $X_h(\overline \FF_q)$ if and only if $c_k^q = c_k$ for $k = 1, \ldots, h-1$. Note that $c_k$ is a polynomial in $a_1, \ldots, a_{2(h-1)}$. We wish to find a clean way to write down $c_k^q - c_k$ as a polynomial in the $a_i$'s.

The coefficient $c_k$ of $\pi^k$ is equal to the coefficient of $\pi^k$ in $(1 + a_2 \pi + \cdots)(1 + a_2^q \pi + \cdots)$ minus the coefficient of $\pi^k$ in $(a_1 + a_3 \pi + \cdots)(a_1^q \pi + a_3^q \pi^2 + \cdots)$. Thus the coefficient of $\pi^k$ in the determinant is
\begin{equation*}
c_k \colonequals \sum_{j=0}^k a_{2j} a_{2k-2j}^q - a_{2j+1} a_{2k-(2j+1)}^q,
\end{equation*}
where it is understood that $a_0 = 1$ and $a_{-n} = 0$ for $n \in \NN$. We can now focus our attention on the terms in $c_k$ involving $a_i$. We see that if $i = 2j$, then the terms in $c_k$ involving $a_i$ are
\begin{equation*}
a_{2j} a_{2k-2j}^q + a_{2k-2j} a_{2j}^q,
\end{equation*}
and that if $i = 2j + 1$, then the terms in $c_k$ involving $a_i$ are
\begin{equation*}
-a_{2j+1} a_{2k-(2j+1)}^q - a_{2k-(2j+1)}a_{2j+1}^q.
\end{equation*}
Therefore the terms in $c_k^q - c_k$ involving $a_i$ are
\begin{equation*}
(-1)^i[(a_i a_{2k-i}^q + a_{2k-i} a_i^q)^q - (a_i a_{2k-i}^q + a_{2k-i} a_i^q)],
\end{equation*}
which simplifies to
\begin{equation*}
(-1)^i [a_i^q(a_{2k-i}^{q^2} - a_{2k-i}) + a_{2k-i}^q(a_i^{q^2} - a_i)].
\end{equation*}
Setting $f_{2k} \colonequals c_k^q - c_k$ completes the proof.
\end{proof}

The following theorem is the main result of this section.

\begin{theorem}\label{t:homspace}
Let $\psi$ be an additive character of $\F$ with conductor $q^2$. If $h$ is odd, then
\begin{equation*}
\dim\Hom_{\UnipF}(V_\psi, H_c^i(X_h, \overline \QQ_\ell)) = 
\begin{cases}
q^{2(h-2)} & \text{if $i = h-1$}, \\
0 & \text{otherwise.}
\end{cases}
\end{equation*}
If $h$ is even, then
\begin{equation*}
\dim\Hom_{\UnipF}(V_\psi, H_c^i(X_h, \overline \QQ_\ell)) = 
\begin{cases}
q^{2(h-2)+1} & \text{if $i = h-1$,} \\
0 & \text{otherwise.}
\end{cases}
\end{equation*}
Moreover, the Frobenius $\Fr_{q^2}$ acts on $\Hom_{\UnipF}(V_\psi, H_c^{h-1}(X_h, \overline \QQ_\ell))$ via multiplication by the scalar $(-1)^{h-1} q^{h-1}$.
\end{theorem}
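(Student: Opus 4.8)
The plan is to compute $\Hom_{\UnipF}(V_\psi, H_c^\bullet(X_h))$ by exploiting the group actions on $X_h$ rather than by a direct geometric analysis. Since $V_\psi = \Ind_{H_0'(\F)}^{\UnipF}(\widetilde\psi)$, Frobenius reciprocity gives
\begin{equation*}
\Hom_{\UnipF}(V_\psi, H_c^\bullet(X_h)) \cong \Hom_{H_0'(\F)}(\widetilde\psi, H_c^\bullet(X_h)),
\end{equation*}
so the task reduces to understanding the $\widetilde\psi$-isotypic part of $H_c^\bullet(X_h)$ for the right $H_0'(\F)$-action. First I would use the polynomial equations $f_{2k}$ from Theorem \ref{t:Xhpolys} to present $X_h$ as an iterated fibration: peeling off the variables $a_{2(h-1)}, a_{2(h-1)-1}, \ldots$ one at a time, each $f_{2k}$ is (for the top variable appearing in it) an Artin--Schreier-type equation $z^{q^2} - z = (\text{lower terms})$, which exhibits $X_h$ as a tower of $\mathbb{A}^1$- and $\GG_a$-torsor-like fibrations over a base cut out by the remaining $f_{2j}$'s. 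The key point is that the right-multiplication action of the relevant one-parameter subgroups $1 + a\tau^j$ ($j$ odd, large) translates in these Artin--Schreier coordinates, so by the standard Artin--Schreier / Deligne--Lusztig fixed-point reasoning the cohomology along each such fiber either vanishes or contributes a single cohomological degree shift together with a sum over a Lang torsor, on which $H_0'(\F)$ acts through characters.

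The main steps, in order: (1) set up the fibration structure of $X_h$ from the $f_{2k}$, identifying which variables are "free" ($\mathbb{A}^1$-fibers, contributing a degree shift by $2$ and a Tate twist) and which give nontrivial Artin--Schreier covers; (2) track the right $H_0'(\F)$-action through this structure and compute the $\widetilde\psi$-isotypic component fiber by fiber, using that $\psi$ has conductor $q^2$ to get nonvanishing exactly when the relevant linear form is the "right" one; (3) assemble the Künneth/Leray contributions to see that everything concentrates in a single total degree, which a dimension/parity count forces to be $i = h-1$, and read off the dimension of the $\widetilde\psi$-isotypic space as $q^{2(h-2)}$ (resp.\ $q^{2(h-2)+1}$) by counting the number of surviving characters --- this matches $|\cA_\psi| \cdot \dim$-bookkeeping against Corollary \ref{c:Vpsiodd} and Corollary \ref{c:Vpsieven}, which say $V_\psi \cong \bigoplus_{\chi \in \cA_\psi} \rho_\chi$ (resp.\ $q$ copies), with $|\cA_\psi| = q^{2(h-2)}$; (4) finally, compute the $\Fr_{q^2}$-action: each $\mathbb{A}^1$-fiber contributes a Tate twist (eigenvalue $q$) and the Artin--Schreier covers in odd degree $h-1$ contribute a sign, so the composite eigenvalue on $H_c^{h-1}$ is $(-1)^{h-1} q^{h-1}$; the exponent $h-1$ on $q$ comes from counting the free fibers and the sign from the single odd-dimensional twist, consistent with the fact that $H_c^{h-1}$ lives in the "middle" of a $(h-1)$-dimensional piece.

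The hard part will be step (2): carefully bookkeeping the interaction between the right $H_0'(\F)$-action and the Artin--Schreier coordinates, because the action of $1 + a\tau^j$ does not act by pure translation on the naive coordinates $a_i$ --- the conjugation/multiplication formulas (as in Lemma \ref{l:conjU}) mix coordinates in a way depending on Frobenius twists, so one must choose the fibration coordinates compatibly with the action. A secondary subtlety is the even-$h$ case, where the subgroup $H_1'(\F) \supset H_0'(\F)$ and the extra $\FF_q$-worth of extensions $\theta$ of $\widetilde\psi$ forces the extra factor of $q$ in the dimension; I would handle this by the same fibration argument but with one additional fiber contributing, being careful that the degree $h-1$ (now even) still gets the sign right, namely $(-1)^{h-1} = -1$ cancels against an extra sign from that fiber so that the scalar is again $(-1)^{h-1} q^{h-1}$. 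Once the isotypic computation and the Frobenius eigenvalue are in hand, the statement follows immediately by Frobenius reciprocity.
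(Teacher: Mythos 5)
Your plan follows essentially the same route as the paper: reduce the Hom space to a $\widetilde\psi$-isotypic computation, use the equations $f_{2k}$ of Theorem \ref{t:Xhpolys} to peel off variables via iterated Artin--Schreier reductions (each step shifting the degree by $2$ and contributing a Tate twist), land on a $0$-dimensional scheme for $h$ odd and a $1$-dimensional one for $h$ even, and read off the dimension and the Frobenius eigenvalue from the bottom of the tower. The one structural difference worth noting is the order of operations at the step you correctly flag as hard: rather than fibering $X_h$ itself and tracking the right $H_0'(\F)$-action through each fiber (which forces you to carry an equivariant structure through every stage, and the action genuinely does not translate the naive coordinates), the paper first applies Proposition 2.3 of \cite{B12} to replace the Hom space by $H_c^i(\beta^{-1}(Y_h), P^*\Loc_\psi)$, and then uses the Lang-torsor substitution $x_i = y_i^{q^2} - y_i$ to show the defining equations become polynomials in the $x_i$ alone; after that the group action is gone and the induction is a purely coordinate-wise application of (a mild generalization of) Proposition 2.10 of \cite{B12}. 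Adopting that order is what makes step (2) of your plan tractable. Two minor bookkeeping points: each reduction step removes a \emph{pair} of odd-indexed coordinates and multiplies the $\Fr_{q^2}$-eigenvalue by $q^2$ (not $q$ per fiber), so the twist after $\lfloor (h-1)/2\rfloor$ steps is $q^{2\lfloor(h-1)/2\rfloor}$, with the residual $-q$ in the even case coming from $H_c^1(\GG_a, P^*\Loc_\psi)$ via Proposition 6.12 of \cite{BW11}; and your sanity check against Corollaries \ref{c:Vpsiodd}/\ref{c:Vpsieven} cannot be used as an input, since it presupposes $H_c^{h-1}(X_h)[\chi]\cong\rho_\chi$, which is only established later.
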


This is proven in Section \ref{s:pfhomspace}. As a corollary to Theorem \ref{t:homspace}, we have the following.

\begin{corollary}\label{c:chispacevanish}
Let $\psi$ be an additive character of $\F$ with conductor $q^2$. If $\chi \from U_L^1/U_L^h \to \overline \QQ_\ell^\times$ is a character that restricts to $\psi$ on $U_L^{h-1}/U_L^h$, then $H_c^i(X_h, \overline \QQ_\ell)[\chi] = 0$ for all $i \neq h-1$.
\end{corollary}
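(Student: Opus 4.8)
The plan is to deduce this from Theorem \ref{t:homspace} together with the representation-theoretic facts from Section \ref{s:repthy}. First, observe that extracting the $\chi$-isotypic component $H_c^i(X_h)[\chi]$ for the left $(U_L^1/U_L^h) = H(\F)$-action is the same as taking the $\chi$-eigenspace under $H(\F)$; since $\chi|_{U_L^{h-1}/U_L^h} = \psi$ by hypothesis, such a vector in particular transforms under $H_{2(h-1)}(\F)$ via $\psi$. So the nonvanishing of $H_c^i(X_h)[\chi]$ would produce a nonzero vector in $H_c^i(X_h)$ on which $H_{2(h-1)}(\F)$ acts by $\psi$, i.e.\ (after pushing up along subgroup inclusions and using Frobenius reciprocity) a nonzero $\UnipF$-equivariant map from some induced module into $H_c^i(X_h)$.

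The cleanest way to package this: by Lemma \ref{l:psitilde}, any irreducible $\UnipF$-summand of $H_c^i(X_h)$ on which $H_{2(h-1)}(\F)$ acts by $\psi$ must, upon restriction to $H_0'(\F)$, contain $\widetilde\psi$; equivalently such a summand is a summand of $V_\psi = \Ind_{H_0'(\F)}^{\UnipF}(\widetilde\psi)$, so it contributes to $\Hom_{\UnipF}(V_\psi, H_c^i(X_h))$. More directly, I would argue: a nonzero $\chi$-eigenvector for $H(\F)$ in $H_c^i(X_h)$ generates (under the $\UnipF$-action) a nonzero submodule, which contains an irreducible constituent; every irreducible constituent of $H_c^i(X_h)$ is a constituent of the regular representation, and the specific one we produce has $H_{2(h-1)}(\F)$ acting by $\psi$ (since $\chi$ restricts to $\psi$ and $H_{2(h-1)}(\F)$ is central), hence by Lemma \ref{l:psitilde} it is a constituent of $V_\psi$. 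Therefore $H_c^i(X_h)[\chi] \neq 0$ forces $\Hom_{\UnipF}(V_\psi, H_c^i(X_h)) \neq 0$. By Theorem \ref{t:homspace}, the latter Hom-space vanishes for all $i \neq h-1$, so $H_c^i(X_h)[\chi] = 0$ for $i \neq h-1$, as claimed.

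I do not expect a genuine obstacle here: the corollary is essentially a formal consequence once Theorem \ref{t:homspace} is in hand. The one point requiring a little care is the bookkeeping between the ``$\chi$-isotypic component for the left $H(\F)$-action'' and ``$\widetilde\psi$ appearing in the restriction to $H_0'(\F)$'': one must check that $H(\F)$ and $H_0'(\F)$ (more precisely their common subgroup $H_{2(h-1)}(\F) = H \cap H_0'$) are being used compatibly, and that the $(U_L^1/U_L^h)$-action in the statement is indeed the left $H(\F)$-action identified in Section \ref{s:notation}. Once that identification is invoked, the rest is Frobenius reciprocity (or the direct submodule-generation argument above) plus Lemma \ref{l:psitilde} and Theorem \ref{t:homspace}.
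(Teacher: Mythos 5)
Your argument is the paper's argument: reduce to $V_\psi$ via Lemma \ref{l:psitilde} (every irreducible constituent on which $H_{2(h-1)}(\F)$ acts by $\psi$ embeds in $V_\psi$) and then invoke the vanishing in Theorem \ref{t:homspace} for $i \neq h-1$. The one place where your justification is not quite right is the transfer from the left action to the right action: you assert that the constituent you produce has $H_{2(h-1)}(\F)$ acting by $\psi$ ``since $\chi$ restricts to $\psi$ and $H_{2(h-1)}(\F)$ is central.'' Centrality of $H_{2(h-1)}(\F)$ in $\UnipF$ only guarantees that the \emph{right} action of this subgroup is by some character on each irreducible constituent; it does not identify that character with $\psi$, because your hypothesis is about the \emph{left} $(U_L^1/U_L^h)$-action, while Lemma \ref{l:psitilde}, $V_\psi$, and Theorem \ref{t:homspace} all concern the right $\UnipF$-action. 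The fact actually needed --- and the single sentence the paper's proof supplies --- is geometric: the left action of $U_L^1/U_L^h$ and the right action of $\UnipF$ on $X_h$ coincide on the common subgroup $U_L^{h-1}/U_L^h \cong H_{2(h-1)}(\F)$, so a $\chi$-eigenvector for the left action is automatically a $\psi$-eigenvector for the right action of $H_{2(h-1)}(\F)$. You do flag this compatibility as the point requiring care, so you located the issue; once that observation is inserted in place of the appeal to centrality, your proof is complete and identical to the paper's.
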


\begin{proof}
The left action of $U_L^1/U_L^h$ and the right action of $\UnipF$ on $X_h$ agree on $U_L^{h-1}/U_L^h \cong H_{2(h-1)}(\F)$. Therefore, since $U_L^{h-1}/U_L^h$ acts by $\psi$ on $H_c^i(X_h, \overline \QQ_\ell)[\chi]$, then $H_{2(h-1)}(\F)$ also acts by $\psi$. We know from our analysis of the representations of $\UnipF$ that every irreducible component of $H_c^i(X_h, \overline \QQ_\ell)[\chi]$ appears in $V_\psi$, so this forces $H_c^i(X_h, \overline \QQ_\ell)[\chi] = 0$ if $i \neq h - 1$.
\end{proof}

This will allow us to compute intertwining spaces using Lemma 2.13 of \cite{B12}. This will be exploited in Section \ref{s:cohomreps}.

\subsection{Proof of Theorem \ref{t:homspace}} \label{s:pfhomspace}

The structure of the proof is as follows. We first use Proposition 2.3 of \cite{B12} to reduce the computation of $\Hom_{\UnipF}(V_\psi, H_c^i(X_h, \overline \QQ_\ell))$ to the computation of the cohomology of a certain scheme $S$ with coefficients in a certain constructible $\QQ_\ell$ sheaf $\EFF$. Then, to compute $H_c^i(S, \EFF)$, we apply (a slightly more general version of) Proposition 2.10 of \cite{B12} inductively. This will allow us to reduce the computation of $H_c^i(X, \sF)$ to a computation involving a 0-dimensional scheme in the case that $h$ is odd, and a computation involving a 1-dimensional scheme in the case that $h$ is even. Because the computation is identical until this final step, we treat these to cases simultaneously until the very last step.

We start with a slight generalization of Proposition 2.10 of \cite{B12} that has been tailored for our purposes.

\begin{proposition}\label{p:2.10rev}
Let $q$ be a power of $p$, let $n \in \NN$, and let $\psi \from \FF_{q^n} \to \overline \QQ_\ell^\times$ be a character that has conductor $q	^m$. Let $S_2$ be a scheme of finite type over $\FF_{q^n}$, put $S = S_2 \times \Affine^1$ and suppose that a morphism $P \from S \to \GG_a$ has the form
\begin{equation*}
P(x,y) = f(x)^{q^j} y - f(x)^{q^n} y^{q^{n-j}} + \alpha(x,y)^{q^m} - \alpha(x,y) + P_2(x).
\end{equation*}
Here, $j$ is some integer $j$ not divisible by $m$; $f, P_2 \from S_2 \to \GG_a$ are two morphisms; and $\alpha \from S_2 \times \Affine^1 \to \GG_a$ is a morphism.
Let $S_3 \subset S_2$ be the subscheme defined by $f = 0$ and let $P_3 = P_2|_{S_3} \from S_3 \to \GG_a$. Then for all $i \in \ZZ$, we have
\begin{equation*}
H_c^i(S, P^* \Loc_\psi) \cong H_c^{i-2}(S_3, P_3^* \Loc_\psi)(-1)
\end{equation*}
as vector spaces equipped with an action of $\Fr_{q^n}$, where the Tate twist $(-1)$ means that the action of $\Fr_{q^n}$ on $H_c^{i-2}(S_3, P_3^* \Loc_\psi)$ is multiplied by $q^n$.
\end{proposition}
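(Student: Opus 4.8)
The plan is to reduce to the original Proposition 2.10 of \cite{B12} by a change of variables, isolating the ``interesting'' part of $P$. First I would dispose of the Artin--Schreier-type term: since $\alpha^{q^m} - \alpha$ factors through the $\FF_{q^m}$-Artin--Schreier morphism, and $\Loc_\psi$ on $\GG_a$ is an Artin--Schreier sheaf whose pullback along $t \mapsto t^{q^m}-t$ is constant (here I use that $\psi$ has conductor $q^m$, i.e.\ $\psi$ is trivial on the image of $\Tr_{\FF_{q^n}/\FF_{q^m}}$ composed appropriately, so $\psi \circ (\text{Artin--Schreier})$ is trivial), one concludes that $P^* \Loc_\psi \cong Q^* \Loc_\psi$ where $Q(x,y) = f(x)^{q^j} y - f(x)^{q^n} y^{q^{n-j}} + P_2(x)$. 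Concretely, the map $(x,y) \mapsto (x, y, \alpha(x,y))$ followed by the additive group law identifies the two pullbacks; this is the standard trick that lets one absorb Artin--Schreier summands, and it is exactly the mechanism already used implicitly in \cite{B12}.

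Next I would decompose $S$ along the vanishing locus of $f$. Write $S_2 = S_3 \sqcup S_2'$ where $S_3 = \{f = 0\}$ and $S_2' = \{f \neq 0\}$, and correspondingly $S = S_3 \times \Affine^1 \;\sqcup\; S_2' \times \Affine^1$. This gives a long exact sequence (or, since it is an open-closed decomposition, a direct-sum decomposition after passing to $H_c$) expressing $H_c^i(S, Q^*\Loc_\psi)$ in terms of $H_c^i(S_3 \times \Affine^1, \cdot)$ and $H_c^i(S_2' \times \Affine^1, \cdot)$. On $S_3 \times \Affine^1$ the morphism $Q$ is just $P_3(x)$ (independent of $y$), so $H_c^i(S_3 \times \Affine^1, Q^*\Loc_\psi) \cong H_c^{i-2}(S_3, P_3^*\Loc_\psi)(-1)$ by the Künneth formula together with $H_c^\bullet(\Affine^1, \overline\QQ_\ell) = \overline\QQ_\ell(-1)[-2]$; the Frobenius bookkeeping is precisely the Tate twist claimed. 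On $S_2' \times \Affine^1$, the fiber over each point $x$ with $f(x) \neq 0$ is the map $y \mapsto c y - c' y^{q^{n-j}} + (\text{const})$ with $c = f(x)^{q^j} \neq 0$; this is a surjective additive-polynomial map $\Affine^1 \to \Affine^1$ (its kernel is finite), so $H_c^\bullet$ of its $\Loc_\psi$-twist vanishes in every degree. Applying proper base change / the Leray spectral sequence for $S_2' \times \Affine^1 \to S_2'$ then shows $H_c^i(S_2' \times \Affine^1, Q^*\Loc_\psi) = 0$ for all $i$. Combining the two contributions yields the stated isomorphism.

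An alternative, and perhaps cleaner, route is simply to invoke Proposition 2.10 of \cite{B12} verbatim: that proposition already handles a morphism of the shape $f(x)^{q^j}y - f(x)^{q^n}y^{q^{n-j}} + P_2(x)$ and produces exactly $H_c^{i-2}(S_3, P_3^*\Loc_\psi)(-1)$. Thus the only new content here is the presence of the extra term $\alpha^{q^m}-\alpha$ and the generalization from $m = 1$ (ordinary Artin--Schreier) to general conductor $q^m$; once the first paragraph's reduction removes that term, the result is immediate. I would structure the write-up that way: first the conductor/Artin--Schreier reduction $P^*\Loc_\psi \cong Q^*\Loc_\psi$, then cite Proposition 2.10 of \cite{B12}.

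The main obstacle I anticipate is making the Artin--Schreier absorption rigorous with the conductor hypothesis: one must check that $\psi$ of conductor $q^m$ really does kill $t \mapsto t^{q^m} - t$, i.e.\ that $\psi(\beta^{q^m} - \beta) = 1$ for all $\beta \in \FF_{q^n}$ (equivalently that $\psi$ is trivial on $\ker(\Tr_{\FF_{q^n}/\FF_{q^m}})$ when one thinks additively, or rather that $\psi$ factors through $\FF_{q^m}$ in the appropriate sense). This is exactly the condition that $\psi$ is $\Gal(\FF_{q^n}/\FF_{q^m})$-invariant, which is the definition of conductor $q^m$, so it should go through — but the indices $j$, $m$, $n$ must be tracked carefully, and one should confirm that no divisibility constraint beyond ``$m \nmid j$'' is needed for the vanishing on $S_2'$ (it is not: $y \mapsto cy - c'y^{q^{n-j}}$ is additive and nonconstant whenever $c \neq 0$, hence has finite kernel and is surjective, regardless of $j$).
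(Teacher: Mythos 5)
Your proposal matches the paper's proof: the paper likewise observes that since $\psi$ has conductor $q^m$ the pullback of $\Loc_\psi$ along $z \mapsto z^{q^m}-z$ is trivial, so the summand $\alpha^{q^m}-\alpha$ can be dropped by additivity of $\Loc_\psi$, and then invokes Proposition 2.10 of \cite{B12} verbatim, exactly as in your preferred write-up. The one slip is the parenthetical claim that finiteness and surjectivity of $y \mapsto cy - c'y^{q^{n-j}}$ already force the vanishing over $\{f \neq 0\}$ ``regardless of $j$'' --- a finite surjective additive map can pull $\Loc_\psi$ back to the trivial sheaf (the Lang isogeny itself does), so the hypothesis $m \nmid j$ is genuinely needed at that step; but since you ultimately defer that step to Proposition 2.10 of \cite{B12}, this does not affect your argument.
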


\begin{proof}[Proof of Proposition \ref{p:2.10rev}] 
Let $P'(x,y) = f(x)^{q^j} y - f(x)^{q^n} y^{q^{n-j}} + P_2(x)$ and $P''(x,y) = \alpha(x,y)^{q^m} - \alpha(x,y).$ We show that the pullbacks $P^*\Loc_\psi$ and $(P')^*\Loc_\psi$ are isomorphic. Since $\psi$ has conductor $q^m$, the pullback of $\Loc_\psi$ by the map $z \mapsto z^{q^m}$ is trivial, and so thus $(P'')^* \Loc_\psi$ is trivial. Since $\Loc_\psi$ is additive, then we have shown that $P^* \Loc_\psi$ and $(P')^* \Loc_\psi$ are isomorphic and thus by Proposition 2.10 of \cite{B12},
\begin{equation*}
H_c^i(S, P^* \Loc_\psi) \cong H_c^i(S, (P')^* \Loc_\psi) \cong H_c^{i-2}(S_3, P_3^* \Loc_\psi)(-1)
\end{equation*}
as vector spaces equipped with an action of $\Fr_{q^2}$.
%
\end{proof}

We now return to the proof of Theorem \ref{t:homspace}.

\subsubsection*{Step 0.}
We first need to establish some notation. I have tried to make this notation reminiscent of that used in the proof of Proposition 6.5 in \cite{BW11}.
\begin{enumerate}[label=\textbullet]
\item
Let $I'$ denote the set of integers $j$ such that $h-1 < j < 2(h-1)$ and $2 \nmid j$. Put $I = I' \cup \{2(h-1)\}$.

\item
Put $J = [2(h-1)] \setminus I$, where $[n] = \{1, \ldots, n\}$.

\item
Put $I_0 \colonequals I'$ and $J_0 \colonequals J$. Then define $I_1 \colonequals I_0 \setminus \{2(h-1) - 1\}$ and $J_1 \colonequals J_0 \setminus \{1\}$. This describes a recursive construction of $I_k$ and $J_k$; namely, one obtains $I_k$ from $I_{k-1}$ by removing the largest odd number and one obtains $J_k$ from $J_{k-1}$ by removing the smallest odd number. This defines indexing sets $I_k$ and $J_k$ for $1 \leq k \leq \lfloor (h-1)/2 \rfloor$.

\item
Note that if $h$ is odd, then
\begin{align*}
I_{\lfloor (h-1)/2 \rfloor} &= I_{(h-1)/2} = \varnothing, \\
J_{\lfloor (h-1)/2 \rfloor} &= J_{(h-1)/2} = \{2, 4, \ldots, 2(h-2)\}.
\end{align*}
If $h$ is even, then
\begin{align*}
I_{\lfloor (h-1)/2 \rfloor} &= I_{(h-2)/2} = \varnothing, \\
J_{\lfloor (h-1)/2 \rfloor} &= J_{(h-2)/2} = \{2, 4, \ldots, 2(h-2)\} \cup \{h-1\}.
\end{align*}
This distinction is exactly why our inductive argument reduces to a $0$-dimensional scheme in the case that $h$ is odd and a $1$-dimensional scheme in the case that $h$ is even.

\item
Note that $H_0' = \{1 + \sum a_i \tau^i : i \in I\}$.

\item
For a finite set $T \subset \NN$, we will write $\Affine[T]$ to denote affine space with coordinates $x_i$ for $i \in T$.
\end{enumerate}

\subsubsection*{Step 1}
We apply Proposition 2.3 of \cite{B12} to the following set-up:
\begin{enumerate}[label=\textbullet]
\item
$G = \Unip$ and $H = H_0'$, both defined over $\F$

\item
the morphism $s \from \Unip/H_0' \to \Unip$ defined by sending the $i$th coordinate to the coefficient of $\tau^i$; that is, identify $\Unip/H_0'$ with affine space with coordinates indexed by $J$, and set $s \from (x_i)_{i \in J} \mapsto 1 + \sum_{i \in J} x_i \tau^i$.

\item
the algebraic group homomorphism $f \from H_0' \to \GG_a$ given by projection to the last coordinate. That is, $f \from 1 + \sum_{i \in I} a_i \tau^i \mapsto a_{2(h-1)}$. (From the definition of $H_0'$, it is easy to see that this map is a homomorphism.)

\item
an additive character $\psi \from \F \to \overline \QQ_\ell^\times$

\item
a locally closed subvariety $Y_h \subset \Unip$ which is chosen so that $X_h = L_{q^2}^{-1}(Y_h)$
\end{enumerate}
Since $X_h$ has a right-multiplication action by $\UnipF$, the cohomology groups $H_c^i(X_h, \overline \QQ_\ell)$ inherit a $\UnipF$-action. For each $i \geq 0$, Proposition 2.3 of \cite{B12} implies that we have a vector space isomorphism
\begin{equation*}
\boxed{\Hom_{\UnipF}(V_\psi, H_c^i(X_h, \overline \QQ_\ell)) \cong H_c^i(\beta^{-1}(Y_h), P^* \Loc_\psi)}
\end{equation*}
compatible with the action of $\Fr_{q^2}$. Here, $\Loc_\psi$ is the Artin-Schreier local system on $\GG_a$ corresponding to $\psi$, the morphism $\beta \from (\Unip/H_0') \times H_0' \to \Unip$ is given by $\beta(x,g) = s(\Fr_{q^2}(x)) \cdot g \cdot s(x)^{-1}$, and the morphism $P \from \beta^{-1}(Y_h) \to \GG_a$ is the composition $\beta^{-1}(Y_h) \hookrightarrow (\Unip/H_0') \times H_0' \stackrel{\pr}{\longrightarrow} H_0' \stackrel{f}{\longrightarrow} \GG_a$.

We now work out an explicit description of $\beta^{-1}(Y_h) \subset \Affine[J] \times H_0'$ (keep in mind that we identified $\Unip/H_0'$ with $\Affine[J]$). For $1 \leq l \leq (h-1)$, recall the polynomial described in Theorem \ref{t:Xhpolys}
\begin{equation*}
f_{2l} \colonequals (a_{2l}^{q^2} - a_{2l}) + \sum_{i = 1}^{2l-1} (-1)^i a_i^q(a_{2l-i}^{q^2} - a_{2l-i}).
\end{equation*}
Write $x = (x_i)_{i \in J} \in \Affine[J]$ and $g = 1 + \sum_{i \in I} x_i \tau^i \in H_0'(\overline \FF_q)$. (Note that $I \cap J = \varnothing$; the $x_i$ in $x$ and $x_i$ in $g$ are independent of each other.) For each $i \in I$, we can write $x_i = y_i^{q^2} - y_i$ for $y_i \in \overline \FF_q$, so that $g = L_{q^2}(y)$, where $y \colonequals 1 + \sum_{i \in I} y_i \tau^i$. Therefore
\begin{equation*}
\beta(x,g) = \Fr_{q^2}(s(x)) \cdot L_{q^2}(y) \cdot s(x)^{-1} = L_{q^2}(s(x) \cdot y).
\end{equation*}
We see that $\beta(x,g) \in Y_h$ if and only if $s(x) \cdot y \in X_h$. Let $s(x) \cdot y = 1 + \sum a_i \tau^i = a$. By Theorem \ref{t:Xhpolys}, we know that $s(x) \cdot y \in X_h$ if and only if $f_{2l}(a) = 0$ for all $l$ with $1 \leq l \leq h-1$.

\subsubsection*{Step 2}

This is a necessary preparation step before we apply Proposition \ref{p:2.10rev}. As in Step 1, let $x = (x_i)_{i \in J} \in \Affine[J]$ and $s(x) = 1 + \sum_{i \in J} x_i \tau^i \in \Unip(\overline \FF_q)$. Let $g = 1 + \sum_{i \in I} x_i \tau^i \in H_0'(\overline \FF_q)$ and let $y_i$ be such that $x_i = y_i^{q^2} - y_i$ for $i \in I$ so that $g = L_{q^2}(y)$, where $y = 1 + \sum_{i \in I} y_i \tau^i$. Recall that we wrote $s(x) \cdot y = 1 + \sum a_i \tau^i = a$.

From direct computation, we can write down an explicit description of each coefficient $a_i$ in terms of $x$ and $y$. For convenience, let $r = 2 \lfloor h/2 \rfloor$. Then
\begin{equation}\label{e:aiform}
a_i = 
\begin{cases}
x_i & \text{if $i \leq r$,} \\
y_i + x_2 y_{i-2}^{q^2} + x_4 y_{i-4}^{q^4} + \cdots + x_{i-(r+1)} y_{r+1}^{q^{i-(r+1)}} & \text{if $r < i$ and $i$ is odd,} \\
x_1 y_{i-1}^q + x_3 y_{i-3}^{q^3} + \cdots + x_{i-(r+1)} y_{r+1}^{q^{i-(r+1)}} + x_i & \text{if $r < i < 2(h-1)$ and $i$ is even,} \\
y_{2(h-1)} + x_1 y_{2(h-1) - 1}^q + x_3 y_{2(h-1) - 3}^{q^3} + \cdots & \\
\qquad\qquad\qquad\qquad+ x_{2(h-1)-(r+1)} y_{r+1}^{q^{2(h-1)-(r+1)}} & \text{if $i = 2(h-1)$.}
\end{cases}
\end{equation}

Fix $1 \leq l \leq h-1$. The polynomial $f_{2l}(a)$ is \textit{a priori} a polynomial in $x_i$ for $i \in J$ and $y_i$ for $i \in I$. In this step, we show that, after setting $x_i = y_i^{q^2} - y_i$ for $i \in I$, the expression $f_{2l}(a)$ is actually a polynomial in $x_i$ for $i \in I \cup J$.

First observe that the monomials occurring in $f_{2l}(s(x) \cdot y)$ can involve $y_i$ for at most one $i$. More precisely, a monomial occurring $f_{2l}(a)$ takes one of the following forms: 
\begin{enumerate}[label=(\roman*)]
\item
It is a product of powers of $x_i$'s.

\item
It involves $y_{2(h-1)}$.

\item
It is of the form $x_i^\alpha x_j^\beta y_k^\gamma$, where $i \geq 0$ is even, $j \leq 2\lfloor h/2 \rfloor$ is odd, and $k \geq 2 \lfloor h/2 \rfloor + 1$ is odd. (As usual, we set $x_0 = 1$.)
\end{enumerate}
We need to show that setting $x_i = y_i^{q^2} - y_i$ for $i \in I$ allows us to write the monomials in (ii) and (iii) as expressions involving only $x_i$'s for $i \in I \cup J$.

The term $y_{2(h-1)}$ only occurs in the polynomial $f_{2l}(a)$ for $l = h-1$. Its contribution to $f_{2(h-1)}(a)$ is
\begin{equation*}
y_{2(h-1)}^{q^2} - y_{2(h-1)} = x_{2(h-1)},
\end{equation*}
so this takes care of (ii).

Now pick $i$, $j$, $k$ with $i + j + k = 2l$ so that $i \geq 0$ is even, $j \leq 2\lfloor h/2 \rfloor$ is odd, and $k > 2\lfloor h/2 \rfloor$ is odd. Then $y_k$, $x_i$, and $x_j$ occur in $f_{2l}(a)$ in the terms
\begin{equation*}
a_i^q(a_{j+k}^{q^2} - a_{j+k}) + a_{j+k}^q(a_i^{q^2} - a_i) - a_j^q(a_{i+k}^{q^2} - a_{i+k}) - a_{i+k}^q(a_j^{q^2} - a_j),
\end{equation*}
and are exactly
\begin{equation*}
x_i^q((x_j y_k^{q^j})^{q^2} - x_j y_k^{q^j}) + (x_j y_k^{q^j})^q(x_i^{q^2} - x_i) - x_j^q((x_i y_k^{q^i})^{q^2} - x_i y_k^{q^i}) - (x_i y_k^{q^i})^q(x_j^{q^2} - x_j).
\end{equation*}
Note that monomials of the form $x_i^\alpha x_j^\beta y_k^\gamma$ do not occur in $a_k^q(a_{i+j}^{q^2} - a_{i+j})$ or $a_{i+j}^q(a_k^q - a_k)$ (see Equation \eqref{e:aiform}). The above simplifies to
\begin{equation*}
x_i^q x_j^{q^2}(y_k^{q^{j+2}} - y_k^{q^{i+1}}) - x_i x_j^q(y_k^{q^{j+1}} - y_k^{q^i}) - x_i^q x_j(y_k^{q^j} - y_k^{q^{i+1}}) + x_i^{q^2} x_j^q(y_k^{q^{j+1}} - y_k^{q^{i+2}}).
\end{equation*}
By assumption, $i$ is even and $j$ is odd, which means that each expression involving $y_k$'s is of the form $y_k^{q^{m+2n}} - y_k^{q^m}$ for some $n$. Since $y_k^{q^2} - y_k = x_k$, we then have
\begin{equation*}
y_k^{q^{m+2n}} - y_k^{q^m} = (y_k^{2n} - y_k)^{q^m} = (x_k^{q^{2n-2}} + x_k^{q^{2n-4}} + \cdots + x_k^{q^2} + x_k)^{q^m}.
\end{equation*}
This takes care of (iii) and thus we have shown that for any $1 \leq l \leq h-1$, $f_{2l}(a)$ is a polynomial in terms of $x_i$ for $i \in I \cup J$. We will write $F_{2l}$ to mean the polynomial $f_{2l}(a)$ viewed as a polynomial in $x_i$ for $i \in I \cup J$.

\subsubsection*{Step 3}

Let $P^{(0)} = x_{2(h-1)} - F_{2(h-1)}$. By Step 2, $P^{(0)}$ is a polynomial in terms of $x_i$ for $i \in I_0 \cup J = I_0 \cup J_0$. Recall from Step 1 that $\beta(x,g) \in Y_h$ if and only if $s(x) \cdot y \in X_h$. If $s(x) \cdot y \in X_h$, then we must have $F_{2(h-1)} = f_{2(h-1)}(s(x) \cdot y) = 0$, so $P^{(0)} = x_{2(h-1)}$. Thus we see that the $2(h-1)$th coordinate of $\beta^{-1}(Y_h) \subset \Affine[I \cup J]$ is uniquely determined by the other coordinates. We can therefore rewrite this scheme as a subscheme $S^{(0)}$ of $\Affine[I_0 \cup J_0]$. Furthermore the morphism $P^{(0)} \from S^{(0)} \to \GG_a$ is exactly the restriction of the morphism $P \from \beta^{-1}(Y_h) \to \GG_a$ introduced in Step 1. Thus
\begin{equation*}
\boxed{H_c^i(\beta^{-1}(Y_h), P^* \Loc_\psi) \cong H_c^i(S^{(0)}, (P^{(0)})^* \Loc_\psi).}
\end{equation*}

\subsubsection*{Step 4}
In the next two steps, we describe an inductive application of Proposition \ref{p:2.10rev}.

We apply Proposition \ref{p:2.10rev} to the following set-up:
\begin{enumerate}[label=\textbullet]
\item
Let $S^{(0)}$ be as in Step 3. Explicitly, it is the subscheme of $\Affine[I_0 \cup J_0]$ defined by the equations $F_{2l} = 0$ for $l < h-1$, where $\Affine[I_0 \cup J_0]$ is the affine space $\Affine^{2(h-1)-1}$ with coordinates labelled by $x_i$ for $i \in I_0 \cup J_0$.

\item
Let $S_2^{(0)}$ denote the subscheme of $\Affine[I_1 \cup J_0]$ defined by the same equations.

\item
Note that $S^{(0)} = S_2^{(0)} \times \Affine[\{2(h-1) - 1\}]$, since $x_{2(h-1)-1}$ has no contribution to $F_{2l}$ for $l < h-1$.

\item
Let $f \from S_2^{(0)} \to \GG_a$ be defined as the projection to $x_1$.

\item
For $v \in S_2^{(0)}$ and $w = x_{2(h-1) - 1}$, we may write
\begin{equation} \label{InductPoly}
P^{(0)}(v,w) = f(v)^q w - f(v)^{q^2} w^q + P_2^{(0)}(v).
\end{equation}
(We justify this later.)

\item
Let $S_3^{(0)} \subset S_2^{(0)} \subset \Affine[I_1 \cup J_0]$ be the subscheme defined by $f = x_1 = 0$ and let $P_3^{(0)} \colonequals P_2^{(0)}|_{S_3^{(0)}} \from S_3^{(0)} \to \GG_a$.
\end{enumerate}
Then by Proposition \ref{p:2.10rev}, for all $i \in \ZZ$,
\begin{equation*}
\boxed{H_c^i(S^{(0)}, (P^{(0)})^* \Loc_\psi) \cong H_c^{i-2}(S_3^{(0)}, (P_3^{(0)})^* \Loc_\psi)(-1)}
\end{equation*}
as vector spaces equipped with an action of $\Fr_{q^2}$, where the Tate twist $(-1)$ means that the action of $\Fr_{q^2}$ on $H_c^{i-2}(S_3^{(0)}, (P_3^{(0)})^* \Loc_\psi)$ is multiplied by $q^2$.
 
Before we proceed, we must show that one can indeed decompose $P^{(0)}$ into the form described in Equation \eqref{InductPoly}. Using Theorem \ref{t:Xhpolys} together with the explicit equations for the coordinates of the product $s(x) \cdot y \equalscolon a$ described in Equation \eqref{e:aiform}, we see that the only terms in $x_{2(h-1)} - f_{2(h-1)}(a)$ involving $y_{2(h-1) - 1}$ occur in the expression
\begin{equation*}
-(a_{2(h-1)}^{q^2} - a_{2(h-1)}) + a_1^q(a_{2(h-1) - 1}^{q^2} - a_{2(h-1) - 1}) + a_{2(h-1) - 1}^q(a_1^{q^2} - a_1)
\end{equation*}
and are exactly
\begin{equation*}
-((x_1 y_{2(h-1) - 1}^q)^{q^2} - (x_1 y_{2(h-1) - 1}^q)) + x_1^q(y_{2(h-1) - 1}^{q^2} - y_{2(h-1) - 1}) + y_{2(h-1) - 1}^q(x_1^{q^2} - x_1).
\end{equation*}
Thus the only terms involving $x_{2(h-1)-1}$ in $P^{(0)}$ are
\begin{equation*}
x_1^q x_{2(h-1) - 1} - x_1^{q^2} x_{2(h-1) - 1}^q.
\end{equation*}
Moreover, the remaining terms in $P^{(0)}$ only involve indices in $I_1 \cup J_0$. This proves that the decomposition in \eqref{InductPoly} exists.

\begin{remark}
Note that since $S_3^{(0)}$ was defined to be the subscheme of $S_2^{(0)} \subset \Affine[I_1 \cup J_0]$ cut out by $x_1$, we can actually view $S_3^{(0)}$ as a subscheme of $\Affine[I_1 \cup J_1]$. Thus what we have done in this step is reduce a computation about a subscheme of $\Affine[I_0 \cup J_0]$ to a computation about a subscheme of $\Affine[I_1 \cup J_1]$.
\end{remark}

\subsubsection*{Step 5}

We now apply Proposition \ref{p:2.10rev} again. We apply it to the following set up.
\begin{enumerate}[label=\textbullet]
\item
Let $S^{(1)} \colonequals S_3^{(0)} \subset \Affine[I_1 \cup J_1]$.

\item
Let $S_2^{(1)}$ be the subscheme of $S^{(1)}$ cut out by $x_{2(h-1) - 3}$ so that we can in fact view $S_2^{(1)}$ as a subscheme of $\Affine[I_2 \cup J_1]$.

\item
Note that $S^{(1)} = S_2^{(1)} \times \Affine[\{2(h-1) - 3\}]$ since $x_1 = x_{2(h-1)-1} = 0$ implies that $x_{2(h-1)-3}$ does not contribute to $F_{2l}$ for $l < h-1$.

\item
Let $f \from S_2^{(1)} \to \GG_a$ be defined as the projection to $x_3$.

\item
For $v \in S_2^{(1)}$ and $w = x_{2(h-1) - 3}$, we may write
\begin{equation}\label{InductPoly2}
P^{(1)}(v,w) \colonequals P_3^{(0)}(v,w) = f(v)^qw - f(v)^{q^2}w^q + (f(v)w^q - (f(v)w^q)^{q^2}) + P_2^{(1)}.
\end{equation}
(We justify this step later.) Note that in the notation of Proposition \ref{p:2.10rev}, we have $\alpha(v,w) = -f(v) w^q$.

\item
Let $S_3^{(1)} \subset S_2^{(1)} \subset \Affine[I_2 \cup J_1]$ be the subscheme defined by $f = x_3 = 0$ and let $P_3^{(1)} \colonequals P_2^{(1)}|_{S_3^{(1)}} \from S_3^{(1)} \to \GG_a$.
\end{enumerate}
Then by Proposition \ref{p:2.10rev}, for all $i \in \ZZ$,
\begin{equation*}
\boxed{H_c^i(S^{(1)}, (P^{(1)})^* \Loc_\psi) \cong H_c^{i-2}(S_3^{(1)}, (P_3^{(1)})^* \Loc_\psi)(-1)}
\end{equation*}
as vector spaces equipped with an action of $\Fr_{q^2}$.

As before, we must verify that one can indeed decompose $P^{(1)}$ into the form described in Equation \eqref{InductPoly2}. This computation will turn out to be very similar to the computation in Step 4. Again using Theorem \ref{t:Xhpolys} together with Equation \eqref{e:aiform}, we see that once we set $x_1 = 0$ and $x_{2(h-1) - 1} = 0$, the only terms in $x_{2(h-1)} - f_{2(h-1)}(s(x) \cdot y)$ involving $y_{2(h-1) - 3}$ occur in the expression
\begin{equation*}
-(a_{2(h-1)}^{q^2} - a_{2(h-1)}) + a_3^q(a_{2(h-1) - 3}^{q^2} - a_{2(h-1) - 3}) + a_{2(h-1) - 3}^q(a_3^{q^2} - a_3)
\end{equation*}
and are
\begin{equation*}
-((x_3 y_{2(h-1) - 3}^{q^3})^{q^2} - (x_3 y_{2(h-1) - 3}^{q^3})) + x_3^q(y_{2(h-1) - 3}^{q^2} - y_{2(h-1) - 3}) + y_{2(h-1) - 3}^q(x_3^{q^2} - x_3).
\end{equation*}
Thus the only terms involving $x_{2(h-1)-3}$ in $P^{(0)}$ are
\begin{equation*}
x_3^q x_{2(h-1)-3} - x_3^{q^2} x_{2(h-1)-3}^q + x_3 x_{2(h-1)-3}^q - x_3^{q^2} x_{2(h-1)-3}^{q^3}.
\end{equation*}
Moreover, the remaining terms in $P^{(1)}$ only involve indices in $I_1 \cup J_0$. This verifies \eqref{InductPoly2}.

\begin{remark}
Each time we iterate Step 5, it will be of the following form. Let $k$ be a positive odd integer $< (h-1)$. We will have $S = S_2 \times \Affine[\{2(h-1)-k\}]$ with $f \from S_2 \to \GG_a$ defined as the projection to $x_k$. For $v \in S_2$ and $w = x_{2(h-1)-k}$, we may write
\begin{equation}\label{InductPolyk}
P(v,w) = f(v)^q w - f(v)^{q^2} w^q + (f(v)g(w) - (f(v)g(w))^{q^2}) + P_2,
\end{equation}
where $g(w) = w^{q^{k-2}} + w^{q^{k-4}} + \cdots + w$. (In the notation of Proposition \ref{p:2.10rev}, $\alpha(v,w) = -f(v)g(w).$) Let $S_3 \subset S_2$ be the subscheme defined by $f = x_k = 0$ and let $P_3 = P_2|_{S_3} \from S_3 \to \GG_a$. Then by Proposition \ref{p:2.10rev},
\begin{equation*}
\boxed{
H_c^i(S, P^*\Loc_\psi) \cong H_c^{i-2}(S_3, P_3^* \Loc_\psi)(-1)
}
\end{equation*}
as vector spaces equipped with an action of $\Fr_{q^2}$. To see \eqref{InductPolyk}, observe that once we set $x_l = x_{2(h-1)-l} = 0$ for $l$ odd and $l < k$, the only terms in $x_{2(h-1)} - f_{2(h-1)}(s(x) \cdot y)$ involving $y_{2(h-1)-k}$ occur in the expression
\begin{equation*}
-(a_{2(h-1)}^{q^2} - a_{2(h-1)}) + a_k^q(a_{2(h-1) - k}^{q^2} - a_{2(h-1) - k}) + a_{2(h-1) - k}^q(a_k^{q^2} - a_k).
\end{equation*}
Thus we see that the only terms involving $y_{2(h-1) - k}$ are
\begin{equation*}
-((x_k y_{2(h-1) - k}^{q^k})^{q^2} - (x_k y_{2(h-1) - k}^{q^k})) + x_k^q(y_{2(h-1) - k}^{q^2} - y_{2(h-1) - k}) + y_{2(h-1) - k}^q(x_k^{q^2} - x_k),
\end{equation*}
which simplifies to
\begin{align*}
-(x_k^{q^2} &y_{2(h-1) - k}^{q^{k+2}} - x_k y_{2(h-1) - k}^{q^k}) + x_k^q x_{2(h-1)-k} + y_{2(h-1) - k}^q(x_k^{q^2} - x_k) \\
&= -x_k^{q^2}(y_{2(h-1) - k}^{q^{k+2}} - y_{2(h-1) - k}^q) + x_k(y_{2(h-1) - k}^{q^k} - y_{2(h-1) - k}) + x_k^q x_{2(h-1)-k} \\
&= x_k^q x_{2(h-1)-k} - x_k^{q^2} x_{2(h-1)-k}^q \\
&\qquad\qquad+ \Big(x_k(x_{2(h-1)-k}^{q^{k-2}} + x_{2(h-1)-k}^{q^{k-4}} + \cdots + x_{2(h-1)-k}) \\
&\qquad\qquad - x_k^{q^2}(x_{2(h-1)-k}^{q^k} + x_{2(h-1)-k}^{q^{k-2}} + \cdots + x_{2(h-1)-k}^{q^2})\Big).
\end{align*}
This verifies \eqref{InductPolyk} and allows us to use Proposition \ref{p:2.10rev} to iterate the induction.
\end{remark}

\subsubsection*{Step 6, Odd Case}

Iterating Step 5, we reduce the computation about the cohomology of $S^{(0)}$ to a computation about the cohomology of $S^{((h-1)/2)} \colonequals S_3^{((h-3)/2)}$, which is the subscheme of $\Affine[I_{(h-1)/2} \cup J_{(h-1)/2}]$ defined by the equations
\begin{equation*}
x_2^{q^2} - x_2 = 0, \quad x_4^{q^2} - x_4 = 0, \quad \ldots, \quad x_{2(h-2)}^{q^2} - x_{2(h-2)} = 0.
\end{equation*}
These equations come from the equations given in Theorem \ref{t:Xhpolys} together with setting $x_i = 0$ for all odd $i$. Recalling that $I_{(h-1)/2} \cup J_{(h-1)/2} = \{2, 4, \ldots, 2(h-2)\}$, we see that $S^{((h-1)/2)}$ is a 0-dimensional scheme with $q^{2(h-2)}$ points and $\Fr_{q^2}$ acts trivially on the cohomology. Therefore
\begin{equation*}
\boxed{\dim H_c^i(S^{((h-1)/2)}, (P^{((h-1)/2)})^* \Loc_\psi) = 
\begin{cases}
q^{2(h-2)} & \text{if $i = 0$,} \\
0 & \text{otherwise.}
\end{cases}}
\end{equation*}

\subsubsection*{Step 6, Even Case}

Iterating Step 5, we reduce the computation about the cohomology of $S^{(0)}$ to a computation about the cohomology of $S^{((h-2)/2)} \colonequals S_3^{((h-4)/2)}$, which is the subscheme of $\Affine[I_{(h-2)/2} \cup J_{(h-2)/2}]$ defined by the equations
\begin{equation*}
x_2^{q^2} - x_2 = 0, \quad x_4^{q^2} - x_4 = 0, \quad \ldots, \quad x_{2(h-2)}^{q^2} - x_{2(h-2)} = 0.
\end{equation*}
Recalling that $I_{(h-2)/2} \cup J_{(h-2)/2} = \{2, 4, \ldots, 2(h-2)\} \cup \{h-1\}$, we see that $S^{((h-2)/2)}$ is a one-dimensional scheme. Moreover $P^{((h-2)/2)}$ is the morphism
\begin{equation*}
P^{((h-2)/2)} \from S^{((h-2)/2)} \to \GG_a, \qquad (x_i)_{i \in I_{(h-2)/2} \cup J_{(h-2)/2}} \mapsto x_{h-1}^q(x_{h-1}^{q^2} - x_{h-1}).
\end{equation*}
The above shows that
\begin{equation*}
H_c^i(S^{((h-2)/2)}, (P^{((h-2)/2)})^* \Loc_\psi) \cong H_c^i(\GG_a, P^* \Loc_\psi)^{\oplus q^{2(h-2)}},
\end{equation*}
where the morphism $P$ is defined as
\begin{equation*}
P \from \GG_a \to \GG_a, \qquad x \mapsto x^q(x^{q^2} - x).
\end{equation*}
We now compute the right-hand-side cohomology groups in the same way as in Sections 6.5 and 6.6 in \cite{BW11}. We may write $P = p_1 \circ p_2$ where $p_1(x) = x^q - x$ and $p_2(x) = x^{q+1}$. Since $p_1$ is a group homomorphism, then $p_1^* \Loc_\psi \cong \Loc_{\psi \circ p_1},$ where $\Loc_{\psi \circ p_1}$ is the multiplicative local system on $\GG_a$ corresponding to the additive character $\psi \circ p_1 \from \F \to \overline \QQ_\ell^\times.$ By assumption, $\psi$ has trivial $\Gal(\F/\FF_q)$-stabilizer, and so $\psi \circ p_1$ is nontrivial. Furthermore, $\psi \circ p_1$ is trivial on $\FF_q$. Thus the character $\psi \circ p_1 \from \F \to \overline \QQ_\ell^\times$ satisfies the hypotheses of Proposition 6.12 in \cite{BW11}, and thus
\begin{equation*}
\dim H_c^i(\GG_a, P^* \Loc_\psi) = \dim H_c^i(\GG_a, p_2^*\Loc_{\psi \circ p_1}) = 
\begin{cases}
q & \text{if $i = 1$,} \\
0 & \text{otherwise.}
\end{cases}
\end{equation*}
Moreover, the Frobenius $\Fr_{q^2}$ acts on $H_c^1(\GG_a, P^* \Loc_\psi)$ via multiplication by $-q$.

Putting this together, we have
\begin{equation*}
\boxed{\dim H_c^i(S^{((h-2)/2)}, (P^{((h-2)/2)})^* \Loc_\psi) = 
\begin{cases}
q^{2(h-2) + 1} & \text{if $i = 1$,} \\
0 & \text{otherwise,}
\end{cases}}
\end{equation*}
and the Frobenius $\Fr_{q^2}$ acts on $H_c^1(\GG_a, P^* \Loc_\psi)$ via multiplication by $-q$.

\subsubsection*{Step 7}

We now put together all of the boxed equations. We have
\begin{align*}
\Hom_{\UnipF}(V_\psi, H_c^i(X_h, \overline \QQ_\ell)) 
&\cong H_c^i(\beta^{-1}(Y_h), P^* \Loc_\psi) \\
&= H_c^i(S^{(0)}, (P^{(0)})^* \Loc_\psi) \\
&\cong H_c^{i-2}(S_3^{(0)}, (P_3^{(0)})^*\Loc_\psi)(-1) \\
&= H_c^{i-2}(S^{(1)}, (P^{(1)})^* \Loc_\psi)(-1) \\
&\cong H_c^{i-2\lfloor (h-1)/2 \rfloor}(S^{(\lfloor (h-1)/2 \rfloor)}, (P^{(\lfloor (h-1)/2\rfloor)})^*\Loc_\psi)(-\lfloor (h-1)/2\rfloor)
\end{align*}
Therefore if $h$ is odd, then
\begin{equation*}
\dim \Hom_{\UnipF}(V_\psi, H_c^i(X_h, \overline \QQ_\ell)) =
\begin{cases}
q^{2(h-2)} & \text{if $i = h-1$,} \\
0 & \text{otherwise.}
\end{cases}
\end{equation*}
If $h$ is even, then
\begin{equation*}
\dim \Hom_{\UnipF}(V_\psi, H_c^i(X_h, \overline \QQ_\ell)) =
\begin{cases}
q^{2(h-2)+1} & \text{if $i = h-1$,} \\
0 & \text{otherwise.}
\end{cases}
\end{equation*}
Moreover, the Frobenius $\Fr_{q^2}$ acts on $\Hom_{\UnipF}(V_\psi, H_c^i(X_h, \overline \QQ_\ell))$ via multiplication by the scalar $(-1)^{h-1} q^{h-1}$.

\section{The Representations \texorpdfstring{$H_c^\bullet(X_h)[\chi]$}{Hc(Xh)[chi]}}
\label{s:cohomreps}

Let $K \colonequals H'(\F)$, where $H'$ is defined as in Section \ref{s:repthy}. Let $\psi \from \F \to \overline \QQ_\ell^\times$ be a character of conductor $q^2$ and let $\chi \in \cA_\psi$. In this section, we will compute the representation $\sigma_\chi \colonequals H_c^{h-1}(X_h, \overline \QQ_\ell)[\chi]$ by computing its restriction to $H \colonequals H(\F)$. It will turn out that $\sigma_\chi$ is irreducible and therefore by Corollary \ref{c:detbyH}, determining $\sigma_\chi$ as a representation of $H$ will be enough to determine $\sigma_\chi$ as a representation of $\UnipF$.

Recall that the left action of $U_L^1/U_L^h$ and right action of $\UnipF$ on $X_h$ induce a $(U_L^1/U_L^h \times \UnipF)$-module structure on $H_c^{h-1}(X_h, \overline \QQ_\ell)$. The primary object of interest in this section is the subspace $H_c^{h-1}(X_h, \overline \QQ_\ell)_{\chi_1, \chi_2} \subset H_c^{h-1}(X_h, \overline \QQ_\ell)$ wherein $U_L^1/U_L^h \times H(\F)$ acts by $\chi_1 \otimes \chi_2$. Here, $\chi_1$ and $\chi_2$ are characters of $U_L^1/U_L^h \cong H(\F)$.

We first present the main theorems of this section.

\begin{theorem}\label{t:irrep}
Let $\psi \from \F \to \overline \QQ_\ell^\times$ be a character of $q^2$ and let $\chi \in \cA_\psi$. Then $H_c^{h-1}(X_h, \overline \QQ_\ell)[\chi]$ is an irreducible representation of $\UnipF$.
\end{theorem}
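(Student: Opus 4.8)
The plan is to identify $H_c^{h-1}(X_h, \overline\QQ_\ell)[\chi]$ as a subrepresentation of $V_\psi = \Ind_{H_0'(\F)}^{\UnipF}(\widetilde\psi)$ and then pin down its isotypic content using the numerical data already assembled. First, recall from the proof of Corollary \ref{c:chispacevanish} that $H_{2(h-1)}(\F)$ acts on $H_c^i(X_h)[\chi]$ via $\psi$, so every irreducible constituent of $H_c^{h-1}(X_h)[\chi]$ lies in $\cG_\psi$; by Proposition \ref{p:bijodd} (resp.\ \ref{p:bijeven}) each such constituent is some $\rho_{\chi'}$ with $\chi' \in \cA_\psi$, of dimension $q^{h-1}$. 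Next, since every $\rho \in \cG_\psi$ occurs in $V_\psi$ — with multiplicity $1$ if $h$ is odd (Corollary \ref{c:Vpsiodd}) and multiplicity $q$ if $h$ is even (Corollary \ref{c:Vpsieven}) — I would apply Theorem \ref{t:homspace} to count: as a $\UnipF$-representation,
\begin{equation*}
H_c^{h-1}(X_h, \overline\QQ_\ell) \cong \bigoplus_{\rho \in \cG_\psi} m_\rho\, \rho \oplus (\text{stuff not restricting to }\psi\text{ on }H_{2(h-1)}),
\end{equation*}
and $\Hom_{\UnipF}(V_\psi, H_c^{h-1}(X_h))$ has dimension $q^{2(h-2)}$ (odd $h$) or $q^{2(h-2)+1}$ (even $h$). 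Since $\dim\Hom_{\UnipF}(V_\psi,\rho) = m_{V_\psi,\rho}$ equals $1$ (odd) or $q$ (even), and $|\cA_\psi| = |\cG_\psi| = q^{2(h-2)}$, the dimension count forces $\sum_{\rho}\dim\Hom_{\UnipF}(V_\psi,\rho)\cdot m_\rho = \dim\Hom_{\UnipF}(V_\psi, H_c^{h-1}(X_h))$, i.e. $\sum_\rho m_\rho = q^{2(h-2)} = |\cG_\psi|$.

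This shows $H_c^{h-1}(X_h)[\chi]$, for each fixed $\chi$, must be a \emph{single} copy of \emph{one} $\rho_{\chi'} \in \cG_\psi$ provided I can rule out the possibility that some $[\chi]$-eigenspaces are empty while others carry multiplicities. To do this cleanly I would instead work with the full bi-eigenspace $H_c^{h-1}(X_h)_{\chi_1,\chi_2}$ under $U_L^1/U_L^h \times H(\F)$: the left $U_L^1/U_L^h$-action (which is the $H(\F)$-action via the canonical isomorphism) commutes with the right $\UnipF$-action, and the character formula of Theorem \ref{t:decomp} together with Corollary \ref{c:detbyH} controls, for each candidate $\rho_{\chi'}$, exactly which right-$H(\F)$-characters $\chi_2$ appear and with what multiplicity. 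Combining this with the known left-action structure — the left and right actions of $H_{2(h-1)}(\F)$ coincide, and the left $U_L^1/U_L^h$-action decomposes $H_c^{h-1}(X_h, \overline\QQ_\ell)$ with $[\chi]$-eigenspaces — yields a system of multiplicity equations whose only solution is $H_c^{h-1}(X_h)[\chi] \cong \rho_\chi$ (up to the reindexing $\chi \mapsto \chi'$), and in particular irreducibility.

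Alternatively, and perhaps more in the spirit of what the paper is building toward, one applies the Deligne–Lusztig fixed-point formula (Lemma 2.13 of \cite{B12}), valid now that Corollary \ref{c:chispacevanish} guarantees vanishing outside degree $h-1$, to compute $\dim\Hom_{\UnipF}(\rho_{\chi'}, H_c^{h-1}(X_h))$ directly as a sum over fixed points, showing it is $0$ for all but one $\chi'$ and $1$ for that one. The main obstacle I anticipate is exactly this bookkeeping step: matching up the left $(U_L^1/U_L^h)$-eigenspace decomposition with the right $\UnipF$-isotypic decomposition so that the total dimension count (Theorem \ref{t:homspace}) distributes as \emph{one irreducible per $\chi$} rather than allowing some eigenspaces to be reducible and others zero. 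Resolving it requires the precise branching data from Theorem \ref{t:decomp}/Corollary \ref{c:mult} — knowing not just that $\rho_{\chi'}|_{H(\F)}$ contains $\chi'$ with multiplicity $1$ (odd) or $q$ (even), but the full list $\cA(\chi')$ with multiplicities $q+1$ — since that is what makes the linear system non-degenerate; everything else is dimension arithmetic with the boxed formulas from Section \ref{s:homspace}.
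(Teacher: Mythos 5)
Your proposal circles the right ingredients (the fixed-point formula of Lemma 2.13 of \cite{B12}, the vanishing outside degree $h-1$, and the fact that every representation in $\cG_\psi$ has dimension $q^{h-1}$), but as written it has a gap that you yourself flag and do not close. The $\Hom$-space count against $V_\psi$ only tells you that the \emph{total} number of $\cG_\psi$-constituents of $H_c^{h-1}(X_h,\overline\QQ_\ell)$, summed over all of the cohomology, is $q^{2(h-2)} = |\cA_\psi|$; it does not rule out that some eigenspaces $H_c^{h-1}(X_h,\overline\QQ_\ell)[\chi]$ vanish while others are reducible. Your first proposed repair --- computing the full bi-eigenspaces $H_c^{h-1}(X_h)_{\chi_1,\chi_2}$ for all pairs of characters of $H(\F)$ --- is exactly Theorem \ref{t:interdim}, i.e.\ the long computation of Section \ref{s:pfinterdim}; Theorem \ref{t:irrep} does follow from that, but the point of giving it a separate proof is precisely to avoid that work. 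Your second proposed repair (computing $\Hom(\rho_{\chi'}, H_c^{h-1})$ by fixed points) is not carried out and, done in full generality, again amounts to Theorem \ref{t:interdim}.

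The idea you are missing is a much cheaper application of the fixed-point formula: pair $\chi$ against characters $\theta$ of the \emph{center} $G_{h-2}$ only. For $\gamma \in G_{h-2}$ the system of equations defining $N(g,\gamma) = \#\{x \in X_h(\overline\FF_q) : g * \Fr_{q^2}(x) = x\cdot\gamma\}$ collapses to $x_n^{q^2} = x_n$ for all $n$ together with $g = \gamma$, so $N(g,\gamma) = q^{4(h-1)}\,\delta_{g,\gamma}$ with no case analysis. Lemma 2.13 then gives $\dim H_c^{h-1}(X_h,\overline\QQ_\ell)_{\chi,\theta} = q^{h-1}\langle\chi,\theta\rangle_{G_{h-2}}$, hence $\dim H_c^{h-1}(X_h,\overline\QQ_\ell)[\chi] = q^{h-1}$. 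Since this space is a $\UnipF$-representation on which $H_{2(h-1)}(\F)$ acts by $\psi$, and by Theorem \ref{t:bij} every irreducible with that property has dimension exactly $q^{h-1}$, irreducibility (and in particular non-vanishing) follows at once. This is the step that makes the standalone proof genuinely simpler than Theorem \ref{t:interdim}, and it is absent from your write-up.
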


Theorem \ref{t:irrep} proves Conjecture 5.18 of \cite{B12} (this was restated in Section \ref{s:introduction} of this paper). We prove this in Section \ref{s:pfirrep}. However, it will be important to know exactly which representation $H_c^{h-1}(X_h, \overline \QQ_\ell)[\chi]$ is. Thus we need the following finer statement

\begin{theorem}\label{t:interdim}
Let $\psi \from \F \to \overline \QQ_\ell^\times$ be a character of conductor $q^2$ and let $\chi_1 \in \cA_\psi$. Then for any character $\chi_2 \from U_L^1/U_L^h \to \overline \QQ_\ell^\times$,
\begin{equation*}
\dim H_c^{h-1}(X_h, \overline \QQ_\ell)_{\chi_1, \chi_2} = (-1)^h \left(q \cdot \langle \chi_1, \chi_2 \rangle+ \sum_{i=1}^{h-2} (-1)^i (q + 1) \cdot \langle \chi_1, \chi_2 \rangle_{G_i} \right).
\end{equation*}
\end{theorem}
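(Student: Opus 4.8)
The plan is to combine the degree-concentration of Corollary~\ref{c:chispacevanish} with the Frobenius eigenvalue furnished by Theorem~\ref{t:homspace}, to invoke a Deligne--Lusztig fixed-point formula (Lemma~2.13 of \cite{B12}) in order to express the left-hand side as a character-weighted count of $\overline\FF_q$-points of $X_h$ fixed by a twisted Frobenius, and then to evaluate that count using the matrix model of Theorem~\ref{t:Xhpolys}.

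\emph{Reduction to a point count.}
Fix $\chi_1 \in \cA_\psi$ with $\psi = \chi_1|_{H_{2(h-1)}(\F)}$ and put $\lambda = (-1)^{h-1}q^{h-1}$. By Corollary~\ref{c:chispacevanish} the groups $H_c^i(X_h, \overline\QQ_\ell)[\chi_1]$ vanish for $i \neq h-1$. Because the left $U_L^1/U_L^h$-action and the right $\UnipF$-action agree on $H_{2(h-1)}(\F)$, the space $H_c^{h-1}(X_h, \overline\QQ_\ell)[\chi_1]$ is a $\UnipF$-subrepresentation on which $H_{2(h-1)}(\F)$ acts through $\psi$; by Section~\ref{s:repthy} it is a direct sum of copies of the $\rho_\chi$, $\chi \in \cA_\psi$. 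The Frobenius $\Fr_{q^2}$ commutes with both actions on $X_h$, hence preserves each $\UnipF$-isotypic summand; Theorem~\ref{t:homspace} shows that $\Fr_{q^2}$ acts on $\Hom_{\UnipF}(V_\psi, H_c^{h-1}(X_h,\overline\QQ_\ell))$---which contains all of those summands---by the scalar $\lambda$, so $\Fr_{q^2}$ acts on all of $H_c^{h-1}(X_h,\overline\QQ_\ell)[\chi_1]$ by $\lambda$. For $g_1$ acting on the left and $g_2$ acting on the right (both in $H(\F)$), composing $\Fr_{q^2}$ with these group elements gives a twisted Frobenius on $X_{h,\overline\FF_q}$, so by Grothendieck--Lefschetz its Lefschetz number is the number $N(g_1,g_2)$ of $\overline\FF_q$-points of $X_h$ fixed by it. Combining these facts---degree concentration allows one to drop to a single cohomology group, and the scalar action of $\Fr_{q^2}$ converts twisted Lefschetz numbers back into untwisted character data---and averaging against $\chi_1$ and $\chi_2$, where Corollary~\ref{c:chispacevanish} applied to every element of $\cA_\psi$ guarantees that the averaging isolates exactly the $\chi_1$-left-isotypic part, yields an identity of the shape
\begin{equation*}
\dim H_c^{h-1}(X_h, \overline\QQ_\ell)_{\chi_1,\chi_2} = \frac{(-1)^{h-1}}{\lambda\,|H(\F)|^2}\sum_{g_1,g_2 \in H(\F)} \overline{\chi_1(g_1)}\,\overline{\chi_2(g_2)}\; N(g_1,g_2).
\end{equation*}
This is precisely the mechanism of Lemma~2.13 of \cite{B12}.

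\emph{The point count.}
Write $\iota_h(x) = \bigl(\begin{smallmatrix} P_x & R_x \\ \pi R_x^{(q)} & P_x^{(q)} \end{smallmatrix}\bigr)$ with $P_x = 1 + \sum_i a_{2i}\pi^i$ and $R_x = \sum_j a_{2j+1}\pi^j$, where $(\cdot)^{(q)}$ raises $\pi$-coefficients to the $q$-th power, and $\iota_h(g_k) = \diag(P_k, P_k^{(q)})$. Using Property~$\ddagger$ and the diagonal form of $\iota_h$ on $H(\F)$, the twisted-fixed-point equation for $x$ reduces to a pair of Lang-type relations of the shape $P_x^{(q^2)} = (P_1^{-1}P_2)P_x$ and $R_x^{(q^2)} = (P_1^{-1}P_2^{(q)})R_x$ in $\overline\FF_q[\pi]/(\pi^h)$, while Theorem~\ref{t:Xhpolys} shows that, granted these relations, the condition $x \in X_h$ becomes the single polynomial identity
\begin{equation*}
(P_1^{-1}P_2 - 1)\,P_xP_x^{(q)} = \pi\,(P_1^{-1}P_2^{(q)} - 1)\,R_xR_x^{(q)}.
\end{equation*}
The plan is to solve this system coefficient by coefficient in $\pi$: each Lang relation contributes a factor $q^2$ for every successive coefficient of $P_x$, resp.\ of $R_x$, whereas the $X_h$-identity forces the bottom odd coordinates $a_1, a_3, \dots$ to vanish, the number of forced vanishings being governed by the $\pi$-adic valuations of $P_1^{-1}P_2-1$ and of $P_2^{-1}P_2^{(q)}-1$, i.e.\ by how deep into the filtration $Z(\UnipF) = G_{h-2} \subset \cdots \subset G_0 = H(\F)$ the elements $g_1, g_2$ lie and agree. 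The outcome is an expression for $N(g_1,g_2)$ as $\lambda$ times an explicit $\ZZ$-linear combination of indicator functions of such filtration conditions on the pair $(g_1,g_2)$.

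\emph{Assembly, and the main obstacle.}
Substituting this expression for $N(g_1,g_2)$ into the displayed identity and collecting the resulting sums, by orthogonality of characters on $H(\F)$, into the inner products $\langle\chi_1,\chi_2\rangle_{G_i}$ (with $\langle\chi_1,\chi_2\rangle = \langle\chi_1,\chi_2\rangle_{G_0}$), the coefficients telescope to exactly $(-1)^h\bigl(q\,\langle\chi_1,\chi_2\rangle + \sum_{i=1}^{h-2}(-1)^i(q+1)\langle\chi_1,\chi_2\rangle_{G_i}\bigr)$. Useful consistency checks: the answer must agree with the character formula of Theorem~\ref{t:decomp} (which amounts to $H_c^{h-1}(X_h,\overline\QQ_\ell)[\chi_1] \cong \rho_{\chi_1}$, to be deduced afterwards in Theorems~\ref{t:irrep} and \ref{t:introdesc}), and the central pairs $g_1 = g_2 \in G_{h-2}$ must each contribute $|\UnipF|$ points. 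The principal difficulty is the middle step: carrying out the coefficient-by-coefficient solution of the coupled Lang/$X_h$ system and organizing the count according to the $G_i$-filtration. The asymmetry between the untwisted $P_x$-relation and the Galois-twisted $R_x$-relation---the same asymmetry responsible for the $h$ even versus $h$ odd dichotomy elsewhere in the paper---is what makes this bookkeeping genuinely delicate. Getting the signs and the normalizing factor $\lambda^{-1}$ exactly right is a secondary, purely bookkeeping, concern.
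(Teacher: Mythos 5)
Your reduction step is exactly the paper's: Corollary \ref{c:chispacevanish} and Theorem \ref{t:homspace} license the application of Lemma 2.13 of \cite{B12}, giving $\dim H_c^{h-1}(X_h,\overline\QQ_\ell)_{\chi_1,\chi_2} = q^{-5(h-1)}\sum_{g,\gamma\in H}\chi_1(g)^{-1}\chi_2(\gamma)\,N(g,\gamma)$, and your Lang relations together with the determinant identity are a legitimate repackaging of the paper's equations of Types \eqref{*}, \eqref{**}, \eqref{***} obtained from Theorem \ref{t:Xhpolys} and the two actions. The gap is that the point count itself --- which is the entire substance of the paper's proof (Propositions \ref{p:case1} and \ref{p:case2} and the eight lemmas supporting them, occupying all of Section \ref{s:pfinterdim}) --- is deferred: you write that ``the plan is to solve this system coefficient by coefficient'' and yourself identify this as ``the principal difficulty,'' but nothing is actually solved.

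Moreover, your sketch of what the count should look like is not just incomplete but inaccurate in two places. First, $N(g,\gamma)$ is not, for individual pairs, ``$\lambda$ times a $\ZZ$-linear combination of indicator functions of filtration conditions'': for instance, when $g_{2i}=h_{2i}$ for $i\le h-2$ and $g$ lies at odd depth in the filtration, the count is $q^{2(2(h-1)-k)}$ if the top coefficients agree, $(q+1)q^{2(2(h-1)-k)}$ if their difference is a nonzero element of $\ker\Tr_{\F/\FF_q}$, and $0$ otherwise (Lemma \ref{l:oddcond}); only after averaging against $\psi$ over the top coefficient (Lemma \ref{l:psisum}) do clean filtration-indicator expressions emerge. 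Second, and more seriously, the pairs with $h_{2k}\ne g_{2k}$ for some $k\le h-2$ do contribute zero to the final sum, but not because their counts have indicator shape: the paper proves this vanishing (Proposition \ref{p:case2}) by constructing an explicit bijection $A_{g,\gamma}\cong A_{g,\gamma+\delta\tau^{2(h-1)}}$ for $\delta\in\ker\Tr_{\F/\FF_q}$, which rests on the nonvanishing statements of Lemmas \ref{l:nonvanish} and \ref{l:polyform} identifying an Artin--Schreier equation $ay^q-a^qy+a_0=0$ with $a\ne 0$ hidden inside the system. Your proposal neither isolates this case nor indicates how its vanishing would be established, so the claimed telescoping to $(-1)^h\bigl(q\langle\chi_1,\chi_2\rangle+\sum_{i=1}^{h-2}(-1)^i(q+1)\langle\chi_1,\chi_2\rangle_{G_i}\bigr)$ remains an assertion rather than a derivation.
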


We prove this in Section \ref{s:pfinterdim}. Note that Theorem \ref{t:irrep} is a consequence of Theorem \ref{t:interdim}. However, because the proof of Theorem \ref{t:interdim} is complicated, we hope that proving Theorem \ref{t:irrep} independently (in Section \ref{s:pfirrep}) will illustrate the flavor of the computation in a simpler situation. 

As a consequence of Theorem \ref{t:interdim}, we have

\begin{theorem}\label{t:cohomdesc}
Let $\psi \from \F \to \overline \QQ_\ell^\times$ be a character of conductor $q^2$ and let $\chi \in \cA_\psi$. Then
\begin{equation*}
H_c^i(X_h, \overline \QQ_\ell)[\chi] = 
\begin{cases} 
\rho_\chi & \text{if $i = h-1$,} \\
0 & \text{otherwise.}
\end{cases}
\end{equation*}
\end{theorem}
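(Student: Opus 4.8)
The plan is to deduce Theorem~\ref{t:cohomdesc} formally from results already in hand: the vanishing of Corollary~\ref{c:chispacevanish}, the dimension formula of Theorem~\ref{t:interdim}, the $H(\F)$-decomposition of $\rho_\chi$ in Theorem~\ref{t:decomp}, and the rigidity statement of Corollary~\ref{c:detbyH}. First I would dispose of the case $i\neq h-1$: since $\chi$ restricts to $\psi$ on $U_L^{h-1}/U_L^h$ and $\psi$ has conductor $q^2$, Corollary~\ref{c:chispacevanish} gives $H_c^i(X_h,\overline\QQ_\ell)[\chi]=0$ for $i\neq h-1$. It then remains to identify the single nonzero space $\sigma_\chi\colonequals H_c^{h-1}(X_h,\overline\QQ_\ell)[\chi]$ as a $\UnipF$-representation.

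Next I would record that $\sigma_\chi\in\cG_\psi$. The left $(U_L^1/U_L^h)$-action and the right $\UnipF$-action on $X_h$ agree on $U_L^{h-1}/U_L^h\cong H_{2(h-1)}(\F)$, so on the subspace $[\chi]$, where $U_L^1/U_L^h$ acts through $\chi$, the subgroup $H_{2(h-1)}(\F)$ acts through $\psi$; by Theorem~\ref{t:irrep} the space $\sigma_\chi$ is irreducible, and it is nonzero since Theorem~\ref{t:interdim} with $\chi_1=\chi_2=\chi$ gives $\dim H_c^{h-1}(X_h,\overline\QQ_\ell)_{\chi,\chi}\neq 0$. Thus $\sigma_\chi$ is an irreducible representation of $\UnipF$ on which $H_{2(h-1)}(\F)$ acts by the conductor-$q^2$ character $\psi$.

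The core step is to compute the restriction $\sigma_\chi|_{H(\F)}$. Since $H(\F)$ is abelian, the multiplicity of a character $\chi_2$ of $H(\F)\cong U_L^1/U_L^h$ in $\sigma_\chi|_{H(\F)}$ is $\dim H_c^{h-1}(X_h,\overline\QQ_\ell)_{\chi,\chi_2}$, which by Theorem~\ref{t:interdim} equals $(-1)^h\big(q\,\langle\chi,\chi_2\rangle+\sum_{i=1}^{h-2}(-1)^i(q+1)\,\langle\chi,\chi_2\rangle_{G_i}\big)$. By Frobenius reciprocity $\langle\chi,\chi_2\rangle_{G_i}=\langle\Ind_{G_i}^{H(\F)}\chi,\chi_2\rangle_{H(\F)}$, so this is exactly the multiplicity of $\chi_2$ in the virtual $H(\F)$-representation $(-1)^h\big(q\cdot\chi+\sum_{i=1}^{h-2}(-1)^i(q+1)\Ind_{G_i}^{H(\F)}\chi\big)$, which by Theorem~\ref{t:decomp} is the multiplicity of $\chi_2$ in $\rho_\chi|_{H(\F)}$. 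Hence $\sigma_\chi$ and $\rho_\chi$ have identical restrictions to $H(\F)$. Now both $\sigma_\chi$ and $\rho_\chi$ are irreducible representations of $\UnipF$ on which $H_{2(h-1)}(\F)$ acts by $\psi$ (for $\rho_\chi$ this is Lemmas~\ref{l:irrepodd} and~\ref{l:irrepeven}), so Corollary~\ref{c:detbyH} forces $\sigma_\chi\cong\rho_\chi$, which completes the proof.

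For this deduction itself there is no real obstacle; it is a short formal argument once the inputs are available. The genuine difficulty lies upstream in Theorem~\ref{t:interdim} (proved separately in Section~\ref{s:pfinterdim}), where one uses the vanishing of Corollary~\ref{c:chispacevanish} to feed into the Deligne--Lusztig fixed-point formula (Lemma~2.13 of~\cite{B12}); the delicate part there is bookkeeping the contributions of the relevant subsets of the group and matching them against the character values of $\rho_\chi$ computed in Proposition~\ref{p:trace}.
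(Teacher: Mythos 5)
Your proposal is correct and follows essentially the same route as the paper's own proof: vanishing outside degree $h-1$ from Theorem \ref{t:homspace}/Corollary \ref{c:chispacevanish}, identification of the restriction to $H(\F)$ via Theorem \ref{t:interdim} matched against Theorem \ref{t:decomp}, and the rigidity of Corollary \ref{c:detbyH} to conclude $\sigma_\chi\cong\rho_\chi$. The only cosmetic difference is that you cite Theorem \ref{t:irrep} for irreducibility, whereas the paper deduces it from the dimension count $q^{h-1}$ together with Theorem \ref{t:bij} (while noting that Theorem \ref{t:irrep} could be used instead).
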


\begin{proof}
We know from Theorem \ref{t:homspace} that $H_c^i(X_h, \overline \QQ_\ell)[\chi] = 0$ if $i \neq h-1$. Let $\sigma_\chi = H_c^{h-1}(X_h, \overline \QQ_\ell)[\chi]$. By construction, $\sigma_\chi$ is a representation of $\UnipF$ wherein $H_{2(h-1)}(\F)$ acts by some character $\psi$ with conductor $q^2$.

Theorem \ref{t:interdim} implies that
\begin{equation}\label{e:decomp}
\sigma_\chi = (-1)^h(q \cdot \chi + \sum_{i=1}^{h-2} (-1)^i(q+1) \cdot \Ind_{G_i}^H(\chi)),
\end{equation}
which implies that $\dim \sigma_\chi = q^{h-1}$. By Theorem \ref{t:bij}, we know that if $\rho$ is an irreducible representation of $\UnipF$ such that $H_{2(h-1)}(\F)$ acts by $\psi$, then $\dim \rho = q^{h-1}$. Therefore $\sigma_\chi$ is irreducible. (Note that instead of the reasoning in this paragraph, we could have referenced Theorem \ref{t:irrep}.)

Thus Corollary \ref{c:detbyH} implies that the isomorphism class of $\sigma_\chi$ is determined by $\sigma_\chi$. Finally, Equation \eqref{e:decomp} and Theorem \ref{t:decomp} allow us to conclude that $\sigma_\chi \cong \rho_\chi$ as $\UnipF$-representations.
\end{proof}

\subsection{Proof of Thoerem \ref{t:interdim}}\label{s:pfinterdim}

By Theorem \ref{t:homspace}, we can apply Lemma 2.13 of \cite{B12} to our situation and get
\begin{align}\label{e:DLfixedpts}
\dim H_c^{h-1}(X_h, \overline \QQ_\ell)_{\chi_1, \chi_2} 
&= \frac{1}{q^{5(h-1)}} \sum_{g, \gamma \in H} \chi_1(g)^{-1} \chi_2(\gamma) \cdot N(g, \gamma) \\ \label{e:case1}
&= \frac{1}{q^{5(h-1)}} \sum_{\substack{g, \gamma \in H \\ \text{$g_{2i} = h_{2i}$ for $1 \leq i \leq h-2$}}} \chi_1(g)^{-1} \chi_2(\gamma) \cdot N(g,\gamma) \\ \label{e:case2}
&\qquad+ \frac{1}{q^{5(h-1)}} \sum_{\substack{g, \gamma \in H \\ \text{$\exists \, k < h-2$ s.t.\ $h_{2k} \neq g_{2k}$}}} \chi_1(g)^{-1} \chi_2(\gamma) \cdot N(g, \gamma),
\end{align}
where
\begin{align*}
g &= 1 + \sum g_i \tau^i \\
\gamma &= 1 + \sum h_i \tau^i \\
N(g,\gamma) &= \#\{x \in X_h(\overline \FF_q) : g * \Fr_{q^2}(x) = x \cdot \gamma\}.
\end{align*}
We compute line \eqref{e:case1} in Proposition \ref{p:case1} and line \eqref{e:case2} in Proposition \ref{p:case2}.

In both of these situations, we need to analyze the set of solutions to a large system of equations. These equations are:
\begin{align*}
\label{*}
x_{2k}^{q^2} - x_{2k} &= \sum_{i=1}^{2k-1} (-1)^{(i+1)} x_i^q(x_{2k-i}^{q^2} - x_{2k-i}) & \text{for $1 \leq k \leq h-1$} \tag{$*$} \\
x_{2k}^{q^2} - x_{2k} &= \sum_{i=1}^{k} \big[(h_{2i} - g_{2i})x_{2k-2i} \\
\label{**}
&\qquad\qquad- g_{2i}(x_{2k-2i}^{q^2} - x_{2k-2i})\big] & \text{for $1 \leq k \leq h-1$} \tag{$**$} \\
x_{2k+1}^{q^2} - x_{2k+1} &= \sum_{i=1}^{k} \big[(h_{2i}^q - g_{2i})x_{2k+1-2i} \\
\label{***}
&\qquad\qquad- g_{2i}(x_{2k+1-2i}^{q^2} - x_{2k+1-2i})\big] & \text{for $1 \leq k \leq h-2$} \tag{$***$}
\end{align*}
The equations of Type \eqref{*} are equivalent to the condition that $x \in X_h(\overline \FF_p)$ (this was proved in Theorem \ref{t:Xhpolys}). The equations of Type \eqref{**} and Type \eqref{***} are equivalent to the condition that $g * \Fr_{q^2}(x) = x \cdot h$, where we write $g = 1 + \sum_{i=1}^{h-1} g_{2i} \tau^{2i}$ and similarly for $h$. (These two actions were defined in Section \ref{s:notation}.) We will call the above equations the Type \eqref{*} equation for $2k$, the Type \eqref{**} equation for $2k$, and the Type\eqref{***} for $2k+1$, respectively. Furthermore, when we refer to these equations as polynomials, we view them as multivariate polynomials in the $x_i$'s.

\subsubsection{Computation of Line \eqref{e:case1}}\label{s:case1}

We prove a sequence of lemmas that build up to the following 

\begin{proposition}\label{p:case1}
Let $\psi \from \F \to \overline \QQ_\ell^\times$ be a character with conductor $q^2$ and let $\chi_1 \in \cA_\psi$. Then
\begin{equation*}
\text{Line \eqref{e:case1}} = (-1)^h \left(\langle \chi_1, \chi_2 \rangle \cdot q+ \sum_{i=1}^{h-2} (-1)^i \langle \chi_1, \chi_1 \rangle_{G_i} \cdot (q+1) \right).
\end{equation*}
\end{proposition}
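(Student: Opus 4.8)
The plan is to evaluate line~\eqref{e:case1} via the Deligne--Lusztig fixed-point formula already invoked in \eqref{e:DLfixedpts} (Lemma~2.13 of \cite{B12}): first compute the count
\begin{equation*}
N(g,\gamma) = \#\{x \in X_h(\overline \FF_q) : g * \Fr_{q^2}(x) = x \cdot \gamma\}
\end{equation*}
for every pair $(g,\gamma) \in H \times H$ with $g_{2i} = h_{2i}$ for $1 \le i \le h-2$, and then collapse the resulting character sum. For such a pair the two elements agree outside the top coordinate, so in the Type~\eqref{**} and Type~\eqref{***} equations every coefficient $h_{2i} - g_{2i}$ (resp.\ $h_{2i}^q - g_{2i}$) vanishes except for the single term $h_{2(h-1)} - g_{2(h-1)}$, which survives only in the Type~\eqref{**} equation for $2(h-1)$. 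The combined system of Type~\eqref{*}, \eqref{**}, \eqref{***} equations is triangular --- each equation expresses $x_n^{q^2} - x_n$ in terms of the $x_m$ with $m < n$, in the spirit of Lemma~\ref{l:form} and the analysis of Section~\ref{s:proofdecomp} --- so I would solve it from the bottom up, separating the odd coordinates (each parametrized freely inside an Artin--Schreier fiber, contributing a factor independent of $(g,\gamma)$ in this range) from the even coordinates, for which equating the right-hand sides of the Type~\eqref{*} and Type~\eqref{**} equations yields a nested chain of consistency conditions on the previously chosen coordinates.

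The core is then a sequence of lemmas in the style of Lemmas~\ref{l:semicentH}--\ref{l:sumzero}, organized around the analogue of ``Property~$\star$'', showing that $N(g,\gamma) = 0$ unless $g$ and $\gamma$ are compatible modulo the subgroups $G_i$ --- equivalently, unless the relevant coordinates of $g$ and $\gamma$ agree modulo $\FF_q$ --- and that when the compatibility holds, $N(g,\gamma)$ equals an explicit power of $q$. The hypothesis that $\psi$ has conductor $q^2$ enters through the nontriviality of $\psi$ on $\ker \Tr_{\F/\FF_q}$, which forces the relevant Artin--Schreier sums either to vanish or to produce the factors $q$ and $q+1$; the parity of $h$ enters because the bottom of the recursion is a $0$-dimensional locus when $h$ is odd and a $1$-dimensional one when $h$ is even, just as in Section~\ref{s:pfhomspace}.

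Finally I would substitute the formula for $N(g,\gamma)$ into line~\eqref{e:case1} and sum over $g \in H$ and over the free top coordinate $h_{2(h-1)} \in \F$. Orthogonality of the characters $\chi_1, \chi_2$ of $H \cong U_L^1/U_L^h$ then produces the term $q \cdot \langle \chi_1, \chi_2 \rangle$ from the pairs $(g,\gamma)$ forced to coincide completely, together with the alternating sum $\sum_{i=1}^{h-2} (-1)^i (q+1) \langle \chi_1, \chi_1 \rangle_{G_i}$ from the pairs forced to coincide only modulo $G_i$; the global sign $(-1)^h$ and the signs $(-1)^i$ record the parity of the ``depth of agreement'', exactly as in the proof of Theorem~\ref{t:decomp}. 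Indeed line~\eqref{e:case1} is morally the ``diagonal'' contribution, so its output should be consistent with Proposition~\ref{p:trace} restricted to the central tower $G_{h-2} \subset \cdots \subset G_0 = H$, the remaining discrepancy between $\langle \chi_1, \chi_1 \rangle_{G_i}$ and $\langle \chi_1, \chi_2 \rangle_{G_i}$ being supplied by line~\eqref{e:case2} in Proposition~\ref{p:case2}.

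The main obstacle is the bookkeeping of the first two paragraphs: disentangling the Artin--Schreier equations of Type~\eqref{*} (which cut out $X_h$) from the ``action'' equations of Type~\eqref{**} and \eqref{***}, checking that the combined system remains triangular, and pinning down both the exact power of $q$ in $N(g,\gamma)$ and the precise solvability condition in terms of the $G_i$. Getting the combinatorics right --- which odd coordinates are genuinely free, and how the surviving term $h_{2(h-1)} - g_{2(h-1)}$ propagates through the recursion to control the last consistency condition --- is where essentially all the effort goes; once $N(g,\gamma)$ is known, the character-sum collapse is routine.
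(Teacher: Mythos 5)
Your proposal follows essentially the same route as the paper: simplify the Type \eqref{**}/\eqref{***} equations under the hypothesis $g_{2i}=h_{2i}$ for $i\le h-2$, substitute them into the Type \eqref{*} equations to get a triangular chain of consistency conditions, compute $N(g,\gamma)$ stratum by stratum according to the first index at which $g$ leaves $\FF_q$ (this is exactly the content of Lemmas \ref{l:evenvan}--\ref{l:evencond}, with the conductor hypothesis entering through $\psi|_{\ker\Tr_{\F/\FF_q}}$ to produce the factors $q$ and $q+1$), and then collapse the character sum over the tower $G_{h-2}\subset\cdots\subset G_0=H$. The only small inaccuracy is your account of where the parity of $h$ enters: here it is not a $0$- versus $1$-dimensional locus as in Section \ref{s:pfhomspace}, but simply whether the outermost stratum $H\smallsetminus G_1$ falls under the ``odd'' or ``even'' case of the point count (Lemma \ref{l:oddcond} versus Lemma \ref{l:evencond}).
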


\begin{lemma}\label{l:evenvan}
Assume that $h_{2i} = g_{2i}$ for $1 \leq i \leq h-2$. Then $x_{2k}^{q^2} - x_{2k} = 0$ for $1 \leq k \leq h-2$.
\end{lemma}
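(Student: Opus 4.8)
The plan is to run a short induction on $k$, using only the Type~\eqref{**} equations (neither Type~\eqref{*} nor Type~\eqref{***} is needed). First I would specialize the Type~\eqref{**} equation for $2k$ to the range $1 \le k \le h-2$: every index $i$ appearing in the sum satisfies $1 \le i \le k \le h-2$, so the hypothesis $h_{2i} = g_{2i}$ annihilates all the terms $(h_{2i} - g_{2i})x_{2k-2i}$, and the equation collapses to
\[
x_{2k}^{q^2} - x_{2k} \;=\; -\sum_{i=1}^{k} g_{2i}\,\bigl(x_{2k-2i}^{q^2} - x_{2k-2i}\bigr), \qquad 1 \le k \le h-2.
\]

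Next I would induct on $k$ using this identity. For $k = 1$ the only term on the right is $-g_2\bigl(x_0^{q^2} - x_0\bigr)$, which vanishes since $x_0 = 1$ by convention; hence $x_2^{q^2} - x_2 = 0$. For the inductive step, assume $x_{2j}^{q^2} - x_{2j} = 0$ for all $1 \le j < k$, where $k \le h-2$. In the displayed identity the $i = k$ summand equals $-g_{2k}\bigl(x_0^{q^2} - x_0\bigr) = 0$, while for $1 \le i < k$ the summand $-g_{2i}\bigl(x_{2(k-i)}^{q^2} - x_{2(k-i)}\bigr)$ vanishes by the inductive hypothesis applied at $j = k-i$. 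Hence the entire right-hand side is zero, so $x_{2k}^{q^2} - x_{2k} = 0$, which closes the induction.

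There is no genuine obstacle here: the system is triangular in the quantities $x_{2k}^{q^2} - x_{2k}$, so the argument is pure bookkeeping. The only point that deserves care is the range of validity — the bound $k \le h-2$ is exactly what guarantees that the hypothesis $h_{2i} = g_{2i}$ (which says nothing about the top coefficients $h_{2(h-1)}$, $g_{2(h-1)}$) covers every index occurring in the Type~\eqref{**} equation being used, and the boundary convention $x_0 = 1$ is what makes the $i = k$ term drop out at each stage.
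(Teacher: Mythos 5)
Your proof is correct and is essentially the same argument as the paper's: a straightforward induction on $k$ using only the Type~\eqref{**} equations, with the hypothesis $h_{2i}=g_{2i}$ killing the linear terms and the inductive hypothesis (plus the convention $x_0=1$) killing the terms $g_{2i}(x_{2k-2i}^{q^2}-x_{2k-2i})$. Your write-up is in fact slightly more careful about the index bookkeeping than the paper's.
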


\begin{proof}
This is just a simple execution of induction. It is clear that this is true for $k = 1$. Now assume that it is true for $k < h-2$, and we can show that it is true for $k+1$. Indeed, by assumption, $h_{2i} = g_{2i}$ for $1 \leq i \leq h-2$, so the induction hypothesis implies that the Type \eqref{**} equation for $2(k+1)$ simples to
\begin{align*}
x_{2(k+1)}^{q^2} - x_{2(k+1)} 
&= \sum_{i=1}^{k+1} h_{2i}x_{2k-2i} - g_{2i}x_{2k-2i}^{q^2} = \sum_{i=1}^{k+1} (h_{2i} - g_{2i})x_{2k-2i} = 0. \qedhere
\end{align*}
\end{proof}

\begin{importantremark}
The key observation that we will capitalize on in the next few lemmas is the following. The Type \eqref{*} equations ``intertwine'' the equations of Type \eqref{**} and Type \eqref{***}. Using Lemma \ref{l:evenvan} and substituting Type \eqref{**} and \eqref{***} equations into Type \eqref{*} equations, we have, for $1 \leq k \leq h-1$,
\begin{align*}
h_{2k} - g_{2k}
&= \sum_{\substack{\text{$i$ odd} \\ 1 \leq i \leq 2k-3}} x_i^q \left(\sum_{\substack{\text{$j$ odd} \\ 1 \leq j \leq 2k-2-i}} (g_{2k-i-j}^q - g_{2k-i-j}) x_j - g_{2k-i-j}(x_j^{q^2} - x_j) \right) \\
&= \sum_{\substack{\text{$i$ odd} \\ 1 \leq i \leq 2k-3}} x_i^q \left(\sum_{\substack{\text{$j$ odd} \\ 1 \leq j \leq 2k-2-i}} (g_{2k-i-j}^q - g_{2k-i-j}) x_j\right)  - \sum_{2 \leq l \leq 2k-2} g_{2k-l} (x_l^{q^2} - x_l).
\end{align*}
Thus:
\begin{align*}
h_{2k} - g_{2k} &= \sum_{\substack{\text{$i$ odd} \\ 1 \leq i \leq 2k - 3}} x_i^q\left(\sum_{\substack{\text{$j$ odd} \\ 1 \leq j \leq 2k-2-i}} (g_{2k-i-j}^q - g_{2k-i-j}) x_j\right) & \text{for $1 \leq k \leq h-1$} \tag{$\dagger$} \label{dagger}
\end{align*}
Recall that $h_{2k} - g_{2k} = 0$ for $1 \leq k \leq h-2$ by assumption.
\end{importantremark}

\begin{lemma}\label{l:easycond}
Assume that $h_{2i} = g_{2i}$ for $1 \leq i \leq h-2$. If $g_{2k} \in \FF_q$ for $1 \leq k \leq h-2$, then 
\begin{equation*}
N(g,\gamma) = \begin{cases}
q^{4(h-1)} & \text{if $g_{2(h-1)} = h_{2(h-1)}$} \\
0 & \text{otherwise.}
\end{cases}
\end{equation*}
\end{lemma}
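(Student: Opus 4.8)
The plan is to analyze directly the system of equations of Types \eqref{*}, \eqref{**}, and \eqref{***} cutting out the set $\{x \in X_h(\overline\FF_q) : g * \Fr_{q^2}(x) = x\cdot\gamma\}$, splitting into the two cases $g_{2(h-1)} \neq h_{2(h-1)}$ and $g_{2(h-1)} = h_{2(h-1)}$. Throughout, the two standing hypotheses play complementary roles: $h_{2i} = g_{2i}$ for $1 \le i \le h-2$ kills the ``translation'' terms $(h_{2i} - g_{2i})x_{\ast}$ and $(h_{2i}^q - g_{2i})x_{\ast}$ in \eqref{**} and \eqref{***} whenever the index $2i$ is at most $2(h-2)$, while $g_{2i} \in \FF_q$ for $1 \le i \le h-2$ kills the Frobenius-difference terms $g_{2i}^q - g_{2i}$ and $h_{2i}^q - g_{2i}$ in the same range. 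Lemma \ref{l:evenvan} is the first instance of this, and it will be used repeatedly.

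For the case $g_{2(h-1)} \neq h_{2(h-1)}$, I would use the intertwining identity \eqref{dagger} with $k = h-1$, namely
\[
h_{2(h-1)} - g_{2(h-1)} = \sum_{\substack{\text{$i$ odd}\\ 1 \le i \le 2(h-1)-3}} x_i^q \Bigl(\sum_{\substack{\text{$j$ odd}\\ 1 \le j \le 2(h-1)-2-i}} (g_{2(h-1)-i-j}^q - g_{2(h-1)-i-j})\, x_j\Bigr),
\]
which holds for every $x$ in the fixed-point set (it is obtained from Lemma \ref{l:evenvan} by substituting \eqref{**} and \eqref{***} into \eqref{*}). Since $i,j \ge 1$, every index $2(h-1)-i-j$ occurring on the right lies between $2$ and $2(h-2)$, so the hypothesis $g_{2l}\in\FF_q$ forces $g_{2(h-1)-i-j}^q - g_{2(h-1)-i-j} = 0$; hence the right-hand side vanishes identically and the identity becomes $h_{2(h-1)} = g_{2(h-1)}$, a contradiction. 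So the fixed-point set is empty and $N(g,\gamma) = 0$.

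For the case $g_{2(h-1)} = h_{2(h-1)}$, I would prove that the fixed-point set is exactly the set of $x = 1 + \sum x_i\tau^i$ with $x_i \in \FF_{q^2}$ for all $i$; since $z \mapsto z^{q^2} - z$ is surjective on $\overline\FF_q$ with kernel $\FF_{q^2}$, this set has $(q^2)^{2(h-1)} = q^{4(h-1)}$ elements. The inclusion ``$\supseteq$'' is a direct check: if all $x_i \in \FF_{q^2}$ then $x_i^{q^2} - x_i = 0$, and using $h_{2i} - g_{2i} = 0$ for $1 \le i \le h-1$ (now including $i = h-1$, by the case hypothesis) and $g_{2i}^q = g_{2i}$ for $1 \le i \le h-2$, every equation of Types \eqref{*}, \eqref{**}, \eqref{***} collapses to $0 = 0$. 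For ``$\subseteq$'' I would argue in three steps. First, running the Type \eqref{**} equation for $2k$ over $k = 1, \dots, h-1$ in turn shows $x_{2k}^{q^2} - x_{2k} = h_{2k} - g_{2k} = 0$, so all even coordinates lie in $\FF_{q^2}$. Second, the Type \eqref{*} equation for $2$ reads $x_2^{q^2} - x_2 = x_1^q(x_1^{q^2} - x_1)$, and the left side now vanishes, forcing $x_1 \in \FF_{q^2}$. Third, running the Type \eqref{***} equation for $2k+1$ over $k = 1, \dots, h-2$ in turn shows $x_{2k+1}^{q^2} - x_{2k+1} = 0$, so the remaining odd coordinates lie in $\FF_{q^2}$. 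This gives $N(g,\gamma) = q^{4(h-1)}$.

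I expect the main (though modest) obstacle to be the bookkeeping: checking in each of \eqref{*}, \eqref{**}, \eqref{***} that the indices of the $g_{\ast}$ and $h_{\ast}$ actually fall in the range $\{2,4,\dots,2(h-2)\}$ where the two hypotheses bite, and, in the second case, organizing the induction so that each coordinate is pinned down only in terms of coordinates already shown to lie in $\FF_{q^2}$ (the even coordinates via \eqref{**} alone, then $x_1$ via \eqref{*}, then the higher odd coordinates via \eqref{***}). Since Lemma \ref{l:evenvan} and the remark introducing \eqref{dagger} already isolate the key vanishing phenomena, the remaining verifications should be routine.
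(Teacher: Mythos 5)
Your proposal is correct and follows essentially the same route as the paper: the emptiness when $g_{2(h-1)} \neq h_{2(h-1)}$ comes from the $(\dagger)$ identity for $k = h-1$, whose coefficients $g_{2l}^q - g_{2l}$ all vanish under the hypothesis $g_{2l} \in \FF_q$, and the count $q^{4(h-1)}$ in the other case comes from each remaining equation reducing to an Artin--Schreier condition with $q^2$ solutions per coordinate. Your explicit identification of the solution set as $\{x_i \in \FF_{q^2} \text{ for all } i\}$ is just a slightly more concrete packaging of the paper's ``$q^2$ choices for each $x_k$'' count.
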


\begin{proof}
If $g_{2k} \in \FF_q$ for $1 \leq k \leq h-2$, then all the coefficients in the Type \eqref{dagger} equations vanish, imposing no further conditions on the $x_i$'s but forcing $h_{2(h-1)} - g_{2(h-1)} = 0$. Therefore the number of solutions to the equations of Types \eqref{*} through \eqref{***} are the solutions to $x_1^{q^2} - x_1 = 0$ together with the the solutions to \eqref{**} and \eqref{***}. Thus we have $q^2$ choices for each $x_k$, and so
\begin{equation*}
N(g,\gamma) = \begin{cases} q^{2(2(h-1))} & \text{if $g_{2(h-1)} = h_{2(h-1)}$,} \\
0 & \text{otherwise.}
\end{cases}\qedhere
\end{equation*}
\end{proof}

\begin{lemma}\label{l:oddcond}
Assume that $h_{2i} = g_{2i}$ for $1 \leq i \leq h-2$. Pick $k \geq 1$ and suppose that $g_{2i} \in \FF_q$ for $1 \leq i \leq h - (2k+1)$ and $g_{2(h-2k)} \notin \FF_q$. Then
\begin{equation*}
N(g,\gamma) = \begin{cases}
q^{2(2(h-1) - k)} & \text{if $h_{2(h-1)} = g_{2(h-1)}$,} \\
(q+1) q^{2(2(h-1) - k)} & \text{if $0 \neq h_{2(h-1)} - g_{2(h-1)} \in \ker \Tr_{\F/\FF_q}$,} \\
0 & \text{otherwise.}
\end{cases}
\end{equation*}
\end{lemma}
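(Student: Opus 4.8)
The plan is to compute $N(g,\gamma)$ by counting, in a convenient order, the tuples $(x_1,\dots,x_{2(h-1)})\in\overline{\FF}_q^{\,2(h-1)}$ satisfying the full system of equations of Types \eqref{*}, \eqref{**}, \eqref{***}, following the bookkeeping of Lemma \ref{l:easycond} and then tracking how the present hypothesis on the $g_{2i}$ changes that count. Since $h_{2i}=g_{2i}$ for $1\le i\le h-2$, Lemma \ref{l:evenvan} applies: each of $x_2,x_4,\dots,x_{2(h-2)}$ is forced into $\F$ and its Type \eqref{**} equation becomes vacuous, while Type \eqref{**} for $2(h-1)$ is the Artin--Schreier equation $x_{2(h-1)}^{q^2}-x_{2(h-1)}=h_{2(h-1)}-g_{2(h-1)}$, which has exactly $q^2$ solutions over $\overline{\FF}_q$ no matter the right-hand side. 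So the even-indexed coordinates contribute a factor $q^{2(h-1)}$ independently of everything else. The equations of Type \eqref{***} are triangular and Artin--Schreier in the odd-indexed coordinates, and Type \eqref{*} for $2$ together with Type \eqref{**} for $2$ forces $x_1\in\F$; discarding Type \eqref{*} for indices $\ge 4$, the odd-indexed coordinates $x_1,x_3,\dots,x_{2(h-1)-1}$ then also contribute $q^{2(h-1)}$, which recovers the count $q^{4(h-1)}$ of Lemma \ref{l:easycond}.

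The remaining constraints are the Type \eqref{*} equations for indices $\ge 4$, which (using Lemma \ref{l:evenvan} and $x_1\in\F$) are the equations \eqref{dagger} derived in the Important Remark above, and these involve only the odd-indexed $x_i$. Here the hypothesis enters: since $g_{2i}\in\FF_q$ for $i\le h-2k-1$, the coefficient $g_{2m-i-j}^q-g_{2m-i-j}$ in \eqref{dagger} for $2m$ vanishes unless $i+j\le 2\bigl(m-(h-2k)\bigr)$. Writing $t:=m-(h-2k)$, equation \eqref{dagger} for $2m$ is vacuous when $t\le 0$, so only $m=h-2k+1,\dots,h-1$ (that is, $t=1,\dots,2k-1$) matter. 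I will process these in increasing order of $m$ and prove by induction that, for odd $t=2s-1$, after substituting $x_1=x_3=\dots=x_{2s-3}=0$ the only surviving monomial in \eqref{dagger} for $2m$ is the $(i,j)=(2s-1,2s-1)$ term, whose coefficient is exactly $g_{2(h-2k)}^q-g_{2(h-2k)}\ne 0$; thus the equation reads $(g_{2(h-2k)}^q-g_{2(h-2k)})\,x_{2s-1}^{q+1}=h_{2m}-g_{2m}$, and for even $t$ it then reads $0=0$. For $s=1,\dots,k-1$ one has $m\le h-2$, so $h_{2m}-g_{2m}=0$ and hence $x_{2s-1}=0$; for $s=k$ one has $m=h-1$, and the equation becomes $(g_{2(h-2k)}^q-g_{2(h-2k)})\,x_{2k-1}^{q+1}=h_{2(h-1)}-g_{2(h-1)}$.

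To finish, put $c:=g_{2(h-2k)}^q-g_{2(h-2k)}$. As $g_{2(h-2k)}\in\F\smallsetminus\FF_q$, we have $c\in\F^\times$ with $c^q=-c$, so $c$ is a nonzero element of $\ker\Tr_{\F/\FF_q}$ and $c\cdot\FF_q=\ker\Tr_{\F/\FF_q}$. Also $x_{2k-1}\in\F$ (for $k\ge 2$ this follows from Type \eqref{***} for $2k-1$ once $x_1=\dots=x_{2k-3}=0$; for $k=1$ from Type \eqref{*} for $2$), so as $x_{2k-1}$ runs over $\F$ the norm $x_{2k-1}^{q+1}$ runs over $\FF_q$, hitting $0$ once and each nonzero value $q+1$ times, whence $c\,x_{2k-1}^{q+1}$ runs over $\ker\Tr_{\F/\FF_q}$. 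So the last equation has no solution unless $h_{2(h-1)}-g_{2(h-1)}\in\ker\Tr_{\F/\FF_q}$, and when it does it has one solution if $h_{2(h-1)}=g_{2(h-1)}$ and $q+1$ solutions otherwise. Since $x_1,\dots,x_{2k-3}$ are pinned to $0$ (factor $1$) and $x_{2k+1},\dots,x_{2(h-1)-1}$ — on which no \eqref{dagger} equation bears — remain free Artin--Schreier coordinates (factor $q^{2(h-1-k)}$), multiplying all factors gives
\[
N(g,\gamma)=q^{2(h-1)}\cdot q^{2(h-1-k)}\cdot\#\{x_{2k-1}\}=q^{2(2(h-1)-k)}\cdot\#\{x_{2k-1}\},
\]
which is the asserted trichotomy.

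The main obstacle is the induction in the second paragraph: one must check, using the bound $i+j\le 2t$ for a nonzero coefficient together with $g_{2l}^q-g_{2l}=0$ for $l\le h-2k-1$, that after the substitutions $x_1=\dots=x_{2s-3}=0$ every monomial $x_i^q x_j$ in \eqref{dagger} for $2m$ other than $x_{2s-1}^q x_{2s-1}$ has $\min(i,j)<2s-1$ and therefore vanishes, and that the even-$t$ equations become trivial as a consequence. After that the argument is the elementary fiber-count above, together with the standard facts that $\Tr_{\F/\FF_q}$ is surjective with kernel $\{y:y^q=-y\}$ and that $y\mapsto y^{q+1}$ maps $\F$ onto $\FF_q$ with the fiber sizes recorded above.
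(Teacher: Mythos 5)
Your proposal is correct and follows essentially the same route as the paper's proof: both reduce to the equations \eqref{dagger}, use the hypothesis on the $g_{2i}$ to show the first nontrivial such equation forces $x_1=0$, the next ones alternately give $0=0$ and force $x_3=0,\dots,x_{2k-3}=0$, and the last reduces to the norm-type equation $(g_{2(h-2k)}^q-g_{2(h-2k)})x_{2k-1}^{q+1}=h_{2(h-1)}-g_{2(h-1)}$ whose fiber sizes over $\ker\Tr_{\F/\FF_q}$ give the trichotomy. Your write-up is somewhat more explicit than the paper's (the $i+j\le 2t$ criterion for surviving monomials, the observation that $x_1\in\F$ comes from Type \eqref{*} for $2$, and the final variable count), but the underlying argument is identical.
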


\begin{proof}
All the coefficients in the Type \eqref{dagger} equations for $2 \leq 2j \leq 2(h-2k)$ vanish so we get the empty conditions $h_{2j} - g_{2j} = 0$, yielding no additional restrictions on any of the $x_i$'s. The first nontrivial restriction comes from the Type \eqref{dagger} equation for $2(h-2k+1)$:
\begin{equation*}
h_{2(h-2k+1)} - g_{2(h-2k+1)} = x_1^q(g_{2(h-2k)}^q - g_{2(h-2k)})x_1.
\end{equation*}
By assumption, the left-hand side vanishes and the coefficient of $x_1$ on the right-hand side is nonvanishing, which forces $x_1 = 0$. This extra condition implies that the subsequent Type \eqref{dagger} equation (i.e.\ the Type \eqref{dagger} equation for $2(h-2k+2)$) simplifies to
\begin{equation*}
h_{2(h-2k+2)} - g_{2(h-2k+2)} = 0,
\end{equation*} 
imposing no additional constraints on the $x_i$'s. The subsequence Type \eqref{dagger} equation simplifies to
\begin{equation*}
h_{2(h-2k+3)} - g_{2(h-2k+3)} = x_3^q(g_{2(h-2k)}^q - g_{2(h-2k)}) x_3,
\end{equation*}
which forces $x_3 = 0$ since the left-hand side vanishes. This continues until the equation
\begin{equation*}
h_{2(h-1)} - g_{2(h-1)} = x_{2k-1}^q(g_{2(h-2k)}^q - g_{2(h-2k)})x_{2k-1}.
\end{equation*}
Thus we see that regardless of whether $h_{2(h-1)}$ and $g_{2(h-1)}$ agree, Equations \eqref{dagger} force $x_1 = 0, x_3 = 0, \ldots, x_{2k-3} = 0$. Note that equation \eqref{***} for $2k-1$ implies that $x_{2(k-1)} \in \F$. Thus we see that if $h_{2(h-1)} = g_{2(h-1)}$, then this last displayed equations implies that we have the additional constraint that $x_{2k-1} = 0$. Furthermore, this gives $q+1$ choices for $x_{2k-1}$ if $h_{2(h-1)} - g_{2(h-1)} \in \ker \Tr_{\F/\FF_q}$ and no choices for $x_{2k-1}$ otherwise. We see from equations of Type \eqref{**} and \eqref{***} that regardless of whether $h_{2(h-1)}$ and $g_{2(h-1)}$ agree, we have $q^2$ choices for the remaining $x_i$'s (i.e.\ for even $i \leq 2k-2$ and all $i > 2k-1$). Therefore
\begin{equation*}
N(g,\gamma) = \begin{cases}
q^{2k-2} \cdot 1 \cdot q^{2(2h-2k-1)} & \text{if $h_{2(h-1)} = g_{2(h-1)}$,} \\
q^{2k-2} \cdot (q+1) \cdot q^{2(2h-2k-1)} & \text{if $0 \neq h_{2(h-1)} - g_{2(h-1)} \in \ker \Tr_{\F/\FF_q}$,} \\
0 & \text{otherwise.}
\end{cases}\qedhere
\end{equation*}
\end{proof}

\begin{lemma}\label{l:evencond}
Assume that $h_{2i} = g_{2i}$ for $1 \leq i \leq h-2$. Pick $k \geq 1$ and suppose that $g_{2i} \in \FF_q$ for $1 \leq i \leq h - (2k+2)$ and $g_{2(h-(2k+1))} \notin \FF_q$. Then
\begin{equation*}
\# =
\begin{cases}
q^{2(2(h-1) - k)} & \text{if $h_{2(h-1)} = g_{2(h-1)}$}, \\
0 & \text{otherwise.}
\end{cases}
\end{equation*}
\end{lemma}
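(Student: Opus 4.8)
The symbol $\#$ in the statement denotes $N(g,\gamma)$, in the notation of the surrounding lemmas. The proof is the ``even'' counterpart of Lemma~\ref{l:oddcond}: the only change is that the first index $m$ with $g_{2m}\notin\FF_q$ is now $m\colonequals h-2k-1$ (so $g_{2i}^q-g_{2i}=0$ for $i<m$ while $g_{2m}^q-g_{2m}\neq 0$), and the plan is to show that this index shift moves the single equation involving $h_{2(h-1)}-g_{2(h-1)}$ from an ``odd-offset'' to an ``even-offset'' position, which is exactly what removes the $(q+1)$ branch present in Lemma~\ref{l:oddcond}. Throughout I work with the equations \eqref{dagger}, which are available since the hypotheses of Lemma~\ref{l:evenvan} hold, and I use repeatedly that the term indexed by the odd pair $(i,j)$ in the \eqref{dagger} equation for $2\nu$ has a nonzero coefficient $g_{2\nu-i-j}^q-g_{2\nu-i-j}$ only when $\nu-i-j\geq m$, i.e.\ when $i+j\leq 2(\nu-m)$.

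First I would process \eqref{dagger} for $\nu=m+1,m+2,\dots,h-1$ in increasing order. Those with $\nu\leq m$ are empty. For $\nu=m+1$ the only surviving pair is $(1,1)$, with coefficient at index $2m$, so the equation reads $h_{2(m+1)}-g_{2(m+1)}=(g_{2m}^q-g_{2m})\,x_1^{q+1}$; since $m+1=h-2k\leq h-2$ the left side vanishes and $g_{2m}^q-g_{2m}\neq 0$, forcing $x_1=0$. Inductively, for $1\leq j\leq k-1$: once $x_1=x_3=\dots=x_{2j-1}=0$ have been forced, the equation for $\nu=m+2j$ is trivially $0=0$ (a surviving pair would need both indices odd and $\geq 2j+1$ but summing to at most $2(\nu-m)=4j$, which is impossible), and the equation for $\nu=m+2j+1\,(\leq h-2)$ retains only the pair $(2j+1,2j+1)$, again with coefficient at index $2m$, hence reads $h_{2\nu}-g_{2\nu}=(g_{2m}^q-g_{2m})\,x_{2j+1}^{q+1}=0$ and forces $x_{2j+1}=0$. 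This yields $x_1=x_3=\dots=x_{2k-1}=0$. Finally, the equation for $\nu=h-1=m+2k$ is of even type: after those vanishings a surviving pair would need both indices odd and $\geq 2k+1$ but summing to at most $4k$, impossible, so its right side is $0$ and it reads $h_{2(h-1)}-g_{2(h-1)}=0$. Consequently $N(g,\gamma)=0$ unless $h_{2(h-1)}=g_{2(h-1)}$.

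Assume now $h_{2(h-1)}=g_{2(h-1)}$. The converse direction is clean: once $x_1=x_3=\dots=x_{2k-1}=0$, every \eqref{dagger} equation with $\nu\leq h-1$ has vanishing right side (a surviving pair would force $\nu\geq m+2k+1=h$) and vanishing left side ($h_{2\nu}-g_{2\nu}=0$ for $\nu\leq h-2$ by hypothesis, and for $\nu=h-1$ by assumption), so the full system of equations of Types \eqref{*}, \eqref{**}, \eqref{***} is equivalent to: \eqref{**} and \eqref{***} together with $x_1=x_3=\dots=x_{2k-1}=0$. Counting solutions of this reduced system then goes exactly as in Lemma~\ref{l:easycond}: the equations \eqref{***} for the odd indices $3,5,\dots,2k-1$ are automatically consistent with the imposed vanishing (their right sides then involve only vanishing coordinates), and each of the remaining coordinates is solved for recursively with $q^2$ choices --- namely $q^2$ choices for each of $x_2,x_4,\dots,x_{2(h-1)}$ via \eqref{**}, and $q^2$ choices for each of $x_{2k+1},x_{2k+3},\dots,x_{2h-3}$ via \eqref{***}. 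This is $(h-1)+(h-1-k)=2(h-1)-k$ free coordinates each taking $q^2$ values, so $N(g,\gamma)=q^{2(2(h-1)-k)}$, as claimed.

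The only genuine obstacle is the index bookkeeping: one must verify carefully that, with $m=h-2k-1$ (as opposed to $m=h-2k$ in Lemma~\ref{l:oddcond}), the ``critical'' \eqref{dagger} equation --- the one whose left side is $h_{2(h-1)}-g_{2(h-1)}$ --- is precisely the one at $\nu=h-1=m+2k$, lying at an even offset $\nu-m=2k$, so that after the forced vanishings its right side is identically zero; this is why one obtains the clean dichotomy between $q^{2(2(h-1)-k)}$ and $0$ rather than the three cases of Lemma~\ref{l:oddcond}. Everything else is the same recursion already carried out in Lemmas~\ref{l:oddcond} and \ref{l:easycond}.
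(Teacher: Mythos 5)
Your proof is correct and follows essentially the same route as the paper's: both use the \eqref{dagger} equations to force $x_1=x_3=\cdots=x_{2k-1}=0$ via the odd-offset equations $\nu=m+1,m+3,\dots,m+2k-1$, observe that the equation at $\nu=h-1=m+2k$ then degenerates to $h_{2(h-1)}-g_{2(h-1)}=0$ (eliminating the $(q+1)$ branch of Lemma~\ref{l:oddcond}), and count $q^2$ choices for each of the remaining $2(h-1)-k$ coordinates. The only blemish is the phrase ``$\nu-i-j\geq m$'', which should read ``$2\nu-i-j\geq 2m$'' since the coefficient is $g_{2\nu-i-j}^q-g_{2\nu-i-j}$; the inequality $i+j\leq 2(\nu-m)$ you actually use is the correct one.
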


\begin{proof}
We argue as in the proof of Lemma \ref{l:oddcond}. All the coefficients in the Type \eqref{dagger} equations for $2 \leq 2j \leq 2(h-2k-1)$ vanish so we get the empty conditions $h_{2j} - g_{2j} = 0$, yielding no additional restrictions on any of the $x_i$'s. The first nontrivial restriction comes from the Type \eqref{dagger} equation for $2(h-2k)$:
\begin{equation*}
h_{2(h-2k)} - g_{2(h-2k)} = x_1^q(g_{2(h-(2k+1))}^q - g_{2(h-(2k+1))})x_1.
\end{equation*}
Thus, for odd $l$ with $1 \leq l \leq 2k-1$, the Type \eqref{dagger} equation for $2(h-(2k+1)+l)$ reduces to
\begin{equation*}
h_{2(h-(2k+1)+l)} - g_{2(h-(2k+1)+l)} = x_l^q(g_{2(h-(2k+1))}^q - g_{2(h-(2k+1))})x_l,
\end{equation*}
which forces $x_l = 0$. But then this implies that the Type \eqref{dagger} equation for $2(h-1)$ simplifies to
\begin{equation*}
h_{2(h-1)} - g_{2(h-1)} = 0.
\end{equation*}
Thus there are no solutions if $h_{2(h-1)} \neq g_{2(h-1)}$. If $h_{2(h-1)} = g_{2(h-1)}$, then we get $q^2$ solutions for each of the $x_i$'s other than the odd $i$, $1 \leq i \leq 2k-1$. Therefore
\begin{equation*}
N(g,\gamma) = \begin{cases}
q^{2k-2} \cdot q^{2(2h-2k-1)} & \text{if $h_{2(h-1)} = g_{2(h-1)}$,} \\
0 & \text{otherwise.}
\end{cases}\qedhere
\end{equation*}
\end{proof}

\begin{lemma}\label{l:psisum}
Assume that $h_{2i} = g_{2i}$ for $1 \leq i \leq h-2$ and let $\psi$ be the restriction of $\chi_1$ to $U_L^{h-1}/U_L^h$. By assumption, we can write $\gamma = g \cdot (1 + \epsilon \tau^{2(h-1)}).$ Then
\begin{equation*}
\sum_{\epsilon \in \F} \psi(\epsilon) \cdot N(g,\gamma) = 
\begin{cases}
q^{4(h-1)} & \text{if $g \in G_{h-2}$,} \\
-q \cdot q^{2(2(h-1) - k)} & \text{if $g \in G_{2(h-(2k+1))} \smallsetminus G_{2(h-(2k+2))}$, $k \geq 1$,} \\
q^{2(2(h-1) - k)} & \text{if $g \in G_{2(h-(2k+2))} \smallsetminus G_{2(h-(2k+3))},$ $k \geq 1$.}
\end{cases}
\end{equation*}
\end{lemma}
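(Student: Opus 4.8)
The plan is to split the sum $\sum_{\epsilon \in \F} \psi(\epsilon) N(g, \gamma)$ according to the position of $g$ in the tower $G_{h-2} \subset \cdots \subset G_0 = H$, and to feed in the point counts already established in Lemmas \ref{l:easycond}, \ref{l:oddcond}, and \ref{l:evencond}. Note that as $\epsilon$ ranges over $\F$, the element $\gamma = g \cdot (1 + \epsilon \tau^{2(h-1)})$ ranges over all of $H$ having the same $2i$-coordinates as $g$ for $1 \leq i \leq h-2$, so the hypothesis ``$h_{2i} = g_{2i}$ for $1 \leq i \leq h-2$'' from those earlier lemmas is automatically in force; the free parameter is exactly $h_{2(h-1)} - g_{2(h-1)} = \epsilon$.

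First I would dispose of the case $g \in G_{h-2}$: here $g_{2i} \in \FF_q$ for all $1 \leq i \leq h-2$, so Lemma \ref{l:easycond} gives $N(g,\gamma) = q^{4(h-1)}$ when $\epsilon = 0$ and $0$ otherwise; hence $\sum_\epsilon \psi(\epsilon) N(g,\gamma) = \psi(0) q^{4(h-1)} = q^{4(h-1)}$.

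Next, the case $g \in G_{2(h-(2k+1))} \smallsetminus G_{2(h-(2k+2))}$ with $k \geq 1$: translating through Remark \ref{r:equivcoord}, this says $g_{2i} \in \FF_q$ for $1 \leq i \leq h-(2k+1)$ and $g_{2(h-2k)} \notin \FF_q$, which is exactly the hypothesis of Lemma \ref{l:oddcond}. That lemma tells us $N(g,\gamma) = q^{2(2(h-1)-k)}$ if $\epsilon = 0$, equals $(q+1)q^{2(2(h-1)-k)}$ if $0 \neq \epsilon \in \ker \Tr_{\F/\FF_q}$, and is $0$ otherwise. Therefore
\begin{align*}
\sum_{\epsilon \in \F} \psi(\epsilon) N(g,\gamma)
&= q^{2(2(h-1)-k)} \Big( 1 + (q+1) \sum_{0 \neq \epsilon \in \ker \Tr_{\F/\FF_q}} \psi(\epsilon) \Big) \\
&= q^{2(2(h-1)-k)} \Big( 1 + (q+1)(-1) \Big) = -q \cdot q^{2(2(h-1)-k)},
\end{align*}
using that $\psi$ has conductor $q^2$, so its restriction to the $q$-element group $\ker \Tr_{\F/\FF_q}$ is a nontrivial character and hence $\sum_{\epsilon \in \ker \Tr} \psi(\epsilon) = 0$, i.e.\ $\sum_{0 \neq \epsilon \in \ker \Tr} \psi(\epsilon) = -1$. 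Finally, the case $g \in G_{2(h-(2k+2))} \smallsetminus G_{2(h-(2k+3))}$ with $k \geq 1$ matches the hypothesis of Lemma \ref{l:evencond} (now $g_{2i} \in \FF_q$ for $1 \leq i \leq h-(2k+2)$ and $g_{2(h-(2k+1))} \notin \FF_q$), which gives $N(g,\gamma) = q^{2(2(h-1)-k)}$ if $\epsilon = 0$ and $0$ otherwise, so $\sum_\epsilon \psi(\epsilon) N(g,\gamma) = q^{2(2(h-1)-k)}$.

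The only real bookkeeping obstacle is making sure the indexing in the three regimes exhausts $H$ and is consistent: every $g \in H$ lies in exactly one of $G_{h-2}$, $G_{2(h-(2k+1))} \smallsetminus G_{2(h-(2k+2))}$, or $G_{2(h-(2k+2))} \smallsetminus G_{2(h-(2k+3))}$ for a unique $k \geq 1$ (with the convention $G_j = H$ for $j \leq 0$), and one must check that the index shift by $1$ in the exponent $2(h-1)-k$ versus the earlier lemmas' ``$q^{2(2(h-1)-k)}$'' lines up — this is a matter of matching Remark \ref{r:equivcoord} against the phrasing of Lemmas \ref{l:oddcond} and \ref{l:evencond} and carries no genuine difficulty. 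This completes the proof.
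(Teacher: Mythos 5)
Your proof is correct and follows essentially the same route as the paper's: both split according to which of Lemmas \ref{l:easycond}, \ref{l:oddcond}, \ref{l:evencond} applies and, in the middle case, use that $\psi$ restricts nontrivially to $\ker \Tr_{\F/\FF_q}$ so that $\sum_{0 \neq \epsilon \in \ker \Tr} \psi(\epsilon) = -1$, giving $1 - (q+1) = -q$. Your extra care in matching the $G_i$-indexing against Remark \ref{r:equivcoord} is the right (and intended) reading of the hypotheses.
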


\begin{proof}
By assumption $\psi$ is a nontrivial additive character $\F \to \overline \QQ_\ell^\times$. Recalling the definition of $G_i$ from Section \ref{s:morerepthy}, it is easy to see that the first and third cases of the lemma follow from Lemmas \ref{l:easycond} and \ref{l:evencond} respectively. To see the second case, recall from Lemma \ref{l:oddcond} that we have
\begin{equation*}
N(g,\gamma) = \begin{cases}
q^{2(2(h-1) - k)} & \text{if $\epsilon = 0$,} \\
(q+1)q^{2(2(h-1) - k)} & \text{if $0 \neq \epsilon \in \ker \Tr_{\F/\FF_q}$,} \\
0 & \text{otherwise.}
\end{cases}
\end{equation*}
Thus
\begin{align*}
\sum_{\epsilon \in \F} \psi(\epsilon) \cdot N(g,\gamma) 
&= -q \cdot q^{2(2(h-1) - k)} + \sum_{\epsilon \in \ker\Tr_{\F/\FF_q}} \psi(\epsilon) \cdot (q+1)q^{2(2(h-1) - k)} \\
&= -q \cdot q^{2(2(h-1) - k)}.\qedhere
\end{align*}
\end{proof}

We are now ready to prove Proposition \ref{p:case1}.

\begin{proof}[Proof of Proposition \ref{p:case1}]
First notice that by the assumption $h_{2i} = g_{2i}$ for $1 \leq i \leq h-2$, we can write Line \eqref{e:case1} as
\begin{equation*}
\frac{1}{q^{5(h-1)}} \sum_{\substack{g \in H \\ \epsilon \in \F}} \frac{\chi_2(g)}{\chi_1(g)} \cdot \psi(\epsilon) \cdot N(g,\gamma),
\end{equation*}
where $\gamma = g \cdot (1 + \epsilon \tau^{2(h-1)})$. Also notice that
\begin{equation*}
\sum_{g \in H} = \sum_{g \in G_{h-2}} + \sum_{g \in G_{h-3} \smallsetminus G_{h-2}} + \cdots + \sum_{g \in G_1 \smallsetminus G_2} + \sum_{g \in H \smallsetminus G_1}.
\end{equation*}
We first analyze each of the summands. Pick $k \geq 1$. Then:
\begin{align*}
\sum_{g \in G_{h-2}} &\frac{\chi_2(g)}{\chi_1(g)} \sum_{\epsilon \in \F} \psi(\epsilon) \cdot N(g,\gamma) \\
&= \langle \chi_1, \chi_2 \rangle_{G_{h-2}} \cdot |G_{h-2}| \cdot q^{2(2(h-1))} = q^{5(h-1)} \cdot \langle \chi_1, \chi_2 \rangle_{G_{h-2}} \cdot q \\
\sum_{\substack{g \in G_{h-(2k+1)} \\ g \notin G_{h-2k}}} &\frac{\chi_2(g)}{\chi_1(g)} \sum_{\epsilon \in \F} \psi(\epsilon) \cdot N(g,\gamma) \\
&= \Big(\langle \chi_1, \chi_2 \rangle_{G_{h-(2k+1)}} \cdot |G_{h-(2k+1)}| - \langle \chi_1, \chi_2 \rangle_{G_{h-2k}} \cdot |G_{h-2k}|\Big) \cdot -q \cdot q^{2(2(h-1)-k)} 
\\
\sum_{\substack{g \in G_{h-(2k+2)} \\ g \notin G_{h-(2k+1)}}} & \frac{\chi_2(g)}{\chi_1(g)} \sum_{\epsilon \in \F} \psi(\epsilon) \cdot N(g,\gamma) \\
&= \Big(\langle \chi_1, \chi_2 \rangle_{G_{h-(2k+2)}} \cdot |G_{h-(2k+2)}| - \langle \chi_1, \chi_2 \rangle_{G_{h-(2k+1)}} \cdot |G_{h-(2k+1)}|\Big) \cdot q^{2(2(h-1)-k)} 
\end{align*}
Now we put this together. From the definition, it is easy to see that
\begin{equation*}
|G_{h-n}| = q^{h-2+n}.
\end{equation*}
We now analyze the coefficient of $\langle \chi_1, \chi_2 \rangle_{G_{h-n}}$.
\begin{enumerate}[label=\textbullet]
\item
We first handle the border cases. Recall that $H = G_0$. If $h$ is odd, then from the above we see that the coefficient of $\langle \chi_1, \chi_2 \rangle_{G_0}$ is 
\begin{equation*}
|G_0| \cdot -q \cdot q^{4(h-1) - (h-1)} = q^{5(h-1)} \cdot -q.
\end{equation*}
If $h$ is even, then from the above we see that the coefficient is 
\begin{equation*}
|G_0| \cdot q^{4(h-1) - (h-2)} = q^{5(h-1)} \cdot q.
\end{equation*}

\item
Now for the middle cases. Let $k \geq 1$. The coefficient of $\langle \chi_1, \chi_2 \rangle_{G_{h-(2k+1)}}$ is 
\begin{equation*}
|G_{h-(2k+1)}| \cdot (-q \cdot q^{2(2(h-1) - k)} - q^{2(2(h-1) - k)}) = q^{5(h-1)} \cdot (-q-1).
\end{equation*}
The coefficient of $\langle \chi_1, \chi_2 \rangle_{G_{h-(2k+2)}}$ is
\begin{equation*}
|G_{h-(2k+2)}| \cdot (q^{2(2(h-1)-k)} + q \cdot q^{2(2(h-1) - (k+1))}) = q^{5(h-1)} \cdot (q+1).
\end{equation*}
\end{enumerate}
The desired result follows.
\end{proof}

\subsubsection{Computation of Line \eqref{e:case2}}\label{s:case2}

In this subsection, we prove the following

\begin{proposition}\label{p:case2}
Let $\psi \from \F \to \overline \QQ_\ell^\times$ be a character with conductor $q^2$ and let $\chi_1 \in \cA_\psi$. Then
\begin{equation*}
\text{Line \eqref{e:case2}} = 0.
\end{equation*}
\end{proposition}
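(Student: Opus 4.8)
The plan is to mirror the computation of Line \eqref{e:case1} (Lemmas \ref{l:evenvan}--\ref{l:psisum}) in the ``off-diagonal'' regime where $g$ and $\gamma$ already disagree strictly below the top degree, and to extract the vanishing from a nontrivial additive character sum, exactly as in Lemma \ref{l:psisum}. First, fix a pair $(g,\gamma)$ occurring in \eqref{e:case2} and let $k_0\le h-2$ be the least index with $h_{2k_0}\neq g_{2k_0}$; set $\epsilon_0\colonequals h_{2k_0}-g_{2k_0}\in\F^\times$. Running the induction of Lemma \ref{l:evenvan} below the first disagreement, the Type \eqref{**} equations for $2k$ with $k<k_0$ give $x_{2k}^{q^2}-x_{2k}=0$, and then the Type \eqref{**} equation for $2k_0$ collapses to $x_{2k_0}^{q^2}-x_{2k_0}=\epsilon_0$. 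Substituting the \eqref{**}- and \eqref{***}-equations into the \eqref{*}-equations --- the ``intertwining'' of the Important Remark, but now retaining the nonzero source terms $h_{2j}-g_{2j}$ --- produces, for $k_0\le k\le h-1$, identities of the shape
\begin{equation*}
h_{2k}-g_{2k}=\epsilon_0\, x_{2(k-k_0)}+\Phi_k\!\big(x_{\mathrm{odd}};\ g_{2j}^q-g_{2j};\ h_{2j}-g_{2j}\ (j<k)\big),
\end{equation*}
with $\Phi_k$ explicit polynomials generalizing \eqref{dagger}.

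Next, I would read off $N(g,\gamma)$ from these identities by the same bookkeeping used in Lemmas \ref{l:easycond}--\ref{l:evencond}: the equation $x_{2k_0}^{q^2}-x_{2k_0}=\epsilon_0$ contributes a translate of the $\FF_q$-line, while the successive identities force certain odd coordinates to vanish and confine others to $\F$, the precise pattern depending on which $G_{h-n}\smallsetminus G_{h-n-1}$ contains $g$ and on the parity of $k_0$. The resulting formula for $N(g,\gamma)$ has its dependence on the relevant disagreement parameter of the ``nonzero only when this parameter lies in $\ker\Tr_{\F/\FF_q}$ (or in $\FF_q$)'' type familiar from Lemmas \ref{l:oddcond}--\ref{l:evencond}.

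Finally, I would assemble $\sum_{g,\gamma}\chi_1(g)^{-1}\chi_2(\gamma)\,N(g,\gamma)$ over the range of \eqref{e:case2}: group the pairs by $g$ and by all of the data of $\gamma$ except one free scalar $t$ (ranging over $\F$, over $\ker\Tr_{\F/\FF_q}$, or over $\FF_q$ according to the previous step), observe that on each such stratum $N(g,\gamma)$ is constant in $t$ (translation of the corresponding coordinate of $X_h$), and that $\chi_2(\gamma)$ contributes, up to a $t$-independent constant, a nontrivial additive character of the domain of $t$ --- the nontriviality being exactly where the conductor-$q^2$ hypothesis on $\psi=\chi_1|_{U_L^{h-1}/U_L^h}$, together with $k_0\le h-2$, enters, mirroring Lemma \ref{l:psisum}. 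Each inner sum then vanishes, so Line \eqref{e:case2}$=0$.

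The main obstacle is the second step together with the ``decoupling'' assertion in the third: in the off-diagonal setting one must track carefully which $x_i$ are forced to $0$ and which into $\F$ as the disagreement propagates up the tower of \eqref{dagger}-type identities (the analogue of Lemmas \ref{l:semicentH}--\ref{l:sumzero}), and then verify that the joint dependence of $N(g,\gamma)$ and $\chi_2(\gamma)$ on the chosen free parameter splits cleanly as (constant)$\times$(nontrivial additive character). This is complicated by the parities of $k_0$ and $h$ --- forcing an even/odd case split as in \S\ref{s:case1} --- by the need to handle the boundary index $k_0=h-2$ here without double-counting it in \eqref{e:case1}, and by the fact that a perturbation of $\gamma$ at a low level $k_0<h/2$ is not a group homomorphism into $U_L^1/U_L^h$, so the relevant additive character must be read off from the top-degree structure of the propagation identities rather than directly from a subgroup restriction of $\chi_2$.
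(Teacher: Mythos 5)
Your overall architecture matches the paper's: substitute the Type \eqref{**} and \eqref{***} equations into the Type \eqref{*} equations to obtain test identities (this is Lemma \ref{l:testcond}), and then kill Line \eqref{e:case2} by summing a nontrivial additive character over a free parameter of $\gamma$ on which $N(g,\gamma)$ is constant. But there are two points of divergence, one of which is a genuine gap. First, your middle step --- explicitly evaluating $N(g,\gamma)$ in the off-diagonal regime ``by the same bookkeeping'' as Lemmas \ref{l:easycond}--\ref{l:evencond} --- is neither what the paper does nor necessary: the paper never computes $N(g,\gamma)$ when $g$ and $\gamma$ disagree below the top degree; it proves only the invariance $N(g,\gamma)=N(g,\gamma+\delta\tau^{2(h-1)})$ for $\delta\in\ker\Tr_{\F/\FF_q}$ (Lemma \ref{l:solbij}), which is all the final character sum needs. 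A full evaluation would be substantially messier than the invariance statement, so this detour costs you work without buying anything.

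Second, and more seriously, that invariance is the entire technical content of the proposition, and your proposal asserts it as ``translation of the corresponding coordinate of $X_h$'' while deferring the verification to your listed obstacle. A single-coordinate translation does not preserve the system \eqref{*}--\eqref{***}. What is actually needed, and what the paper supplies, is: (i) the forced vanishings $x_1=x_3=\cdots=0$ propagate up the test identities until the equation for $2(h-1)$ takes the Artin--Schreier form $a\,x_m^q-a^q x_m+a_0=0$ in one specific odd coordinate $x_m$, with $a=x_n(g_{2(k-n)}^q-g_{2(k-n)})\neq 0$ and with $x_m$ absent from $a$ and $a_0$ (Lemmas \ref{l:nonvanish} and \ref{l:polyform}; note case (d) of Lemma \ref{l:nonvanish}, where the solution set is simply empty); and (ii) since $y\mapsto ay^q-a^qy$ maps $\F$ onto $\ker\Tr_{\F/\FF_q}$, one may perturb $x_m$ by a root $y$ of $ay^q-a^qy+\delta=0$ --- but one must then also correct every higher odd coordinate recursively and shift $x_{2(h-1)}$ by a root of $z^{q^2}-z=\delta$ to restore the remaining equations (Definition \ref{d:element}, Lemma \ref{l:element}). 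Without this compound correction the claimed constancy of $N$ in $t$, and hence the vanishing, is unsupported. Your closing uncertainty about whether the free parameter ranges over $\F$, $\ker\Tr_{\F/\FF_q}$, or $\FF_q$ also resolves uniformly: it is always the top coefficient $h_{2(h-1)}$ shifted by $\ker\Tr_{\F/\FF_q}$, and the nontrivial character on that kernel is the conductor-$q^2$ character $\psi$, exactly as in Lemma \ref{l:psisum}.
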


Here is the idea of the proof. First let 
\begin{equation*}
A_{g,\gamma} \colonequals \{x \in X_h(\overline \FF_p) : g * \Fr_{q^2}(x) = x \cdot \gamma \}.
\end{equation*}
We will show that a partial solution $(x_1, \ldots, x_{2(h-2)})$ extends to a full solution $(x_1, \ldots, x_{2(h-1)})$ if and only if the partial solution satisfies an equation of the form $ax^q - a^qx + a_0 = 0$ for some nonzero $a \in \F$. The $x$ in this equation will be one of the $x_k$'s. The main work is in giving a nonvanishing condition for coefficients for certain $x_k$'s which will allow us to find such an $a$. This will give us a bijection between $A_{g,h}$ and $A_{g,h+\delta \tau^{2(h-1)}}$. Once we have established this, we will be able to prove Proposition \ref{p:case2}.

\begin{lemma}\label{l:testcond}
Let $(x_1, \ldots, x_{2(h-1)})$ be a solution to the equations of type \eqref{**} and \eqref{***}. Then $(x_1, \ldots, x_{2(h-1)})$ also satisfies the equations of type \eqref{*} if and only if, for every $k$, the tuple satisfies the equation
\begin{align*}
h_{2k} - g_{2k} 
&= \sum_{\substack{1 \leq i \leq 2k-1 \\ \text{$i$ odd}}} x_i^q \left[\sum_{j=1}^{(2k-i-1)/2} (h_{2j}^q - g_{2j}) x_{2k-i-2j}\right] \\
&\qquad - \sum_{\substack{1 \leq i \leq 2k-1 \\ \text{$i$ even}}} x_i^q \left[\sum_{j=1}^{(2k-i)/2} (h_{2j} - g_{2j}) x_{2k-i-2j}\right] - \sum_{i=1}^{k-1}(h_{2i} - g_{2i})x_{2k-2i}.\label{dagger2}\tag{$\dagger\dagger$}
\end{align*}
\end{lemma}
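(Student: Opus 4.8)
The argument is cleanest in generating‑function form, where the three systems of equations become power‑series identities and passing between them is just multiplication by a unit. Work in $A\colonequals\overline\FF_q[\pi]/(\pi^h)$, and let $(\,\cdot\,)^{(q)}$ be the ring endomorphism of $A$ raising each $\pi$‑coefficient to the $q$‑th power and fixing $\pi$. To the tuple $(x_1,\dots,x_{2(h-1)})$ attach the series $P\colonequals 1+\sum_{i=1}^{h-1}x_{2i}\pi^i$ and $Q\colonequals\sum_{i=1}^{h-1}x_{2i-1}\pi^{i-1}$; these are, up to the factor $\pi$ on one off‑diagonal entry, the matrix entries of $\iota_h(1+\sum x_i\tau^i)$, so that $\det\iota_h(1+\sum x_i\tau^i)=PP^{(q)}-\pi QQ^{(q)}$. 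Let $\mathcal G\colonequals 1+\sum g_{2i}\pi^i$ and $\mathcal H\colonequals 1+\sum h_{2i}\pi^i$ be the $(1,1)$‑entries of $\iota_h(g)$ and $\iota_h(\gamma)$; since $g,\gamma\in H(\F)$ these are units of $A$ with $\mathcal G^{(q^2)}=\mathcal G$ and $\mathcal H^{(q^2)}=\mathcal H$.

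Step 1 (dictionary). Comparing the coefficient of $\pi^k$ on both sides, one checks:
\begin{itemize}
\item the tuple satisfies all equations \eqref{*} iff $P^{(q)}(P^{(q^2)}-P)=\pi Q^{(q)}(Q^{(q^2)}-Q)$ in $A$; this is just Theorem \ref{t:Xhpolys} rewritten as the statement that $\det\iota_h(1+\sum x_i\tau^i)$ is fixed by $\Fr_q$;
\item the tuple satisfies all equations \eqref{**} iff $\mathcal G(P^{(q^2)}-P)=(\mathcal H-\mathcal G)P$;
\item the tuple satisfies all equations \eqref{***} iff $\mathcal G(Q^{(q^2)}-Q)=(\mathcal H^{(q)}-\mathcal G)Q$;
\item the tuple satisfies \eqref{dagger2} for every $k$ iff $P^{(q)}(\mathcal H-\mathcal G)P=\pi Q^{(q)}(\mathcal H^{(q)}-\mathcal G)Q$.
\end{itemize}

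Step 2 (conclusion). Assume the tuple satisfies \eqref{**} and \eqref{***}. Since $\mathcal G$ is a unit, the second and third dictionary items give $P^{(q^2)}-P=\mathcal G^{-1}(\mathcal H-\mathcal G)P$ and $Q^{(q^2)}-Q=\mathcal G^{-1}(\mathcal H^{(q)}-\mathcal G)Q$, hence
\[
P^{(q)}(P^{(q^2)}-P)-\pi Q^{(q)}(Q^{(q^2)}-Q)=\mathcal G^{-1}\Big(P^{(q)}(\mathcal H-\mathcal G)P-\pi Q^{(q)}(\mathcal H^{(q)}-\mathcal G)Q\Big).
\]
As $\mathcal G^{-1}$ is a unit of $A$, the left side vanishes iff the bracket does; by the first and fourth dictionary items this says exactly that the tuple satisfies all of \eqref{*} iff it satisfies \eqref{dagger2} for every $k$.

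The one genuinely laborious point is the fourth bullet: showing that the coefficient of $\pi^k$ in $P^{(q)}(\mathcal H-\mathcal G)P-\pi Q^{(q)}(\mathcal H^{(q)}-\mathcal G)Q$ equals the difference of the two sides of \eqref{dagger2}. This is a bookkeeping convolution: split the three factors by the parity of the $\pi$‑degree each contributes, match the ``even $\times$ even'' contributions with the $x_i^q(\cdots)$‑terms for even $i$ in \eqref{dagger2} and the ``odd $\times$ odd'' contributions with those for odd $i$, and recognize the terms in which $P$ or $Q$ contributes its degree‑zero coefficient $x_0=1$ as the standalone summands $h_{2k}-g_{2k}$ and $-\sum_{i=1}^{k-1}(h_{2i}-g_{2i})x_{2k-2i}$. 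The second and third bullets are the same computation without the extra $Q^{(q)}$/$P^{(q)}$ factor, hence easier, and the first bullet is already done in Theorem \ref{t:Xhpolys}. It is worth stressing that the equivalence produced is between the two \emph{systems} of equations rather than equation‑by‑equation — multiplication by $\mathcal G^{-1}$ mixes coefficients of different powers of $\pi$ — which is precisely the form in which the lemma is stated.
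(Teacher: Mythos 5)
Your proof is correct, and it takes a genuinely different route from the paper's. The paper argues by direct substitution at the level of coefficients: it plugs the right-hand sides of \eqref{**} and \eqref{***} into the right-hand side of the Type \eqref{*} equation for $2k$, expands the left-hand side via \eqref{**}, and observes that after cancelling the common terms $-\sum_j g_{2j}(x_{2k-2j}^{q^2}-x_{2k-2j})$ what remains is exactly \eqref{dagger2}; that cancellation silently invokes the Type \eqref{*} equations of lower index, so the paper's equivalence is also really a system-level one, obtained by induction on $k$. Your reformulation in $A=\overline\FF_q[\pi]/(\pi^h)$ makes that bookkeeping structural: the terms the paper cancels by hand are precisely what multiplication by the unit $\mathcal G^{-1}$ absorbs, and the system-level nature of the statement becomes transparent. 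The cost is that all the content is pushed into the fourth dictionary item, which you only describe. I checked it: writing the $\pi^k$-coefficient of $P^{(q)}(\mathcal H-\mathcal G)P$ as the triple convolution $\sum_{a+j+b=k,\, j\ge 1}x_{2a}^q(h_{2j}-g_{2j})x_{2b}$ (with $x_0=1$), the terms with $a=b=0$ give $h_{2k}-g_{2k}$, those with $a=0$, $b\ge 1$ give $\sum_{i=1}^{k-1}(h_{2i}-g_{2i})x_{2k-2i}$, and those with $a\ge 1$ give the even-$i$ sum in \eqref{dagger2}, while the $\pi^k$-coefficient of $\pi Q^{(q)}(\mathcal H^{(q)}-\mathcal G)Q$ is the odd-$i$ sum; so the bullet holds, but it should be written out, since it is where the lemma actually lives.

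One small imprecision: the third dictionary item is not an equivalence exactly as stated. The identity $\mathcal G(Q^{(q^2)}-Q)=(\mathcal H^{(q)}-\mathcal G)Q$ in $A$ also constrains the coefficient of $\pi^{h-1}$, which is not among the equations of Type \eqref{***} (those stop at $x_{2(h-1)-1}$, i.e., at the $\pi^{h-2}$-coefficient of $Q$); the correct statement is that \eqref{***} is equivalent to this identity modulo $\pi^{h-1}$. This does not damage Step 2, because there the $Q$-identity is only ever used after multiplication by $\pi Q^{(q)}$, which kills the top coefficient and is insensitive to multiplication by the unit $\mathcal G^{-1}$ in the truncated quotient --- but the bullet should be corrected.
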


\begin{proof}
First note that a tuple $(x_1, \ldots, x_{2(h-1)})$ satisfying \eqref{**} and \eqref{***} can be constructed as follows: pick any $x_1, x_2 \in \F$ and then notice that the equations of type \eqref{**} and \eqref{***} allow us to choose $x_k$ given $x_1, \ldots, x_{k-1}$.

Now we substitute \eqref{**} and \eqref{***} into \eqref{*}.
\begin{align*}
x_{2k}^{q^2} - x_{2k}
&= \sum_{i=1}^{2k-1} (-1)^{i+1} x_i^q(x_{2k-i}^{q^2} - x_{2k-i}) \\
&= \sum_{\text{$i$ odd}} x_i^q \Big[\sum_{j=1}^{(2k-i-1)/2} \big((h_{2j}^q - g_{2j})x_{2k-i-2j} - g_{2j}(x_{2k-i-2j}^{q^2} - x_{2k-i-2j})\big)\Big] \\
&\qquad\qquad- \sum_{\text{$i$ even}} x_i^q\Big[\sum_{j=1}^{(2k-i)/2} \big((h_{2j} - g_{2j}) x_{2k-i-2j} - g_{2j}(x_{2k-i-2j}^{q^2} - x_{2k-i-2j})\big)\Big] \\
&= \sum_{\text{$i$ odd}} x_i^q \left[\sum_j (h_{2j}^q - g_{2j})x_{2k-i-2j}\right] - \sum_{\text{$i$ even}} x_i^q \left[\sum_j (h_{2j} - g_{2j})x_{2k-i-2j}\right] \\
&\qquad\qquad - \sum_{j=1}^{k-1} g_{2j}(x_{2k-2j}^{q^2} - x_{2k-2j}).
\end{align*}
On the other hand,
\begin{align*}
x_{2k}^{q^2} - x_{2k} 
&= \sum_{i=1}^k\big((h_{2i} - g_{2i})x_{2k-2i} - g_{2i}(x_{2k-2i}^{q^2} - x_{2k-2i})\big) \\
&= (h_{2k} - g_{2k}) + \sum_{i=1}^{k-1} (h_{2i} - g_{2i})x_{2k-2i} - \sum_{i=1}^{k-1} g_{2i}(x_{2k-2i}^{q^2} - x_{2k-2i}).
\end{align*}
Therefore
\begin{align*}
h_{2k} - g_{2k} 
&= \sum_{\substack{1 \leq i \leq 2k-1 \\ \text{$i$ odd}}} x_i^q \left[\sum_{j=1}^{(2k-i-1)/2} (h_{2j}^q - g_{2j}) x_{2k-i-2j}\right] \\
&\qquad\qquad - \sum_{\substack{1 \leq i \leq 2k-1 \\ \text{$i$ even}}} x_i^q \left[\sum_{j=1}^{(2k-i)/2} (h_{2j} - g_{2j}) x_{2k-i-2j}\right] - \sum_{i=1}^{k-1}(h_{2i} - g_{2i})x_{2k-2i}.
\end{align*}
This shows that the above collection of equations imposes the same conditions as the equations of type \eqref{*}.
\end{proof}

\begin{lemma}\label{l:nonvanish}
Let $k$ be the smallest $k$ such that $h_{2k} \neq g_{2k}$ and assume that $k \leq h-2$. If $(x_1, \ldots, x_{2(h-1)})$ is a solution to the equations \eqref{*} through \eqref{***}, then:
\begin{enumerate}[label=(\alph*)]
\item
If $g_{2i} \in \FF_q$ for $1 \leq i \leq k - 3$, then $g_{2(k-2)} \in \FF_q$ and $x_1(g_{2k-2}^q - g_{2k-2}) \neq 0$.

\item
If $g_{2i} \in \FF_q$ for $1 \leq i < k-3$ and $g_{2(k-3)} \notin \FF_q$, then $x_3(g_{2(k-3)}^q - g_{2(k-3)}) \neq 0$.

\item
If $n$ is odd and $g_{2i} \in \FF_q$ for $1 \leq i < k-n$ and $g_{2(k-n)} \notin \FF_q$, then $x_n(g_{2(k-n)}^q - g_{2(k-n)}) \neq 0$.

\item
If $n > 2$ is even and $g_{2i} \in \FF_q$ for $1 \leq i < k-n$ and $g_{2(k-n)} \notin \FF_q$, then $\# = 0$.
\end{enumerate}
\end{lemma}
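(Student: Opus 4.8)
The plan is to run the equations furnished by Lemma~\ref{l:testcond}. Recall that a tuple satisfying the Type~\eqref{**} and Type~\eqref{***} equations lies in $A_{g,\gamma}$ (equivalently, also satisfies the Type~\eqref{*} equations) exactly when it satisfies the system \eqref{dagger2}, one equation per even index $2k'$. I would feed these in order of increasing index and watch the chain of forced vanishings they produce, using the minimality of $k$ to annihilate almost everything in sight. \emph{Reduction of \eqref{dagger2}.} Since $k$ is the smallest index with $h_{2k}\neq g_{2k}$, one has $h_{2i}=g_{2i}$ for every $i<k$, so $h_{2i}-g_{2i}=0$ and $h_{2i}^q-g_{2i}=g_{2i}^q-g_{2i}$ for all such $i$. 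In the \eqref{dagger2} equation for $2k'$ with $k'\leq k\leq h-2$ every index that appears is $<k$, so the ``$i$ even'' double sum and the trailing sum $\sum_i(h_{2i}-g_{2i})x_{2k'-2i}$ drop out and the equation collapses to
\begin{equation*}
h_{2k'}-g_{2k'}\;=\;\sum_{\substack{1\leq i\leq 2k'-1\\ \text{$i$ odd}}} x_i^{q}\sum_{\substack{j\geq 1\\ 2j\,\leq\,2k'-i-1}}(g_{2j}^q-g_{2j})\,x_{2k'-i-2j}.
\end{equation*}
Let $m$ be the least index with $g_{2m}\notin\FF_q$, so $g_{2j}^q-g_{2j}=0$ for $j<m$ and $g_{2m}^q-g_{2m}\neq 0$; under the hypotheses of (b)--(d) this is the prescribed index $k-n$, and in (a) it will be forced that $m\geq k-2$.

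\emph{The domino.} The displayed equation is vacuous ($0=0$) for all $k'\leq m$. Setting $k'=m+\ell$ and inductively assuming $x_1=x_3=\cdots=x_{\ell-2}=0$, one checks that the only monomial that can survive is the one with $i=\ell$, $j=m$ (and this one is present only when $\ell$ is odd): every other admissible term pairs $x_i^q$ with an $x_{2k'-i-2j}$ whose index is odd and strictly below $\ell$, hence already known to vanish. Thus the \eqref{dagger2} equation for $2(m+\ell)$ reads $h_{2(m+\ell)}-g_{2(m+\ell)}=x_\ell^{\,q+1}(g_{2m}^q-g_{2m})$ when $\ell$ is odd and $h_{2(m+\ell)}-g_{2(m+\ell)}=0$ when $\ell$ is even. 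For $m+\ell<k$ the left side vanishes, so the odd-$\ell$ equation forces $x_\ell=0$ (which feeds the next step of the induction) and the even-$\ell$ one is automatic; the process stops at $k'=m+n=k$, where the left side equals $h_{2k}-g_{2k}\neq 0$.

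\emph{Reading off the conclusions.} If $n=k-m$ is odd, the equations at $2(m+1),2(m+3),\dots,2(m+n-2)$ force $x_1=x_3=\cdots=x_{n-2}=0$, and the equation at $2k$ gives $h_{2k}-g_{2k}=x_n^{q+1}(g_{2m}^q-g_{2m})$, whence $x_n(g_{2(k-n)}^q-g_{2(k-n)})\neq 0$: this is (c), and (b) is the instance $n=3$. If $n$ is even, the same chain forces $x_1=x_3=\cdots=x_{n-1}=0$ and the equation at $2k$ degenerates to $h_{2k}-g_{2k}=0$, a contradiction, so $A_{g,\gamma}=\varnothing$: this is (d), as well as the case $n=2$. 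For (a): if a solution exists and $g_{2i}\in\FF_q$ for $i\leq k-3$, then $m\geq k-2$; the subcase $m=k-2$ is the excluded even case $n=2$, and the subcase $m\geq k$ makes the $2k$-equation read $h_{2k}-g_{2k}=0$, again impossible; hence $m=k-1$, which simultaneously yields $g_{2(k-2)}\in\FF_q$ and, via the $n=1$ instance, $x_1(g_{2k-2}^q-g_{2k-2})\neq 0$.

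The step I expect to demand the most care is the ``domino'' reduction: verifying that at index $2(m+\ell)$, after the earlier vanishing conditions are imposed, nothing survives except $x_\ell^{q+1}(g_{2m}^q-g_{2m})$. The subtlety is that the coefficients $g_{2j}$ with $m<j<k$ are completely unconstrained by the hypotheses, so those terms cannot be dismissed for scalar reasons; instead one argues that any such term multiplies $x_i^q$ (with $i$ odd and $i\geq\ell$) against an $x_{2k'-i-2j}$ whose index $2(m+\ell)-i-2j$ is odd and $\leq\ell-1$, hence already zero by the induction hypothesis. Granting this bookkeeping, the remainder is a straightforward induction on the displayed reduced form of \eqref{dagger2}, together with the elementary observation that $g_{2m}^q-g_{2m}\neq 0$ because $g_{2m}\notin\FF_q$.
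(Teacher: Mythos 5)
Your proposal is correct and follows essentially the same route as the paper: reduce to the $(\dagger\dagger)$ system via Lemma~\ref{l:testcond}, use minimality of $k$ to discard all terms with coefficient $h_{2j}-g_{2j}$, and then run the chain of equations in increasing index so that each odd step below $2k$ forces the next odd-indexed coordinate to vanish and the step at $2k$ yields either the nonvanishing condition (odd $n$) or a contradiction (even $n$). The only cosmetic difference is in part (a), where you classify by the least index $m$ with $g_{2m}\notin\FF_q$ and eliminate $m=k-2$ and $m\geq k$, while the paper first shows $x_1\neq 0$ and then reads off $g_{2(k-2)}\in\FF_q$ from the equation at $2(k-1)$ — the two are logically equivalent.
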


\begin{proof}[Proof of (a)]
If $g_{2i} \in \FF_q$ for $1 \leq i \leq k-3$, then by Lemma \ref{l:testcond}, the tuple $(x_1, \ldots, x_{2(h-1)})$ must satisfy
\begin{align*}
0 = h_{2(k-1)} - g_{2(k-1)} &= x_1^q\big((g_{2(k-2)}^q - g_{2(k-2)})x_1\big) \\
0 \neq h_{2k} - g_{2k} &= x_1^q\big((g_{2(k-1)}^q - g_{2(k-1)})x_1 + (g_{2(k-2)}^q - g_{2(k-2)})x_3\big) \\
&\qquad\qquad + x_3^q\big((g_{2(k-2)}^q - g_{2(k-2)})x_1\big). \end{align*}
If $x_1 = 0$, then this automatically implies that $h_{2k} - g_{2k} = 0$, which contradicts the assumption that $h_{2k} \neq g_{2k}$. Therefore $x_1 \neq 0$. The first equation above then forces $g_{2(k-2)} \in \FF_q$, and so the second equation simplifies to
\begin{equation*}
0 \neq h_{2k} - g_{2k} = x_1^q(g_{2(k-1)}^q - g_{2(k-1)})x_1. \qedhere
\end{equation*}
\end{proof}

\begin{proof}[Proof of (b)]
If $g_{2i} \in \FF_q$ for $1 \leq i < k-3$ and $g_{2(k-3)} \notin \FF_q$, then by Lemma \ref{l:testcond}, we necessarily have
\begin{align*}
0 = h_{2(k-2)} - g_{2(k-2)} &= x_1^q\big((g_{2(k-3)}^q - g_{2(k-3)})x_1\big) \\
0 = h_{2(k-1)} - g_{2(k-1)} &= x_1^q\big((g_{2(k-2)}^q - g_{2(k-2)})x_1 + (g_{2(k-3)}^q - g_{2(k-3)})x_3\big) \\
&\qquad\qquad+ x_3^q\big((g_{2(k-3)}^q - g_{2(k-3)})x_1\big) \\
0 \neq h_{2k} - g_{2k} &= x_1^q\big((g_{2(k-1)}^q - g_{2(k-1)})x_1 + (g_{2(k-2)}^q - g_{2(k-2)})x_3 + (g_{2(k-3)}^q - g_{2(k-3)})x_5\big) \\
&\qquad\qquad + x_3^q\big((g_{2(k-2)}^q - g_{2(k-2)})x_1 + (g_{2(k-3)}^q - g_{2(k-3)})x_3\big) \\
&\qquad\qquad + x_5^q\big((g_{2(k-3)}^q - g_{2(k-3)})x_1\big).
\end{align*}
By assumption $g_{2(k-3)} \notin \FF_q$, so the first equation forces $x_1 = 0$. Then the second equation simplifies to $0 = 0$ and the third equation simplifies to
\begin{equation*}
0 \neq h_{2k} - g_{2k} = x_3^q(g_{2(k-2)}^q - g_{2(k-2)})x_3. \qedhere
\end{equation*}
\end{proof}

\begin{proof}[Proof of (c) and (d)]
Now suppose that there is some $n > 3$ such that $g_{2i} \in \FF_q$ for $1 \leq i < k-n$ and $g_{2(k-n)} \notin \FF_q$. Then since $h_{2i} = g_{2i}$ for $1 \leq i < k$, the equations \eqref{dagger2} simplify to
\begin{align*}
h_{2m} - g_{2m} &= \sum_{\substack{1 \leq i \leq 2m-1 \\ \text{$i$ odd}}} x_i^q\left[\sum_{j=k-n}^{(2m-i-1)/2} (g_{2j}^q - g_{2j}) x_{2m-i-2j}\right] && \text{for $k - n + 1 \leq m \leq k$.} \\
\end{align*}
So Equation \eqref{dagger2} for $2(k-n+1)$ is
\begin{equation*}
0 = x_1^q(g_{2(k-n)}^q - g_{2(k-n)})x_1,
\end{equation*}
which forces $x_1 = 0$ since by assumption $g_{2(k-n)} \notin \FF_q$. This implies that Equation \eqref{dagger2} for $2(k-n+2)$ gives the empty condition $0 = 0$. Setting $x_1 = 0$, Equation \eqref{dagger2} for $2(k-n+3)$ simplifies to
\begin{equation*}
0 = x_3^q(g_{2(k-n)}^q - g_{2(k-n)})x_3,
\end{equation*}
which forces $x_3 = 0$. Continuing this, we see that:
\begin{enumerate}[label=\textbullet]
\item
For $2l+1 \leq n$, Equation \eqref{dagger2} for $2(k - n + 2l + 1)$ yields
\begin{equation*}
h_{2(k-n+2l+1)} - g_{2(k-n+2l+1)} = x_{2l+1}^q(g_{2(k-n)}^q - g_{2(k-n)})x_{2l+1},
\end{equation*}
which forces $x_{2l + 1} = 0$ if $2l + 1 < n$, and $x_n^q(g_{2(k-n)}^q - g_{2(k-n)})x_n \neq 0$ when $2l + 1 = n$. This proves (c).

\item
For $2l \leq n$, the test equation for $m = k - n + 2l$ gives the condition equation $h_{2(k-n+2l)} - g_{2(k-n+2l)} = 0$. In particular, if $2l = n$, then we have $h_{2k} - g_{2k} = 0$, which is a contradiction. This proves (d). \hfill \qedhere
\end{enumerate}
\end{proof}

\begin{lemma}\label{l:polyform}
Let $k$ be as in Lemma \ref{l:nonvanish}. Then
\begin{enumerate}[label=(\alph*)]
\item
If $g_{2i} \in \FF_q$ for $1 \leq i \leq k - 3$, then Equation \eqref{dagger2} for $2(h-1)$ is of the form $ax_{2(h-1)-2(k-1)-1}^q - a^qx_{2(h-1)-2(k-1)-1} + a_0 = 0$, where $a = x_1(g_{2k-2}^q - g_{2k-2})$. Moreover, $x_{2(h-1)-2(k-1)-1}$ has no contribution to $a$ or $a_0$.

\item
If $n \geq 3$ is odd and $g_{2i} \in \FF_q$ for $1 \leq i < k-n$ and $g_{2(k-n)} \notin \FF_q$, then Equation \eqref{dagger2} for $2(h-1)$ is of the form $ax_{2(h-1)-2(k-n)-n}^q - a^qx_{2(h-1)-2(k-n)-n} + a_0 = 0$, where $a = x_n(g_{2(k-n)}^q - g_{2(k-n)}).$ Moreover, $x_{2(h-1)-2(k-n)-n}$ has no contribution to $a$ or $a_0$.
\end{enumerate}
\end{lemma}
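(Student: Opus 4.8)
The plan is to write out Equation \eqref{dagger2} at the index $2(h-1)$ and read off exactly which monomials involve the distinguished odd-degree variable; write $x_N$ for that variable, so that $N=2(h-1)-2(k-1)-1=2h-2k-1$ in case (a) and $N=2(h-1)-2(k-n)-n$ in case (b). In both cases $N$ is odd, and since $1\le k\le h-2$ (and $n\le k-1$ in case (b)) one checks $3\le N\le 2h-3$, so $x_N$ is indeed among the variables $x_1,\dots,x_{2h-3}$ that appear in \eqref{dagger2} for $2(h-1)$. The inputs available to us are: $h_{2i}=g_{2i}$ for $i<k$ while $h_{2k}\ne g_{2k}$ (minimality of $k$); in case (a), $g_{2i}\in\FF_q$ for $1\le i\le k-2$ (the hypothesis together with Lemma \ref{l:nonvanish}(a)) and $x_1(g_{2k-2}^{q}-g_{2k-2})\ne 0$; in case (b), $g_{2i}\in\FF_q$ for $1\le i<k-n$, $g_{2(k-n)}\notin\FF_q$, $x_i=0$ for all odd $i<n$, and $x_n(g_{2(k-n)}^{q}-g_{2(k-n)})\ne 0$ (Lemma \ref{l:nonvanish}(c) and its proof).

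First I would observe that $x_N$ can occur only in the first sum of \eqref{dagger2}, namely $\sum_{i\text{ odd}} x_i^{q}\bigl[\sum_j (h_{2j}^{q}-g_{2j})\,x_{2(h-1)-i-2j}\bigr]$, since the remaining two sums involve only even-degree variables while $N$ is odd. So it suffices to follow, inside that sum, the monomials $x_a^{q}x_b$ with $a,b$ odd, $j\ge 1$, $a+b+2j=2(h-1)$, in which one of $a,b$ equals $N$, together with their coefficients $h_{2j}^{q}-g_{2j}$ --- which equal $g_{2j}^{q}-g_{2j}$ for $j<k$. The coefficient of $x_N^{q}$ corresponds to $a=N$ and equals $\sum_j(g_{2j}^{q}-g_{2j})\,x_{2(h-1)-N-2j}$. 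In case (a) this is $\sum_j(g_{2j}^{q}-g_{2j})\,x_{2k-1-2j}$, and only $j=k-1$ survives (all smaller $j$ have $g_{2j}\in\FF_q$), giving $x_1(g_{2k-2}^{q}-g_{2k-2})=:a$. In case (b) it is $\sum_j(g_{2j}^{q}-g_{2j})\,x_{2k-n-2j}$, where $j<k-n$ dies by hypothesis, $j>k-n$ contributes a multiple of some $x_b$ with $b$ odd and $1\le b<n$ hence $x_b=0$, and $j=k-n$ gives $x_n(g_{2(k-n)}^{q}-g_{2(k-n)})=:a$. An entirely parallel computation with $b=N$ gives the coefficient of $x_N$ (to the first power) as $x_1^{q}(g_{2k-2}^{q}-g_{2k-2})$, resp.\ $x_n^{q}(g_{2(k-n)}^{q}-g_{2(k-n)})$; since the coefficients of $g$ lie in $\F$ we have $(g_{2j}^{q}-g_{2j})^{q}=-(g_{2j}^{q}-g_{2j})$, so this coefficient is exactly $-a^{q}$. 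Finally, a term $x_N^{q+1}$ could only arise from $a=b=N$, which forces $j=h-1-N$; a short computation using $k\le h-2$ shows this $j$ falls in a range where $g_{2j}\in\FF_q$, so its coefficient vanishes and no $x_N^{q+1}$ term occurs.

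Putting these three computations together, Equation \eqref{dagger2} for $2(h-1)$ reads $a\,x_N^{q}-a^{q}x_N+a_0=0$, where $a_0$ is by construction the sum of all the remaining monomials and hence contains no $x_N$, and where $a=x_1(g_{2k-2}^{q}-g_{2k-2})$ (case (a)), resp.\ $a=x_n(g_{2(k-n)}^{q}-g_{2(k-n)})$ (case (b)), manifestly does not involve $x_N$ because $N\ne 1$, resp.\ $N>n$; note also $a\ne 0$ by Lemma \ref{l:nonvanish}(a) (resp.\ (c)). The whole argument is thus a careful bookkeeping of the index identity $a+b+2j=2(h-1)$ inside a single double sum, and the only point requiring genuine attention is verifying that the vanishing inputs from Lemma \ref{l:nonvanish} leave exactly one surviving term in each of the two relevant coefficients and annihilate the potential $x_N^{q+1}$ term. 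I expect that to be the main --- indeed the only --- obstacle.
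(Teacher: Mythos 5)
Your proposal is correct and follows essentially the same route as the paper: isolate the first (odd-odd) sum of \eqref{dagger2} for $2(h-1)$, use the hypotheses together with Lemma \ref{l:nonvanish} (including $x_i=0$ for odd $i<n$ in case (b)) to kill all but the single surviving term in each of the coefficients of $x_N^q$ and $x_N$, and use $g_{2j}\in\F$ to identify the latter with $-a^q$. Your explicit check that no $x_N^{q+1}$ term survives (via $j=h-1-N$ and $k\le h-2$) is a detail the paper leaves implicit, but the argument is the same.
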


\begin{proof}
First note that since $k \leq h-2$, then necessarily $2(h-1)-2(k-n)-n \neq n$, which automatically implies that $x_{2(h-1)-2(k-n)-n}$ has no contribution to $a$.

Recall Equation \eqref{dagger2} for $2(h-1)$:
\begin{align}\nonumber
h_{2(h-1)} - g_{2(h-1)} 
&= \sum_{\substack{1 \leq i \leq 2(h-1)-1 \\ \text{$i$ odd}}} x_i^q \left[\sum_{j=1}^{(2(h-1)-i-1)/2} (h_{2j}^q - g_{2j}) x_{2(h-1)-i-2j}\right] \\ \nonumber
&\qquad\qquad - \sum_{\substack{1 \leq i \leq 2(h-1)-1 \\ \text{$i$ even}}} x_i^q \left[\sum_{j=1}^{(2(h-1)-i)/2} (h_{2j} - g_{2j}) x_{2(h-1)-i-2j}\right] \\
&\qquad\qquad - \sum_{i=1}^{(h-1)-1}(h_{2i} - g_{2i})x_{2(h-1)-2i}. \label{e:polyform}
\end{align}

We prove (a). We need only show that the only terms in Equation \eqref{e:polyform} involving $x_{2(h-1)-2(k-1)-1}$ are exactly the terms
\begin{equation*}
ax_{2(h-1)-2(k-1)-1}^q - a^qx_{2(h-1)-2(k-1)-1}, \qquad \text{where $a = x_1(g_{2k-2}^q - g_{2k-2})$.}
\end{equation*}
Clearly any term involving $x_{2(h-1)-2(k-1)-1}$ must come from the first sum in the equation. These terms are
\begin{equation*}
\sum x_i^q(h_{2j}^q - g_{2j})x_{2(h-1)-2(k-1)-1} +  x_{2(h-1)-2(k-1)-1}^q(h_{2j}^q-g_{2j})x_i,
\end{equation*}
where the sum ranges over $i$ and $j$ such that $i + 2j + 2(h-1)-2(k-1)-1 = 2(h-1)$. In particular, if $i \geq 3$, then $2j \leq 2(k-2)$. We know by assumption that $h_{2j} = g_{2j} \in \FF_q$ for $2j \leq 2(k-3)$ and Lemma \ref{l:nonvanish}(a) implies $g_{2(k-2)} \in \FF_q$, so the coefficient $h_{2j}^q - g_{2j}$ vanishes for $2j \leq 2(k-2)$. Therefore the sum above simplifies to 
\begin{equation*}
x_1^q(g_{2(k-1)}^q - g_{2(k-1)})x_{2(h-1)-2(k-1)-1} + x_{2(h-1)-2(k-1)-1}^q(g_{2(k-1)}^q - g_{2(k-1)})x_1. 
\end{equation*}
Set $a = x_1(g_{2(k-1)}^q - g_{2(k-1)})$ and notice that since $x_1 \in \F$, the above expression simplifies to
\begin{equation*}
-a^qx_{2(h-1)-2(k-1)-1} + ax_{2(h-1)-2(k-1)-1}^q,
\end{equation*}
which is exactly what we wanted to show in (a).

We now prove (b). We need to establish the following statements:
\begin{enumerate}[label=(\roman*)]
\item
$x_n \in \F$

\item
The only term in the equation for $2(h-1)$ in Lemma \ref{l:testcond} that contains $x_{2(h-1)-2(k-n)-n}$ are the terms $ax_{2(h-1)-2(k-n)-n}^q$ and $a^q x_{2(h-1)-2(k-n)-n}$.
\end{enumerate}

In the proof of Lemma \ref{l:nonvanish}(c), we showed that for odd $m$ with $m < n$, we have $x_m = 0$. Then by the equation for $n$ of type \eqref{***}, we see that we must have
\begin{equation*}
x_n^{q^2} - x_n = 0,
\end{equation*}
so this shows (i).

To see (ii), we proceed as in the proof of part (a) of this lemma. Clearly any term involving $x_{2(h-1)-2(k-n)-n}$ must come from the first sum in Equation \eqref{e:polyform}. In this sum, the terms involving $x_{2(h-1)-2(k-n)-n}$ are
\begin{equation*}
\sum x_i^q(h_{2j}^q - g_{2j})x_{2(h-1)-2(k-n)-n} + x_{2(h-1)-2(k-n)-n}^q(h_{2j}^q - g_{2j})x_i,
\end{equation*}
where the sum ranges over $i$ and $j$ such that $i + 2j + 2(h-1)-2(k-n)-n = 2(h-1)$. Equivalently, $i + 2j = 2(k-n)+n.$ Note that this forces $i$ to be odd since $n$ is odd by assumption. If $i  < n$, then $x_i = 0$, and thus any terms involving $(h_{2j}^q - g_{2j})$ for $j > 2(k-n)$ vanish. If $j < 2(k-n)$, then by assumption $h_{2j} = g_{2j} \in \FF_q$, and so $h_{2j}^q - g_{2j} = 0$ when $j < 2(k-n)$. Therefore the above sum simplifies to
\begin{equation*}
x_n^q(g_{2(k-n)}^q - g_{2(k-n)})x_{2(h-1)-2(k-n)-n} + x_{2(h-1)-2(k-n)-n}^q(g_{2(k-n)}^q - g_{2(k-1)})x_n.
\end{equation*}
Set $a = x_n(g_{2(k-n)}^q - g_{2(k-n)})$. By (i), we know that $x_n \in \F$, and thus the above expression simplifies to
\begin{equation*}
-a^q x_{2(h-1)-2(k-n)-n} + a x_{2(h-1)-2(k-n)-n}^q,
\end{equation*}
which is exactly what we wanted to show in (b). This completes the proof.
\end{proof}

\begin{definition}\label{d:element}
Let $\delta \in \ker \Tr_{\F/\FF_q}$. Given a tuple $(x_1, \ldots, x_{2(h-1)}) \in \overline \FF_q^{2(h-1)}$ together with $g, \gamma \in H(\F)$ satisfying the conditions of Line \eqref{e:case2}, define a tuple $(x_1', \ldots, x_{2(h-1)}')$ in the following way:
\begin{enumerate}[label=\textbullet]
\item
Pick $z$ so that $z^{q^2} - z = \delta$

\item
Pick $y$ such that $ay^q - a^q y + \delta = 0,$ where $a$ is as in Lemma \ref{l:polyform}.

\item
Set $y_{2(h-1)-2(k-n)-n} \colonequals y$ and $y_i = 0$ for odd such that $i < 2(h-1)-n$ and $i \neq 2(h-1)-2(k-n)-n$. Here, $k$ is as in Lemma \ref{l:polyform}.

\item
For each odd $i$ with $i > 2(h-1)-n$, pick $y_i$ so that
\begin{equation*}
y_i^{q^2} - y_i = \sum_{2m \leq i} (h_{2m}^q - g_{2m}) y_{i-2m}
\end{equation*}
\end{enumerate}
Finally, define
\begin{equation*}
x_i' = \begin{cases}
x_i + y_i & \text{if $i$ is odd,} \\
x_i + z & \text{if $i = 2(h-1)$,} \\
x_i & \text{otherwise.}
\end{cases}
\end{equation*}
\end{definition}

\begin{lemma}\label{l:element}
Let $k$ be the smallest integer such that $h_{2k} \neq g_{2k}$ and assume that $k \leq h-2$. Let $(x_1, \ldots, x_{2(h-1)}) \in A_{g,\gamma}$ and $\delta \in \ker \Tr_{\F/\FF_q}$. Then the tuple $(x_1', \ldots, x_{2(h-1)}')$ defined in Definition \ref{d:element} is an element of $A_{g,\gamma + \delta \tau^{2(h-1)}}$.
\end{lemma}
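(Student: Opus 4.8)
The plan is to verify directly that the modified tuple $(x_1',\dots,x_{2(h-1)}')$ lies in $A_{g,\gamma+\delta\tau^{2(h-1)}}$, i.e.\ that it satisfies the three families of equations \eqref{*}, \eqref{**}, \eqref{***} attached to $g$ and $\gamma':=\gamma+\delta\tau^{2(h-1)}$. Passing from $\gamma$ to $\gamma'$ alters a single structure constant, $h_{2(h-1)}\mapsto h_{2(h-1)}+\delta$, while the only coordinates that are changed are $x_{2(h-1)}\mapsto x_{2(h-1)}+z$ and the odd coordinates $x_i\mapsto x_i+y_i$. The Type \eqref{**} equations for $2k<2(h-1)$ involve neither $h_{2(h-1)}$ nor any odd coordinate nor $x_{2(h-1)}$, so they are untouched; the Type \eqref{**} equation for $2(h-1)$ has its left-hand side increased by $z^{q^2}-z=\delta$ and its right-hand side increased by $\delta$ (the $i=h-1$ term, using $x_0=1$), so it still holds. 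The whole content therefore lies in \eqref{*} and \eqref{***}.

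For those two families I would invoke Lemma \ref{l:testcond}: a solution of \eqref{**} and \eqref{***} satisfies \eqref{*} if and only if it satisfies Equation \eqref{dagger2} for every $k$. Thus it is enough to show that $(x_i')$ satisfies \eqref{***} and Equation \eqref{dagger2} (for all $k$) relative to $g$ and $\gamma'$. Since $(x_i)$ already satisfies all of these relative to $g$ and $\gamma$, subtracting reduces everything to a closed system of identities in the $y_i$ and $z$ alone. The crucial preliminary is that $y_{j_0}\in\F$, where $j_0=2(h-1)-2(k-n)-n$ is the index appearing in Lemma \ref{l:polyform}: applying $\Fr_q$ to the defining relation $ay^q-a^qy+\delta=0$, using $a\in\F$ (so $a^{q^2}=a$) and $\delta\in\ker\Tr_{\F/\FF_q}$ (so $\delta^q=-\delta$), and adding, gives $a^q(y^{q^2}-y)=0$; since $a\ne 0$ by Lemma \ref{l:nonvanish}, we conclude $y^{q^2}=y$, so in particular $y_{j_0}^{q^2}-y_{j_0}=0$.

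Granting this, I would run an induction on the odd index to establish \eqref{***}. The difference of the Type \eqref{***} equation for $j=2k+1$ reads $y_j^{q^2}-y_j=\sum_i[(h_{2i}^q-g_{2i})y_{j-2i}-g_{2i}(y_{j-2i}^{q^2}-y_{j-2i})]$. For odd $j<2(h-1)-n$ with $j\ne j_0$ one has $y_j=0$, and the right-hand side vanishes: each $y_{j-2i}$ with $j-2i\ne j_0$ is $0$ by induction, while if $j-2i=j_0$ then $i<k-n$, so $h_{2i}^q-g_{2i}=g_{2i}^q-g_{2i}=0$ (using $g_{2i}\in\FF_q$ for $i<k-n$ and $h_{2i}=g_{2i}$ for $i<k$) and also $y_{j_0}^{q^2}-y_{j_0}=0$ by the previous paragraph. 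For $j=j_0$ the right-hand side is again $0$ and the left-hand side is $y^{q^2}-y=0$. For odd $j$ in the top range $2(h-1)-n\le j\le 2(h-1)-1$, the recursion defining $y_j$ in Definition \ref{d:element} is exactly what is needed to make this identity hold, the auxiliary contributions $g_{2i}(y_{j-2i}^{q^2}-y_{j-2i})$ being controlled by the vanishing just established, the nonvanishing assertions of Lemma \ref{l:nonvanish}, and the minimality of $k$.

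Finally, for Equation \eqref{dagger2}: when $2k<2(h-1)$, every monomial of \eqref{dagger2} that involves a modified coordinate ($x_{j_0}$, or an $x_i$ with $i>2(h-1)-n$) is paired against some $x_m$ with $m<n$, which is $0$ in the original solution by Lemma \ref{l:nonvanish}(c) and is not among the modified coordinates (so $x_m'=0$); hence these equations are literally unchanged and remain valid. When $2k=2(h-1)$, Lemma \ref{l:polyform} puts \eqref{dagger2} in the shape $a\,x_{j_0}^q-a^q x_{j_0}+a_0=0$ with $a\ne 0$ and $a_0$ independent of $x_{j_0}$; the same pairing argument shows $a_0$ is unaffected by the modification, and the substitution $x_{j_0}\mapsto x_{j_0}+y$ changes this equation by precisely the amount needed to absorb the shift $h_{2(h-1)}\mapsto h_{2(h-1)}+\delta$ — which is exactly why $y$ was chosen so that $ay^q-a^q y+\delta=0$. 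With \eqref{**}, \eqref{***} and \eqref{dagger2} verified, Lemma \ref{l:testcond} gives \eqref{*}, so $(x_i')\in A_{g,\gamma+\delta\tau^{2(h-1)}}$. The step I expect to be the main obstacle is precisely this bookkeeping in the last two paragraphs: identifying exactly which equations and monomials are disturbed by the modification, and showing that the abbreviated top-range recursion still produces genuine solutions of the full Type \eqref{***} equations. This is a long but essentially routine computation that leans repeatedly on Lemmas \ref{l:nonvanish} and \ref{l:polyform} and on the minimality of $k$.
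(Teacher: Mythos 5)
Your proposal follows the paper's proof of Lemma \ref{l:element} essentially step for step: the Type \eqref{**} equations are checked directly, the Type \eqref{***} equations are reduced to the defining recursion for the $y_i$, and Lemma \ref{l:testcond} reduces \eqref{*} to the \eqref{dagger2} equations, which are unchanged for indices below $2(h-1)$ (modified coordinates only pair with coordinates that vanish) and are absorbed at index $2(h-1)$ by the choice of $y$ via Lemma \ref{l:polyform}. You even supply a detail the paper leaves implicit — the verification that $y \in \F$ — and the one step you flag as the main obstacle (the top-range recursion in the \eqref{***} check) is precisely the step the paper itself dispatches with ``this holds by construction.''
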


\begin{proof}
It is easy to see that $x'$ satisfies each Type \eqref{**} equation. To see that $x'$ satisfies the Type \eqref{***} equations for the all odd $i$, amounts to checking that
\begin{equation*}
(x_i')^{q^2} - (x_{i}') = x_{i}^{q^2} - x_{i} + \sum_{2m \leq i} (h_{2m}^q - g_{2m}) y_{i-2m},
\end{equation*}
which certainly holds by the construction of $y_i$. 
By Lemma \ref{l:testcond}, it remains only to show that $x'$ satisfies each Type \eqref{dagger2} equation. This amounts to showing that for each $i$, 
\begin{equation*}
\sum_{\substack{1 \leq j \leq 2i-1 \\ \text{$j$ odd}}} x_j^q \left[\sum_{m=1}^{(2i-j-1)/2} (h_{2m}^q - g_{2m}) x_{2i-j-2m} \right] = \sum_{\substack{1 \leq j \leq 2i-1 \\ \text{$j$ odd}}} (x_j')^q \left[\sum_{m=1}^{(2i-j-1)/2} (h_{2m}^q - g_{2m}) x_{2i-j-2m}' \right]
\end{equation*}
Note that by construction, if $j$ is odd and $j < n$, then $x_j' = x_j = 0$ (using the proof of Lemma \ref{l:nonvanish}(c) here). Furthermore, since we have $h_{2m}^q - g_{2m} = 0$ if $m < k-n$, then the only potentially nonzero terms on the right-hand side are of the form $(x_j')^q (h_{2m}^q - g_{2m}) x_{2i-j-2m}'$ where $j \geq n$, $m \geq k-n$, and $2(h-1)-j-2m \geq n$. First assume $i < h-1$. Then it follows that $j, 2i-j-2m < 2(h-1)-2(k-n)-n$, and thus $x_j' = x_j$ and $x_{2i-j-2m}' = x_{2i-j-2m}$. Therefore equality holds when $i < h-1$.

Finally, let $i = h-1$. Then by the above analysis together with Lemma \ref{l:polyform}, showing $x'$ satisfies the Type \eqref{dagger2} equation for $2(h-1)$ is equivalent to showing the equality
\begin{equation*}
a(x_{2(h-1)-2(k-n)-n}')^q - a^q(x_{2(h-1)-2(k-n)-n)}') + a_0 + \delta = 0.
\end{equation*}
But since $(x_1, \ldots, x_{2(h-1)})$ satisfies Equation \eqref{dagger2} for $2(h-1)$, and since $x_{2(h-1)-2(k-n)-n}' = x_{2(h-1)-2(k-n)-n} + y$ where $ay^q - a^q y + \delta = 0$, then the above equality holds. This finishes the proof that $(x_1', \ldots, x_{2(h-1)}') \in A_{g, \gamma + \delta \tau^{2(h-1)}}$.
\end{proof}

\begin{lemma}\label{l:solbij}
There is a bijection between $A_{g,\gamma}$ and $A_{g,\gamma+\delta\tau^{2(h-1)}}$, where $\delta \in \ker \Tr_{\F/\FF_q}$.
\end{lemma}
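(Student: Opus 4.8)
The plan is to extract the bijection directly from Lemma~\ref{l:element} by building an explicit two–sided inverse. Throughout I retain the standing hypotheses of that lemma: $k$ is the least index with $h_{2k}\neq g_{2k}$ and $k\le h-2$; I write $\gamma'\colonequals\gamma+\delta\tau^{2(h-1)}$, and I keep the integer $n$ and the special index $\kappa\colonequals 2(h-1)-2(k-n)-n$ of Lemmas~\ref{l:nonvanish} and \ref{l:polyform}, all of which depend only on $g$ and $\gamma$.

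The first step is to turn Definition~\ref{d:element} into an honest map. It rests on three auxiliary equations: $z^{q^2}-z=\delta$; the relation $ay^q-a^qy+\delta=0$ for the coefficient $a\in\F^\times$ isolated in Lemma~\ref{l:polyform}; and, for odd $i>2(h-1)-n$, the recursion $y_i^{q^2}-y_i=\sum_{2m\le i}(h_{2m}^q-g_{2m})y_{i-2m}$. The first and third are solvable because $\overline{\FF}_q$ is algebraically closed; for the second, the substitution $w=y/a$ turns it into $w^q-w=-\delta/a^{q+1}$, and since $a^{q+1}\in\FF_q^\times$ and $\delta\in\ker\Tr_{\F/\FF_q}$ the right-hand side lies in $\ker\Tr_{\F/\FF_q}=\{w^q-w:w\in\F\}$, so it too is solvable. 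In each case the solution set is a coset of an additive group, so I would fix once and for all a system of choices subject to the single compatibility requirement that the solution chosen for a right-hand side $c$ is the negative of the one chosen for $-c$ (and $0$ for $0$). By Lemma~\ref{l:element} this makes $x\mapsto x'$ a well-defined map $\Phi_\delta\colon A_{g,\gamma}\to A_{g,\gamma'}$.

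The core step is to show that running the same recipe for the pair $(g,\gamma')$ and the element $-\delta\in\ker\Tr_{\F/\FF_q}$ produces a two-sided inverse $\Psi\colon A_{g,\gamma'}\to A_{g,\gamma}$. Given $x\in A_{g,\gamma}$ and $x'=\Phi_\delta(x)$, the key observation is that $\Phi_\delta$ alters $x$ only in odd coordinates and in coordinate $2(h-1)$, and in fact fixes $x_1$ and $x_n$, since $y_1=y_n=0$ (this uses $1,n<2(h-1)-n$ and $1,n\neq\kappa$, which follow from $n\le k-1$ and $k\le h-2$). Hence the coefficient $a$ of Lemma~\ref{l:polyform}, equal to $x_1(g_{2k-2}^q-g_{2k-2})$ or $x_n(g_{2(k-n)}^q-g_{2(k-n)})$ according to the case, is untouched: $a(x')=a(x)$. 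Consequently the auxiliary equations governing $\Psi(x')$ are exactly those governing $\Phi_\delta(x)$ with $\delta$ replaced by $-\delta$ and — by downward induction on the odd index in the recursion — with all right-hand sides negated. By the sign-compatibility of the chosen solutions, the adjustments used to form $\Psi(x')$ are therefore the negatives of those used to form $\Phi_\delta(x)$, so adding them back returns $x$ coordinate by coordinate; thus $\Psi\circ\Phi_\delta=\id$, and $\Phi_\delta\circ\Psi=\id$ follows by interchanging the roles of $\delta$ and $-\delta$.

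I expect the main obstacle to be bookkeeping rather than a new idea. The recipe of Definition~\ref{d:element} is a priori multivalued, so one must commit to a coherent system of solutions and verify both that such a system exists (routine, since the relevant solution sets are additive cosets) and that it forces the composite to be the \emph{literal} identity — which in turn hinges on the index computations that keep $a$ (and, via Lemma~\ref{l:element}, the constant term $a_0$) from changing under $\Phi_\delta$. If only equicardinality were wanted, one could instead treat $\Phi$ as a correspondence whose fibers over $A_{g,\gamma}$ and over $A_{g,\gamma'}$ have the same constant size and conclude $\#A_{g,\gamma}=\#A_{g,\gamma'}$; but the explicit inverse above is what actually delivers a bijection, which is what the later count in Proposition~\ref{p:case2} uses.
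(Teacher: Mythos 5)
Your plan is essentially the paper's own proof: both construct the explicit map $x\mapsto x'$ of Definition~\ref{d:element} and invert it by subtracting the same adjustments $y_i$ and $z$ (i.e.\ running the recipe for $-\delta$), and your extra care about fixing a coherent, sign-compatible system of solutions is a legitimate tightening of a point the paper treats informally. The one thing you omit is the degenerate case of Lemma~\ref{l:nonvanish}(d) (the relevant $n$ even), where the coefficient $a$ of Lemma~\ref{l:polyform} is not available and your construction of $\Phi_\delta$ has nothing to work with; the paper disposes of this in one line by observing that there both $A_{g,\gamma}$ and $A_{g,\gamma+\delta\tau^{2(h-1)}}$ are empty, so the bijection is trivial, and you should add that sentence.
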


\begin{proof}
Pick $\delta \in \ker \Tr_{\F/\FF_q}$. First notice is that if $g$ and $h$ satisfy the hypotheses of Lemma \ref{l:nonvanish}(d), then $g$ and $\gamma + \delta \tau^{2(h-1)}$ also satisfy the hypotheses of Lemma \ref{l:nonvanish}(d). Thus, by Lemma \ref{l:nonvanish}(d), $A_{g,\gamma}$ and $A_{g,\gamma+\delta}$ are both empty.

From now on, we assume that $g$ and $h$ either satisfy the hypotheses in Lemma \ref{l:polyform}(a) or \ref{l:polyform}(b).  By Lemma \ref{l:polyform}, the Type \eqref{dagger2} equation for $2(h-1)$ is of the form $ax_{2(h-1)-2(k-n)-n}^q - a^qx_{2(h-1)-2(k-n)-n} + a_0 = 0$ where $a = x_n(g_{2(k-n)}^q - g_{2(k-n)})$. 

Let $(x_1, \ldots, x_{2(h-1)}) \in A_{g,\gamma}$ and let $(x_1', \ldots, x_{2(h-1)}')$ be the element of $A_{g,\gamma + \delta \tau^{2(h-1)}}$ constructed in Definition \ref{d:element}. Then we have a map
\begin{equation*}
\varphi_\delta \from A_{g,\gamma} \to A_{g,\gamma+\delta\tau^{2(h-1)}}, \qquad x \mapsto x'.
\end{equation*}

Now we check that $\varphi_\delta$ is invertible. Using the same notation as in Definition \ref{d:element}, it is easy to see that by Lemma \ref{l:element}, setting
\begin{equation*}
x_i'' = \begin{cases}
x_i - y_i & \text{if $i$ is odd} \\
x_i - z & \text{if $i = 2(h-1)$} \\
x_i & \text{otherwise}
\end{cases}
\end{equation*}
defines a map
\begin{equation*}
\varphi_{-\delta} \from A_{g,\gamma+\delta\tau^{2(h-1)}} \to A_{g,\gamma}, \qquad x \mapsto x''
\end{equation*}
wherein
\begin{align*}
\varphi_{-\delta} \circ \varphi_{\delta} &= \id_{A_{g,\gamma}} \\
\varphi_{\delta} \circ \varphi_{-\delta} &= \id_{A_{g,\gamma+\delta\tau^{2(h-1)}}}.
\end{align*}
Therefore $\varphi_\delta$ must be a bijection.
\end{proof}

We are now ready to prove Proposition \ref{p:case2}.

\begin{proof}[Proof of Proposition \ref{p:case2}]
From Lemma \ref{l:solbij}, $|A_{g,\gamma}| = |A_{g,\gamma+\delta\tau^{2(h-1)}}|$, so
\begin{equation*}
\sum \chi_2(g^{-1}\gamma) \cdot \# = {\sum}' \chi_2(g^{-1}\gamma) \sum_{\delta \in \ker \Tr} \psi(\delta) \cdot N_{g,\gamma+\delta\tau^{2(h-1)}} = 0,
\end{equation*}
where $\sum$ and $\sum'$ range over $g \in H$ and $\gamma \in H$ satisfying the condition that there exists $k \leq h-2$ such that $h_{2k} \neq g_{2k}$, and $\sum'$ has the additional restriction that $h_{2(h-1)} \in \FF_q$. This completes the proof of Proposition \ref{p:case2}.
\end{proof}

\begin{proof}[Proof of Theorem \ref{t:interdim}]
This follows directly from Proposition \ref{p:case1} and \ref{p:case2}.
\end{proof}

\subsection{Proof of Theorem \ref{t:irrep}}\label{s:pfirrep}

Let $\psi$ and $\chi \in \cA_\psi$ be as in the statement of the theorem. Let $\theta$ be an arbitrary character of $G_{h-2} \subset H$. To prove Theorem \ref{t:irrep}, we will compute
\begin{equation*}
\dim H_c^{h-1}(X_h, \overline \QQ_\ell)_{\chi, \theta} = \frac{1}{q^{h-1} \cdot q^{2(h-1)} \cdot q^h} \sum_{\substack{g \in H \\ \gamma \in G_2}} \chi(g)^{-1} \theta(\gamma) \cdot N(g, \gamma),
\end{equation*}
where the above equation follows by Lemma 2.13 of \cite{B12} and $N(g,\gamma) = \#\{x \in X_h(\overline \FF_q) : g * \Fr_{q^2}(x) = x \cdot \gamma\} = \# A_{g,\gamma}$, as in Section \ref{s:pfinterdim}. Since $G_{h-2}$ is a subgroup of $H$, then in fact $x \in A_{g,\gamma}$ if and only if $x = (x_1, \ldots, x_{2(h-1)})$ satisfies Equations \eqref{*} through \eqref{***}, where as before, we write $g = 1 + \sum g_i \tau^i$ and $\gamma = 1 + \sum h_i \tau^i$.

Now comes the simplification. It is not difficult to see inductively that since $\gamma \in G_{h-2}$, Equations \eqref{*} through \eqref{***} are equivalent to the following:
\begin{enumerate}[label=(\roman*)]
\item
For $1 \leq n \leq 2(h-1)$, we have $x_n^{q^2} - x_n = 0$.

\item
For $1 \leq k \leq h-1$, we have $h_{2k} = g_{2k}$.
\end{enumerate}
Thus,
\begin{equation*}
N(g,\gamma) = 
\begin{cases}
q^{4(h-1)} & \text{if $g = \gamma$,} \\
0 & \text{otherwise.}
\end{cases}
\end{equation*}
Therefore,
\begin{align*}
\dim H_c^{h-1}(X_h, \overline \QQ_\ell)_{\chi, \theta} 
&= \frac{1}{q^{h-1} \cdot q^{2(h-1)} \cdot q^h} \sum_{\substack{g \in H \\ \gamma \in G_2}} \chi(g)^{-1} \theta(\gamma) \cdot N(g, \gamma) \\
&= \frac{1}{q^{h-1} \cdot q^{2(h-1)} \cdot q^h} \sum_{\gamma \in G_2} \chi(\gamma)^{-1} \theta(\gamma) \cdot q^{4(h-1)} \\
&= \frac{q^{4(h-1)} \cdot |G_2|}{q^{h-1} \cdot q^{2(h-1)} \cdot q^h} \cdot \langle \chi, \theta \rangle_{G_{h-2}} = q^{h-1} \langle \chi, \theta \rangle_{G_{h-2}}.
\end{align*}
It follows that $H_c^{h-1}(X_h, \overline \QQ_\ell)[\chi]$ has dimension $q^{h-1}$. Now, $H_c^{h-1}(X_h, \overline \QQ_\ell)[\chi]$ is a representation of $\UnipF$ wherein $H_{2(h-1)}(\F)$ acts by $\psi$. Therefore, by Theorem \ref{t:bij}, $H_c^{h-1}(X_h, \overline \QQ_\ell)[\chi]$ is irreducible. This completes the proof.

\section{An Example: Level 3}\label{s:examples} 

In \cite{B12} (see Theorem 5.20), Boyarchenko computes the representations $H_c^\bullet(X_3)[\chi]$ for characters $\chi$ whose restriction to $U_L^2/U_L^3$ has conductor $q^2$. The computational method presented in this paper generalizes the result Theorem 5.20 in \cite{B12} but differs from its proof. Specifically, for characters $\chi_1, \chi_2 \from U_L^1/U_L^3 \to \overline \QQ_\ell^\times$, Boyarchenko computes the subspace $H_c^\bullet(X_3)_{\chi_1, \chi_2^\sharp} \subset H_c^\bullet(X_3)$ wherein $U_L^1/U_L^3$ acts by $\chi_1$ and $H'(\F)$ acts by $\chi_2^\sharp$. 

In this paper (see Section \ref{s:cohomreps}), we compute the subspace $H_c^\bullet(X_h)_{\chi_1, \chi_2} \subset H_c^\bullet(X_h)$ wherein $U_L^1/U_L^h$ acts by $\chi_1$ and $H(\F) \subset \UnipF$ acts by $\chi_2$. Here, the action of $U_L^1/U_L^h$ is the one induced by the left action on $X_h$, and the action of any subgroup of $\UnipF$ is the one induced by the right-multiplication action on $X_h$. We then use the character formula established in Section \ref{s:morerepthy} to determine the representation $H_c^\bullet(X_h)[\chi]$.

In this section, we apply the arguments of this paper to the special case $h = 3$, thereby obtaining a different proof of Theorem 5.20 of \cite{B12}. These examples allow us to illustrate the structure and flavor of the general computations in a simpler setting. The boxed equations indicate the milestone steps.

\subsection{Restrictions of Irreducible Representations of $U_3^{2,q}(\F)$}

In this subsection, we describe the computations of Section \ref{s:morerepthy} in our special case $h = 3$. For a character $\chi \from U_L^1/U_L^3 \to \overline \QQ_\ell^\times$ whose restriction to $U_L^2/U_L^3$ has conductor $q^2$, let $\rho_\chi$ be the irreducible representation of $U_3^{2,q}(\F)$ associated to $\chi$ under the bijection described in Proposition \ref{p:bijodd}.

For convenience, we remind the reader of the notation established in Section \ref{s:morerepthy}. Let
\begin{align*}
H &= \{1 + a_2 \tau^2 + a_4 \tau^4 : a_i \in \F\}, \\
K &= \{1 + a_2 \tau^2 + a_3 \tau^3 + a_4 \tau^4 : a_i \in \F\}, \\
G_1 &= \{1 + a_2 \tau^2 + a_4 \tau^4 : a_2 \in \FF_q, \, a_4 \in \F\}, \\
\cA(\chi) &= \{\text{characters $\theta \from H \to \overline \QQ_\ell^\times$ s.t.\ $\chi = \theta$ on $G_1$ but not on $H$}\}.
\end{align*}

We would like to show that as elements of the Grothendieck group of $H$,
\begin{equation}\label{e:char3}
\boxed{\rho_\chi = (-1)(q \cdot \chi + (-1)(q + 1) \cdot \Ind_{G_1}^H(\chi)),}
\end{equation}
and therefore, as a representation of $H$, the representation $\rho_\chi$ comprises
\begin{equation}\label{e:decomp3}
\boxed{\begin{cases}
\text{$1$ copy of $\chi$, and} \\
\text{$q+1$ copies of $\theta$, for $\theta \in \cA(\chi)$.}
\end{cases}}
\end{equation}

First note that $G_1$ is the center of $U_3^{2,q}(\F)$, so if $s \in G_1$, then $\Tr \rho_\chi(s) = q^2 \cdot \chi(s)$.

Now suppose $s \in H \smallsetminus G_1$. (Note that if we write $1 = h - 1 - k$, we have $k = 1.$) By a straightforward computation, one can see that every element $t \in U_3^{2,q}(\F)$ can be written in the form $t = (1 - a_1 \tau)(1 - a_3 \tau^3) \cdot g$ for some $g \in H$. Furthermore, $(1 - a_3 \tau^3)s(1 - a_3 \tau^3)^{-1} = s$. Thus if $t \in K$, then $tst^{-1} = s$.

Now take $a \in \F^\times$. Then
\begin{equation*}
(1 - a \tau)(1 + s_2 \tau^2 + s_4 \tau^4)(1 - a \tau)^{-1} = (1 + s_2 \tau^2 + (-a(s_2^q - s_2)) \tau^3 + s_4 \tau^4)(1 + (-a^{q+1}(s_2^q - s_2)) \tau^4),
\end{equation*}
and therefore, remembering that $\chi^\sharp(1 + a_2 \tau^2 + a_3 \tau^3 + a_4 \tau^4) = \chi(1 + a_2 \tau^2 + a_4 \tau^4)$ by definition, we have
\begin{equation*}
\chi^\sharp((1 - a\tau)s(1 - a\tau)^{-1}) = \chi(s) \cdot \psi(-a^{q+1}(s_2^q - s_2)).
\end{equation*}
Since $\psi$ has conductor $q^2$, its restriction to the subgroup $\ker \Tr_{\F/\FF_q} \subset \F$ is nontrivial. Note that for any $a \in \F^\times$, we have $a^{q+1}(s_2^q - s_2) \in \ker \Tr_{\F/\FF_q} \smallsetminus \{0\}$ since $s_2 \notin \FF_q$ by assumption. Therefore if $s \in H \smallsetminus G_1$, we have
\begin{align*}
\rho_\chi(s) 
&= \frac{1}{|K|} \sum_{t \in U_3^{2,q}(\F)} \chi_\circ^\sharp(tst^{-1}) \\
&= \frac{1}{|K|} \Big( \sum_{t \in K} \chi_\circ^\sharp(tst^{-1}) + \sum_{t \notin K} \chi_\circ^\sharp(tst^{-1}) \Big) \\
&= \chi(s) + \sum_{a \in \F^\times} \chi_\circ^\sharp((1 - a\tau)s(1 - a\tau)^{-1}) \\
&= \chi(s) + (-1)(q+1) \cdot \chi(s) = -q \cdot \chi(s).
\end{align*}

Consider the $H$-representation
\begin{equation*}
\rho = (-1)(q \cdot \chi + (-1)(q + 1) \cdot \Ind_{G_1}^H(\chi)).
\end{equation*}
Then since $H$ is abelian,
\begin{align*}
\Tr \rho(s) 
&= (-1)(q \cdot \chi(s) + (-1)(q + 1) \frac{|H|}{|G_1|} \cdot \ONE_{G_1}(s) \cdot \chi(s)) \\
&= \begin{cases}
q^2 \cdot \chi(s) & \text{if $s \in G_1$,} \\
-q \cdot \chi(s) & \text{if $s \in H \smallsetminus G_1.$}
\end{cases}
\end{align*}
Thus we can conclude that as (virtual) representations of $H$, $\rho_\chi = \rho$, and Equation \eqref{e:char3} follows.

Let $\theta \from H \to \overline \QQ_\ell^\times$ be any character. If it agrees with $\chi$ on $G_1$, then it occurs exactly once in $\Ind_{G_1}^H(\chi)$. Moreover, if $\theta$ is a constituent of $\rho_\chi$, then it must agree with $\chi$ on $G_1$. Thus by Equation \eqref{e:char3}, we see that if $\theta = \chi$, then $\theta$ occurs in $\rho_\chi$ exactly once ($-q + (q+1) = 1$), and if $\theta = \chi$ on $G_1$ but not on $H$, then $\theta$ occurs in $\rho_\chi$ exactly $q+1$ times. This proves Equation \eqref{e:decomp3}.

\subsection{Morphisms Between $H_c^i(X_3)$ and Representations of $U_3^{2,q}(\F)$}

Let $\psi \from \F \to \overline \QQ_\ell^\times$ have conductor $q^2$. Recall from Section \ref{s:repthy} that every irreducible representation of $U_3^{2,q}(\F)$ that restricts to a sum of $\psi$ occurs in $V_\psi = \Ind_{H_0'(\F)}^{U_3^{2,q}(\F)}(\widetilde \psi)$.

Let $H_c^\bullet(X_3) = \bigoplus_{i \in \ZZ} H_c^i(X_3, \overline \QQ_\ell).$ The action of $U_3^{2,q}(\F)$ on $X_h$ induces a $U_3^{2,q}(\F)$-module structure on $H_c^\bullet(X_h)$. We wish to compute the space of morphisms from $V_\psi$ to $H_c^\bullet(X_3)$. We can show that $\Fr_{q^2}$ acts on $\Hom_{U_3^{2,q}(\F)}(V_\psi, H_c^i(X_3))$ via multiplication by $q^2$ and that
\begin{equation}\label{e:dimhom3}
\boxed{\dim \Hom_{U_3^{2,q}(\F)}(V_\psi, H_c^i(X_3)) = \begin{cases}
q^4 & \text{if $i = 2$,} \\
0 & \text{otherwise.}
\end{cases}}
\end{equation}

If we specialize the proof of Theorem \ref{t:homspace} to the case $h = 3$, we recover the proof of Lemma 6.18 of \cite{B12}. We omit this part of the example and refer the reader to \cite{B12} for this computation.

\subsection{Intertwining Spaces of $H_c^\bullet(X_3)$}

For characters $\chi_1, \chi_2 \from U_L^1/U_L^3 \to \overline \QQ_\ell^\times$, consider the subspace $H_c^i(X_3, \overline \QQ_\ell)_{\chi_1, \chi_2} \subset H_c^i(X_3, \overline \QQ_\ell)$ wherein $U_L^1/U_L^3 \times H \subset U_L^1/U_L^3 \times U_3^{2,q}(\F)$ acts by $\chi_1 \otimes \chi_2$. (Recall that the left action of $U_L^1/U_L^3$ and the right action of $U_3^{2,q}(\F)$ on $X_3$ described in Section \ref{s:notation} induce a $(U_L^1/U_L^3 \times U_3^{2,q}(\F))$-module structure on the cohomology of $X_3$. Recall also that $U_L^1/U_L^3 \cong H$.) Assume that the restriction of $\chi_1$ to $U_L^2/U_L^3$ is $\psi$ and that $\psi$ has conductor $q^2$. Then by Equation \eqref{e:dimhom3}, we know that $H_c^i(X_3, \overline \QQ_\ell)_{\chi_1, \chi_2}$ vanishes for $i \neq 2$. We will show that
\begin{equation}\label{e:interdim3}
\boxed{\dim H_c^2(X_3, \overline \QQ_\ell)_{\chi_1, \chi_2} = (-1)\Big(\langle \chi_1, \chi_2 \rangle \cdot q + (-1)(q+1) \cdot \chi_1, \chi_2 \rangle_{G_1}\Big)}
\end{equation}

Equation \eqref{e:dimhom3} implies that we can apply Lemma 2.13 of \cite{B12}, which implies
\begin{equation*}
\dim H_c^2(X_3, \overline \QQ_\ell)_{\chi_1, \chi_2} = \frac{1}{q^2 \cdot q^4 \cdot q^4} \sum_{g,h \in H} \chi_1(g)^{-1} \chi_2(h) \cdot \#\{x \in X_3(\overline \FF_q) : g * \Fr_{q^2}(x) = x \cdot h\}.
\end{equation*}
Tracing through the definitions in Section \ref{s:notation}, we have
\begin{align*}
g * \Fr_{q^2}(x) 
&= (1 + g_2 \tau^2 + g_4 \tau^4) * (1 + x_1^{q^2} \tau + \cdots + x_4^{q^2} \tau^4) \\
&= 1 + x_1^{q^2} \tau + (x_2^{q^2} + g_2) \tau^2 + (x_3^{q^2} + g_2 x_1^{q^2}) \tau^3 + (x_4^{q^2} + g_2 x_2^{q^2} + g_4) \tau^4, \\
x \cdot h
&= (1 + x_1 \tau + \cdots + x_4 \tau^4) \cdot (1 + h_2 \tau^2 + h_4 \tau^4) \\
&= 1 + x_1 \tau + (x_2 + h_2) \tau^2 + (x_3 + x_1 h_2^q) \tau^3 + (x_4 + x_2 h_2 + h_4) \tau^4.
\end{align*}
Equating coefficients of $\tau$ and combining these equations with the defining equations of $X_3$ (see Theorem \ref{t:Xhpolys}) implies that $x \in X_3(\overline \FF_q)$ if and only if $x$ satisfies
\begin{align}
\label{*1}
x_2^{q^2} - x_2 &= x_1^q(x_1^{q^2} - x_1) \\
\label{*2}
x_4^{q^2} - x_4 &= x_1^q(x_3^{q^2} - x_3) - x_2^q(x_2^{q^2} - x_2) + x_3^q(x_1^{q^2} - x_1) \\ 
\label{***1}
x_1^{q^2} - x_1 &= 0 \\
\label{**1}
x_2^{q^2} - x_2 &= h_2 - g_2 \\
\label{***2}
x_3^{q^2} - x_3 &= h_2^q x_1 - g_2 x_1^{q^2} \\ 
\label{**2}
x_4^{q^2} - x_4 &= h_4 - g_4 + h_2 x_2 - g_2 x_2^{q^2}
\end{align}
which reduce to the conditions $x_1, x_2 \in \F$, $h_2 = g_2$, and
\begin{align}
\label{*3}
x_4^{q^2} - x_4 &= x_1^q(x_3^{q^2} - x_3) \\ 
\label{***3}
x_3^{q^2} - x_3 &= (g_2^q - g_2) x_1 \\ 
\label{**3}
x_4^{q^2} - x_4 &= h_4 - g_4.
\end{align}
Note that Equations \eqref{*1} and \eqref{*2} are of Type \eqref{*}, Equations \eqref{**1} and \eqref{**2} are of Type \eqref{**}, and Equations \eqref{***1} and \eqref{***2} are of Type \eqref{***}.

First observe that if $g_2 \in \FF_q$, then Equation \eqref{***3} implies that $x_3 \in \F$, which forces $x_4 \in \F$ by Equation \eqref{*3}. Thus by Equation \eqref{**3}, we know that
\begin{equation*}
\#\{x \in X_3(\overline \FF_q) : g * \Fr_{q^2}(x) = x \cdot h\} = \begin{cases}
q^8 & \text{if $g_4 = h_4$,} \\
0 & \text{otherwise.}
\end{cases}
\end{equation*}
(This is Lemma \ref{l:easycond}.)

If $g_2 \notin \FF_q$, then combining Equations \eqref{*3}, \eqref{***3}, and \eqref{**3}, we see that 
\begin{align*}
\#\{x \in X_3(\overline \FF_q) : g * \Fr_{q^2}(x) = x \cdot h\} 
&= q^6 \cdot \#\{x_1 \in \overline \FF_q : h_4 - g_4 = x_1^q(g_2^q - g_2)x_1\} \\
&= \begin{cases}
q^6 & \text{if $g_4 = h_4$,} \\
q^6(q+1) & \text{if $g_4 \neq h_4$ and $g_4 - h_4 \in \ker \Tr_{\F/\FF_q}$,} \\
0 & \text{otherwise.}
\end{cases}
\end{align*}
(This is Lemma \ref{l:oddcond} for $k = 1$.) Thus if $g_2 \notin \FF_q$, then
\begin{equation*}
\sum_{\epsilon \in \F} \psi(\epsilon) \cdot \#\{x \in X_3(\overline \FF_q) : g * \Fr_{q^2}(x) = x \cdot h\} = q^6 - (q+1)q^6 = -q^7.
\end{equation*}
Putting this together, we have
\begin{align*}
\dim H_c^2(X_3, \overline \QQ_\ell)_{\chi_1, \chi_2}
&= \frac{1}{q^{10}} \sum_{\substack{g \in H \\ \epsilon \in \F}} \frac{\chi_2(g)}{\chi_1(g)} \cdot \psi(\epsilon) \cdot \#\{x \in X_3(\overline \FF_q) : g * \Fr_{q^2}(x) = x \cdot h\} \\
&= \frac{1}{q^{10}} \Bigg(\sum_{\substack{g \in G_1 \\ \epsilon \in \F}} \frac{\chi_2(g)}{\chi_1(g)} \cdot \psi(\epsilon) \cdot \#\{x \in X_3(\overline \FF_q) : g * \Fr_{q^2}(x) = x \cdot h\} \\
&\qquad\qquad + \sum_{\substack{g \in H \smallsetminus G_1 \\ \epsilon \in \F}} \frac{\chi_2(g)}{\chi_1(g)} \cdot \psi(\epsilon) \cdot \#\{x \in X_3(\overline \FF_q) : g * \Fr_{q^2}(x) = x \cdot h\}\Bigg) \\
&= \frac{1}{q^{10}}\Bigg(|G_1| \cdot \langle \chi_1, \chi_2 \rangle_{G_1} \cdot \psi(1) \cdot q^8 \\
&\qquad\qquad + |H| \cdot \langle \chi_1, \chi_2 \rangle_H \cdot -q^7 - |G_1| \langle \chi_1, \chi_2 \rangle_{G_1} \cdot -q^7\Bigg) \\
&= (q+1) \cdot \langle \chi_1, \chi_2 \rangle_{G_1} - q \cdot \langle \chi_1, \chi_2 \rangle_H.
\end{align*}
This completes the proof of Equation \eqref{e:interdim3}.

\begin{remark}
Note that for $h = 3$, the arguments in Section \ref{s:case1} are enough to compute the intertwining spaces $H_c^i(X_3)_{\chi_1, \chi_2}$. The arguments in Section \ref{s:case2} are needed to compute the intertwining spaces $H_c^i(X_h)_{\chi_1, \chi_2}$ for $h \geq 4$.
\end{remark}

\subsection{The Representations $H_c^\bullet(X_3)[\chi]$}

By Equation \eqref{e:interdim3}, the dimension of the $U_3^{2,q}(\F)$-representation $H_c^2(X_3, \overline \QQ_\ell)_{\chi_1, \chi_2}$ is equal to $q^2$, which implies by Section \ref{s:repthy} (see Lemma \ref{l:irrepodd} and Proposition \ref{p:bijodd}) that it is irreducible. Thus by Corollary \ref{c:detbyH}, it is uniquely determined by its restriction to $H(\F)$. Comparing Equation \eqref{e:interdim3} to Equation \eqref{e:decomp3} allows us to conclude that if $\chi \from U_L^1/U_L^3 \to \overline \QQ_\ell^\times$ restricts to a character of conductor $q^2$ on $U_L^2/U_L^3$, then 
\begin{equation*}
\boxed{H_c^i(X_3, \overline \QQ_\ell)[\chi] = 
\begin{cases}
\rho_\chi & \text{if $i = 2$,} \\
0 & \text{otherwise.}
\end{cases}}
\end{equation*}
This proves Theorem 5.20 of \cite{B12} and completes our example.

\section{Representations of Division Algebras}\label{s:divalg}

Throughout this section, $\theta \from L^\times \to \overline \QQ_\ell^\times$ will be a primitive character of level $h$. Recall that $\theta$ is primitive of level $h$ if for each $\gamma \in \Gal(L/K)$, both $\theta$ and $\theta/\theta^\gamma$ have level $h$. This induces a character $\chi \from U_L^1/U_L^h \to \overline \QQ_\ell^\times$ whose restriction to $U_L^{h-1}/U_L^h \cong \F$ has conductor $q^2$ and will be denoted by $\psi$.

In this section, we use Theorem \ref{t:cohomdesc} in order to describe the representations of the division algebra $D^\times \colonequals D_{1/2}^\times$ arising from Lusztig's conjectural $p$-adic Deligne-Lusztig variety $X$ (see \cite{L79} and \cite{B12}). We can write $D = L \langle \Pi \rangle/(\Pi^2 - \pi)$, where $L \langle \Pi \rangle$ is the twisted polynomial ring defined by the commutation relation $\Pi \cdot a = \varphi(a) \cdot \Pi$ ($\varphi$ is the nontrivial element of $\Gal(L/K)$), and $\pi$ is the uniformizer of $L$. Write $\cO_D = \OH_L \langle \Pi \rangle/(\Pi^2 - \pi)$ for the ring of integers of $D$. Define $P_D^r = \Pi^r \OH_D$ and $U_D^r = 1 + P_D^r$.

There exists a connected reductive group $\GG$ over $K$ such that $\GG(K)$ is isomorphic to $D^\times$, and a $K$-rational maximal torus $\TT \subset \GG$ such that $\TT(K)$ is isomorphic to $L^\times$. We describe $\GG$ more explicitly here. Let $\Khat$ be the completion of the maximal unramified extension of $K$ and let $\varphi$ denote the Frobenius automorphism of $\Khat$ (inducing $x \mapsto x^q$ on the residue field). Letting $\varpi = \left(\begin{smallmatrix} 0 & 1 \\ \pi & 0 \end{smallmatrix}\right)$, the homomorphism $F \from \GL_2(\Khat) \to \GL_2(\Khat)$ given by $F(A) = \varpi^{-1} A^\varphi \varpi$ is a Frobenius relative to a $K$-rational structure whose corresponding algebraic group over $K$ is $\GG$.

Let $\widetilde G \colonequals \GG(\Khat) = \GL_2(\Khat)$ and $\widetilde T \colonequals \TT(\Khat)$. Let $\BB \subset \GG \otimes_K \Khat$ be the Borel subgroup consisting of upper triangular matrices and let $\UU$ be its unipotent radical. Note that $\widetilde T$ consists of all diagonal matrices and $\widetilde U \colonequals \UU(\Khat)$ consists of unipotent upper triangular matrices. Let $\widetilde U^- \subset \GL_2(\Khat)$ denote the subgroup consisting of unipotent lower triangular matrices.

The $p$-adic Deligne-Lusztig construction $X$ for $D^\times$ described in \cite{L79} is the quotient
\begin{equation*}
X \colonequals (\widetilde U \cap F^{-1}(\widetilde U)) \backslash \{A \in \GL_2(\Khat) : F(A)A^{-1} \in \widetilde U\}.
\end{equation*}
In \cite{B12} (see Section 4.2 of \textit{op.\ cit.}), Boyarchenko proves that $X$ can be identified\footnote{Since we are in the situation $n=2$, the subgroup $\widetilde U \cap F^{-1}(\widetilde U)$ is actually trivial. For arbitrary $n$, the analogous subgroup is not trivial, but then there is more substance to the identification of $X$ with $\widetilde X$.} with the set
\begin{equation*}
\widetilde X \colonequals \{A \in \GL_2(\Khat) : F(A)A^{-1} \in \widetilde U \cap F(\widetilde U^-)\}
\end{equation*}
and describes how to define the homology groups $H_i(\widetilde X, \overline \QQ_\ell)$ (see Section 4.4 of \textit{op.\ cit.}). For each $i \geq 0$, $H_i(\widetilde X, \overline \QQ_\ell)$ inherits commuting smooth actions of $\GG(K) \cong D^\times$ and $\TT(K) \cong L^\times$. Given a smooth character $\theta \from L^\times \to \overline \QQ_\ell^\times$, we may consider the subspace $H_i(\widetilde X, \overline \QQ_\ell)[\theta] \subset H_i(\widetilde X, \overline \QQ_\ell)$ wherein $L^\times$ acts by $\theta$.

Using Proposition 5.19 of \textit{op.\ cit.}, we can now describe the cohomology groups $H_i(\widetilde X, \overline \QQ_\ell)[\theta]$ as representations of the division algebra $D^\times \colonequals D_{1/2}^\times$. For convenience, we restate the description given in this proposition.
\begin{enumerate}[label=\textbullet]
\item
Let $\rho_\chi$ denote the representation $H_c^{h-1}(X_h, \overline \QQ_\ell)[\chi]$. (Note that by Theorem \ref{t:cohomdesc}, this notation is consistent with the representation $\rho_\chi$ introduced in Section \ref{s:repthy}.) This is a representation of $\UnipF \cong U_D^1/U_D^{2(h-1)+1}$.

\item
This extends to a representation $\eta_\theta^\circ$ of $\OH_D^\times/U_D^{2(h-1)+1}$ with the property that $\Tr(\eta_\theta^\circ(\zeta)) = (-1)^{h-1} \theta(\zeta).$

\item
This inflates to a representation $\widetilde \eta_\theta^\circ$ of $\OH_D^\times$.

\item
This extends to a representation $\eta_\theta'$ of $\pi^\ZZ \cdot \OH_D^\times$ via setting $\eta_\theta'(\pi) \colonequals \theta(\pi).$

\item
Set $\eta_\theta \colonequals \Ind_{\pi^\ZZ \cdot \OH_D^\times}^{D^\times}(\eta_\theta')$ and Proposition 5.19 of \cite{B12} asserts that 
\begin{equation*}
H_i(\widetilde X, \overline \QQ_\ell)[\theta] \cong \eta_\theta \qquad \qquad \text{for $i = h-1$}.
\end{equation*}
\end{enumerate}

Via the local Langlands and Jacquet-Langlands correspondences, there is a bijection between smooth characters of $L^\times$ and irreducible representations of $D^\times$. For a character $\theta \from L^\times \to \overline \QQ_\ell^\times$, let $\rho_\theta$ denote the corresponding $D^\times$-representation. Theorem 2.6 of \cite{BW11} gives an explicit construction of $\rho_\theta$ in the case that $\theta$ is primitive using a geometric ingredient given by the representation $H_c^1(X_2, \overline \QQ_\ell)[\psi]$ of $U_2^{2,q}(\F)$. Note that in \cite{BW11}, $X_2$ is denoted by $X$ and $U_2^{2,q}(\F)$ is denoted by $U^{2,q}(\F)$.

Our work describes a correspondence between $L^\times$-representations and $D^\times$-representations arising in Lusztig's conjectural construction of a local analogue of Deligne-Lusztig theory. A natural question to ask is whether the map
\begin{center}
\begin{tikzpicture}[xscale=8]
\draw (0,0) node(a){\{primitive characters of $L^\times$\}} (1,0) node(b){\{irreducible representations of $D^\times$\}};
\draw[->] (a.east) to (b.west);
\draw (0,-1) node(a'){$\theta$} (1,-1) node(b'){$H_\bullet(\widetilde X, \overline \QQ_\ell)[\theta]$};
\draw[|->] (a'.east) to (b'.west);
\end{tikzpicture}
\end{center}
matches the correspondence given by the local Langlands and Jacquet-Langlands correspondences. It in fact does!

\begin{theorem}\label{t:JLCcomp}
Let $\theta \from L^\times \to \overline \QQ_\ell^\times$ be a primitive character of level $h$ and let $\rho_\theta$ be the $D^\times$-representation corresponding to $\theta$ under the local Langlands and Jacquet-Langlands correspondences. Then $H_i(\widetilde X, \overline \QQ_\ell)[\theta] = 0$ if $i \neq h-1$ and
\begin{equation*}
\rho_\theta \cong H_{h-1}(\widetilde X, \overline \QQ_\ell)[\theta].
\end{equation*}
\end{theorem}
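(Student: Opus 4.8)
The plan is to deduce Theorem~\ref{t:JLCcomp} from Theorem~\ref{t:cohomdesc} together with the explicit recipe of Proposition~5.19 of \cite{B12} recalled just above, by matching that recipe step-by-step with the known construction of $\rho_\theta$ on the automorphic side. First I would invoke Theorem~\ref{t:cohomdesc} to identify $H_c^{h-1}(X_h,\overline\QQ_\ell)[\chi]\cong\rho_\chi$ and the vanishing of $H_c^i(X_h,\overline\QQ_\ell)[\chi]$ for $i\neq h-1$; feeding this into Proposition 5.19 of \cite{B12} (whose hypotheses are precisely Conjecture~\ref{c:5.18}, now a theorem) immediately gives $H_i(\widetilde X,\overline\QQ_\ell)[\theta]=0$ for $i\neq h-1$ and produces the representation $\eta_\theta=\Ind_{\pi^\ZZ\cdot\OH_D^\times}^{D^\times}(\eta_\theta')$ with $H_{h-1}(\widetilde X,\overline\QQ_\ell)[\theta]\cong\eta_\theta$. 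So the theorem reduces to the purely representation-theoretic statement $\eta_\theta\cong\rho_\theta$, where $\rho_\theta$ is the $D^\times$-representation attached to $\theta$ by LLC followed by JLC.

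Next I would recall the standard description of $\rho_\theta$ for primitive $\theta$: since $\sigma_\theta=\Ind_{\cW_L}^{\cW_K}(\widetilde\theta)$ is an irreducible two-dimensional induced representation, the supercuspidal $\pi_\theta$ and hence (via JLC) $\rho_\theta$ is itself induced from a representation of $\pi^\ZZ\cdot\OH_D^\times$ (equivalently, of the normalizer of $L^\times$ in $D^\times$); this is the Howe/Corwin--Howe--Moy type construction, and in the primitive case it is given cleanly in \cite{BW11} and \cite{BW13}. Thus both $\eta_\theta$ and $\rho_\theta$ are of the form $\Ind_{\pi^\ZZ\cdot\OH_D^\times}^{D^\times}(-)$, and since induction from this (open, finite-index-normalizer) subgroup is injective on isomorphism classes of the relevant irreducibles, it suffices to match the inducing data on $\pi^\ZZ\cdot\OH_D^\times$. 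I would check three things: (i) the restrictions to $U_D^1/U_D^{2(h-1)+1}\cong\UnipF$ agree --- on the geometric side this is $\rho_\chi$, and on the automorphic side this is exactly the representation $H_c^{h-1}$ (for $h>2$) or $H_c^1(X_2)$ (built up inductively) appearing in the Boyarchenko--Weinstein construction, so this is where the character formula of Theorem~\ref{t:decomp} and the uniqueness statement Corollary~\ref{c:detbyH} do the work; (ii) the central character / value on $\OH_L^\times$-type elements matches, which is governed by the trace condition $\Tr(\eta_\theta^\circ(\zeta))=(-1)^{h-1}\theta(\zeta)$ and the analogous normalization in the LLC construction (the sign $(-1)^{h-1}$ is absorbed consistently because it appears on both sides); and (iii) the value at the uniformizer $\pi$ is $\theta(\pi)$ on both sides, which is built into the definition of $\eta_\theta'$ and is a standard feature of the Howe construction of $\rho_\theta$.

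The main obstacle I expect is step (i): verifying that the $U_D^1$-level data produced geometrically by Proposition 5.19 of \cite{B12} literally coincides with the $U_D^1$-level data used in the Howe-style construction of $\rho_\theta$ for \emph{arbitrary} level $h$, rather than only for $h=2$ where \cite{BW11} already did this by hand. The cleanest route is not to compare inducing characters directly but to compare the irreducible $\UnipF$-representations $\rho_\chi$ with the restriction to $U_D^1$ of the known $\rho_\theta$: by Corollary~\ref{c:detbyH} an irreducible representation of $\UnipF$ in which $H_{2(h-1)}(\F)$ acts by $\psi$ (conductor $q^2$) is pinned down by its restriction to $H(\F)\cong U_L^1/U_L^h$, and one checks that this restriction on the $\rho_\theta$ side is read off from $\theta|_{U_L^1}$ in exactly the way Theorem~\ref{t:bij} / Propositions~\ref{p:bijodd}--\ref{p:bijeven} associate $\rho_\chi$ to $\chi$. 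Once the $\UnipF$-restrictions are matched, the remaining data (values on $\mu_{q^2-1}\subset\OH_L^\times$ and at $\pi$) are one-dimensional and are fixed by the trace normalization and $\eta_\theta'(\pi)=\theta(\pi)$, so the two induced representations agree. Finally I would note that $\eta_\theta$ is irreducible (Mackey: the inducing character has trivial stabilizer modulo the inducing subgroup because $\theta$ is primitive), so the isomorphism $\eta_\theta\cong\rho_\theta$ holds on the nose, completing the proof that $\rho_\theta\cong H_{h-1}(\widetilde X,\overline\QQ_\ell)[\theta]$.
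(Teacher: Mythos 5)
Your proposal is correct and follows essentially the same route as the paper: reduce via Theorem \ref{t:cohomdesc} and Proposition 5.19 of \cite{B12} to the single identity $\eta_\theta\cong\rho_\theta$, and then match the inducing data of the Boyarchenko--Weinstein construction of $\rho_\theta$ with $\eta_\theta'$ in three pieces --- on $U_D^1$, on $\zeta$ (via the trace normalization $(-1)^{h-1}\theta(\zeta)$), and at $\pi$. The only tactical difference is in the $U_D^1$-level comparison: the paper directly identifies the image of $(L^\times\cdot U_D^h)\cap U_D^1$ in $\UnipF$ with $H'(\F)$ and the inducing character with $\chi^\sharp$ (with an extra induction-in-stages step through $H''(\F)$ when $h$ is even), whereas you propose to pin down the representation by its restriction to $H(\F)$ via Corollary \ref{c:detbyH}; both routes work.
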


\begin{proof}
The first assertion is clear from Theorem \ref{t:cohomdesc} and Proposition 5.19 of \cite{B12}. As the description in Theorem 2.6 of \cite{BW11} depends on the parity of $h$, we will handle the even-$h$ and odd-$h$ cases separately. In the case that $h$ is odd, the heart of the proof is really in the observation that the image of $L^\times \cdot U_D^h \cap U_D^1$ in $\UnipF$ under the surjection $U_L^1 \to \UnipF$ is exactly the group $H'(\F)$. The case when $h$ is even requires a bit more work as we must unravel the connection between the $U_2^{2,q}(\F)$-representations $H_c^1(X_2, \overline \QQ_\ell)[\psi]$ and the $\UnipF$-representations $H_c^{h-1}(X_h, \overline \QQ_\ell)[\chi]$.

Let $h$ be odd. By Theorem 2.6 of \cite{BW11}, there is a unique character $\widetilde \theta$ of $L^\times \cdot U_D^h$ that restricts to $\theta$ on $L^\times$ and is trivial on $1 + (C' \cap P_D^h)$. Here, $C' = L \cdot \Pi \subset D$. Then $\rho_\theta = \Ind_{L^\times \cdot U_D^h}^{D^\times}(\widetilde \theta)$. We would like to compare $\rho_\theta$ to the representation $\eta_\theta =H_{h-1}(\widetilde X, \overline \QQ_\ell)[\theta]$. Notice that $\pi^\ZZ \cdot \OH_D^\times = L^\times \cdot U_D^1$ so that $\eta_\theta = \Ind_{L^\times \cdot U_D^1}^{D^\times}(\eta_\theta')$.

The image of $(L^\times \cdot U_D^h) \cap U_D^1$  under the surjection
\begin{align*}
\varphi \from U_D^1 &\to \UnipF \\
1 + \sum_{i \geq 1} a_i \Pi^i &\mapsto 1 + \sum_{i=1}^{2(h-1)} a_i \tau^i
\end{align*}
is exactly equal to $H'(\F)$ and the pullback of $\chi^\sharp \from H'(\F) \to \overline \QQ_\ell^\times$ along $\varphi$ is exactly equal to $\widetilde \theta \from (L^\times \cdot U_D^h) \cap U_D^1 \to \overline \QQ_\ell^\times$. Therefore
\begin{equation*}
\Ind_{(L^\times \cdot U_D^h) \cap U_D^1}^{U_D^1}(\widetilde \theta) \cong \Ind_{(L^\times \cdot U_D^h) \cap U_D^1}^{U_D^1}(\chi^\sharp \circ \varphi),
\end{equation*}
and if follows that, viewing $H_c^{h-1}(X_h, \overline \QQ_\ell)[\chi]$ as a representation of $U_D^1$ by pulling back along $\varphi$, we have
\begin{equation*}
\Ind_{(L^\times \cdot U_D^h) \cap U_D^1}^{U_D^1}(\widetilde \theta) \cong H_c^{h-1}(X_h, \overline \QQ_\ell)[\chi].
\end{equation*}

We may identify $\OH_D^\times/U_D^{2(h-1)+1}$ with the semidirect product $\langle \zeta \rangle \ltimes \UnipF$, where $\zeta$ can be viewed as a generator of $\F^\times$. By Proposition 5.19 of \cite{B12}, we know that $H_c^{h-1}(X_h, \overline \QQ_\ell)[\chi]$ extends to a representation $\eta_\theta^\circ$ of $\OH_D^\times/U_D^{2(h-1)+1}$ which is characterized by $\Tr(\eta_\theta^\circ(\zeta)) = (-1)^{h-1} \theta(\zeta)$, where $\zeta$ is a chosen generator of $\F^\times$. We now check that
\begin{equation*}
\Ind_{(L^\times \cdot U_D^h) \cap \OH_D^\times}^{\OH_D^\times}(\widetilde \theta) \cong \eta_\theta^\circ.
\end{equation*}
It is sufficient to show that the traces agree on $\zeta$. But this is easy: The representation $\Ind_{(L^\times \cdot U_D^h) \cap \OH_D^\times}^{\OH_D^\times}(\widetilde \theta)$ is the pullback of the representation $\Ind_{\langle \zeta \rangle \ltimes H'(\F)}^{\langle \zeta \rangle \ltimes \UnipF}(\widetilde \theta)$, whose trace on $\zeta$ is exactly $\theta(\zeta)$ since any element $g \in \langle \zeta \rangle \ltimes \UnipF$ conjugates $\zeta$ out of $\langle \zeta \rangle \ltimes H'(\F)$.

Pulling back these representations to $\OH_D^\times$, we see that
\begin{equation*}
\Ind_{(L^\times \cdot U_D^h) \cap \OH_D^\times}^{\OH_D^\times}(\widetilde \theta) \cong \widetilde \eta_\theta^\circ.
\end{equation*}
It is clear that
\begin{equation*}
\Ind_{L^\times \cdot U_D^h}^{L^\times \cdot \OH_D^\times}(\widetilde \theta) \cong \eta_\theta'.
\end{equation*}
Noting that $L^\times \cdot \OH_D^\times = \pi^\ZZ \cdot \OH_D^\times$, we may now conclude that
\begin{equation*}
\rho_\theta = \Ind_{L^\times \cdot U_D^h}^{D^\times}(\widetilde \theta) = \Ind_{\pi^\ZZ \cdot \OH_D^\times}^{D^\times} \Ind_{L^\times \cdot U_D^h}^{\pi^\ZZ \cdot \OH_D^\times} (\widetilde \theta) \cong \Ind_{\pi^\ZZ \cdot \OH_D^\times}^{D^\times}(\eta_\theta') \cong H_{h-1}(\widetilde X, \overline \QQ_\ell)[\theta].
\end{equation*}

Now let $h$ be even. By Theorem 2.6 of \cite{BW11}, there is an irreducible representation $\sigma$ of $L^\times \cdot U_D^{h-1}$ such that $\Tr \sigma(x) = (-1) \cdot \theta(x)$ for each very regular element $x \in \OH_L^\times$ and the restriction of $\sigma$ to $K^\times \cdot U_L^1 \cdot U_D^h$ is a direct sum of copies of a character that equals $\theta$ on $K^\times \cdot U_L^1$ and is trivial on $1 + (C' \cap P_D^h)$. Then $\rho_\theta = \Ind_{L^\times \cdot U_D^{h-1}}^{D^\times}(\sigma)$. Just as in the odd-$h$ case, we would like to compare $\rho_\theta$ to the representation $\eta_\theta = H_{h-1}(\widetilde X, \overline \QQ_\ell)[\theta]$. 

The image of $(L^\times \cdot U_D^{h-1}) \cap U_D^1$ under the surjection $\varphi \from U_D^1 \to \UnipF$ is equal to $H''(\F)$, where
\begin{equation*}
H'' \colonequals \{1 +\sum a_i \tau^i : \text{$i$ is even; or $i \geq h-1$}\} \subset \Unip.
\end{equation*}
Note that $H''(\F)$ contains $H'(\F)$ as a degree-$q$ subgroup. 

By the proof of Theorem 2.6 of \cite{BW11}, $\sigma$ is constructed as follows. Consider the group $J = 1 + P_L^{h-1} + (C' \cap P_D^{h-1})$ and $J_+ = 1 + P_L^h + (C' \cap P_D^{h+1})$. Then we have an isomorphism $J/J_+ \cong U_2^{2,q}(\F)$ coming from the natural surjection $L^\times \ltimes J \to \langle \zeta \rangle \ltimes U_2^{2,q}(\F)$. Consider pullback of $H_c^1(X_2, \overline \QQ_\ell)[\psi]$ to $L^\times \ltimes J$ and tensor this representation with $\theta$ to obtain a representation that descends to a representation $\sigma$ of $L^\times \cdot U_D^{h-1}$. The representation $H_c^1(X_2, \overline \QQ_\ell)[\psi]$ is constructed as follows. Let $\widetilde \psi$ be any extension of $\psi$ to $\{1 + a\tau + b \tau^2 : a \in \FF_q\} \subset U_2^{2,q}(\F)$. Then $H_c^1(X_2, \overline \QQ_\ell)[\psi] \cong \Ind^{U_2^{2,q}(\F)}(\widetilde \psi)$ as representations of $U_2^{2,q}(\F)$. By Theorem 2.9 of \cite{BW11}, $\Tr(\sigma(\zeta)) = -\theta(\zeta)$.

We can realize $U_2^{2,q}(\F)$ as a subgroup of $\UnipF$ via the inclusion
\begin{align*}
U_2^{2,q}(\F) &\to \UnipF \\
1 + a_{h-1} \tau + a_{2(h-1)} \tau^2 &\mapsto 1 + a_{h-1} \tau^{h-1} + a_{2(h-1)} \tau^{2(h-1)}.
\end{align*}
Thus $H_c^1(X_2, \overline \QQ_\ell)[\psi] \cong \Ind_{\{1 + a \tau^{h-1} + b \tau^{2(h-1)} : a \in \FF_q\}}^{\{1 + a\tau^{h-1} + b \tau^{2(h-1)}\}}(\widetilde \psi)$ and as representations of $(L^\times \cdot U_D^{h-1}) \cap U_D^1$, $\sigma \cong \Ind_{H'(\F)}^{H''(\F)}(\widetilde \chi)$. Therefore,
\begin{align*}
\Ind_{(L^\times \cdot U_D^{h-1}) \cap U_D^1}^{U_D^1}(\sigma) 
&\cong \Ind_{\varphi^{-1}(H''(\F))}^{U_D^1}\Ind_{\varphi^{-1}(H'(\F))}^{\varphi^{-1}(H''(\F))}(\widetilde \chi) \\
&= \Ind_{\varphi^{-1}(H'(\F))}^{U_D^1}(\widetilde \chi) \\
&\cong H_c^{h-1}(X_h, \overline \QQ_\ell)[\chi].
\end{align*}
By Proposition 5.19 of \cite{B12}, there exists a unique extension of $H_c^{h-1}(X_h, \overline \QQ_\ell)[\chi]$ to a representation of $\OH_D^\times$ characterized by $\Tr(\zeta ; H_c^{h-1}(X_h, \overline \QQ_\ell)[\chi]) = (-1)^{h-1}\theta(\zeta) = -\theta(\zeta).$ This therefore implies that as representations of $\OH_D^\times,$
\begin{equation*}
\Ind_{(L^\times \cdot U_D^{h-1}) \cap \OH_D^\times}^{\OH_D^\times}(\sigma) \cong \widetilde \eta_\theta^\circ.
\end{equation*}
The final conclusion is exactly the same as the argument in the $h$-odd case, and this completes the proof of Theorem \ref{t:JLCcomp}.
\end{proof}


\begin{thebibliography}{WW99}
\bibitem[B12]{B12}
Boyarchenko, M. \textit{Deligne-Lusztig Constructions for Unipotent and $p$-Adic Groups.} Preprint, arXiv:1207.5876, 2012.

\bibitem[BW11]{BW11} Boyarchenko, M. and Weinstein, J. \textit{Maximal Varieties and the Local Langlands Correspondence for $\GL_n$.} Preprint, arXiv:1109.3522, version 3, 2013.

\bibitem[BW13]{BW13} Boyarchenko, M. and Weinstein, J. \textit{Geometric Realization of Special Cases of Local Langlands and Jacquet-Langlands Correspondences.} Preprint, arXiv:1303.5795, 2013.

\bibitem[DL76]{DL76} Deligne, P. and Lusztig, G. \textit{Representations of reductive groups over finite fields.} {Ann.\ of Math.\ (2),} 103(1):103-161, 1976.

\bibitem[I13]{I13} Ivanov, A. \textit{Cohomology of affine Deligne-Lusztig varieties for $\GL_2$}. {J.\ of Algebra 383}, 42-62, 2013.

\bibitem[L79]{L79} Lusztig, G. \textit{Some remarks on the supercuspidal representations of $p$-adic semisimple groups.} In {Automorphic forms, representations and L-functions (Proc.\ Sympos.\ Pure Math., Oregon State Univ., Corvallis, Ore., 1977), Part 1}, Proc.\ Sympos.\ Pure Math., XXXIII, pages 171-175. Amer.\ Math.\ Soc., Providence, R.I., 1979.
\end{thebibliography}
\end{document}